\newtheorem{theorem}{Theorem}[section]
\newtheorem{proposition}[theorem]{Proposition}
\newtheorem{definition}[theorem]{Definition}
\newtheorem{corollary}[theorem]{Corollary}
\newtheorem{conjecture}[theorem]{Conjecture}
\newtheorem{construction}[theorem]{Construction}
\newtheorem{lemma}[theorem]{Lemma}
\newtheorem{example}[theorem]{Example}
\theoremstyle{remark}
\newtheorem{remark}[theorem]{Remark}
\numberwithin{equation}{section}
\numberwithin{figure}{section}
\numberwithin{table}{section}
\DeclareMathOperator{\Ad}{\mathrm{Ad}}
\DeclareMathOperator{\Ann}{\mathrm{Ann}}
\DeclareMathOperator{\Bun}{\mathrm{Bun}}
\DeclareMathOperator{\Cht}{\mathrm{Cht}}
\DeclareMathOperator{\FinS}{\mathrm{FinS}}
\DeclareMathOperator{\Fl}{{\mathcal{F}\ell}}
\DeclareMathOperator{\Frob}{\mathrm{Frob}}
\DeclareMathOperator{\Gal}{\mathrm{Gal}}
\DeclareMathOperator{\GL}{\mathop{GL}}
\DeclareMathOperator{\Gr}{\mathrm{Gr}}
\DeclareMathOperator{\Hom}{\mathrm{Hom}}
\DeclareMathOperator{\rec}{\mathrm{rec}}
\DeclareMathOperator{\Reg}{\mathrm{Reg}}
\DeclareMathOperator{\Rep}{\mathrm{Rep}}
\DeclareMathOperator{\Res}{\mathrm{Res}}
\DeclareMathOperator{\Spec}{\mathrm{Spec}}
\DeclareMathOperator{\triv}{\mathrm{triv}}
\DeclareMathOperator{\Weil}{\mathrm{Weil}}
\newcommand{\atimes}{\stackbin{\leftarrow}{\times}}
\newcommand{\qlbar}{\overline{\mathbb{Q}}_l}
\newcommand{\F}{\mathbb{F}}
\newcommand{\fqbar}{\overline{\mathbb{F}}_q}
\newcommand{\btilde}{\widetilde{\mathcal{B}}}
\newcommand{\fltilde}{\widetilde{\Fl}}
\newcommand{\wtilde}{\widetilde{W}}
\title{Restricted shtukas and $\Psi$-factorizable sheaves}
\author{Andrew Salmon}
\email{asalmon@mit.edu}
\address{Department of Mathematics\\
Massachusetts Institute of Technology\\
Cambridge, MA 02139\\ USA}
\begin{document}

\begin{abstract}
    We show that Lusztig's theories of two-sided cells and non-unipotent representations of a reductive group over a finite field are compatible with the V. Lafforgue's automorphic-to-galois direction of the Langlands correspondence.  To do this, we extend cases where nearby cycles commutes with pushforward from sheaves on the moduli space of shtukas to a product of curves to include certain depth $0$ cases.  More generally, we introduce the notion of $\Psi$-factorizability to study nearby cycles over general bases, whereby a sheaf is $\Psi$-factorizable if its nearby cycles are the same as iterated nearby cycles with respect to arbitrary compositions of specializations on the base.  The Satake sheaves on Beilinson-Drinfeld grassmannians and their cohomology sheaves on curves are nontrivial examples of $\Psi$-factorizable sheaves.  This notion allows us to adapt arguments from Xue \cite{xue2020smoothness}.  As an application, for automorphic forms in depth zero attached to a Langlands parameter, we characterize the image of the tame generator of this parameter in terms of semisimple orbits and two-sided cells attached to representations, extending ideas of Lusztig-Yun and Bezrukavnikov-Finkelberg-Ostrik.
\end{abstract}

\maketitle

\section{Introduction}

For a reductive group $G$, a $G$-shtuka is a principal $G$-bundle on a curve together with a Hecke modification relating the $G$-bundle to its Frobenius pullback.  While the early study of shtukas applied to the Langlands correspondence for $\GL_n$ used the Arthur trace formula applied to simple modification types, such as in the work of L.~Lafforgue \cite{lafforgue2002chtoucas} and Drinfeld, arbitrary modification types have been used since the work of Varshavsky \cite{varshavsky2004moduli}.  Notably, V.~Lafforgue introduced an automorphic-to-Galois direction of the Langlands correspondence for any split reductive group $G$.  For $G$ a split reductive group over a function field $K$, the reciprocity map gives
\[
\begin{tikzcd}
\rec \colon \left\{ \begin{gathered}\mbox{cuspidal automorphic} \\ \mbox{representations} \\
\pi \subset L^2(G(K) \backslash G(\mathbb{A})) \\ \mbox{with finite central character} \end{gathered} \right\} \arrow[r] & \left\{ \begin{gathered} \mbox{semisimple} \\ \widehat{G}(\qlbar)\mbox{-valued representations} \\ \text{of} \Weil(\overline{K} / K). \end{gathered} \right\}
\end{tikzcd}
\]
Automorphic representations include representations of reductive groups over local fields at each place of the function field $K$, and $\Weil$ group representations restrict to local Galois groups at each place.  It is expected that there should be a compatibility between these representations.  For example, reductive groups over local fields have a Moy-Prasad filtration at any parahoric subgroup, while local Galois groups have an upper numbering filtration, and it is expected that these filtrations are compatible with each other.  In depth 0, the local Galois groups are expected to factor over the tame fundamental group, and the image of the tame generator should be related to two-sided cells and semisimple parameters.  In fact, in some elliptic depth zero cases, we will give a complete description of the image of the tame generator in Corollary~\ref{cor:Langlands param}.

We need the following preliminaries.  Let $G$ be a split reductive group and $\pi$ be a compactly supported cuspidal automorphic form for $G(\mathbb{A}_K)$ that is attached to the semisimple Langlands parameter $\rho$ under reciprocity.  Let $C$ be the smooth curve over $\mathbb{F}_q$ with function field $K$, and let $|C|$ denote its places.  $G(\mathbb{A}_K)$ contains an open compact subgroup $\prod_{x \in |C|} G_x \subset G(\mathbb{A}_K)$ so that the group $G_x$ encodes some ramification at the closed point $x$.  Let $y \in |C|$ be a fixed place corresponding to an $\mathbb{F}_q$-point that is unramified, let $G_y$ be the standard maximal parahoric at $y$, and let $G^+_y$ be the first congruence subgroup with Levi quotient $G_y / G^+_y \cong G(\mathbb{F}_{q})$.

For a fixed open compact subgroup $U^+ = \prod_x H_x \subset \prod_x G_x$ with level structure $H_y = G^+_y$ at $y$, and $U = \prod_{x \ne y} H_x \times G_y$, we get a map:
\[ G(K) \backslash G(\mathbb{A}_K) / U^+ \to G(K) \backslash G(\mathbb{A}_K) / U \]
that is a Galois $G(\mathbb{F}_{q})$-cover.  Therefore, the space of compactly supported functions has an isotypic decomposition according to the regular representation $\Reg(G(\mathbb{F}_q))$ of this finite group
\[ H_{1} \cong C_c(G(K) \backslash G(\mathbb{A}_K) / U^+) \cong C_c(G(K) \backslash G(\mathbb{A}_K) / U, \Reg_{G(\mathbb{F}_q)}) \cong \bigoplus_{\zeta \in \Rep(G(\mathbb{F}_q))} H_{1,\zeta} \otimes \underline{\zeta^*}. \]
The spaces of functions $H_{1,\zeta}$ for $\zeta \in \Rep(G(\mathbb{F}_{q}))$ irreducible can be realized as a space of automorphic forms, given a vector $0 \ne v \in \underline{\zeta^*}$.  To irreducible $\zeta$ Lusztig has attached a $W$-orbit of semisimple parameters $\lambda \colon T \to \qlbar^{\times}$ and a two-sided cell $\textbf{c}$ of the affine Weyl group $\wtilde_{\lambda}$ fixing $\lambda$.  We will see in Section~\ref{sec:horocycle} that once we make a choice of tame generator, a pair of semisimple parameter and two-sided cell determines a conjugacy class in $\widehat{G}$, which we denote $\mathfrak{o}_{\zeta}$.

On the Galois side we have local Galois group $\Gal(\overline{K_x} / K_x) \subset \Gal(\overline{K} / K)$ at each place.  Each local Galois group has a ramification filtration whose first steps are given by
\[ P \subset I \subset \Gal(\overline{K_y} / K_y), \]
where $P$ is wild inertia, a Sylow-p subgroup, and $I$ is inertia.  The quotient $I / P$ is topologically generated by a single element $\gamma$, which we fix and call the tame generator.
\begin{theorem}\label{thm:depth0}
Let our curve $C$, reductive group $G$ be chosen as above, and suppose furthermore that $G$ has connected center.  Let $y \in C(\mathbb{F}_q)$ be a place and let $\pi$ be an automorphic form in an isotypic component $H_{1}[\zeta] \subset H_{1}$, where the irreducible representation $\zeta$ is attached to the conjugacy class $\mathfrak{o}_{\zeta}$.  Furthermore, suppose $\pi$ is attached to the Langlands parameter $\rho$ under the reciprocity map.  Let $\gamma$ be a choice of tame generator inside local inertia mod wild inertia $I / P$ at the point $y$.  Then $\rho(P) = 1$ and thus $\rho(\gamma)$ is well-defined.  Furthermore, $\rho(\gamma) \in \overline{\mathfrak{o}_{\zeta}}$.

In the case that $\rho$ is an elliptic Langlands parameter, we have $\rho(\gamma) \in \mathfrak{o}_{\zeta}$.
\end{theorem}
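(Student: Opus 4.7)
The plan is to realize both $\rho(\gamma)$ and the orbit $\mathfrak{o}_\zeta$ as geometric invariants on the cohomology of a moduli space of shtukas, and then to match them via nearby cycles. First, I would express $H_{1,\zeta}$ as the $\zeta$-isotypic component of the cohomology of a moduli $\Cht_{U^+}$ of shtukas with $U^+$-level structure at $y$, so that the Langlands parameter $\rho$ attached to $\pi$ arises from V.~Lafforgue's excursion operators acting on this cohomology. The local Galois group $\Gal(\overline{K_y}/K_y)$ acts on $\pi$ through these operators, and the vanishing $\rho(P)=1$ comes from the fact that the level at $y$ has depth zero: the first congruence subgroup $G_y^+$ kills wild ramification, so only $I/P$ can act nontrivially on the $\zeta$-isotypic piece.

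Second, I would realize the $G_y^+$-level structure as the outcome of nearby cycles along a leg that specializes from a generic point of the curve to $y$. This is the key technical point. Using the $\Psi$-factorizability of Satake sheaves on Beilinson--Drinfeld grassmannians (and of their cohomology sheaves on curves) established earlier in the paper, nearby cycles of the cohomology sheaf on $\Cht$ as a leg approaches $y$ can be computed locally around $y$, and they commute with pushforward to the coarse moduli of shtukas even in this depth zero, parahoric-level setting. The monodromy of these nearby cycles then represents precisely $\rho(\gamma)$ under the Satake correspondence, which identifies representations of the reductive quotient $G(\mathbb{F}_q)$ with conjugacy classes in $\widehat{G}$.

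Third, I would apply the geometric description of depth zero characters due to Lusztig--Yun and Bezrukavnikov--Finkelberg--Ostrik. On the $\zeta$-isotypic piece, the nearby cycles sheaf is supported on a union of components of the affine flag variety at $y$ governed by the $W$-orbit of the semisimple parameter $\lambda$ and the two-sided cell $\mathbf{c}$ attached to $\zeta$; after choosing the tame generator this matches the conjugacy class $\mathfrak{o}_\zeta$ constructed in the introduction. In general one obtains only the closure $\overline{\mathfrak{o}_\zeta}$, because during the specialization the Frobenius eigenvalue can flow along adjoint-orbit degenerations inside the corresponding Levi, but in the elliptic case the centralizer structure of an elliptic parameter rules out such degenerations and yields the precise orbit.

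The main obstacle is step two: establishing that nearby cycles commute with the relevant pushforward on shtuka cohomology in the depth zero, parahoric-level setting. The unramified case, where Satake sheaves are supported at a single point of the curve, is classical, but here the nearby cycles must also be factored through the first congruence subgroup $G_y^+$, which is exactly where the notion of $\Psi$-factorizability is deployed, together with an adaptation of Xue's smoothness arguments from \cite{xue2020smoothness}. A secondary technical point is correctly aligning the Frobenius eigenvalues on the resulting nearby cycles sheaf with the explicit parametrization of $\mathfrak{o}_\zeta$ by a semisimple parameter together with a two-sided cell, rather than some Langlands-dual invariant which only captures $\zeta$ up to its geometric series.
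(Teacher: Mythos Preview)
Your outline captures the general architecture---realize the depth zero level via nearby cycles on shtukas, use $\Psi$-factorizability to commute nearby cycles with pushforward, and invoke Lusztig--Yun/BFO to identify the monodromy with the cell data---but there are two genuine gaps between your sketch and an actual proof.

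\medskip

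\textbf{First gap: the link between local monodromy and the global parameter.} You write that ``the monodromy of these nearby cycles then represents precisely $\rho(\gamma)$,'' but this identification is not direct. The parameter $\rho$ is defined through Lafforgue's excursion algebra acting on $H_{\{0\},1}$, whereas the monodromy of nearby cycles acts on sheaves over restricted shtukas. The paper bridges these via the \emph{framed} excursion algebra: one forms operators $F_{f,\gamma}$ for $f\in\mathscr{O}(\widehat{G})$ (not just $\widehat{G}$-invariant $f$) acting on $H_{\{0\},\Reg,\zeta}$, identifies $F_{f,\gamma}$ restricted to $H_{\{0\},1,\zeta}$ with the composition $\xi\circ\mathfrak{M}_V\circ x$ coming from monodromy (this is where commuting $\Psi$ with $\mathfrak{p}_!$ is used), and then proves that the annihilator of $\pi$ in $\mathscr{O}(\widehat{G})$ under $f\mapsto F_{f,\gamma}$ is exactly the ideal $\mathscr{J}_{\mathfrak{o}_\zeta}$ of the orbit closure. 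One direction of this annihilator computation requires a weight argument using Deligne's theorem. Finally, a separate analysis of how framed annihilators constrain the unframed ones shows $\rho(\gamma)\in\overline{\mathfrak{o}_\zeta}$. Your proposal does not indicate any of this machinery, and without it there is no way to pass from ``monodromy lies in $\mathfrak{o}_\zeta$'' (a statement in some truncated convolution category) to a constraint on the global semisimple parameter $\rho$.

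\medskip

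\textbf{Second gap: the horocycle correspondence for restricted shtukas.} The BFO computation of monodromy takes place in the monodromic affine Hecke category ${}_{L_\lambda}D_{L_\lambda}(I^\circ\backslash LG/I^\circ)$, not directly on restricted shtukas for the level $G_y^+$. To transport it, the paper constructs a horocycle correspondence $\Cht\mathcal{R}\leftarrow\Cht\btilde\rightarrow\Gr\btilde$ over the curve which specializes to Lusztig's finite horocycle correspondence generically and to an affine one over $y$. This is what lets you write $\zeta$ as a summand of $\pi_!f^*\mathbb{L}_\lambda^{\dot{w}}$ and thereby pull the BFO monodromy computation over to $\Psi(\mathcal{S}_V\boxtimes\zeta)$. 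Your step three gestures at Lusztig--Yun and BFO but does not explain this bridge; in particular, the sentence about the nearby cycles sheaf being ``supported on a union of components of the affine flag variety'' is not the mechanism---the orbit $\mathfrak{o}_\zeta$ is read off from the \emph{tensor automorphism} $\mathfrak{M}_d$ in a Tannakian subcategory $\mathcal{A}_d^\lambda$, not from support.

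\medskip

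A smaller point: the phrase ``Satake correspondence, which identifies representations of $G(\mathbb{F}_q)$ with conjugacy classes in $\widehat{G}$'' is incorrect. Satake relates $\Rep(\widehat{G})$ to perverse sheaves on $\Gr_G$; the assignment $\zeta\mapsto\mathfrak{o}_\zeta$ is Lusztig's parametrization by a semisimple element $\lambda\in\widehat{T}$ and a two-sided cell, combined with the cell--unipotent bijection in the endoscopic group $\widehat{G}_\lambda$. Your elliptic argument is in the right spirit, but the actual proof uses Richardson's description of closed orbits in $\widehat{G}^n//\widehat{G}$ together with the finiteness of $\mathfrak{S}_\rho/Z_{\widehat{G}}$ to show the framed support $\xi^\Box_\pi$ consists of a single closed orbit, from which $\rho(\gamma)\in\mathfrak{o}_\zeta$ (not merely the closure) follows.
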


At a coarse level, the method of proof is to connect the theorem above to statements about the action of the excursion algebra and its framed variant acting on the cohomology of moduli spaces of shtukas.  To see the connection, we note that automorphic forms as we have described above are a subspace of $\qlbar$-valued functions on the discrete groupoid $\Bun_{G,U}(\mathbb{F}_q)$ of $G$-bundles with level structure given by $U$ such that the center acts via a finite order central character.  Functions on a discrete groupoid are the same as cohomology of the underlying 0-dimensional stack.  This stack can be taken to be a stack of shtukas with trivial modifications and the Galois covering given by level structure is in fact a Galois covering that appears for stacks of shtukas with modifications as well.  The cohomology of shtukas are related to each other by excursion operators, and the action of all excursion operators gives V. Lafforgue's automorphic to Galois reciprocity map described above.

To understand the behavior of excursion operators for the local Galois group, we will reinterpret the cohomology locally around a point $y$ as the cohomology of a nearby cycles sheaf and the action of monodromy on this nearby cycles sheaf.  In the depth 0 case, such nearby cycles sheaves will turn out to be related to monodromic variations of Gaitsgory's nearby cycles sheaves, and the action of monodromy will be arise from a horocycle correspondence which can be viewed as an affine analogue of the the horocycle correspondence that Lusztig uses to study character sheaves \cite{lusztig2015non} and non-unipotent representations of $G(\F_q)$ \cite{lusztig2016non}, thus making the connection to two-sided cells.

One obstacle to the study of shtukas is the lack of a good compactification in the general case.  One complication to the study of compactifications is that the moduli space of shtukas is not of finite type in general unless shtukas are truncated using the Harder-Narasimhan filtration.  But even in finite type cases (such as shtukas for an inner form, like a division algebra), the moduli space is known to be noncompact if the Hecke modification type is sufficiently large \cite{lau2007degenerations}.  Moreover, for deeper level structures, the singularities of proper compactifications studied by L.~Lafforgue seem challenging to control, whereas it is desirable to construct compactifications whose singularities are no worse than the singularities of the Beilinson-Drinfeld grassmannian corresponding to the Hecke modification used.

Instead, we will focus on techniques that bypass the need for constructing a compactification, following Xue \cite{xue2020smoothness}, who proved the smoothness of the cohomology of shtukas as a sheaf on the base curve without constructing a compactification.  The key technique, following \cite{salmon2023unipotent} which handled the parahoric case, is to establish the appropriate fusion structures on the nearby cycles over the relevant complexes of sheaves.  Then, using Zorro's lemma, we can construct an inverse to the natural map
\begin{equation}\label{eqn: canonical nearby cycle map} \mathrm{can} \colon R\mathfrak{p}_! \Psi_J \mathscr{F}_{I \cup J, W \boxtimes V, \zeta} \to \Psi_J R\mathfrak{p}_! \mathscr{F}_{I \cup J, V \boxtimes W, \zeta}. \end{equation}
Here the map $\mathfrak{p}$ sends an $S$-point of the stack of shtukas to a tuple of points on a power of a curve $C^{I \cup J}$.  While isomorphisms of this form are immediate if $\mathfrak{p}$ is proper, $\mathfrak{p}$ will fail to be proper in almost all cases of interest.  Here, $\Psi_J$ can be understood as nearby cycles over general bases from a geometric generic point $\overline{\eta_J}$ of $C^J$ to an $\fqbar$-point $\overline{s_J}$.  If $J = \{ j_1, \dots, j_k \}$, we may also consider an iterated nearby cycle $\Psi_{j_1} \cdots \Psi_{j_k}$ where $\Psi_{j_m}$ specializes from a geometric generic point $\overline{\eta}$ of $C$ to $\overline{s_m}$, the projection of $\overline{s_J}$ to the $j_m$ factor of the curve.

The sheaves on shtukas that we work with $\mathscr{F}_{I, V, \zeta}$ will live in the bounded derived category of constructible sheaves on shtukas with legs indexed by $I$ at the \emph{parahoric} level, but they encode information about a piece of the cohomology of shtukas at deeper level.  To explain this, let us consider a special case of congruence subgroups at a single point $y \in C(\mathbb{F}_q)$ of the curve.  Consider shtukas with level structure given by a divisor $N = d^{-} \cdot y$, where the $-$ indicates that our convention will differ slightly from what we use later in Definition~\ref{def: shtukas N level}.  The stack $\Cht_{G,N,I,V}$ classifies shtukas with Hecke modification bounded by $V$ and a level structure over the divisor $d \cdot y$.  The IC sheaf on this stack pushes forward along the \'{e}tale Galois map $c_N \colon \Cht_{G,N,I,V} \to \Cht_{G,I,V}$, and such a pushforward $(c_N)_! \mathrm{IC}_V$ breaks up into isotypic components corresponding to the regular representation of the finite group $H(\mathbb{F}_q) = (L^+G / (t^d))(\mathbb{F}_q)$ where $L^+G$ is the positive loop group formed by Weil restriction along the formal disk around $y$.  The isotypic summands will correspond to sheaves on the classifying stack $BH(\mathbb{F}_q)$.  It turns out the shtukas at the parahoric level admit a natural smooth map to restricted shtukas.  In our special case, an isotypic summand of the cohomology with level structure $N$ is in fact the cohomology of a sheaf $\mathscr{F}_{I,V,\zeta}$ that lives over shtukas the spherical level.  It is to this sheaf on $\Cht_{G,I,V}$ that we consider the nearby cycles.  The following theorem can be seen by combining the results of Theorem~\ref{thm:xuezorro} and Theorem~\ref{thm: psi-factorizable GFq}.
\begin{theorem}
    Let $N = \sum_{i} a_i y_i$ be a level structure with $a_1 = 1$ and $y_1 = y \in C(\F_q)$ and let $\zeta \in \Rep(H(\F_q))$.  Write $N = y + N'$ with $N'$ not containing $y$ in its support and let $U = C \setminus N'$ be a Zariski-open subset of $C$.  Let $\overline{\eta_I} \to \overline{s}$ be a specialization in the \'etale topos of $U^I \subset C^I$.  Then the map \eqref{eqn: canonical nearby cycle map} is an isomorphism.  Moreover, $\Psi_J$ can be taken to be sliced nearby cycles over general bases or iterated nearby cycles with respect to the legs $J$, and all such choices of nearby cycles are independent of the permutation $J = \{ j_1, \dots, j_k \}$.  That is, all arrows in the diagram
    \begin{equation}
        \begin{tikzcd}
            R\mathfrak{p}_! \Psi_J \mathscr{F}_{I \cup J, W \boxtimes V, \zeta} \arrow{r} \arrow{d}& \Psi_J R\mathfrak{p}_! \mathscr{F}_{I \cup J, V \boxtimes W, \zeta} \arrow{d} \\
            R\mathfrak{p}_! \Psi_{j_1} \cdots \Psi_{j_k} \mathscr{F}_{I \cup J, W \boxtimes V, \zeta} \arrow{r}& \Psi_{j_1} \cdots \Psi_{j_k} R\mathfrak{p}_! \mathscr{F}_{I \cup J, V \boxtimes W, \zeta}
        \end{tikzcd}
    \end{equation}
    are isomorphisms.
\end{theorem}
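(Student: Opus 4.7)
The plan is to assemble the statement from the two inputs cited just before it: Theorem~\ref{thm:xuezorro}, a Zorro-lemma argument that produces an inverse to $\mathrm{can}$ from factorization and partial-Frobenius structure on the source, and Theorem~\ref{thm: psi-factorizable GFq}, which identifies sliced nearby cycles $\Psi_J$ on $\mathscr{F}_{I \cup J, \cdot, \zeta}$ with any iteration $\Psi_{j_1} \cdots \Psi_{j_k}$, independently of the ordering. So the structural goal is only to glue these two statements into the displayed diagram.

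First I would apply Theorem~\ref{thm:xuezorro} to the top horizontal arrow. The role of the hypothesis $a_1 = 1$ at the $\F_q$-point $y$, together with $U = C \setminus N'$, is that away from $N'$ the congruence-level Galois cover $c_N$ of the parahoric-level shtuka stack is \'etale with group $H(\F_q)$, so the $\zeta$-isotypic summand $\mathscr{F}_{I \cup J, \cdot, \zeta}$ is well-defined on the parahoric-level stack over $U^{I \cup J}$. One then checks that this isotypic decomposition is compatible with the creation/annihilation morphisms and with partial Frobenius at each leg, so that the Xue-style Zorro construction applies verbatim and yields an isomorphism
\[ R\mathfrak{p}_! \Psi_J \mathscr{F}_{I \cup J, W \boxtimes V, \zeta} \xrightarrow{\sim} \Psi_J R\mathfrak{p}_! \mathscr{F}_{I \cup J, V \boxtimes W, \zeta}. \]

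Second I would invoke Theorem~\ref{thm: psi-factorizable GFq} to handle the vertical arrows: the $\Psi$-factorizability of $\mathscr{F}_{I \cup J, W \boxtimes V, \zeta}$ yields $\Psi_J \mathscr{F} \xrightarrow{\sim} \Psi_{j_1} \cdots \Psi_{j_k} \mathscr{F}$ on the source, so the left vertical arrow becomes an isomorphism after $R\mathfrak{p}_!$. The right vertical arrow follows by induction on $|J|$: once $\Psi_{j_k}$ has been pushed across $R\mathfrak{p}_!$ via the first step (applied with $|J|-1$ remaining legs), one is reduced to the same statement with one fewer leg. Two-out-of-three in the commuting square then forces the bottom arrow to be an isomorphism, and independence of the ordering of $j_1, \ldots, j_k$ is inherited directly from the corresponding independence statement of Theorem~\ref{thm: psi-factorizable GFq}, transported across the now-established horizontal isomorphisms.

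The main obstacle I anticipate is the first step: verifying that the Zorro construction descends through the congruence-level isotypic decomposition. Partial Frobenius and the creation/annihilation morphisms are natural at the parahoric level for Satake input, and one has to check carefully that they commute with projection to the $\zeta$-isotypic component at $y$. The requirement that $y$ be an $\F_q$-point with $a_1 = 1$ is essential: it forces the level at $y$ to be the first congruence subgroup of the standard parahoric with Levi quotient $G(\F_q)$, so that $c_N$ is indeed \'etale Galois and the Zorro-producing operations remain within the parahoric-level world over $U^{I \cup J}$, where Theorem~\ref{thm:xuezorro} has its natural habitat.
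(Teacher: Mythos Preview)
Your plan matches the paper's --- which simply cites the two theorems --- but you have misread their logical roles.

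Theorem~\ref{thm: psi-factorizable GFq} does not assert $\Psi$-factorizability of $\mathscr{F}_{I\cup J,\cdot,\zeta}$ on the shtuka stack. It asserts that $\zeta$ is a $\Psi$-factorizable \emph{representation} (Definition~\ref{def: factorizable representation}): a condition on the sheaf $j_!(\mathcal{S}_{I\cup J,V\boxtimes W}\boxtimes\zeta)$ on the Grassmannian $\Gr_{G,I\cup J}^{\triv(N)}$, proved via the horocycle correspondence by writing $\zeta$ as a summand of $\pi_! f^*\mathbb{L}^{\dot w}_\lambda$. This condition is exactly the hypothesis ``$(N,\zeta)$ is factorizable over $U$'' required by Theorem~\ref{thm:xuezorro}. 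So the flow is: Theorem~\ref{thm: psi-factorizable GFq} supplies the hypothesis; Theorem~\ref{thm:xuezorro} then gives the top horizontal isomorphism directly, and the same hypothesis, via Theorem~\ref{thm:nearbyfusion} together with the smoothness of $\epsilon$, yields the left vertical arrow (the $\Psi$-factorizability of $\mathscr{F}$ on shtukas).

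Your anticipated ``main obstacle'' is therefore not one: Theorem~\ref{thm:xuezorro} is already stated for arbitrary $\zeta$, and the cocartesian and partial-Frobenius compatibilities with $\zeta$ are handled once in the proposition immediately preceding it. The genuine work is the factorizability hypothesis on the Grassmannian side, and the condition $a_1=1$ at an $\F_q$-point (in the body's convention, $N=0\cdot y$) is what places us in the $G(\F_q)$-Levi setting where the horocycle argument applies. With the top and left arrows in hand, your two-out-of-three closes the square; alternatively the right vertical follows from Theorem~\ref{thm:smoothness} applied to the cohomology sheaves.
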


In Section~\ref{sec:nearby cycles}, we review the nearby cycles formalism and explain how it applies to our setting.  We also introduce the notion of $\Psi$-factorizability, which is related to the K\"unneth formula studied by Gabber, and then by Illusie and Zheng for nearby cycles over general bases \cite{illusie2017around}.  $\Psi$-factorizability is the key technical feature that the sheaves on shtukas and their pushforwards to curves satisfy that allows Xue's ``Zorro lemma'' argument \cite{xue2020smoothness} to run.

In Section~\ref{sec:shtukas}, we survey properties of reductive groups, Beilinson-Drinfeld grassmannians, and global shtukas.  Our perspective combines that of Xue \cite{xue2020smoothness} and Hemo-Richarz-Scholbach \cite{hemo2020constructible} to prove $\Psi$-factorizability for the direct limit of certain cocartesian functors.  We will later see that examples are given by cohomology sheaves of shtukas, as well as cohomology sheaves of their nearby cycles, at fixed level structure given by parahoric and Moy-Prasad subgroups at a set of closed points.  In this setting, $\Psi$-factorizability here is contained in Drinfeld's lemma.

In Section~\ref{sec:fusion}, we investigate $\Psi$-factorizability on the sheaves $\mathscr{F}_{I,V,\zeta}$, before pushing forward.  This property is closely related to the centrality of nearby cycles functors into Hecke categories generalizing Gaitsgory's geometric construction of central elements of the affine Hecke algebra \cite{gaitsgory1999construction}.  As an application, in cases where the $\Psi$-factorizability of $\mathscr{F}$ is known, we can run Xue's ``Zorro lemma'' argument using nearby cycles instead of specializations to prove that nearby cycles commutes with pushforward to the power of the curve.

In Section~\ref{sec:horocycle}, we restrict to the setting of the principal level subgroup.  We construct a horocycle correspondence for restricted shtukas for principal level subgroups at an $\mathbb{F}_q$-rational point that relates these restricted shtukas to the affine flag variety on the special fiber and recovers the usual horocycle correspondence between $G$ and $(G / U)^2$ on the generic fiber.  At this level, we show that $\Psi$-factorizability of the sheaves $\mathscr{F}$ can be proven using the horocycle correspondence.  We also extend some results of Bezrukavnikov \cite{bezrukavnikov2004tensor} and Bezrukavnikov-Finkelberg-Ostrik \cite{bezrukavnikov2009tensor}.

Finally in Section~\ref{sec:depth 0}, we give applications to automorphic forms by computing the monodromy action of nearby cycles over restricted shtukas, proving Theorem~\ref{thm:depth0}.  Using the results of the previous section, we identify framed excursion operators with the action of specific elements in $\widehat{G}$.  We develop the machinery of framed excursion operators introduced in \cite{lafforgue2018d} to relate the semisimple and unipotent parts of the image of the tame generator of a Langlands parameter to the isotypic component of a residual $G(\mathbb{F}_q)$-representation in which a certain automorphic form lives, obtaining particularly complete results in the case of elliptic Langlands parameters.

\subsection*{Acknowledgements}

Many of the ideas in this document arose in discussions with Jianqiao Xia as well as my advisor, Zhiwei Yun, and they offered many clarifying explanations and suggestions as this document was being prepared.

In the preparation of this paper, we learned that Arnaud Eteve has independently studied the semisimple part of the image of the tame generator as in Theorem~\ref{thm:depth0} for depth $0$ representations.  We would like to thank him sharing a copy of the preprint \cite{eteve}.

I would like to thank Alain Genestier and Vincent Lafforgue for identifying a serious gap in a preliminary version of this document and for suggesting Definition~\ref{def:grgtrivmax}.
I would like to thank Patrick Bieker for explaining an error in an earlier version of this document.

\tableofcontents

\section{Nearby cycles formalism}\label{sec:nearby cycles}

At parahoric level structures, the Langlands correspondence produces local Galois representations that factor over the tame quotient and such that the tame generator acts unipotently.  At deeper level structures, neither of these facts are expected to hold, so to adapt the nearby cycles techniques of \cite[Section~4]{salmon2023unipotent}, we will need to use nearby cycles over general bases to get nontrivial constructions.  The theory of nearby cycles over general bases dates back to Laumon and Deligne \cite{laumon1983vanishing} and was developed by Orgogozo \cite{orgogozo2006modifications}.  For a general exposition of oriented toposes, see also \cite[Expos\'{e}~XI]{illusie2014travaux}.

\subsection{Review of nearby cycles over general bases}

\begin{definition}
Let $f \colon X \to S$ be a map of toposes.  The oriented product of toposes $X \atimes_S S$ is the universal topos which admits projections $p_1 \colon X \atimes_S S \to X$, $p_2 \colon X \atimes_S S \to S$, and a $2$-morphism $\tau \colon f p_1 \to p_2$.  The oriented topos is naturally equipped with a morphism $\Psi_f \colon X \to X \atimes_S S$.  If $K$ is a sheaf on $X$, then the direct image $R\Psi_{f} K = R(\Psi_f)_* K$ is called the nearby cycles sheaf.
\end{definition}

Now let $X$ be a scheme and let $\Lambda$ be a coefficient ring.  If $\Lambda$ is torsion, we may consider the derived category $D(X, \Lambda)$ of \'{e}tale sheaves, $D^+(X, \Lambda)$ of bounded below complexes of \'{e}tale sheaves, and $D^b(X, \Lambda)$ of bounded complexes.  Similarly, if $\Lambda$ is an finite extension of $\mathbb{Q}_\ell$ or $\overline{\mathbb{Q}}_{\ell}$, we may consider $D(X, \Lambda)$ as the derived category of pro-\'{e}tale sheaves in the pro-\'{e}tale topology.  There is also the subcategory $D_c(X, \Lambda)$ of constructible sheaves.  As a convention, all notions in this section will be derived, so when we say that $K$ is constructible, we really mean it is a constructible complex of sheaves, and so on.

We suppose $f \colon X \to S$ is a morphism of schemes and $K$ is a sheaf of complex of sheaves in $D(X, \Lambda)$ and we identify the notation $X$ and $S$ with their \'{e}tale, resp. pro-\'{e}tale toposes.  For the applications to shtukas, we may take these schemes to live over $\overline{\mathbb{F}_q}$, although none of our constructions we give will depend on this.  We say that $(f,K)$ is $\Psi$-good if $R\Psi_f K$ commutes with base change along any cartesian square given by maps $h' \colon X' \to X$ over $h \colon S' \to S$.  For this notion, see \cite[Definition~1.5]{illusie2017around}.  The property of being $\Psi$-good is not guaranteed for sheaves, and only guaranteed up to modification of the base, as the main result of \cite{orgogozo2006modifications} shows:

\begin{theorem}
Let $f \colon X \to S$ and $K$ be a constructible $\Lambda$-sheaf with $S$ noetherian and $f$ of finite type.  Then there exists some modification $h \colon S' \to S$ such that if $f' \colon X \times_S S' \to S'$, $p_1 \colon X \times_S S' \to X$, and $K' = p_1^* K$, then $R \Psi_{f'} K'$ is constructible and $(f',K')$ is $\Psi$-good.
\end{theorem}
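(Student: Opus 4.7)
The plan is to follow the strategy originating with Deligne and carried out by Orgogozo, combining Noetherian induction on $S$ with a dévissage on $K$ and de Jong alterations to control the geometry of $f$. First I would reduce to a geometric situation: any constructible $K \in D^b_c(X, \Lambda)$ sits in a finite filtration whose graded pieces are of the form $j_! \Lambda_Z$ for locally closed immersions $Z \hookrightarrow X$, and by further alteration of $Z$ (using de Jong, since $\Lambda$ is torsion-prime-to-residue-characteristics or $\ell$-adic) one may replace such a sheaf by $Rg_! \Lambda$ for $g \colon Y \to X$ proper with $Y$ regular. Thus it suffices to prove the statement for $K = Rg_! \Lambda$ with good control on $Y$, so that $R\Psi_f K = R\Psi_f Rg_! \Lambda$ can be analyzed via the relative geometry of $Y \to X \to S$.

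Second, I would set up a Noetherian induction on the dimension (or more precisely on the underlying topological space) of $S$. The inductive hypothesis would give $\Psi$-goodness and constructibility of nearby cycles after modification over every proper closed subset of $S$. It then remains to produce a modification that works over some dense open $U \subseteq S$; gluing this with the modifications furnished by induction on $S \setminus U$ yields the required modification of $S$. To build the modification over $U$, I would apply de Jong alterations simultaneously to the composite $Y \to S$: after an alteration $h \colon S' \to S$, the base change $Y \times_S S' \to S'$ can be arranged (possibly after a further alteration of $Y$) to be, generically, a strict semistable family. In this semistable situation the nearby cycles sheaf is computable by the classical formula (via $K(\pi,1)$-style local models), and the key input is that formation of $R\Psi$ in the semistable case commutes with arbitrary base change.

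Finally, having constructibility and base change compatibility over an open $U \subseteq S'$, I would compare with the induction hypothesis on the complement and patch, using that $\Psi$-goodness can be checked locally on $S$. The main obstacle, and the genuinely hard part of Orgogozo's theorem, is producing a single modification that simultaneously resolves (i) the singularities of the map $f$, (ii) the stratification witnessing constructibility of $K$, and (iii) the potentially wild ramification of $K$ along the vertical divisors introduced by the alteration; the extra wild inertia is precisely what forces a modification of $S$ rather than a mere stratification, and the standard semistable reduction results apply only once the modification has been chosen well enough that the sheaf becomes tame along the new boundary. Controlling this coupling between the alteration of $X$ and the needed modification of $S$ is where the bulk of Orgogozo's work lies, and it is this step that I would expect to require the most care to adapt or reprove.
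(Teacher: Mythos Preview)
The paper does not give its own proof of this theorem. It is stated immediately after the sentence ``as the main result of \cite{orgogozo2006modifications} shows:'' and is simply quoted as Orgogozo's theorem, with no argument supplied; the paper then remarks that it will in fact \emph{not} use this result except in the one-dimensional case, where $\Psi$-goodness is automatic.

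Your sketch is a reasonable high-level outline of Orgogozo's actual argument in the cited reference, so in that sense there is nothing to compare. A couple of small corrections if you intend this as a summary of Orgogozo: the geometric input is de Jong's theorem on \emph{plurinodal fibrations} (an iterated family of nodal curves) rather than strict semistable reduction, and the local computation of nearby cycles is done in that plurinodal setting; also, the reduction step is not quite ``$K = Rg_!\Lambda$ with $Y$ regular'' but rather a d\'evissage to sheaves of the form $g_!\Lambda$ for $g$ a locally closed immersion, after which one alters the source. But for the purposes of this paper none of that is needed: the theorem is treated as a black box from the literature.
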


In particular, over a regular $1$-dimensional scheme $S$, any pair $(f,K)$ as above will be $\Psi$-good.  This construction recovers the usual nearby cycles functor.

We will want to verify that certain Satake sheaves on spaces similar to the Beilinson-Drinfeld grassmannian are $\Psi$-good.  Our constructions will not use Orgogozo's theorem except in the $1$-dimensional case, and rather reduce everything to the convolution product.  However, we note that the property of being $\Psi$-good is an important hypothesis in the following K\"unneth formula of \cite[Theorem~2.3 and Corollary~2.4, also Theorem~A.3]{illusie2017around}.

\begin{proposition}\label{kunneth}
Assume $f_1 \colon X_1 \to Y_1$ and $f_2 \colon X_2 \to Y_2$ are maps over $S$ and $K_i$ are constructible and finite tor-dimension in $D(X_i, \Lambda)$.  Suppose $(f_i, K_i)$ are $\Psi$-good.  Then $(f,K = K_1 \boxtimes^L K_2)$ is $\Psi$-good and there is a natural isomorphism $R\Psi_{f_1} K_1 \boxtimes^L R\Psi_{f_2} K_2 \cong R\Psi_f K$.
\end{proposition}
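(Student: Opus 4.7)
The plan is to follow the approach of Illusie-Zheng: construct the canonical K\"unneth comparison map, verify it is an isomorphism stalkwise using the $\Psi$-good hypothesis on each factor, and deduce $\Psi$-goodness of $(f,K)$ as a byproduct. Setting $X = X_1 \times_S X_2$, $Y = Y_1 \times_S Y_2$, and $f = f_1 \times_S f_2$, the two projections induce natural morphisms of oriented toposes $X \atimes_Y Y \to X_i \atimes_{Y_i} Y_i$, compatible with the projections $X \to X_i$ and $Y \to Y_i$. Pulling back the adjunction units for $(\Psi_{f_i})_*$ through these morphisms and forming the derived tensor product gives a canonical comparison morphism
\[
R\Psi_{f_1} K_1 \boxtimes^L R\Psi_{f_2} K_2 \longrightarrow R\Psi_f(K_1 \boxtimes^L K_2)
\]
on $X \atimes_Y Y$. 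The task is then to show this map is an isomorphism and that the target commutes with further base change.

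Next I would check the comparison is an isomorphism stalkwise. A geometric point of $X \atimes_Y Y$ is a triple consisting of a geometric point $\overline{x} \to X$, a geometric point $\overline{y} \to Y$, and a specialization $f(\overline{x}) \leadsto \overline{y}$ in $Y$; since $X$ and $Y$ are fiber products over $S$, such data decompose canonically into analogous data $(\overline{x_i}, \overline{y_i}, f_i(\overline{x_i}) \leadsto \overline{y_i})$ for $i = 1, 2$. The $\Psi$-good hypothesis on $(f_i, K_i)$ identifies the stalk of $R\Psi_{f_i} K_i$ at $(\overline{x_i}, \overline{y_i})$ with $R\Gamma$ of the Milnor tube $X_{i,(\overline{x_i})} \times_{Y_i} \overline{y_i}$ with coefficients in $K_i$, compatibly with further base change. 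The Milnor tube for $f$ at the combined point is the fiber product over $S$ of the two individual tubes, so invoking the classical K\"unneth formula for $R\Gamma$ on a product, justified by the finite tor-dimension assumption, identifies the stalk of the right-hand side of the display with the derived tensor product of the stalks appearing on the left.

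Finally, to promote this stalkwise identification to $\Psi$-goodness of $(f, K)$, I would observe that any base change of $f$ along $h : Y' \to Y$ factors as a pair of base changes of $f_1$ and $f_2$ along the compositions $Y' \to Y \to Y_i$; by hypothesis each $R\Psi_{f_i} K_i$ commutes with these base changes, and the K\"unneth identification transports this compatibility to $R\Psi_f K$. The main obstacle will be the Milnor-tube K\"unneth step: one must verify that the Milnor tube for $f$ really is the fiber product over $S$ of the individual tubes (using the compatibility of specializations in the oriented product of a product), and then apply the ordinary K\"unneth formula to an external tensor product of constructible complexes of finite tor-dimension. The finite tor-dimension hypothesis is essential here, since without it the derived tensor product does not interact correctly with $R\Gamma$ on possibly infinite-cohomological-dimension tubes; once the local K\"unneth is in hand, the remaining steps are formal manipulations with the universal properties of the oriented product.
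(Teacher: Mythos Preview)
The paper does not prove this proposition at all: it is stated as a direct citation of \cite[Theorem~2.3, Corollary~2.4, and Theorem~A.3]{illusie2017around}, with no argument given. Your sketch is essentially a recapitulation of the Illusie--Zheng strategy in that reference, so there is nothing to compare against within the paper itself; your approach and the cited source are the same.

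One small caution on your sketch: the step ``the Milnor tube for $f$ at the combined point is the fiber product over $S$ of the two individual tubes'' is the genuine content and is not as automatic as you suggest. A geometric point of $Y = Y_1 \times_S Y_2$ does project to geometric points $\overline{y_i}$ of $Y_i$, but the strict localizations $(Y_i)_{(\overline{y_i})}$ need not assemble back into $Y_{(\overline{y})}$ as a fiber product over $S$; one has to use $\Psi$-goodness to reduce to a situation where the stalk computation factors through base changes along each $Y_i$ separately, and then invoke the ordinary K\"unneth theorem on the resulting honest product. This is exactly what Illusie--Zheng do, but your phrasing slightly elides the role of $\Psi$-goodness in making the tube decomposition work rather than merely in identifying stalks afterward.
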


For a geometric point $s$ of $S$, we recall that the neighborhood $S_{(s)}$ is the localization of $s$ in $S$.  We write $X_{(s)} = X \times_S S_{(s)}$.  Any specialization of points $t \to s$ gives a map of neighborhoods $S_{(t)} \to S_{(s)}$.

The sliced and shredded nearby cycles functors recover the familiar functor from the general fiber to the special fiber of $X$ \cite[Section~1.3]{illusie2017around}.

\begin{definition}
Let $s,t \to S$ be two geometric points of $S$ and let $c \colon t \to s$ be a specialization map.  Let $j_t^s \colon X_{(t)} \to X_{(s)}$ and $i^s \colon X_s \to X_{(s)}$.  Then the shredded nearby cycles functor (called sliced nearby cycles in \cite{illusie2017around} and \cite{orgogozo2006modifications}) is
\begin{equation}
    R (\Psi_f)_t^s = (i^s)^* R(j_t^s)_* \colon D^+(X_{(t)},\Lambda) \to D^+(X_s,\Lambda).
\end{equation}
When it does not create any ambiguity, we will also use this notation to denote the functor $R(\Psi_f)_t^s (i^t)_* \colon D^+(X_t, \Lambda) \to D^+(X_s, \Lambda)$.
\end{definition}

In what follows, we will assume that $\Lambda$ is a torsion $\mathbb{F}_\ell$-algebra or $\ell$-adic with $\ell$ prime to the characteristic.  The nearby cycles functors satisfy the following base change properties.

\begin{proposition}\label{prop:basic functorialities}
Let $\pi \colon X' \to X$ be a separated, finite type morphism between coherent schemes over a base $S$.  Let $s,t \to S$ be two geometric points of $S$ and let $c \colon t \to s$ be a specialization map.  Then there is a natural transformation
\begin{equation}
    R\pi_! R\Psi_t^s \rightarrow R\Psi_t^s R\pi_!
\end{equation}
that is an isomorphism if $\pi$ is proper.

Let $g \colon X' \to X$ be a morphism between schemes over $S$ under the same assumptions as above.  Then there is a natural transformation
\begin{equation}
    g^* R\Psi_t^s \rightarrow R\Psi_t^s g^*
\end{equation}
that is an isomorphism if $g$ is locally of finite type and smooth.
\end{proposition}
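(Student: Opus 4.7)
The plan is to build both natural transformations by composing standard base change $2$-cells arising from the defining squares of $R\Psi_t^s = (i^s)^* R(j_t^s)_*$, and then to identify when those $2$-cells become isomorphisms in the proper and smooth cases.

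Writing $\pi_{(s)}, \pi_{(t)}, \pi_s$ for the restrictions of $\pi$, and setting $j = (j_t^s)_X$, $j' = (j_t^s)_{X'}$, $i = (i^s)_X$, $i' = (i^s)_{X'}$ for brevity, I would first apply proper base change to the cartesian square
\begin{equation*}
\begin{tikzcd}
X'_s \ar[r, "i'"] \ar[d, "\pi_s"'] & X'_{(s)} \ar[d, "\pi_{(s)}"] \\
X_s \ar[r, "i"'] & X_{(s)}
\end{tikzcd}
\end{equation*}
which gives $i^* R\pi_{(s),!} \cong R\pi_{s,!}\, i'^*$ on coherent schemes without any finite-type restriction on $i$. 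This rewrites
\begin{equation*}
R\pi_! R\Psi_t^s = R\pi_{s,!}\, i'^*\, R j'_* \cong i^*\, R\pi_{(s),!}\, R j'_*.
\end{equation*}
Then I would compose with the Beck--Chevalley transformation $R\pi_{(s),!}\, R j'_* \to R j_*\, R\pi_{(t),!}$ associated to the right-hand cartesian square with horizontal maps $j, j'$ and vertical maps $\pi_{(t)}, \pi_{(s)}$, arriving at $i^*\, R j_*\, R\pi_{(t),!} = R\Psi_t^s R\pi_!$. When $\pi$ is proper, $R\pi_! = R\pi_*$, and the Beck--Chevalley $2$-cell becomes an isomorphism, because both composites $R\pi_{(s),*}\, R j'_*$ and $R j_*\, R\pi_{(t),*}$ are canonically the right adjoint of the common pullback functor $\pi_{(t)}^* j^* = {j'}^* \pi_{(s)}^*$.

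For the second statement, a parallel argument applies to the analogous squares built from $g$. The equality $g_s^* i^* = {i'}^* g_{(s)}^*$ is automatic, so the construction of the transformation reduces to the standard base change map $g_{(s)}^* R j_* \to R j'_*\, g_{(t)}^*$. If $g$ is smooth and locally of finite type, so are its base changes $g_{(s)}$ and $g_{(t)}$, and smooth base change upgrades this map to an isomorphism.

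The delicate point is that the localization map $j$ is not of finite type, so the invocations of proper and smooth base change require some care. The proper case is covered by the formulation of proper base change for coherent schemes (cf.~SGA~4, Expos\'e~XII), which imposes no finite-type condition on the horizontal side of the square. For the smooth case, one writes $S_{(s)}$ as a cofiltered limit of \'etale neighborhoods of $s$, observes that $R j_*$ and $g^*$ commute with the resulting filtered colimits of sheaves, and deduces the $2$-cell on the limit as a colimit of isomorphisms obtained from classical smooth base change at each finite stage.
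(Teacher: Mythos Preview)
Your argument is correct but follows a different route from the paper. The paper's proof is a two-line citation to Lu--Zheng \cite{lu2019duality}, Equations~(4.6) and~(4.8), which establish these base change transformations at the level of the oriented topos $X \atimes_S S$; the isomorphism in the smooth case is obtained there by noting that smooth morphisms are universally locally acyclic. You instead unpack the sliced definition $R\Psi_t^s = (i^s)^* R(j_t^s)_*$ and assemble the transformations by hand from proper and smooth base change, then handle the non-finite-type localization map $j_t^s$ via a passage-to-the-limit argument. Your approach is more self-contained and transparent about where each map comes from; the paper's approach is terser but imports the full six-functor framework of \cite{lu2019duality}, which also yields the $\ell$-adic extension (to $\mathbb{Z}_\ell$, $\mathbb{Q}_\ell$, $\qlbar$ coefficients via the pro-\'etale site) that the paper mentions and that your argument does not explicitly address.
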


\begin{proof}
The statement about pullback follows by \cite[Equation~4.6]{lu2019duality}, noting that smooth morphisms are universally locally acyclic and the isomorphism holds for locally acyclic morphisms locally of finite type.  The statement about pushforward follows by \cite[Equation~4.8]{lu2019duality}.  The statements in \cite{lu2019duality} only give these results for torsion coefficients, but the results extend without difficulty to apply to the pro-\'{e}tale topology as well to handle coefficients in $\mathbb{Z}_\ell$, $\mathbb{Q}_\ell$, etc.
\end{proof}

\subsection{$\Psi$-factorizability}

\subsubsection{A construction involving iterated nearby cycles}

The following basic construction will establish a correspondence between various iterated nearby cycles functors.

\begin{construction}\label{map to iterated}
Let $f \colon X \to S$ be a map, and let $s$, $t$, and $u$ be three points of $S$ with specializations $u \to t \to s$.  Let $j_u^t \colon X_{(u)} \to X_{(t)}$, similarly for the other neighborhoods, and $i^t \colon X_t \to X_{(t)}$, $i^s \colon X_s \to X_{(s)}$ the inclusions of fibers.  This gives an adjunction
\begin{equation}
    (i^s)^*R(j_t^s)_*R(j_u^t)_* \to (i^s)^*R(j_t^s)_*R(i^t)_*(i^t)^*R(j_u^t)_*.
\end{equation}
Recall that the shredded nearby cycles gives a functor
\[ (\Psi_f)_{u}^{s} = (i_s)^* R(j_u^s)_* \colon D^+(X_{u}, \Lambda) \to D^+(X_{s}, \Lambda) \]
so the above adjunction can be read as a natural transformation of shredded nearby cycle functors
\begin{equation}
R(\Psi_f)_u^s \to R(\Psi_f)_t^s R(i_t)_* R(\Psi_f)_u^t \colon D^+(X_{(u)}, \Lambda) \to D^+(X_s, \Lambda).
\end{equation}
\end{construction}

By iterating this construction we get for any two choices of path of specializations $t \to s$ a pair of maps relating the iterated shredded nearby cycles functors corresponding to the two choices of paths.

\subsubsection{$\Psi$-factorizability for curves}

\begin{definition}
Let $f \colon X \to S$ be a structure map.  We say that $(f, K)$ of $f$ and a constructible sheaf $K$ is $\Psi$-factorizable if $(f, K)$ is $\Psi$-good and for any composition $u \to t \to s$ of specializations, the natural map
\[ R(\Psi_f)_u^s K \to R(\Psi_f)_t^s R(i_t)_* R(\Psi_f)_u^t K \]
is an isomorphism.
\end{definition}

We now focus on building examples of $\Psi$-factorizable pairs.  The first basic example is that of the $1$-dimensional case, where all constructible sheaves are already $\Psi$-good.  Recall that a trait is the spectrum of a Henselian local ring.
\begin{proposition}
    If $S$ is a trait and $K$ is constructible, $(f, K)$ is $\Psi$-factorizable.
\end{proposition}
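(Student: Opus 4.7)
The plan is to verify the two conditions in the definition of $\Psi$-factorizability separately. The first, $\Psi$-goodness, is immediate: the paragraph after Orgogozo's theorem already notes that over a regular $1$-dimensional base no modification is required, so every constructible sheaf is $\Psi$-good, and a trait qualifies. So only the factorization condition requires a genuine argument.

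For the factorization condition I would carry out a case analysis on the images of the geometric points $u, t, s$ in $S$. The trait $S$ has exactly two scheme-theoretic points, the generic point $\eta$ and the closed point $s_0$. A specialization $a \to b$ of geometric points forces the image of $b$ in $S$ to lie in the Zariski closure of the image of $a$; since $\{s_0\}$ is closed, a chain $u \to t \to s$ can contain at most one step that drops from $\eta$ to $s_0$. Hence at least one of the stages $u \to t$ or $t \to s$ consists of geometric points lying over a single scheme-theoretic point of $S$; call such a stage \emph{trivial}. For a trivial stage lying over a scheme point $x$, both $S_{(a)}$ and $S_{(b)}$ are strict henselizations of the same local ring $\mathcal{O}_{S,x}$, the map $S_{(a)} \to S_{(b)}$ is an isomorphism, and under the induced isomorphism $X_{(a)} \cong X_{(b)}$ the inclusions $i^a, i^b$ of closed fibers correspond. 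When $x = \eta$ the scheme $S_{(a)}$ is itself a single-point scheme, so $i^a$ is the identity.

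I would then feed these reductions into the natural map of Construction~\ref{map to iterated}. If the stage $u \to t$ is trivial and lies over $\eta$, then $i^t$ is the identity, so $R(i^t)_*(i^t)^* R(j_u^t)_* K = R(j_u^t)_* K$, and composition with $R(j_t^s)_*$ and $(i^s)^*$ using $R(j_u^s)_* = R(j_t^s)_* R(j_u^t)_*$ produces exactly $(i^s)^* R(j_u^s)_* K$. If instead the stage $t \to s$ is trivial and lies over $s_0$, then $j_t^s$ is an isomorphism under which $i^t$ is identified with $i^s$ and $j_u^t$ with $j_u^s$, so the right-hand side reduces to $(i^s)^* R(i^s)_* (i^s)^* R(j_u^s)_* K$. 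This equals $(i^s)^* R(j_u^s)_* K$ because $i^s$ is a closed immersion and therefore satisfies $(i^s)^* R(i^s)_* = \mathrm{id}$. Cases with both stages trivial are degenerate subcases of either of the above.

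The main obstacle is modest: it is the categorical bookkeeping required to check that the adjunction unit $\mathrm{id} \to R(i^t)_*(i^t)^*$ that defines the comparison map in Construction~\ref{map to iterated} really does become the identity (respectively, the identity after composition with the closed-immersion adjunction $(i^s)^* R(i^s)_* \cong \mathrm{id}$) under the identifications made above. Once this is tracked through, the comparison map is an isomorphism in every case, and $(f, K)$ is $\Psi$-factorizable.
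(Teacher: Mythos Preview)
Your proposal is correct and follows essentially the same approach as the paper. The paper organizes the case analysis by the types (generic or special) of the three points $u, t, s$, while you organize it by which of the two specialization stages lies entirely over a single scheme-theoretic point; these are equivalent partitions of the same four cases, and the identifications you make ($i^t = \mathrm{id}$ when $t$ is generic, $j_t^s$ an isomorphism when $t, s$ are both special) are exactly the ones the paper uses.
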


\begin{proof}
    Over a trait, we say that a geometric point is generic if it lies over the geometric generic point.  For a generic point $\eta$, the map $i_{\eta}$ is the identity.  We say that a geometric point is special if it lies over the closed point, and for a special point $j_{s}$ is the identity.  There are no specialization maps from a special point to a generic point.  Let $u \to t \to s$ be a composition of specializations.  We will do a case-by-case analysis of the type of each point.  If $u$ and $s$ are both generic, then let $a_1 \colon t \to s$ and $a_2 \colon u \to t$.  Since $i_s$ and $i_t$ are the identity, the natural map is identified with
    \[ R(\Psi_f)_{a_1 \circ a_2} \cong a_1^* a_2^* \cong R(\Psi_f)_{a_1} R(i_{t})_* R(\Psi_f)_{a_2}. \]
    Similarly, if $u$ and $s$ are both special, we get maps $b_1 \colon X_s \to X_t$ and $b_2 \colon X_t \to X_u$ of the special fiber, and
    \[ R(\Psi_f)_{b_i} \cong b_i^* i_s^*, \]
    so
    \[ R(\Psi_f)_{b_1 \circ b_2} \cong b_1^* b_2^* (i_s)_* \cong R(\Psi_f)_{b_1} R(i_{s})_* R(\Psi_f)_{b_2} \]
    is the base change map giving Construction~\ref{map to iterated}.  So we need to check the result when $u$ is generic and $s$ is special.  Write $c$ for the specialization from a generic point to a special point.  If $t$ is generic, write $a \colon u \to t$ the generic-to-generic specialization, so we have
    \[ R(\Psi_f)_{c \circ a} \cong (i_s)^* R(j_{\eta})_* a^* \cong R(\Psi_f)_{c} R(i_{\eta})_* R(\Psi_f)_{a}. \]
    If $t$ is special, write $b \colon X_s \to X_t$, so we have
    \[ R(\Psi_f)_{b \circ c} \cong b^* (i_s)^* R(j_{\eta})_* \cong R(\Psi_f)_{b} (i_s)_* R(\Psi_f)_{c}. \]
\end{proof}

Having handled the basic example of a trait, we can extend this to any smooth curve.

\begin{corollary}
    If $S$ is a smooth curve and $K$ is constructible, $(f, K)$ is $\Psi$-factorizable.
\end{corollary}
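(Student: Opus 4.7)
The plan is to reduce the smooth-curve case to the trait case that was just proven by passing to strict neighborhoods on the base. Since the previous proposition already handles traits and the paper notes that $\Psi$-goodness is automatic over any regular $1$-dimensional scheme, the only real content is the iterated-composition identity.

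First I would fix a composition of specializations $u \to t \to s$ in $S$ and observe that every term appearing in the factorizability map is local on $S$ near $s$. Concretely, $R(\Psi_f)_u^s K = (i^s)^* R(j_u^s)_* (i^u)_* K$ only depends on the data of $X \times_S S_{(s)} \to S_{(s)}$ together with the geometric point $u$ of $S_{(s)}$ determined by the specialization; the same holds for $R(\Psi_f)_t^s$. For the inner factor $R(\Psi_f)_u^t$, the key point is that strict henselization is transitive, so $(S_{(s)})_{(t)} \cong S_{(t)}$ as schemes over $S_{(s)}$ whenever $t$ specializes to $s$. Consequently, both sides of Construction~\ref{map to iterated} are unchanged under replacing $S$ by $S_{(s)}$ and $(f,K)$ by their pullback, using the smooth base change of Proposition~\ref{prop:basic functorialities} applied to $S_{(s)} \to S$ to identify the functors.

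Next I would split into two cases according to the type of $s$. If $s$ is the generic point of the smooth curve $S$, then $S_{(s)}$ is the spectrum of a separably closed field, so $u = t = s$ and every map in the diagram is the identity. If $s$ is a closed point of $S$, then because $S$ is a smooth curve, $S_{(s)}$ is a strictly Henselian trait. The previous proposition, applied to the base change $f' \colon X \times_S S_{(s)} \to S_{(s)}$ and the pulled-back sheaf $K'$, then provides the required isomorphism for the composition $u \to t \to s$ viewed inside the trait $S_{(s)}$.

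I expect no real obstacle: the main (and only) subtle point is checking rigorously that the oriented product and its associated shredded nearby cycles functors commute with replacing the base by a strict neighborhood. This is a formal consequence of the definitions in \cite{illusie2017around} together with Proposition~\ref{prop:basic functorialities}, but it is worth stating carefully before invoking the trait proposition, since without it one cannot directly identify the factorizability map on $S$ with the corresponding map on $S_{(s)}$.
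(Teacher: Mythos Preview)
Your approach is correct and matches the paper's: it simply observes that $\Psi$-factorizability is tautologically checked after localizing at $s$, where for a smooth curve $S_{(s)}$ is either a geometric point or a trait. One minor point: the identification after passing to $S_{(s)}$ is definitional (the shredded functors are already built from $X_{(s)}$, $X_{(t)}$, $X_{(u)}$) rather than an instance of Proposition~\ref{prop:basic functorialities}, whose finite-type hypothesis does not literally apply to the pro-\'etale map $S_{(s)} \to S$.
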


\begin{proof}
    Tautologically, a pair $(f, K)$ is $\Psi$-factorizable if it is $\Psi$-factorizable with respect to all localizations at all points.  In the case of a smooth curve, the localizations at any geometric point are a single geometric generic point or a trait.
\end{proof}

\subsubsection{The K\"unneth formula revisited}

The technique to produce higher-dimensional examples of $\Psi$-factorizable sheaves relies on the following reformulation of the K\"unneth formula.

\begin{proposition}\label{prop: kunneth factorizable}
    Let $f_i \colon X_i \to S_i$ be finite type morphisms between noetherian schemes, and let $(f_i, K_i)$ be $\Psi$-factorizable for $i \in \{ 1,2 \}$.  Then $(f_1 \times f_2, K_1 \boxtimes K_2)$ is $\Psi$-factorizable.
\end{proposition}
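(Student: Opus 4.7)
The plan is to reduce $\Psi$-factorizability for $(f_1 \times f_2, K_1 \boxtimes K_2)$ to the corresponding property on each factor via a K\"unneth-type formula for shredded nearby cycles. The $\Psi$-goodness of $(f_1 \times f_2, K_1 \boxtimes K_2)$ is immediate from Proposition~\ref{kunneth}, so what remains is to verify, for each composition of specializations $u \to t \to s$ in $S = S_1 \times S_2$, that the natural map
\[ R(\Psi_f)_u^s (K_1 \boxtimes K_2) \to R(\Psi_f)_t^s R(i_t)_* R(\Psi_f)_u^t (K_1 \boxtimes K_2) \]
of Construction~\ref{map to iterated} is an isomorphism. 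Any such specialization projects to specializations $u_i \to t_i \to s_i$ in $S_i$, and I would use these projections to transfer the problem to each factor.

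The technical heart of the argument is a K\"unneth formula for shredded nearby cycles: for any specialization $a \to b$ in $S_1 \times S_2$ with projections $a_i \to b_i$ in $S_i$, one has a natural isomorphism
\[ R(\Psi_{f_1 \times f_2})_a^b (K_1 \boxtimes K_2) \cong R(\Psi_{f_1})_{a_1}^{b_1} K_1 \boxtimes R(\Psi_{f_2})_{a_2}^{b_2} K_2. \]
This is the sliced analog of Proposition~\ref{kunneth}, and matches the setup considered by Illusie-Zheng, where $\Psi$-goodness is precisely what guarantees that the formula descends from the oriented topos to the individual fibers. Concretely, I would derive it by observing that, under $\Psi$-goodness, pullback of $R\Psi_f (K_1 \boxtimes K_2)$ along the inclusion of the stratum corresponding to $a$ and further to the fiber over $b$ decomposes as an external product of the analogous pullbacks on each factor, using the standard base-change identities for oriented products.

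With this formula in hand, I would apply it to both sides of the Construction~\ref{map to iterated} map. The left-hand side becomes $R(\Psi_{f_1})_{u_1}^{s_1} K_1 \boxtimes R(\Psi_{f_2})_{u_2}^{s_2} K_2$. For the right-hand side, the external product structure is preserved by $R(\Psi_f)_u^t$ (by K\"unneth), by $R(i_t)_*$ (since $i_t$ factors as $i_{t_1} \times i_{t_2}$ and proper pushforward along closed immersions is compatible with external tensor products), and then again by $R(\Psi_f)_t^s$. The result is an external tensor product of iterated shredded nearby cycles on each $S_i$, and the individual $\Psi$-factorizability of each $(f_i, K_i)$ gives an isomorphism in each factor. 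The step I expect to be the main obstacle is verifying that the natural comparison map on the product, defined via the adjunction units in Construction~\ref{map to iterated}, is identified under the K\"unneth decompositions with the external product of the comparison maps on the factors; this is conceptually routine but requires carefully tracing through the oriented topos formalism and the interplay of projections, inclusions, and adjunctions that define the shredded nearby cycles.
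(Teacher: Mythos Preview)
Your proposal is essentially identical to the paper's proof: both establish $\Psi$-goodness via the K\"unneth theorem of Illusie--Zheng, derive a sliced K\"unneth isomorphism $R(\Psi_{f_1 \times f_2})_a^b(K_1 \boxtimes K_2) \cong R(\Psi_{f_1})_{a_1}^{b_1} K_1 \boxtimes R(\Psi_{f_2})_{a_2}^{b_2} K_2$ from the $\Psi$-good base-change maps $\overleftarrow{p_i}^* R\Psi_{f_i} \to R\Psi_f p_i^*$, and then set up the commutative square comparing the factorization map on the product with the external product of the factorization maps on the factors. The paper draws this square and asserts its commutativity, while you explicitly flag the verification of commutativity as the step needing care; beyond that difference in emphasis, the arguments coincide.
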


\begin{proof}
    Write $f = f_1 \times f_2$, $K = K_1 \boxtimes K_2$, $S = S_1 \times S_2$, and $X = X_1 \times X_2$.  The statement that $(f,K)$ is $\Psi$-good follows from \cite[Corollary~2.4]{illusie2017around}.
    
    Let $p_i \colon S \to S_i$ be projections and similarly let $q_i \colon X \to X_i$ be projections.  Then in the diagram,
    \begin{equation}
        \begin{tikzcd}
            X \arrow{r}{p_i} \arrow{d}{\Psi}& X_i \arrow{d}{\Psi} \\
            X \atimes_{S} S \arrow{r}{\overleftarrow{p_i}}& X_i \atimes_{S_i} S_i
        \end{tikzcd}
    \end{equation}
    there is a natural map $\overleftarrow{p_i}^* R\Psi_{f_i} \to R\Psi_f p_i^*$, which is an isomorphism when applied to a sheaf that is $\Psi$-good \cite[Equation~(2.1.4)]{illusie2017around}.  If $t \to s$ is a specialization in $S_1 \times S_2$, $p_i(t) \to p_i(s)$ is a specialization in $S_i$, and on the level of slices, this isomorphism gives
    \[ p_i^* R(\Psi_{f_i})_{p_i(t)}^{p_i(s)} K_i \cong R(\Psi_f)_t^s p_i^* K_i. \]
    Using the fact that the map
    \[ R\Psi_f p_1^* K_1 \otimes R\Psi_f p_2^* K_2 \to R\Psi_f K \]
    is an isomorphism and taking slices, we arrive at the statement in Proposition~\ref{kunneth}, that
    \[ R(\Psi_{f_1 \times f_2})_t^s(K_1 \boxtimes K_2) \cong R(\Psi_{f_1})_{p_1(t)}^{p_1(s)} K_1 \boxtimes R(\Psi_{f_2})_{p_2(t)}^{p_2(s)} K_2 \]
    is an isomorphism.
    
    Let $u \to t \to s$ be a composition of specializations of geometric points in $S_1 \times S_2$.  Then $p_i(u) \to p_i(t) \to p_i(s)$ is a composition of specializations in $S_i$.  Consider the diagram in Figure~\ref{fig:kunneth diagram}
    \begin{figure}
        \begin{tikzcd}
            R(\Psi_f)_u^s K \arrow{r} \arrow{d}& R(\Psi_{f_1})_{p_1(u)}^{p_1(s)} K_1 \boxtimes R(\Psi_{f_2})_{p_2(u)}^{p_2(s)} K_2 \arrow{d} \\
            R(\Psi_f)_t^s R(\Psi_f)_u^t K \arrow{r}& R(\Psi_{f_1})_{p_1(t)}^{p_1(s)} R(\Psi_{f_1})_{p_1(u)}^{p_1(t)} K_1 \boxtimes R(\Psi_{f_2})_{p_2(t)}^{p_2(s)} R(\Psi_{f_2})_{p_2(u)}^{p_2(t)} K_2.
        \end{tikzcd}
    \caption{Diagram giving $\Psi$-factorizability of $K_1 \boxtimes K_2$.  Horizontal arrows come from base change diagrams using the $\Psi$-goodness of $K_i$, while the vertical arrows express the $\Psi$-factorizability property on the left in terms of two $\Psi$-factorizability propertes on the right.}\label{fig:kunneth diagram}
    \end{figure}
    Since the horizontal arrows are isomorphisms, and the right vertical arrow is an isomorphism by the hypothesis of $\Psi$-factorizability of $K_1$ and $K_2$, we conclude that the left vertical arrow is an isomorphism.  Since the composition $u \to t \to s$ was arbitrary, we conclude that $(f, K)$ is $\Psi$-factorizable.
\end{proof}

In the examples of interest, we will have external product sheaves defined on the fiber product $X \times_S U$ for an open subset $U = U_1 \times \dots \times U_n$ with $U_i \subset S_i$.  We note that $!$-pushforwards will be themselves external product sheaves to which Proposition~\ref{prop: kunneth factorizable} applies.

\begin{lemma}
    Let $f_i \colon X_i \to S_i$ be finite type morphisms between noetherian schemes, and let $j_i \colon U_i \to S_i$ also be finite type, and let all schemes be separated.  Let $K_i$ be constructible sheaves on $X_i \times_{S_i} U_i$.  Define $X$, $U$, and $S$ as the products $\times_i X_i$, $\times_i U_i$, and $\times_i S_i$, and define $j \colon X \times_S U \to X$ as the pullback of the map $U \to S$.  Then
    \[ j_!(K_1 \boxtimes \dots \boxtimes K_n) \cong (j_1)_! K_1 \boxtimes \dots \boxtimes (j_n)_! K_n. \]
\end{lemma}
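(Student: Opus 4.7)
The plan is to reduce the claim to the standard Künneth formula for lower-shriek pushforward of external products: for finite type separated maps $g_i\colon V_i \to X_i$ between noetherian separated schemes and constructible sheaves $L_i$ on $V_i$, one has a canonical isomorphism
\[ (g_1 \times \cdots \times g_n)_!(L_1 \boxtimes \cdots \boxtimes L_n) \cong (g_1)_! L_1 \boxtimes \cdots \boxtimes (g_n)_! L_n. \]
First I would unwind the construction of $j$. Since fiber products commute with Cartesian products of schemes, there is a canonical identification $X \times_S U \cong \prod_i (X_i \times_{S_i} U_i)$, and under it the map $j$ becomes the product $\tilde{j}_1 \times \cdots \times \tilde{j}_n$, where $\tilde{j}_i \colon X_i \times_{S_i} U_i \to X_i$ is the pullback of $j_i$ along $f_i$. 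The notation $(j_i)_!$ in the statement should be read as $(\tilde{j}_i)_!$. With this reformulation, the lemma is an instance of the Künneth formula above applied to the maps $\tilde{j}_i$.

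Next, a straightforward induction on $n$ reduces the Künneth formula to the case $n = 2$, using associativity of $\boxtimes$ and of Cartesian products. For $n = 2$ I would factor $\tilde{j}_1 \times \tilde{j}_2 = (\tilde{j}_1 \times \mathrm{id}) \circ (\mathrm{id} \times \tilde{j}_2)$ and compute each factor separately. Writing $K_1 \boxtimes K_2 = p_1^* K_1 \otimes p_2^* K_2$ with $p_i$ the projections from the product, the sheaf $p_1^* K_1$ is pulled back along $\mathrm{id} \times \tilde{j}_2$, so the projection formula yields
\[ (\mathrm{id} \times \tilde{j}_2)_!(K_1 \boxtimes K_2) \cong q_1^* K_1 \otimes (\mathrm{id} \times \tilde{j}_2)_! p_2^* K_2, \]
where $q_1, q_2$ are the projections from $(X_1 \times_{S_1} U_1) \times X_2$. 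Proper base change applied to the evident Cartesian square identifies $(\mathrm{id} \times \tilde{j}_2)_! p_2^* K_2 \cong q_2^*(\tilde{j}_2)_! K_2$, so the right-hand side becomes $K_1 \boxtimes (\tilde{j}_2)_! K_2$. Running the same argument on the outer factor $\tilde{j}_1 \times \mathrm{id}$ produces $(\tilde{j}_1)_! K_1 \boxtimes (\tilde{j}_2)_! K_2$, completing the $n=2$ case.

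The only point requiring care is that the projection formula and proper base change for lower-shriek pushforward are available in the coefficient setting in use. Both are classical for torsion coefficients and extend without difficulty to the pro-étale setup for $\mathbb{Z}_\ell$-, $\mathbb{Q}_\ell$-, or $\overline{\mathbb{Q}}_\ell$-coefficients, exactly as invoked in Proposition~\ref{prop:basic functorialities}. I do not anticipate any serious obstacle: once these two formal properties are in hand, the proof is essentially a diagram chase, and the finiteness and separatedness hypotheses on the $f_i$ and $j_i$ suffice to ensure all constructions are well-defined on constructible sheaves.
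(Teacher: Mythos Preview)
Your proposal is correct and follows essentially the same approach as the paper: reduce to $n=2$ by induction, then use proper base change together with the tensor/projection-formula step. Your version is in fact more careful than the paper's one-line proof, which writes the base change identity somewhat imprecisely; your explicit factoring $\tilde{j}_1 \times \tilde{j}_2 = (\tilde{j}_1 \times \mathrm{id}) \circ (\mathrm{id} \times \tilde{j}_2)$ and separate invocation of the projection formula make the argument watertight.
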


\begin{proof}
    By induction, it suffices to prove the $n = 2$ case.  But this follows by proper base change,
    \[ j_!p_i^* K_i \cong p_i^*(j_1)_! K_i \]
    and applying tensor product.
\end{proof}

The following corollary now follows immediately.
\begin{corollary}\label{corr: Kunneth factorizable}
    In the setting above, if $S_i$ are all curves, then $(f, j_!(K_1 \boxtimes \dots \boxtimes K_n))$ is $\Psi$-factorizable.
\end{corollary}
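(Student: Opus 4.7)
The plan is to show that the statement reduces, via the preceding lemma, to an $n$-fold application of the Künneth-type result for $\Psi$-factorizability (Proposition~\ref{prop: kunneth factorizable}) combined with the one-dimensional base case established for curves.

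First, I would invoke the lemma that immediately precedes the corollary to rewrite
\[ j_!(K_1 \boxtimes \dots \boxtimes K_n) \cong (j_1)_! K_1 \boxtimes \dots \boxtimes (j_n)_! K_n. \]
Thus I only need to show that $(f, L_1 \boxtimes \dots \boxtimes L_n)$ is $\Psi$-factorizable when $L_i := (j_i)_! K_i$ is a constructible sheaf on $X_i$ and $f = f_1 \times \dots \times f_n$ with $S_i$ curves. The constructibility of each $L_i$ follows from the fact that $j_i$ is a finite type map between noetherian schemes, so $(j_i)_!$ preserves constructibility.

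Next, I would apply the earlier corollary (the smooth curve case of $\Psi$-factorizability) to each factor: for every $i$, the pair $(f_i, L_i)$ is $\Psi$-factorizable since $S_i$ is a smooth curve and $L_i$ is constructible. The remaining task is to upgrade these individual factorizability statements to the product. This is precisely Proposition~\ref{prop: kunneth factorizable}, which I would apply inductively on $n$: the $n=2$ case is the proposition itself, and for general $n$ one groups the first $n-1$ factors (which are $\Psi$-factorizable by the inductive hypothesis, after noting that the external product of constructible sheaves on products of curves is itself constructible of finite tor-dimension in the relevant sense) with the last factor, and applies the proposition one more time.

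There is no real obstacle here; the work has already been done in the preceding lemma, the curve case, and the Künneth proposition. The only minor point to verify is that the inductive hypothesis applies cleanly, i.e.\ that the iterated external product $L_1 \boxtimes \dots \boxtimes L_{n-1}$ on $X_1 \times \dots \times X_{n-1}$ over the product curve $S_1 \times \dots \times S_{n-1}$ is again constructible (so that Proposition~\ref{prop: kunneth factorizable} applies to pair it with $L_n$). This is standard for external products of constructible sheaves along finite type maps, and the corollary follows.
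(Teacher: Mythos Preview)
Your proposal is correct and follows exactly the route the paper intends: the paper simply states that the corollary ``follows immediately'' from the preceding lemma, the smooth-curve base case, and Proposition~\ref{prop: kunneth factorizable}, and your write-up spells out precisely this chain (lemma to split $j_!$ of an external product, curve case for each factor, then induction via the K\"unneth proposition).
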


\subsubsection{Compatibilities under pushforward and pullback}

\begin{proposition}\label{prop: psi factorizability under pushforward}
    If $K$ is $\pi_! L$, $L$ is $\Psi$-factorizable, and $\pi$ is proper, then $K$ is $\Psi$-factorizable.
\end{proposition}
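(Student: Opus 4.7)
The plan is to reduce $\Psi$-factorizability of $K = \pi_! L$ to that of $L$, exploiting that proper pushforward commutes with every other functor appearing in the definition. Write $g = f \circ \pi$, so that $L$ lives on the domain $Y$ of $\pi$ and $(g, L)$ is $\Psi$-factorizable by hypothesis.

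First I would verify $\Psi$-goodness of $(f, \pi_! L)$. By Proposition~\ref{prop:basic functorialities}, properness of $\pi$ gives canonical isomorphisms $R(\Psi_f)_u^s \pi_! \cong \pi_! R(\Psi_g)_u^s$ for every specialization $u \to s$, and more generally $\pi_!$ intertwines the full nearby cycles functor in the oriented-topos sense. Combined with proper base change and the $\Psi$-goodness of $L$, this yields $\Psi$-goodness of $\pi_! L$.

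For the factorizability axiom on a composition of specializations $u \to t \to s$, I would chain together the canonical isomorphisms
\begin{align*}
R(\Psi_f)_u^s \pi_! L &\cong \pi_! R(\Psi_g)_u^s L \\
&\cong \pi_! R(\Psi_g)_t^s R(i_t^Y)_* R(\Psi_g)_u^t L \\
&\cong R(\Psi_f)_t^s \pi_! R(i_t^Y)_* R(\Psi_g)_u^t L \\
&\cong R(\Psi_f)_t^s R(i_t^X)_* (\pi_t)_! R(\Psi_g)_u^t L \\
&\cong R(\Psi_f)_t^s R(i_t^X)_* R(\Psi_f)_u^t \pi_! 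L.
\end{align*}
Here $\pi_t \colon Y_t \to X_t$ is the fiber of $\pi$ over $t$. The first, third, and fifth isomorphisms are the commutation of $\pi_!$ with shredded nearby cycles along $u \to s$, $t \to s$, and $u \to t$ respectively; the second is the $\Psi$-factorizability of $L$; the fourth is the tautological identity $\pi_! R(i_t^Y)_* = R(i_t^X)_* (\pi_t)_!$ (no base change is needed, since both sides equal pushforward along the common composition $\pi \circ i_t^Y = i_t^X \circ \pi_t$, with $\pi_! = \pi_*$ by properness).

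The main obstacle I anticipate is verifying that this composite isomorphism coincides with the canonical comparison map of Construction~\ref{map to iterated} applied to $K = \pi_! L$, rather than some other isomorphism between the same objects. This amounts to a diagram chase showing that the adjunction unit $\mathrm{id} \to R(i_t^X)_* (i_t^X)^*$ used in the construction is compatible with the identification $\pi_! R(i_t^Y)_* \cong R(i_t^X)_* (\pi_t)_!$ and with the commutation of $\pi_!$ with $R(j_u^t)_*$. Each constituent square commutes by naturality of the unit and functoriality of the base change morphisms, so the composite is indeed the comparison morphism for $K$, which is therefore an isomorphism as required.
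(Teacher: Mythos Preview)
Your proposal is correct and follows essentially the same route as the paper. The paper packages the factorizability step as a single commutative square
\[
\begin{tikzcd}
\pi_! R(\Psi_{f\circ\pi})_u^s L \arrow{r}{\mathrm{can}} \arrow{d} & R(\Psi_f)_u^s \pi_! L \arrow{d} \\
\pi_! R(\Psi_{f\circ\pi})_t^s R(\Psi_{f\circ\pi})_u^t L \arrow{r}{\mathrm{can}} & R(\Psi_f)_t^s R(\Psi_f)_u^t \pi_! L
\end{tikzcd}
\]
and concludes by the three-out-of-four principle, whereas you unwind the same square into a chain of isomorphisms and then verify that the composite agrees with the canonical comparison map; these are equivalent presentations. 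For the $\Psi$-goodness step, the paper works directly in the oriented-topos formalism (citing a base change result of Kato--Saito for $\overleftarrow{\pi}_*$ against $\overleftarrow{g}^*$) rather than invoking Proposition~\ref{prop:basic functorialities} on slices, which is the more precise justification for the sentence you phrase as ``more generally $\pi_!$ intertwines the full nearby cycles functor in the oriented-topos sense.''
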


\begin{proof}
First, we check that the $\Psi$-good property is preserved.  Let $\pi \colon Y \to X$ be proper, and let $g \colon S' \to S$ be a base change.  Define $X' = X \times_S S'$ and $Y' = Y \times_S S'$.  By properness, $\pi_! \cong \pi_*$.  Write $\overleftarrow{\pi}_*$ for the pushforward along $Y \atimes_S S \to X \atimes_S S$ and similarly $\overleftarrow{\pi'} \colon Y' \atimes_{S'} S' \to X' \atimes_{S'} {S'}$.  In the commutative diagram
\begin{equation}
    \begin{tikzcd}
        Y' \atimes_{S'} S' \arrow{r}{\overleftarrow{\pi'}} \arrow{d}{\overleftarrow{g}} & X' \atimes_{S'} S' \arrow{d}{\overleftarrow{g}} \\
        Y \atimes_{S} S \arrow{r}{\overleftarrow{\pi}}& X \atimes_{S} S
    \end{tikzcd}
\end{equation}
the property that $\overleftarrow{\pi'}_* \overleftarrow{g}^* \simeq \overleftarrow{g}^* \overleftarrow{\pi}_*$ follows by \cite[Lemma~2.3]{kato2020etale}.  This base change property allows us to form the diagram
\begin{equation}
    \begin{tikzcd}
        \overleftarrow{g}^* \Psi_f \pi_! L \arrow{r} \arrow{d}{\simeq}& \Psi_{f'} g^* \pi_! L \arrow{d}{\simeq} \\
        \overleftarrow{\pi'}_* \overleftarrow{g}^* \Psi_{f \circ \pi} L \arrow{r}{\simeq}& \overleftarrow{\pi'}_* \Psi_{f' \circ \pi'} g^* L,
    \end{tikzcd}
\end{equation}
where the bottom row is an isomorphism by the $\Psi$-good assumption on $L$ and the vertical arrows are base change isomorphisms using the identification of $\pi_!$ with $\pi_*$.  Therefore, the top row is an isomorphism, and we conclude that $\pi_! L = K$ is $\Psi$-good.

Now we check that the composition property is preserved.  Let $u \to t \to s$ be a sequence of specializations in the base $S$.  Then the diagram
\begin{equation}
\begin{tikzcd}
    \pi_! R(\Psi_{f \circ \pi})_u^s L \arrow{r}{\mathrm{can}} \arrow{d}&  R(\Psi_f)_u^s \pi_! L \arrow{d} \\
    \pi_! R(\Psi_{f \circ \pi})_t^s R(\Psi_{f \circ \pi})_u^t L \arrow{r}{\mathrm{can}}& R(\Psi_f)_t^s R(\Psi_f)_u^t \pi_! L
\end{tikzcd}
\end{equation}
commutes.  By assumption the left vertical arrow and both horizontal arrows are isomorphisms, so the right vertical arrow is as well.  Since the specializations were arbitrary, we conclude that $\pi_! L = K$ is $\Psi$-factorizable.
\end{proof}

\begin{proposition}\label{prop: psi factorizability under pullback}
    If $K$ is $h^* L$, $L$ is $\Psi$-factorizable, and $h$ is smooth, then $K$ is $\Psi$-factorizable.
\end{proposition}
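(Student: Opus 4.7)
The plan is to mirror the proof of Proposition~\ref{prop: psi factorizability under pushforward}, replacing the role of proper pushforward $\pi_!$ with smooth pullback $h^*$, and using the fact that smooth morphisms are universally locally acyclic so that $h^*$ commutes with nearby cycles and with the relevant base changes. As in the pushforward case, I would split the verification into two parts: first that $\Psi$-goodness is preserved under smooth pullback, and second that the iterated-nearby-cycles comparison map is preserved.

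For $\Psi$-goodness, let $g \colon S' \to S$ be an arbitrary base change and let $h' \colon X' \times_S S' \to X \times_S S'$ be the pullback of $h$, which is still smooth. Writing $\overleftarrow{h}$ and $\overleftarrow{g}$ for the induced maps on oriented products, one has a natural isomorphism $\overleftarrow{h}^* \Psi_f \cong \Psi_{f \circ h} h^*$ since smooth morphisms are universally locally acyclic (this is the oriented-topos version of the smooth base change in Proposition~\ref{prop:basic functorialities}). Combining this isomorphism with the $\Psi$-good assumption on $L$ in a commutative diagram entirely analogous to the one in the proof of Proposition~\ref{prop: psi factorizability under pushforward} gives that $\overleftarrow{g}^* \Psi_{f \circ h} h^* L \to \Psi_{(f \circ h)'} (g')^* h^* L$ is an isomorphism, so $(f \circ h, h^* L)$ is $\Psi$-good.

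For the composition property, fix a composition of specializations $u \to t \to s$ in $S$. I would form the diagram
\begin{equation*}
\begin{tikzcd}[column sep=small]
h^* R(\Psi_f)_u^s L \arrow{r} \arrow{d}& R(\Psi_{f \circ h})_u^s h^* L \arrow{d} \\
h^* R(\Psi_f)_t^s R(i_t)_* R(\Psi_f)_u^t L \arrow{r}& R(\Psi_{f \circ h})_t^s R(i'_t)_* R(\Psi_{f \circ h})_u^t h^* L
\end{tikzcd}
\end{equation*}
where $i'_t \colon X'_t \to X'$ is the pullback of $i_t$. The horizontal arrows are isomorphisms by the smooth base change part of Proposition~\ref{prop:basic functorialities} applied slicewise, together with ordinary smooth base change $h^* R(i_t)_* \cong R(i'_t)_* h^*_t$ in the middle (valid because $h$ is smooth and hence so is its base change $h_t$). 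The left vertical arrow is an isomorphism because $L$ is $\Psi$-factorizable and because $h^*$ is a functor. By commutativity, the right vertical arrow is an isomorphism as well, which is exactly the required $\Psi$-factorizability condition for $K = h^* L$ along $u \to t \to s$. Since this composition of specializations was arbitrary, we conclude.

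The main point to verify carefully is the identification $\overleftarrow{h}^* \Psi_f \cong \Psi_{f \circ h} h^*$ and its slicewise counterpart $h^* R(\Psi_f)_u^s \cong R(\Psi_{f \circ h})_u^s h^*$: this is the only place the hypothesis on $h$ enters in an essential way, and it is what forces smoothness rather than merely flatness. Everything else is a diagram chase identical in structure to the pushforward case.
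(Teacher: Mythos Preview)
Your proposal is correct and matches the paper's own proof essentially line for line: the paper also splits into verifying $\Psi$-goodness via the oriented-topos base change isomorphism for smooth $h$ (citing \cite[Equation~(4.4)]{lu2019duality}, which is the same input as your appeal to Proposition~\ref{prop:basic functorialities}), and then runs the same commutative square for the composition property. Your version is in fact slightly more explicit in tracking the $R(i_t)_*$ factor and the intermediate smooth base change, which the paper suppresses via its notational convention.
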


\begin{proof}
    The proof is similar to the above except now we use a diagram
    \begin{equation}
    \begin{tikzcd}
        X' \arrow{r}{g} \arrow{d}{h'} & X \arrow{d}{h} \\
        Y' \arrow{r}{g} \arrow{d}{f'} & Y \arrow{d}{f} \\
        S' \arrow{r}{g} & S.
    \end{tikzcd}
    \end{equation}
    To check the $\Psi$-good property, we instead form the diagram
    \begin{equation}
    \begin{tikzcd}
        \overleftarrow{g}^* \Psi_{f \circ h} h^* L \arrow{r} \arrow{d}{\simeq}& \Psi_{f' \circ h'} g^* h^* L \arrow{d}{\simeq} \\
        \overleftarrow{h'}^* \overleftarrow{g}^* \Psi_{f} L \arrow{r}{\simeq}& \overleftarrow{h'}^* \Psi_{f'} g^* L,
    \end{tikzcd}
    \end{equation}
    where the relevant base change property for nearby cycles over general bases is now \cite[Equation~(4.4)]{lu2019duality}.

    To check the composition property, let $u \to t \to s$ be a sequence of specializations in the base $S$.  Then the diagram
\begin{equation}
\begin{tikzcd}
    h^* R(\Psi_f)_u^s L \arrow{r}{\mathrm{can}} \arrow{d}&  R(\Psi_{f \circ h})_u^s h^* L \arrow{d} \\
    h^* R(\Psi_f)_t^s R(\Psi_f)_u^t L \arrow{r}{\mathrm{can}}& R(\Psi_{f \circ h})_t^s R(\Psi_{f \circ h})_u^t h^* L
\end{tikzcd}
\end{equation}
commutes.  By assumption the left vertical arrow and both horizontal arrows are isomorphisms, so the right vertical arrow is as well.  Since the specializations were arbitrary, we conclude that $h^* L = K$ is $\Psi$-factorizable.
\end{proof}

\subsection{Nearby cycles over a product of a curve}

We make two extensions of our notions of the previous section without significant justification.  First, we will assert that all our notions will extend to the setting of $f \colon X \to S$ where $X$ is a Deligne-Mumford stack locally of finite type that admits an exhaustive filtration $X^{\le \mu}$ by open substacks over a countable poset.  Second, we will assert that the notion of $\Psi$-factorizability also makes sense, with appropriate modifications, for the case when the sheaf $K$ lives in the derived category of ind-constructible sheaves.

Let $J$ be a finite set and let $C^J$ be $J$-fold power of a curve.  Let $\eta$ be the generic point of $C$ in the Zariski topology, so $\eta^J$ is the generic point of $C^J$.  Choose a geometric generic point $\overline{\eta_J}$ over $\eta^J$.  Similarly, let $s_J$ be a closed point of $C^J$ and let $\overline{s_J}$ be an $\fqbar$-point over $s_J$ that is equipped with a specialization map $\overline{\eta_J} \to \overline{s_J}$.

Let $J_1$ and $J_2$ be two identical copies of $J$ and let our base be $S = C^{J_1 \cup J_2}$.  Under the diagonal copy $\Delta(C^J) \subset C^{J_1 \cup J_2}$, write the image of $\overline{s_J}$ as $\overline{s_{J_1}} \times \overline{s_{J_2}}$.  The choice of specialization map above gives a commutative diagram
\begin{equation}
    \begin{tikzcd}
        S_{\overline{\eta_{J_1}} \times \overline{\eta_{J_2}}} \arrow{r} \arrow{d}& S_{(\overline{\eta_{J_1}} \times \overline{s_{J_2}})} \arrow{d} \\
        S_{(\Delta(\overline{\eta_{J}}))} \arrow{r}& S_{(\overline{s_{J_1}} \times \overline{s_{J_2}})}.
    \end{tikzcd}
\end{equation}
Choosing a geometric generic point $\overline{\eta_{J_1 \cup J_2}}$ of $\overline{\eta_{J_1}} \times \overline{\eta_{J_2}}$ then gives a diagram of specializations
\begin{equation}
    \begin{tikzcd}
        \overline{\eta_{J_1 \cup J_2}} \arrow{r} \arrow{d}& \overline{\eta_{J_1}} \times \overline{s_{J_2}} \arrow{d} \\
        \Delta(\overline{\eta_{J}}) \arrow{r}& \Delta(\overline{s_J}) = \overline{s_{J_1}} \times \overline{s_{J_2}}.
    \end{tikzcd}
\end{equation}
\begin{definition}
For a map $f \colon X \to C^{J_1 \cup J_2}$, let $f_i \colon X \to C^{J_i}$ by projection.  We make the following definitions.
\begin{enumerate}
\item Let $\Psi_{J_1 \cup J_2}$ denote sliced nearby cycles $R(\Psi_f)_{\overline{\eta_{J_1 \cup J_2}}}^{\Delta(\overline{s_J})}$.
\item Let $\Psi_{\Delta}$ denote sliced nearby cycles $R(\Psi_f)_{\Delta(\overline{\eta_J})}^{\Delta(\overline{s_J})}$.
\item Let $\Psi^{\Delta}$ denote sliced nearby cycles $R(\Psi_f)_{\overline{\eta_{J_1 \cup J_2}}}^{\Delta(\overline{\eta_J})}$.
\item Let $\Psi_{J_i}$ denote the nearby cycles $R(\Psi_{f_i})_{\overline{\eta_J}}^{\overline{s_J}}$.
\end{enumerate}
\end{definition}
If $(f,K)$ is $\Psi$-good, construction~\ref{map to iterated} now gives maps
\begin{equation}
    \begin{tikzcd}
        \Psi_{\Delta} \Psi^{\Delta} & \Psi_{J_1 \cup J_2} \arrow{r} \arrow{l} & \Psi_{J_1} \Psi_{J_2},
    \end{tikzcd}
\end{equation}
and these maps are isomorphisms if $(f,K)$ is $\Psi$-factorizable.  If in addition $K$ is locally acyclic over the generic point, $\Psi^{\Delta}$ is specialization to the diagonal.

Our usage of nearby cycles over general bases is to prove various fusion properties of nearby cycles over a power of a curve.  These fusion properties will justify the terminology of $\Psi$-factorizability that we used in the previous section.  One fusion property is contained in Theorem~\ref{thm:smoothness}, which implies that cohomology sheaves of shtukas $\mathcal{H}_{I \cup J_1 \cup J_2,V \boxtimes W_1 \boxtimes W_2}$ over a curve $C^{I \cup J_1 \cup J_2}$ projecting to $C^{J_1 \cup J_2}$ are $\Psi$-good with respect to the projection and satisfy
\[
\begin{aligned}
    \Psi_{J_1 \cup J_2} \mathcal{H}_{I \cup J_1 \cup J_2, V \boxtimes W_1 \boxtimes W_2} &\cong \Psi_{J_1} \Psi_{J_2} \mathcal{H}_{I \cup J_1 \cup J_2, V \boxtimes W_1 \boxtimes W_2} \\
    &\cong \Psi_J \mathcal{H}_{I \cup J, V \boxtimes (W_1 \otimes W_2)}.
\end{aligned} \]
The other fusion property is Theorem~\ref{thm:nearbyfusion}, which implies corresponding properties for sheaves that we will denote $\mathscr{F}_{I \cup J, V \boxtimes W, \zeta}$.

\section{Smoothness of cohomology sheaves with Moy-Prasad level structures}\label{sec:shtukas}

\subsection{Parahoric group schemes and their Moy-Prasad filtrations}

Throughout this paper we deal with infinite-dimensional groups $G$ acting on ind-schemes or ind-stacks $X$.  These ind-schemes are equipped with a closed stratification $X_{\le \omega}$ and for each stratum there is a finite-dimensional quotient $G_n$ through which the infinite-dimensional group acts such that the subgroup acting trivially is a pro-unipotent group.  For such cases, $D(G_n \backslash X_{\le \omega}) \cong D(G_m \backslash X_{\le \omega})$ for all $m \ge n$, and we can define $D(G \backslash X_{\le \omega})$ unambiguously.  Then, we may set $D(G \backslash X) = \varinjlim_{\omega} D(G \backslash X_{\le \omega})$.

Let $G$ be parahoric group scheme with reductive generic fiber, also denoted $G$, and let $G_x$ denote its reduction at $x$, assumed to be a parahoric group.  Considering the split form of $G$ over a finite field, we may form the loop group $LG$ and positive loop group $L^+ G$.  The classical affine Grassmannian $\Gr_G$ is the quotient $LG / L^+ G$.  More generally, for the parahoric group $G_x$, we let $\Fl_x$ denote the partial affine flag variety for this parahoric group.  Let $R$ be the divisor over which $G$ does not have hyperspecial reduction.

Let $\Gr_{G,I}$ be the Beilinson-Drinfeld Grassmannian over $C^I$.  Over a general closed point $(x_1, \dots, x_{|I|})$ away from diagonals, if the reduction of $G_{x_i}$ can be identified with $L^+G$, then the fiber of the map is $\Gr_G^I$, $I$ copies of the usual affine Grassmannian $LG / L^+ G$, while over $(x, \dots, x)$ the fiber is $\Fl_x$, the partial affine flag variety associated to the parahoric group at $x$.  Let $G_{I,\infty} = \varprojlim G_{I,(n_i)}$ where $G_{I,(n_i)}(S)$ classifies an $I$-tuple $(x_i)_{i \in I}$ of $S$-points of $C$ together with an automorphism of the trivial $G$-bundle on the graph of $\sum n_i x_i$.  Geometric Satake produces sheaves on the quotient $[G_{I,\infty} \backslash \Gr_{G,I}] |_{(C \setminus R)^I}$ of the form $\mathcal{S}_V$ for $V \in \Rep(\widehat{G}^I)$ (more generally, see \cite[Section~2.5]{gaitsgory2007jong}).

Following \cite{lafforgue2018chtoucas}, we can put various additional structures on the Beilinson-Drinfeld Grassmannian.  For a partition $(I_1, \dots, I_n)$ of $I$, we can consider an iterated Beilinson-Drinfeld Grassmannian $\Gr^{(I_1, \dots, I_n)}$ classifying a chain of Hecke modifications along each term $I_k$ in the partition instead of a single Hecke modification along all the legs at once.  Over the diagonals, the map $\Gr^{(1, 2)}_{G, \{ 1,2 \}} \to \Gr_{G, \{1,2\}}$ realizes the convolution product.  Additionally, the Beilinson-Drinfeld Grassmannian admits a stratification by $\Gr_{G,I,V}$ for $V$ an $I$-tuple of irreducible representations of $\widehat{G}$ giving modification types along the legs.  The closures of strata are the supports of the Satake sheaves $\mathcal{S}_{I,V}$.

Let $x$ be a closed point of $C$.  For a point in the building $x_0$ such that $G_{x_0}$ corresponds to the parahoric group attached to $G_x$, there is a Moy-Prasad filtration that gives normal subgroups $G_{x_0}^r$ and $G_{x_0}^{r+}$ of $G_x$ for each real number $r$.  We will assume that such a point in the building is fixed in advance and suppress the dependence on $x_0$ in our notation, writing simply $G_x^{r}$, resp. $G_x^{r+}$.  These groups can be defined a priori on the level of rational points but they also admit a group scheme structure.  Let $H^{r+} = G_x / G_x^{r+}$ and $V = G_x^r / G_x^{r+}$.  For $r > 0$, $V$ is a vector group.  In the course of this paper, we will often consider $N$ to be the data of a subset $N \subset C$ together with an effective $\mathbb{Q}$-divisor supported on $N$, a sum $\sum_{y_j \in N} a_j y_j$ of points $y_j \in |C|$ with $a_j \in \mathbb{Q}_{\ge 0}$ giving the data of Moy-Prasad subgroups $G_{y_j}^{a_j+}$ for each point in $N$.  We may write $N \subset C$ to denote the collection of points $\{ y_i \}$ when no confusion may result.  Let $N$ be fixed and define $H = \prod_j (G_{y_j} / G_{y_j}^{a_j+})$.

\begin{proposition}\label{prop:btconstruction}
By dilatation \cite{mayeux2020n}, we can form a smooth group scheme $G_N$ that modifies $G$ to have reduction $G_{y_j}^{a_j+}$ at each point $y_j \in N$.
\end{proposition}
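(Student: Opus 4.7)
The plan is to reduce to a local construction at each point $y_j \in N$ and then invoke iterated dilatation (N\'eron blowup) as developed in \cite{mayeux2020n}. Since the support of $N$ is a finite set of disjoint closed points, the modifications there are independent of each other and of the rest of $C$, so local smooth group schemes constructed over the completed local rings at each $y_j$ will glue back to $G$ over $C \setminus N$ along the common generic fiber to yield a global smooth group scheme $G_N$ with the prescribed reductions.

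At each point $y_j \in N$, I would localize so that $G$ becomes a smooth group scheme over a trait with special fiber the parahoric $G_{y_j}$. The Moy-Prasad subgroup $G_{y_j}^{a_j+}$ is a smooth closed subgroup of $G_{y_j}$, a consequence of its construction as part of the Bruhat-Tits integral model. Applying dilatation of $G$ along this smooth closed subgroup of the special fiber produces a smooth affine group scheme whose generic fiber is unchanged and whose special fiber is, after a suitable iteration, exactly $G_{y_j}^{a_j+}$. When $a_j=0$, a single dilatation along the unipotent radical of $G_{y_j}$ suffices; for positive integer $a_j$, one iterates dilatations along the successive steps of the congruence filtration.

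Once the local models have been built, one glues them to $G$ over $C \setminus N$ along the common generic fibers. Smoothness is a local property preserved by both dilatation and gluing, and the generic fiber of each dilatation agrees canonically with that of $G$, so the resulting $G_N$ is smooth over $C$ with the stated reductions at each $y_j$.

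The main obstacle is the fractional case $a_j \in \mathbb{Q}_{>0} \setminus \mathbb{Z}$, since the Moy-Prasad filtration is strictly finer than the filtration by integer-level congruence subgroups. Here I would invoke Mayeux's generalization of the N\'eron blowup, which accommodates dilatation along Cartier divisors with rational exponents; alternatively one can base change to a tamely ramified cover $\widetilde{C} \to C$ on which $a_j$ becomes integral, perform the classical iterated dilatation, and descend using the Galois-invariance of the Moy-Prasad filtration. In either approach, the verification that the special fiber of the resulting scheme coincides with $G_{y_j}^{a_j+}$ reduces to a standard computation with affine roots in Bruhat-Tits theory.
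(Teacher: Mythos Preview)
Your broad strategy—localize at each $y_j$, perform dilatation, glue—matches the paper's, and for integer $a_j$ your iterated dilatation over the trait is a valid alternative to what the paper does. The divergence, and the gap, is in the fractional case.

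The paper does not iterate over a trait, nor does it invoke rational exponents. For arbitrary real $a_j$ it fixes an integer $n > a_j$ and performs a single N\'eron blowup of $G$ along the \emph{non-reduced} Cartier divisor $D = n y_j$, with respect to the closed subgroup scheme $G_{y_j}^{a_j+}/G_{y_j}^n \subset G|_D$. The framework of \cite{mayeux2020n} allows dilatation along a flat closed subgroup over any effective Cartier divisor, reduced or not, and this handles integer and fractional $a_j$ uniformly in one step. Independence of the auxiliary choice of $n$ is then checked via \cite[Lemma~2.7]{mayeux2020n}.

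Both of your suggestions for fractional $a_j$ miss this. First, \cite{mayeux2020n} contains no dilatation ``along Cartier divisors with rational exponents''; the relevant generality there is dilatation over non-reduced (integral) Cartier divisors, which is precisely what the paper exploits. Second, there is a conflation in your setup: for $a_j$ not small the group $G_{y_j}^{a_j+}$ is not visible as a subgroup of the reduced special fiber of $G$ over the trait—its image modulo $\mathfrak{m}_{y_j}$ is eventually trivial—so one cannot dilate along it there directly. The object one can actually dilate along is its image $G_{y_j}^{a_j+}/G_{y_j}^n$ inside the restriction of $G$ to $\mathrm{Spec}(\mathcal{O}_{y_j}/\mathfrak{m}_{y_j}^n)$, which brings you back to the paper's construction. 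The ramified-cover-and-descent alternative could perhaps be made to work but is more involved than necessary.
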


\begin{proof}
Let $y_0$ be the basepoint in the building that defines the parahoric group $G_y$.  If $\pi$ is a uniformizer of the complete local ring $\mathcal{O}_y$ at the point $y$ in the curve $C$, the Moy-Prasad subgroup $G_y^n$ has $\pi^n \mathfrak{g}_y$ as its Lie algebra and can be identified with the kernel $G_y(\mathcal{O}_y) \to G_y(\mathcal{O}_y / \mathfrak{m}_y^n)$ on the level of points for $\mathfrak{m}_y$ the maximal ideal in $\mathcal{O}_y$.  We can define $G_y^n$ as the kernel of $G_y \to (G_y)_{n y}$ where the latter group classifies automorphisms of the trivial $G$-bundle on the divisor $n y$.  In the language of \cite{mayeux2020n}, we can form a group scheme $G_{ny}$ by a N\'eron blowup of $G$ in the trivial group along the divisor $n y$.

For a real number $a$ and $n > a$ for an integer $n$, $G_y^{a+} / G_y^n$ defines a closed subgroup scheme inside $G_y / G_y^n$.  We can define $G_{ay}$ by performing a N\'eron blowup along $G_y^{a+} / G_y^n$.  Finally, for a divisor $N = \sum a_j y_j$, we can perform N\'eron blowups as described above for each $y_j$ individually to form our smooth group scheme $G_N$.

We check that this N\'{e}ron blowup is independent of the choice of $n$ above.  We note that if $m > n$, then $\Spec(\mathcal{O}_y / \mathfrak{m}_y^n) \to \Spec(\mathcal{O}_y / \mathfrak{m}_y^m)$ is a closed embedding under which $H_n = G_y^{a+} / G_y^n$ is the preimage of $H_m = G_y^{a+} / G_y^m$ viewed as schemes over these respective divisors.  Then the result follows by \cite[Lemma~2.7]{mayeux2020n}.
\end{proof}

We have the following properties of the group constructed above.

\begin{proposition}\label{prop:Ginfty construction}
\begin{enumerate}
\item Consider the group $(G_N)_{I \cup J,\infty} \to X^{I \cup J}$.  The special fiber over a tuple of points $(x_i)_{i \in I \cup J}$ is
\[ \prod_{x_i \not \in N} G_{x_i} \times \prod_{y_j} G_{y_j}^{a_j+}, \]
where the product $y_j$ runs over distinct points in the set $\{ x_i : i \in I \cup J \} \cap N$ and the product $x_i$ runs over distinct points in the relative complement.  The map $G_N \to G$ induces an injective morphism $(G_N)_{I \cup J,\infty} \to G_{I \cup J,\infty}$ compatible with this decomposition.
\item Let $J$ be defined as the points in the divisor $N$.  Define $G_{I,\infty,\sum \infty y_j}$ as the fiber of $G_{I \cup J,\infty} \to X^{I \cup J} \to X^J$ over $(y_j)_{j\in J}$ where $y_j$ runs through points in $N$.  Similarly, we may define $(G_N)_{I,\infty,\sum \infty y_j}$ and by the previous proposition there is a map $i_N \colon (G_N)_{I,\infty,\sum \infty y_j} \to G_{I,\infty,\sum \infty y_j}$.  This is a normal subgroup scheme whose cokernel can be identified with the constant group $H = \prod_{y_j} G_{y_j} / G_{y_j}^{a_j+}$.
\end{enumerate}
\end{proposition}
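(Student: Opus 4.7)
For part (1), the plan is to exploit that $G_N$ differs from $G$ only in formal neighborhoods of the points of $N$, since by Proposition~\ref{prop:btconstruction} it is obtained from $G$ by dilatations supported at $N$. For a tuple of $S$-points $(x_i)_{i \in I \cup J}$, the formal neighborhood of the graph of $\sum n_i x_i$ in $C \times S$ decomposes as a disjoint union of formal disks indexed by the distinct points occurring in $\{x_i\}$ (with multiplicity given by the sum of the $n_i$ that map to that point). The automorphism group of the trivial $G_N$-bundle on this neighborhood therefore factors as a product over these distinct points; taking the inverse limit over $(n_i)$ yields at each distinct point the positive loop group of $G_N$. At a point $y_j \in N$ this positive loop group has reduction $G_{y_j}^{a_j+}$ by Proposition~\ref{prop:btconstruction}, while at a point $x_i \notin N$ the dilatation is trivial, so we recover the usual positive loop group of $G$ with reduction $G_{x_i}$. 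The injective morphism $(G_N)_{I\cup J,\infty} \to G_{I \cup J,\infty}$ is induced factor-by-factor from $G_N \to G$, which is a monomorphism since N\'eron blowups are.

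For part (2), I would specialize the $J$-legs to the tuple of points $(y_j)_{j \in J}$ of $N$, so that by part (1) the fiber $G_{I,\infty,\sum \infty y_j}$ decomposes as a product of the usual $G_{I,\infty}$-like factor in the $I$-legs with a factor $L^+G$ at each $y_j$, and similarly for $G_N$ with $L^+G_N$ in place of $L^+G$. The map $i_N$ is then the identity on the $I$-leg factor and the inclusion $L^+G_N \hookrightarrow L^+G$ at each $y_j$. Since $L^+G_N$ is the preimage of $G_{y_j}^{a_j+}$ under the reduction $L^+G \twoheadrightarrow G_{y_j}$, and $G_{y_j}^{a_j+}$ is normal in $G_{y_j}$ as a Moy-Prasad subgroup, $L^+G_N$ is normal in $L^+G$ at each $y_j$ with quotient $G_{y_j}/G_{y_j}^{a_j+}$. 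Taking the product over $j$ produces the constant group $H = \prod_j G_{y_j}/G_{y_j}^{a_j+}$ as the cokernel of $i_N$; constancy in the $I$-legs is automatic because the cokernel at each $y_j$ depends only on the (fixed) $J$-leg data.

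The main obstacle I anticipate is making rigorous the product decomposition at the level of pro-group schemes, namely that the positive loop group of $G_N$ along a tuple of distinct points really factors as the product of positive loop groups at each point. This requires compatibility of the dilatation construction with restriction to each formal disk and with the inverse limit over increasing thickenings; the essential input is \cite[Lemma~2.7]{mayeux2020n}, as already used in the proof of Proposition~\ref{prop:btconstruction}. Once that compatibility is in hand, the remaining verifications are an unwinding of the definitions, and the cokernel computation in part (2) is the first isomorphism theorem applied to the Moy-Prasad filtration.
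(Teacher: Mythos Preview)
Your argument for part~(1) is essentially the paper's: work at finite level $(n_i)$, use the fact that the formal neighborhood splits over distinct points, identify the factor at each point via the dilatation description, and pass to the inverse limit. This is correct.

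For part~(2), however, your argument has a gap. You claim that $G_{I,\infty,\sum \infty y_j}$ decomposes globally over $C^I$ as a product of a ``$G_{I,\infty}$-like factor in the $I$-legs'' and copies of $L^+G$ at each $y_j$. But part~(1) only gives a fiberwise decomposition, and the product splitting you invoke breaks down on the locus where an $I$-leg collides with some $y_j$: there the formal neighborhoods around $x_i$ and $y_j$ merge into a single thickened disk, so there is no separate ``$I$-leg factor'' and ``$y_j$ factor''. Your constancy claim for the cokernel is therefore not yet justified over all of $C^I$, only over $(C\setminus N)^I$. The paper avoids this by arguing globally: it starts from the exact sequence of sheaves $1 \to G_N \to G \to j_*(G|_D/G_D^{(a_j+)}) \to 1$ on the syntomic site (from \cite[Lemma~3.7]{mayeux2020n}), then applies the Weil-restriction functor $(p_1)_*(p_2)^*$ along the universal divisor $\mathcal{D} \subset C^I \times C$. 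The point is that $(p_1)_*(p_2)^*$ applied to the quotient term $j_*(G|_D/G_D^{(a_j+)})$ is visibly the constant group $H$, since this sheaf is supported on the fixed divisor $D$ and does not see the moving $I$-legs at all. This handles the collision locus uniformly. Your fiberwise picture is correct and morally explains why the answer is $H$, but to upgrade it to a statement about group schemes over $C^I$ you need either the paper's sheaf-theoretic argument or an explicit check that the fiberwise isomorphisms glue across the collision stratification.
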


\begin{proof}
\begin{enumerate}
\item Let $(n_i)_{i \in I \cup J}$ be such that $n_i > a_j$ for any $i,j$.  By the independence of $n$ property in the proof of \ref{prop:btconstruction}, the fiber of $(G_N)_{I \cup J,(n_i)_{i \in I \cup J}}$ over a tuple $(x_i)_{i \in I \cup J}$ is
\[ \prod_{x \not \in N} G_{x} / G_{x}^{\sum_i n_i} \times \prod_{x \in N} G_{x}^{a(x)+} / G_{x}^{\sum_i n_i}, \]
where the product $x$ runs over distinct points in $\{ x_i : i \in I \cup J \}$, $\sum_i n_i$ denotes the sum of $n_i$ such that $x_i = x$, and $a(x)$ denotes the coefficient of the divisor in the case that $x \in N$.  We note the inclusion $G_N \to G$ induces an inclusion $G_{x}^{a(x)+} \to G_{x}$ in the product decomposition.  Now the proposition follows by passing to the inverse limit as $(n_i) \to \infty$.
\item It suffices to prove the corresponding statement for $G_{I\cup J,(d_i)_{i \in I \cup J}}$ for $(d_i)$ sufficiently large.  Let $D = \sum d_j y_j$ denote the divisor along which we perform dilatation in \ref{prop:btconstruction}, let $j \colon D \to C$ denote the inclusion of this divisor, and let $G_{D}^{(a_j+)}$ denote the subgroup scheme on $D$ in $G_{D}$ that produces $G_N$ by dilatation.  By \cite[Lemma~3.7]{mayeux2020n} we have an exact sequence of sheaves of pointed sets on the syntomic site
\begin{equation}
    1 \to G_N \to G \to j_*(G|_{D} / G_{D}^{(a_j+)}) \to 1.
\end{equation}

Now consider the universal divisor $\mathcal{D} \subset C^{I} \times C$ that over a point $(x_i)_{i \in I}$ has fiber $\sum n_i x_i + \sum \infty y_j$.  We have a diagram
\begin{equation}
    \begin{tikzcd}
    C^{I} & \mathcal{D} \arrow{l}{p_1} \arrow{r}{p_2} & C
    \end{tikzcd}
\end{equation}
The group scheme $G_{I,(n_i),\sum \infty y_j}$ represents the sheaf $(p_1)_* (p_2)^* G$.  Since $p_2$ is fpqc, if $S \to \mathcal{D}$ is fpqc, then $G(S)$ using the composition is the same, set-theoretically, as $(p_2)^* G(S)$, and the injection $G_N(S) \to G(S)$ gives an injection $(p_2)^* G_N \to (p_2)^* G$.  Pushing forward along $p_1$, we get an exact sequence of sheaves of pointed sets
\begin{equation}
    1 \to (p_1)_* (p_2)^* G_N \to (p_1)_* (p_2)^* G \to (p_1)_* (p_2)^* j_*(G|_{D} / G_{D}^{(a_j+)}) \to 1,
\end{equation}
where the sheaf on the right is represented by the constant group $H$.
\end{enumerate}
\end{proof}

For the following definition, compare the discussion preceding Lemme~2.7 of \cite{genestier2017chtoucas}.

\begin{definition}\label{def:grgtriv}
Let $\Gr_{G,I}^{\triv(N)}$ classify as its $S$-points:
\begin{enumerate}
    \item $(x_i)_{i \in I}\in C^I(S)$,
    \item $\mathcal{E}$ a $G$-bundle on $\Gamma_{\sum_i \infty x_i + \sum_j \infty y_j}$,
    \item $\phi$ a Hecke modification
    \[ \phi \colon \mathcal{E} |_{\Gamma_{\sum_i \infty x_i + \sum_j \infty y_j} \setminus \Gamma_{\sum_i x_i}} \simeq \mathcal{G} |_{\Gamma_{\sum_i \infty x_i + \sum_j \infty y_j} \setminus \Gamma_{\sum_i x_i}} \]
    where $\mathcal{G}$ is the trivial $G$-bundle, and
    \item for each $y_j$ in the support of $N$, a reduction of structure group $\varepsilon$ of $\mathcal{E}$ from $G_{y_j}$ to $G_{y_j}^{a_j+}$.
\end{enumerate}

More generally, if $(I_1, \dots, I_n)$ is a partition of $I$, let $\Gr_{G,I}^{(I_1, \dots, I_n),\triv(N)}$ classify as its $S$-points:
\begin{enumerate}
    \item $(x_i)_{i \in I}\in C^I(S)$,
    \item $\mathcal{E}_0, \dots, \mathcal{E}_n$ a set of $n+1$ $G$-bundles on $\Gamma_{\sum_i \infty x_i + \sum_j \infty y_j}$,
    \item $\phi_1, \dots, \phi_{n}$ Hecke modifications
    \[ \phi_k \colon \mathcal{E}_k |_{\Gamma_{\sum_{i\in I_k} \infty x_i + \sum_j \infty y_j} \setminus \Gamma_{\sum_{i\in I_k} x_i}} \simeq \mathcal{E}_{k+1} |_{\Gamma_{\sum_{i\in I_k} \infty x_i + \sum_j \infty y_j} \setminus \Gamma_{\sum_{i\in I_k} x_i}} \]
    and $\theta$ an isomorphism of the last bundle to the trivial bundle
    \[ \theta \colon \mathcal{E}_n \simeq \mathcal{G}. \]
    \item for each $y_j$ in the support of $N$, a reduction of structure group $\varepsilon$ of $\mathcal{E}_0$ from $G_{y_j}$ to $G_{y_j}^{a_j+}$.
\end{enumerate}
\end{definition}

\begin{example}
When $I = \{ 0 \}$, $G$ has Iwahori reduction at a point $x$, and $N = \{ x \} \subset C$ corresponding to the divisor $0 \cdot x$, $\Gr_{G,I}^{\triv(N)}$ is closely related to the ind-scheme $\widetilde{\Fl_C}$ appearing in \cite{bezrukavnikov2009tensor}.  In fact, this paper was the main motivation for considering the central sheaf construction on restricted shtukas.
\end{example}

$\Gr_{G,I}^{(I_1, \dots, I_n),\triv(N)}$ admits a natural map to $C^I$ that forgets all the information except for the legs.  When the partition is trivial $(I)$, then the iterated version reduces to $\Gr^{\triv(N)}_{G,I}$.

\begin{proposition}
$\Gr_{G,I}^{(I_1, \dots, I_n),\triv(N)} \to \Gr^{(I_1, \dots, I_n)}_{G,I}$ is an $H$-torsor.  When restricted to $(C \setminus N)^I$, this $H$-torsor is trivial.
\end{proposition}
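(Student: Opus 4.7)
The plan is to identify the forgetful map $\Gr_{G,I}^{(I_1,\dots,I_n),\triv(N)} \to \Gr_{G,I}^{(I_1,\dots,I_n)}$ as the operation of forgetting the reductions of structure group $\varepsilon$ at the points $y_j \in N$, and to exhibit the fibers as $H$-torsors by appealing to Proposition~\ref{prop:Ginfty construction}(2). The first step is to check that the remaining data $(x_i, \mathcal{E}_0, \dots, \mathcal{E}_n, \phi_1, \dots, \phi_n, \theta)$ in $\Gr_{G,I}^{(I_1,\dots,I_n),\triv(N)}$ really produces an $S$-point of $\Gr_{G,I}^{(I_1,\dots,I_n)}$: away from the legs, the bundles $\mathcal{E}_k$ are determined by $\theta$ and the chain of modifications, and over $\Gamma_{\sum_i \infty x_i}$ one recovers the usual iterated Beilinson-Drinfeld data, so the map is well-defined.

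Next I would identify the fibers. Once $(\mathcal{E}_0, \dots, \mathcal{E}_n, \phi_k, \theta)$ is fixed, the only remaining datum is the collection of reductions $\varepsilon$ of $\mathcal{E}_0$ from $G_{y_j}$ to $G_{y_j}^{a_j+}$ at each $y_j \in N$. By Proposition~\ref{prop:btconstruction}, choosing such reductions is the same as lifting $\mathcal{E}_0|_{\Gamma_{\sum \infty y_j}}$ to a $G_N$-bundle along $G_N \to G$. The short exact sequence of group schemes on $C^I$ produced by Proposition~\ref{prop:Ginfty construction}(2),
\begin{equation*}
    1 \to (G_N)_{I,\infty,\sum \infty y_j} \to G_{I,\infty,\sum \infty y_j} \to H \to 1,
\end{equation*}
then implies that the set of such lifts forms a torsor under $H$ whenever nonempty. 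Local existence of a lift follows from the smoothness of the dilatation $G_N \to G$ (so surjectivity holds in the smooth, and hence fppf, topology), which upgrades the principal homogeneous space of sets to an $H$-torsor in the required topology.

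For the triviality statement over $(C \setminus N)^I$, I would observe that when none of the legs $x_i$ coincide with any point of $N$, the singular loci $\Gamma_{\sum_{i \in I_k} x_i}$ of the modifications $\phi_k$ are disjoint from $\Gamma_{\sum_j \infty y_j}$. Consequently each $\phi_k$ restricts to an isomorphism of $\mathcal{E}_k|_{\Gamma_{\sum \infty y_j}} \to \mathcal{E}_{k+1}|_{\Gamma_{\sum \infty y_j}}$, and composing with $\theta \colon \mathcal{E}_n \cong \mathcal{G}$ gives a canonical trivialization $\mathcal{E}_0|_{\Gamma_{\sum \infty y_j}} \cong \mathcal{G}|_{\Gamma_{\sum \infty y_j}}$. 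The trivial bundle carries an obvious reduction to the trivial $G_N$-bundle, which defines a canonical section of the $H$-torsor over $(C \setminus N)^I$ and hence trivializes it.

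The main technical obstacle I anticipate is the careful bookkeeping around Proposition~\ref{prop:Ginfty construction}(2): one must verify that the cokernel really acts freely and transitively on the set of lifts at the level of functors of points, and that the sheafified quotient coincides with the constant group $H$ globally on $C^I$ (not merely on diagonals). Once this torsor structure is in hand, the remaining arguments --- reinterpreting the map as forgetting $\varepsilon$ and constructing the canonical section away from $N$ --- are essentially formal.
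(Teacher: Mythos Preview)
Your approach is essentially the same as the paper's: both identify the map as forgetting the reduction-of-structure-group datum $\varepsilon$, argue that the space of such reductions is an $H$-torsor, and then observe that away from $N$ the Hecke modifications are isomorphisms near the $y_j$, giving a canonical section. The paper argues the torsor structure directly (the set of reductions of a $G_{y_j}$-bundle to $G_{y_j}^{a_j+}$ is the associated $H$-bundle), whereas you route this through the short exact sequence of Proposition~\ref{prop:Ginfty construction}(2); both are fine.

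One point you gloss over that the paper makes explicit: the map does not merely forget $\varepsilon$ but also restricts the bundle data from $\Gamma_{\sum_i \infty x_i + \sum_j \infty y_j}$ to $\Gamma_{\sum_i \infty x_i}$, so you must check that this restriction loses no information beyond the $H$-action and is surjective. Your sentence ``away from the legs, the bundles $\mathcal{E}_k$ are determined by $\theta$ and the chain of modifications'' is the right idea for injectivity modulo $H$, but surjectivity---producing data on the larger neighborhood from a point of $\Gr_{G,I}^{(I_1,\dots,I_n)}$---needs the Beauville--Laszlo description (a point of the Beilinson--Drinfeld Grassmannian is equivalently a bundle on the whole curve with a trivialization away from the legs, which then restricts to any formal neighborhood). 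The paper spells this out as the construction of an inverse map; you should add a line invoking Beauville--Laszlo here.
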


\begin{proof}
Given a tuple $((x_i)_{i \in I}, (\mathcal{E}_0, \dots, \mathcal{E}_n), (\phi_0,\dots,\phi_n,\theta), \varepsilon)$, forgetting $\varepsilon$ gives an $H$-torsor over its quotient.  If we further restrict $\mathcal{E}$ to the formal neighborhood of $x_i$ (restricting away from points $y_j$ not coinciding with some $x_i$), this gives a point of $\Gr^{(I_1, \dots, I_n)}_{G,I}$.  We want to show that this identifies $\Gr^{(I_1, \dots, I_n)}_{G,I}$ with the quotient $\Gr_{G,I}^{(I_1, \dots, I_n),\triv(N)} / H$.  To construct an inverse map, note that a point in $\Gr_{G,I}$ determines a collection of points $(x_i)_{i \in I}$, a $G$-bundle over the whole curve $\mathcal{E}$, and a Hecke modification $\mathcal{E} \to \mathcal{G}$ away from $\Gamma_{\sum_i x_i}$.  This gives a point of $\Gr_{G,I}^{\triv(N)} / H$ by restricting this Hecke modification to $\Gamma_{\sum_i \infty x_i + \sum_j \infty y_j} \setminus \Gamma_{\sum_i x_i}$.

When each of the legs $x_i \in C \setminus N$, the information of the reduction of structure group does not interact with the other bundle data defining the tuple, and $\Gr_{G,I}^{(I_1, \dots, I_n),\triv(N)}$ splits as a product $\Gr^{(I_1, \dots, I_n)}_{G,I} \times H$.
\end{proof}

\begin{proposition}\label{prop:stratification}
$\Gr_{G,I}^{(I_1, \dots, I_n),\triv(N)}$ is an ind-scheme admitting a Schubert stratification by tuples of irreducible representations $(W_1, \dots, W_{|I|}) \in \Rep(\widehat{G}^I)$.  The strata are stable under a left action of $G_{I,\infty,\sum \infty y_j}$.
\end{proposition}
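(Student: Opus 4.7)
The plan is to bootstrap everything from Proposition 3.4 of the paper, which identifies $\Gr_{G,I}^{(I_1,\dots,I_n),\triv(N)}$ as an $H$-torsor over the iterated Beilinson--Drinfeld convolution Grassmannian $\Gr^{(I_1,\dots,I_n)}_{G,I}$. The latter is already known to be an ind-scheme of ind-finite type over $C^I$ carrying a Schubert stratification indexed by tuples $(W_1,\dots,W_{|I|}) \in \Rep(\widehat G^I)$: over the open subset $U \subset C^I$ where all legs are pairwise distinct and disjoint from $N$, the stratification is literally a product of the classical $L^+ G$-orbit stratifications on $\Gr_G$, and one extends across diagonals by the usual fusion/limit procedure. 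Since $H$ is a smooth affine group scheme of finite type, pulling back along the $H$-torsor preserves ind-schemeness and transports strata to strata, giving the desired statement with the same indexing set $\Rep(\widehat G^I)$.

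For the group action, I would define $G_{I,\infty,\sum \infty y_j}$ to act on a point $\bigl((x_i), (\mathcal E_0,\dots,\mathcal E_n), (\phi_1,\dots,\phi_n), \theta, \varepsilon\bigr)$ by post-composing the trivialization $\theta \colon \mathcal E_n \xrightarrow{\sim} \mathcal G$ with the automorphism of $\mathcal G$ on $\Gamma_{\sum_i \infty x_i + \sum_j \infty y_j}$ determined by the element of the group. Crucially, this leaves the bundles $\mathcal E_k$, the Hecke modifications $\phi_k$, and the Moy--Prasad reduction $\varepsilon$ on $\mathcal E_0$ untouched, so the action is visibly well-defined on $\Gr_{G,I}^{(I_1,\dots,I_n),\triv(N)}$ and is compatible with the $H$-torsor map down to $\Gr^{(I_1,\dots,I_n)}_{G,I}$, where it recovers the standard left action used in the definition of Satake sheaves.

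Stability of the strata then reduces via the torsor to the analogous, classical statement on $\Gr^{(I_1,\dots,I_n)}_{G,I}$: over $U$ the Schubert strata are by construction orbits of the relevant local group at each leg and are thus preserved, and at collisions (either among the $x_i$'s or with the $y_j$'s) one closes up and uses the fusion compatibility of the Schubert stratification. The part I expect to be the main technical annoyance is precisely the behavior at collisions with the points $y_j$, where the Moy--Prasad data enters; the key observation that unblocks the argument is that the $G_{I,\infty,\sum \infty y_j}$-action lives on the trivial-bundle side of the moduli problem while the reduction $\varepsilon$ lives on $\mathcal E_0$, so the two are decoupled and the stability on the base lifts intact to the $H$-torsor.
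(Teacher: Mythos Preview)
Your approach is correct and is essentially the one the paper takes: the paper's own proof is the single sentence ``This stratification is pulled back from the corresponding stratification of $\Gr^{(I_1,\dots,I_n)}_{G,I}$,'' and you have supplied precisely the details that sentence suppresses, namely the $H$-torsor of Proposition~3.4, the inherited ind-scheme structure, and the equivariance check.

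One small remark: the action you define (post-composing $\theta$ only, leaving $\varepsilon$ untouched) is not literally the action the paper later singles out in Proposition~3.7(3), where $G_{I,\infty,\sum\infty y_j}$ also acts on the reduction $\varepsilon$ through its $H$-quotient. This does not affect your argument---both actions descend along the $H$-torsor to the same standard action on $\Gr^{(I_1,\dots,I_n)}_{G,I}$, and the strata, being pullbacks, are stable under either---but it is worth being aware of, since the paper uses the latter action when forming restricted shtukas.
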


The data of a tuple $(W_1, \dots, W_{|I|})$ of representations of $\widehat{G}$ is equivalent to a tuple of dominant cocharacters $(\lambda_1, \dots, \lambda_{|I|}) \in X_*(T)^+$ of a maximal torus corresponding to the highest weights of the dual torus.

\begin{proof}
This stratification is pulled back from the corresponding stratification of $\Gr^{(I_1, \dots, I_n)}_{G,I}$.
\end{proof}

As in the Beilinson-Drinfeld Grassmannian, there is a map
\[
\begin{aligned}
\Gr_{G,I}^{(I_1, \dots, I_n),\triv(N)} &\rightarrow \Gr_{G,I}^{\triv(N)} \\
((x_i)_{i \in I}, (\mathcal{E}_0, \dots, \mathcal{E}_n), (\phi_1,\dots,\phi_n,\theta), \varepsilon) &\mapsto ((x_i)_{i\in I}, \mathcal{E}_0, \theta \circ \phi_n \circ \dots \phi_0, \varepsilon).
\end{aligned}
\]
This map can be thought of as giving a convolution product over generalized diagonals.

\begin{proposition}\label{properness}
The map $\Gr_{G,I}^{(I_1, \dots, I_n),\triv(N)} \rightarrow \Gr_{G,I}^{\triv(N)}$ collapsing the legs is a ind-proper morphism.
\end{proposition}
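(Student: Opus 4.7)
The plan is to exhibit the collapsing morphism as a base change of the standard convolution map
\[ \Gr^{(I_1,\dots,I_n)}_{G,I} \longrightarrow \Gr_{G,I}, \]
whose ind-properness on the undecorated iterated Beilinson--Drinfeld Grassmannian is well documented and goes back to Beilinson--Drinfeld: stratum by stratum, it realizes a twisted product of finite-dimensional partial affine Schubert varieties, each of which is projective.

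The key observation is that the collapsing operation preserves the reduction of structure group $\varepsilon$ verbatim: $\varepsilon$ is a reduction of $\mathcal{E}_0$ at the points $y_j \in N$, and the collapsing map only composes the modifications $\phi_1, \dots, \phi_n, \theta$ while leaving $\mathcal{E}_0$ unchanged. This assembles into the commutative square
\[
\begin{tikzcd}
\Gr_{G,I}^{(I_1,\dots,I_n),\triv(N)} \arrow{r} \arrow{d} & \Gr_{G,I}^{\triv(N)} \arrow{d} \\
\Gr^{(I_1,\dots,I_n)}_{G,I} \arrow{r} & \Gr_{G,I}
\end{tikzcd}
\]
whose vertical arrows are the $H$-torsors produced by the preceding proposition and whose horizontal arrows are the two collapsing maps. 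I would then verify this square is Cartesian by matching moduli directly: a point of the fiber product over the bottom right consists of an iterated chain $(\mathcal{E}_0,\dots,\mathcal{E}_n)$ with modifications $\phi_1,\dots,\phi_n,\theta$ together with a reduction $\varepsilon$ of $\mathcal{E}_0$, which is precisely an $S$-point of the upper-left corner.

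Since ind-properness is stable under base change, the top horizontal arrow inherits ind-properness from the bottom one. The Schubert stratification of Proposition~\ref{prop:stratification}, itself pulled back from the undecorated Grassmannian, then organizes this into a genuine ind-proper structure: the preimage of the closed Schubert stratum bounded by $W_1 \otimes \cdots \otimes W_n$ on the right decomposes as a finite union of closed strata bounded by tuples $(W_1,\dots,W_n)$ on the left, each proper over the former.

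The main subtlety I anticipate is confirming $H$-equivariance of the square — one must check that the torsor action defined via reductions of $\mathcal{E}_0$ on the left agrees with the torsor action defined via reductions of the collapsed bundle on the right. But this is immediate from the fact that $\mathcal{E}_0$ literally \emph{is} the underlying bundle of the collapsed point, so the two $H$-actions are the same action on the same reduction datum; there is no further compatibility to verify. With the Cartesian square and classical ind-properness in hand, the conclusion follows.
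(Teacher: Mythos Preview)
Your proposal is correct and is essentially the paper's own proof: the paper writes down the same Cartesian square (with rows and columns transposed) and invokes base change of the ind-proper collapsing map $\Gr^{(I_1,\dots,I_n)}_{G,I}\to\Gr_{G,I}$. Your additional discussion of why the square is Cartesian and of $H$-equivariance is more detail than the paper gives, but the argument is identical.
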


We note that neither the source nor the target is ind-proper, as even over the generic fiber the map splits off a copy of $H$, which is an affine group scheme of positive dimension if $N$ is nonempty.

\begin{proof}
We have a Cartesian square
\begin{equation}
    \begin{tikzcd}
    \Gr_{G,I}^{(I_1, \dots, I_n),\triv(N)} \arrow[d] \arrow[r] & \Gr_{G,I}^{(I_1, \dots, I_n)} \arrow[d] \\
    \Gr_{G,I}^{\triv(N)} \arrow[r] & \Gr_{G,I}.
    \end{tikzcd}
\end{equation}
So the map in question is the pullback of a map between proper ind-schemes and is itself proper.
\end{proof}

Similar to the affine Grassmannian, we want a description of $\Gr_{G,I}^{\triv(N)}$ in terms of a loop group.

\begin{definition}
The positive loop group $L_I^+ G$ is defined as $G_{I,\infty}$, and the loop group $L_I G$ is the ind-group scheme whose $S$-points classify tuples $(x_i)_{i \in I} \in C^I(S)$ together with a trivialization of the trivial $G$-bundle on $\Gamma_{\sum \infty x_i} \setminus \Gamma_{\sum x_i}$.

We define another loop group, $L_I G_{\sum \infty y_j}$, to be the ind-group scheme over $C^I$ whose $S$-points classify a tuple $(x_i)_{i \in I} \in C^I(S)$ together with a trivialization of the trivial $G$-bundle on $\Gamma_{\sum \infty x_i + \sum \infty y_j} \setminus \Gamma_{\sum x_i}$.
\end{definition}

\begin{proposition}\label{prop:Ginfty action}
\begin{enumerate}
\item The quotient $L_I G_{\sum \infty y_j} / G_{I,\infty,\sum \infty y_j}$ classifies triples $((x_i)_{i \in I}, \mathcal{E}, \phi)$ given by $(1)-(3)$ in Definition~\ref{def:grgtriv} for $\Gr_{G,I}^{\triv(N)}$.
\item There is an isomorphism of ind-schemes $L_I G_{\sum \infty y_j} / G_{I,\infty,\sum \infty y_j} \cong L_I G / L_I^+ G$.
\item Let $G_{I,\infty,\sum \infty y_j}$ act on $\Gr_{G,I}^{\triv(N)}$ by composing with automorphisms of the trivial bundle $\mathcal{G}$ in the Hecke modification and by composing with automorphisms of the trivial $H$-bundle in the reduction of structure group.  Let $Z$ be the center of the split form of $G$ and let $Z_N$ be the preimage of the center in $G_N$.  Then the action of $G_{I,\infty,\sum \infty y_j}$ restricts to an action of $Z_{I,\infty}$.  The action of $(Z_N)_{I,\infty}$ is trivial.
\end{enumerate}
\end{proposition}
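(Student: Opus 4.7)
The plan is to prove each part in turn. For part (1), I would apply the standard uniformization argument for Beilinson-Drinfeld-style objects. Given $g \in L_I G_{\sum \infty y_j}(S)$, which by definition is a trivialization of $\mathcal{G}$ on $\Gamma_{\sum \infty x_i + \sum \infty y_j}\setminus \Gamma_{\sum x_i}$, one associates the triple $((x_i), \mathcal{G}, g)$, viewing $g$ as a Hecke modification from $\mathcal{G}$ to itself. For the inverse direction, given any triple $((x_i), \mathcal{E}, \phi)$, one uses the theorem (due to Drinfeld and Beilinson) that $G$-bundles on a formal thickening of a finite set of points trivialize fpqc-locally. A choice of trivialization $\tau\colon \mathcal{E}\to \mathcal{G}$ converts $\phi$ into an element of $L_I G_{\sum \infty y_j}$; different choices of $\tau$ differ by left multiplication by elements of $G_{I,\infty,\sum \infty y_j}$, so the quotient is well defined.

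For part (2), I would show that both quotients classify the same data. By part (1), the left quotient classifies triples $((x_i), \mathcal{E}, \phi)$ with $\mathcal{E}$ a $G$-bundle on the larger neighborhood $\Gamma_{\sum \infty x_i + \sum \infty y_j}$, while the right-hand side $L_I G / L_I^+ G = \Gr_{G,I}$ classifies such triples with $\mathcal{E}$ only defined on $\Gamma_{\sum \infty x_i}$. Restriction of $\mathcal{E}$ and $\phi$ along the inclusion of formal thickenings gives a natural map in one direction; the inverse comes from the observation that $\phi$ trivializes $\mathcal{E}$ on $\Gamma_{\sum \infty y_j}$ away from $\Gamma_{\sum x_i}$, so that a bundle on the larger thickening can be reconstructed from its restriction to $\Gamma_{\sum \infty x_i}$ by gluing with the trivial bundle on $\Gamma_{\sum \infty y_j}$ using $\phi$ as descent data on the overlap.

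For part (3), I would first verify the action is well defined. An element $g \in G_{I,\infty,\sum \infty y_j}$ acts on the Hecke modification by $\phi\mapsto g\circ\phi$ and on the reduction of structure group $\varepsilon$ via the automorphism of the trivial $H$-bundle induced by the projection $G_{y_j}\to G_{y_j}/G_{y_j}^{a_j+}$; these are compatible because $\mathcal{E}$ near $y_j$ is identified with $\mathcal{G}$ through $\phi$, so that changing $\phi$ to $g\circ\phi$ is exactly compensated by modifying $\varepsilon$ by the $H$-projection of $g$. The inclusion $Z \hookrightarrow G$ then induces $Z_{I,\infty}\hookrightarrow G_{I,\infty,\sum \infty y_j}$ by extension, canonically since $Z$ is a commutative central group scheme whose sections on the smaller thickening extend uniquely to the larger. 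For the final triviality claim, I would decompose $(Z_N)_{I,\infty}$: the $Z$-component acts on $\mathcal{E}$ by central automorphisms, so $(x_i, \mathcal{E}, \phi, \varepsilon)$ and its translate are isomorphic via a central automorphism of $\mathcal{E}$; the Moy-Prasad piece $G_{y_j}^{a_j+}$ at each point projects trivially to $G_{y_j}/G_{y_j}^{a_j+}$, hence acts trivially on $\varepsilon$, while its modification of $\phi$ is absorbed by the ambiguity in the reduction of structure group data.

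The main technical obstacle I anticipate is in the last piece of part (3): the subtlety that $Z_N$ is the preimage of $Z$ under $G_N \to G$ rather than the actual center of $G_N$, so the Moy-Prasad piece of $Z_N$ need not act trivially on the Hecke modification a priori. Verifying that any such modification is indeed absorbed by a compensating change of $\varepsilon$ will require using Proposition~\ref{prop:Ginfty construction}(2), specifically the identification of the cokernel of $(G_N)_{I,\infty,\sum \infty y_j}\to G_{I,\infty,\sum \infty y_j}$ with the constant group $H$, together with the centrality of $Z$ in $G$.
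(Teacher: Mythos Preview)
Your approach to part~(1) matches the paper's: both run the standard uniformization argument identifying the quotient with the moduli of triples.

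For part~(2), the paper takes a different and cleaner route. Rather than attempting to glue directly on formal neighborhoods, it invokes the Beauville--Laszlo description of $L_I G / L_I^+ G = \Gr_{G,I}$ as classifying a $G$-bundle on the \emph{entire curve} $C$ together with a trivialization away from $\sum_i x_i$, and then simply restricts that global data to the larger neighborhood $\Gamma_{\sum_i \infty x_i + \sum_j \infty y_j}$. Your proposed inverse has a gap: you say ``$\phi$ trivializes $\mathcal{E}$ on $\Gamma_{\sum \infty y_j}$ away from $\Gamma_{\sum x_i}$'', but on the small side $\phi$ is only defined on $\Gamma_{\sum \infty x_i}\setminus \Gamma_{\sum x_i}$, which is disjoint from $\Gamma_{\sum \infty y_j}$ whenever no $x_i$ equals any $y_j$. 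There is then no ``overlap'' on which $\phi$ could serve as gluing data, and one is really just declaring the bundle to be trivial on the $y_j$-neighborhoods. Making this uniform as the $x_i$ vary (and possibly collide with the $y_j$) is exactly what the Beauville--Laszlo step handles automatically.

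For part~(3), you are overcomplicating things, and the ``main technical obstacle'' you anticipate is a non-issue. The paper's argument is two lines, resting on the structure of $\Gr_{G,I}^{\triv(N)}$ as an $H$-torsor over $\Gr_{G,I}$. The action of $G_{I,\infty,\sum \infty y_j}$ on the $H$-torsor direction is through the quotient map $G_{I,\infty,\sum \infty y_j}\to H$, whose kernel is $(G_N)_{I,\infty,\sum \infty y_j}$ by Proposition~\ref{prop:Ginfty construction}(2); since $Z_N\subset G_N$, the group $(Z_N)_{I,\infty}$ lies in this kernel and therefore acts trivially on $\varepsilon$. The action on the base $\Gr_{G,I}$ factors through the image of $Z_N$ in $G$, which is $Z$, and the center acts trivially on the Beilinson--Drinfeld Grassmannian. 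Thus both pieces of the action are \emph{separately} trivial; there is no need to decompose $(Z_N)_{I,\infty}$ into a ``$Z$-component'' and a ``Moy--Prasad piece'', and no compensation mechanism between $\phi$ and $\varepsilon$ is required.
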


\begin{proof}
\begin{enumerate}
    \item The datum of an $S$-point $(x_i)_{i \in I}$ in $C^I$ together with $G$-bundle on the graph of $\sum_i \infty x_i + \sum_j \infty y_j$ is the same as an $S$-point of $BG_{I,\infty,\sum \infty y_j}$.  All such bundles are trivial, and the choice of a trivialization over $\Gamma_{\sum_i \infty x_i + \sum_j \infty y_j} \setminus \Gamma_{\sum_i x_i}$ is the choice of an $S$-point of $L_I G_{\sum \infty y_j}$.  Forgetting the trivialization then amounts to taking a quotient by $G_{I,\infty,\sum \infty y_j}$.
    \item There is a natural map $L_I G_{\sum \infty y_j} / G_{I,\infty,\sum \infty y_j} \to L_I G / L_I^+ G$ given by restriction.  The inverse map is given by identifying $L_I G / L_I^+ G$ with the Beauville-Laszlo moduli description of a collection of points $(x_i)_{i\in I}$, a $G$-bundle over the whole curve $C$, and an isomorphism to the trivial bundle away from the graphs $\sum_i x_i$.  We can then restrict from the curve to $\sum_i \infty x_i + \sum_j \infty y_j$.  The two maps are inverse to each other by construction.
    \item The action of $Z_{I,\infty}$ on the Beilinson-Drinfeld grassmannian is trivial, and $(Z_N)_{I,\infty}$ lives in the kernel of the map $G_{I,\infty,\sum \infty y_j} \to H$, so the action of $(Z_N)_{I,\infty}$ on $\Gr_{G,I}^{\triv(N)}$ is trivial.
\end{enumerate}
\end{proof}

Thus, given the ind-scheme $\Gr_{G,I}^{\triv(N)}$, we may take a quotient by an action of $G_{I,\infty,\sum \infty y_j}$ or similarly $(G_N)_{I,\infty,\sum \infty y_j}$.  At the risk of some repetition, we give moduli descriptions in the following proposition.

\begin{proposition}\label{prop:quotient moduli lemma}
The quotient $[(G_N)_{I,\infty,\sum \infty y_j} \backslash \Gr_{G,I}]$ classifies as its $S$-points:
\begin{enumerate}
    \item $(x_i)_{i \in I}\in C^I(S)$,
    \item $\mathcal{E}$ and $\mathcal{F}$ two $G$-bundles on $\Gamma_{\sum_i \infty x_i + \sum_j \infty y_j}$,
    \item $\phi$ a Hecke modification
    \[ \phi \colon \mathcal{E} |_{\Gamma_{\sum_i \infty x_i} \setminus \Gamma_{\sum_i x_i}} \simeq \mathcal{F} |_{\Gamma_{\sum_i \infty x_i} \setminus \Gamma_{\sum_i x_i}}, \]
    and
    \item for each $y_j$ in the support of $N$, a reduction of structure group $\varepsilon_{\mathcal{E}}$ of $\mathcal{E}$ from $G_{y_j}$ to $G_{y_j}^{a_j+}$.
\end{enumerate}

The quotient $[G_{I,\infty,\sum \infty y_j} \backslash \Gr_{G,I}^{\triv(N)}]$ classifies as its $S$-points:

\begin{enumerate}
    \item $(x_i)_{i \in I}\in C^I(S)$,
    \item $\mathcal{E}$ and $\mathcal{F}$ two $G$-bundles on $\Gamma_{\sum_i \infty x_i + \sum_j \infty y_j}$,
    \item $\phi$ a Hecke modification
    \[ \phi \colon \mathcal{E} |_{\Gamma_{\sum_i \infty x_i} \setminus \Gamma_{\sum_i x_i}} \simeq \mathcal{F} |_{\Gamma_{\sum_i \infty x_i} \setminus \Gamma_{\sum_i x_i}}, \]
    and
    \item for each $y_j$ in the support of $N$, a reduction of structure group $\varepsilon_{\mathcal{F}}$ of $\mathcal{F}$ from $G_{y_j}$ to $G_{y_j}^{a_j+}$.
\end{enumerate}
\end{proposition}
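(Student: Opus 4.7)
The plan is to realize both stacks as a single common double quotient of the loop group. Proposition~\ref{prop:Ginfty action}(2) already identifies $\Gr_{G,I} \cong L_I G_{\sum \infty y_j}/G_{I,\infty,\sum \infty y_j}$. The parallel identification
\[ \Gr_{G,I}^{\triv(N)} \cong L_I G_{\sum \infty y_j}/(G_N)_{I,\infty,\sum \infty y_j} \]
should follow by running the same Beauville-Laszlo argument used in the proof of Proposition~\ref{prop:Ginfty action}(1) and~(2), but with the subgroup $(G_N)_{I,\infty,\sum \infty y_j}$ from Proposition~\ref{prop:Ginfty construction}(2) in place of the full automorphism group. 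Quotienting by $(G_N)$-automorphisms of the trivial bundle rather than all $G$-automorphisms retains precisely a reduction of structure group to $G_N$, which by the dilatation construction of Proposition~\ref{prop:btconstruction} is the same data as a reduction from $G_{y_j}$ to $G_{y_j}^{a_j+}$ at each $y_j$, as appearing in Definition~\ref{def:grgtriv}.

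Since the left action of $(G_N)_{I,\infty,\sum \infty y_j}$ and the right action of $G_{I,\infty,\sum \infty y_j}$ on $L_I G_{\sum \infty y_j}$ commute (they act on opposite ends of the trivialization), the double quotient
\[ [(G_N)_{I,\infty,\sum \infty y_j} \backslash L_I G_{\sum \infty y_j} / G_{I,\infty,\sum \infty y_j}] \]
is canonically isomorphic to both stacks whose moduli we want to describe. Performing the right quotient first yields $[(G_N)_{I,\infty,\sum \infty y_j} \backslash \Gr_{G,I}]$, while performing the left quotient first yields $[G_{I,\infty,\sum \infty y_j} \backslash \Gr_{G,I}^{\triv(N)}]$. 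It then suffices to spell out the moduli of the double quotient.

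For this, an $S$-point of $L_I G_{\sum \infty y_j}$ is a trivialization of the trivial $G$-bundle on $\Gamma_{\sum_i \infty x_i + \sum_j \infty y_j} \setminus \Gamma_{\sum_i x_i}$. Quotienting on one side by the full group $G_{I,\infty,\sum \infty y_j}$ replaces the corresponding trivial bundle by an arbitrary $G$-bundle on $\Gamma_{\sum_i \infty x_i + \sum_j \infty y_j}$, while quotienting on the other side by $(G_N)_{I,\infty,\sum \infty y_j}$ replaces its trivial bundle by a $G$-bundle equipped with a reduction of structure group to $G_N$, i.e.\ to $G_{y_j}^{a_j+}$ at each $y_j$. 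The residual content of the original trivialization is exactly a Hecke modification $\phi$ between the two resulting bundles on $\Gamma_{\sum_i \infty x_i} \setminus \Gamma_{\sum_i x_i}$. The naming $\mathcal{E}$ versus $\mathcal{F}$ in the two parts of the statement simply records which side of the double quotient is interpreted as supplying the reduction, and this swap depends on the order in which the quotient is taken. The main obstacle is the careful verification of the loop group description $\Gr_{G,I}^{\triv(N)} \cong L_I G_{\sum \infty y_j}/(G_N)_{I,\infty,\sum \infty y_j}$, since this is the only genuinely new input beyond Proposition~\ref{prop:Ginfty action}; the remainder is bookkeeping of the two orderings of the double quotient.
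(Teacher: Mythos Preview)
The paper offers no proof of this proposition; it is presented as a direct moduli-theoretic unwinding of the definitions (the sentence introducing it reads ``At the risk of some repetition, we give moduli descriptions''). Your loop-group approach is a perfectly good way to make this precise, but there is a slip in the second paragraph. Having identified $\Gr_{G,I}^{\triv(N)} \cong L_I G_{\sum \infty y_j} / (G_N)_{I,\infty,\sum \infty y_j}$ as a \emph{right} quotient, the action of $G_{I,\infty,\sum \infty y_j}$ from Proposition~\ref{prop:Ginfty action}(3) is the residual \emph{left} action on this quotient. Hence
\[
[G_{I,\infty,\sum \infty y_j} \backslash \Gr_{G,I}^{\triv(N)}] \;=\; \bigl[G_{I,\infty,\sum \infty y_j} \,\backslash\, L_I G_{\sum \infty y_j} \,/\, (G_N)_{I,\infty,\sum \infty y_j}\bigr],
\]
whereas
\[
[(G_N)_{I,\infty,\sum \infty y_j} \backslash \Gr_{G,I}] \;=\; \bigl[(G_N)_{I,\infty,\sum \infty y_j} \,\backslash\, L_I G_{\sum \infty y_j} \,/\, G_{I,\infty,\sum \infty y_j}\bigr].
\]
These are two \emph{different} double quotients, with the left and right subgroups interchanged; they are not obtained from one another by changing the order in which a single double quotient is computed. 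In particular, $(G_N)_{I,\infty,\sum \infty y_j}$ is normal only in $G_{I,\infty,\sum \infty y_j}$, not in $L_I G_{\sum \infty y_j}$, so the left coset space $(G_N)\backslash L$ is not the same as the right coset space $L/(G_N)$ you identified with $\Gr_{G,I}^{\triv(N)}$.

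The fix is small. Either unwind the moduli of each double quotient separately --- your last paragraph does this correctly for one of them, and the other is symmetric --- or observe that inversion $g \mapsto g^{-1}$ on $L_I G_{\sum \infty y_j}$ identifies the two double quotients while swapping the source and target bundles of the Hecke modification. It is this swap, not the order of taking quotients, that accounts for the reduction landing on $\mathcal{E}$ in one description and on $\mathcal{F}$ in the other.
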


We note that the quotient $[G_{I,\infty,\sum \infty y_j} \backslash \Gr_{G,I}^{\triv(N)}]$, although not algebraic, is a ``nice'' quotient in the sense of \cite[Section~6, A.3]{gaitsgory1999construction}.  That is,
\begin{enumerate}
    \item $G_{I,\infty,\sum \infty y_j}$ is the projective limit of linear algebraic groups
    \item $G_{I,\infty,\sum \infty y_j}$ has a finite codimensional subgroup which is pro-unipotent,
    \item $\Gr_{G,I}^{\triv(N)}$ admits a stratification by closed subschemes stabilized by $G_{I,\infty,\sum \infty y_j}$, preimages of Schubert strata, such that any closed subscheme $Z \subset \Gr_{G,I}^{\triv(N)}$ is contained in such a finite union of strata, and
    \item the action of $G_{I,\infty,\sum \infty y_j}$ on closed stratum factors through a linear algebraic quotient.
\end{enumerate}

Here, the stratification on $\Gr_{G,I}^{\triv(N)}$ comes from the corresponding stratification on $\Gr_{G,I}$.  These niceness conditions will be satisfied for all such stacks of the form $[H \backslash X]$ that we consider when $H$ is infinite-dimensional.  Under these conditions, we may consider the category of perverse sheaves equivariant under the action of the projective limit, similarly the equivariant derived category of constructible sheaves, and this equivariant category is simply the limit of equivariant categories for the underlying linear algebraic groups.

\begin{proposition}
The ind-scheme $\Gr_{G,I}^{(I_1, \dots, I_n),\triv(N)}$ admits a map to
\begin{equation}\label{eqn:product grassmannian}
[(G_N)_{I_1,\infty,\sum \infty y_j} \backslash \Gr_{G,I_1}] \times \dots \times [G_{I_{n-1},\infty,\sum \infty y_j} \backslash \Gr_{G,I_{n-1}}] \times [G_{I_n,\infty,\sum \infty y_j} \backslash\Gr_{G,I_n}^{\triv(N)}]
\end{equation}
by sending $((x_i)_{i \in I}, (\mathcal{E}_k)_{k \in \{1,\dots,n\}}, \phi_k, \varepsilon_0, \theta)$ to the data of
\begin{enumerate}
    \item $((x_i)_{i \in I_1}, \mathcal{E}_0, \mathcal{E}_1, \phi_1, \varepsilon_0) \in [(G_N)_{I_1,\infty,\sum \infty y_j} \backslash \Gr_{G,I_1}]$ using the moduli description of Proposition~\ref{prop:quotient moduli lemma},
    \item for $1 < k < n$, $((x_i)_{i \in I_k}, \mathcal{E}_{k-1}, \mathcal{E}_k, \phi_k) \in [G_{I_k,\infty,\sum \infty y_j} \backslash \Gr_{G,I_k}]$,
    \item and $((x_i)_{i \in I_k}, \mathcal{E}_{k-1}, \mathcal{E}_k, \phi_k, \theta|_H) \in [G_{I_n,\infty,\sum \infty y_j} \backslash\Gr_{G,I_n}^{\triv(N)}]$ noting that the data of a reduction of a structure group in the moduli description is the same as a trivialization of associated $H$-bundles.
\end{enumerate}
\end{proposition}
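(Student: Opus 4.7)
The proposition is a well-definedness statement for the map between ind-schemes, so the proof reduces to checking that the tuples listed in items (1)--(3) actually define $S$-points of the factors on the right-hand side of \eqref{eqn:product grassmannian}, using the moduli descriptions in Proposition~\ref{prop:quotient moduli lemma} (and Definition~\ref{def:grgtriv} for the last factor). My plan is to handle the three types of factors in turn, then observe that functoriality in $S$ is automatic.

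For each $k$, the underlying pair of bundles with Hecke modification is obtained by restricting $\mathcal{E}_{k-1}, \mathcal{E}_k$ from the formal neighborhood $\Gamma_{\sum_i \infty x_i + \sum_j \infty y_j}$ to the smaller one $\Gamma_{\sum_{i \in I_k} \infty x_i + \sum_j \infty y_j}$, while $\phi_k$ is already defined precisely on the open locus $\Gamma_{\sum_{i \in I_k} \infty x_i + \sum_j \infty y_j} \setminus \Gamma_{\sum_{i \in I_k} x_i}$ required by the target. For a middle factor ($1 < k < n$) this is all the data needed; the first and last factors additionally require a reduction of structure group. For the first factor, the required reduction is of $\mathcal{E}$, which is provided directly by $\varepsilon_0$ from the source. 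For the last factor, the required reduction is of $\mathcal{F} = \mathcal{E}_n$; here we use the trivialization $\theta \colon \mathcal{E}_n \simeq \mathcal{G}$ to pull back the tautological reduction of the trivial bundle $\mathcal{G}$ from $G_{y_j}$ to $G_{y_j}^{a_j+}$ at each $y_j$, obtaining a reduction on $\mathcal{E}_n$. This is exactly the interpretation of the notation $\theta|_H$ in the statement.

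The main point to pay attention to is the bookkeeping of which factor absorbs which piece of the reduction data: the single datum $\varepsilon_0$ goes into the first factor, which is made possible by the use of the $(G_N)$-level quotient there (retaining a reduction of $\mathcal{E}$ in its moduli), while $\theta$ at the rightmost end produces the required reduction in the last factor via its identification with $\Gr^{\triv(N)}$. No reduction data enters the middle factors. Once these assignments are verified, functoriality in $S$ is immediate since restriction of bundles, restriction of modifications, and pullback of reductions along trivializations are all natural in the test scheme.
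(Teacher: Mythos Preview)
Your proposal is correct. In fact, the paper does not supply a separate proof for this proposition at all: the statement is treated as a construction that is self-evident from the moduli descriptions in Definition~\ref{def:grgtriv} and Proposition~\ref{prop:quotient moduli lemma}, and the text immediately continues with the observation that the map is a torsor for the product of arc groups. Your write-up simply makes explicit the bookkeeping the paper leaves to the reader, namely that restriction to the smaller formal neighborhoods $\Gamma_{\sum_{i\in I_k}\infty x_i + \sum_j \infty y_j}$ is harmless for the Hecke modifications, that $\varepsilon_0$ supplies the reduction datum required by the first factor (whose quotient is by $(G_N)$ rather than $G$), and that $\theta$ induces a reduction on $\mathcal{E}_n$ giving the last factor. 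There is nothing to add beyond this.
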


This map is a torsor for an infinite-dimensional group scheme, namely $(G_N)_{I_1, \infty,\sum \infty y_j} \times \dots \times G_{I_{n-1},\infty,\sum \infty y_j} \times G_{I_n, \infty,\sum \infty y_j}$, so it inherits some smoothness properties.  Closed stratum by closed stratum, this map factors through a linear algebraic quotient, which is a bona fide (finite-dimensional) smooth map, similar to the smoothness of the maps $\kappa^{(I_1, \dots, I_k)}_{I, (\omega_i)_{i \in I}}$ from \cite[Equation~(1.12) and Lemme~1.16]{lafforgue2018chtoucas}.  Explicitly, we mean that there is a Schubert stratification $\Gr_{G,I,(W_i)_{i\in I}}^{(I_1, \dots, I_n),\triv(N)}$ and smooth maps
\begin{equation}\label{eqn: kappa map}
\begin{aligned}
\kappa^{(I_1, \dots, I_n)}_{(W_i)_{i\in I}} &\colon \Gr_{G,I,(W_i)_{i\in I}}^{(I_1, \dots, I_n),\triv(N)} \to \\
&[(G_N)_{I_1,\sum_{i\in I_1} n_i x_i} \backslash \Gr_{G,I_1,W_1}] \times \dots \\
&\qquad \times [G_{I_{n-1},\sum_{i\in I_{n-1}} n_i x_i} \backslash \Gr_{G,I_{n-1},W_{n-1}}] \times [G_{I_n,\sum_{i\in I_n} n_i x_i} \backslash\Gr_{G,I_n,W_n}^{\triv(N)}]
\end{aligned}
\end{equation}
for sufficiently large choices of $(n_i)_{i \in I}$ depending on $(W_i)_{i\in I}$.

Over the unramified locus, the map $\kappa$ in \eqref{eqn:product grassmannian} factors through the quotient
\begin{equation}\label{eqn:unramified product grassmannian}
    [G_{I_1,\infty} \backslash \Gr_{G,I_1}] \times \dots \times [G_{I_{n-1},\infty} \backslash \Gr_{G,I_{n-1}}] \times [G_{I_n,\infty} \backslash\Gr_{G,I_n}] \times H
\end{equation}
So for any local system $\zeta$ on $H$, and for any choices of $V_j \in \Rep(\widehat{G}^{I_j})$, we can form $\mathcal{S}_{V_1} \boxtimes \dots \boxtimes \mathcal{S}_{V_k} \boxtimes \zeta$ as a sheaf on the product \eqref{eqn:unramified product grassmannian}, and this sheaf is itself a pullback from a sheaf on the product in Equation~\eqref{eqn:product grassmannian}.  The pullback along the smooth map gives the usual twisted external product $\mathcal{S}_{V_1} \widetilde{\boxtimes} \dots \widetilde{\boxtimes} \mathcal{S}_{V_k} \boxtimes \zeta$ on $\Gr^{(I_1, \dots, I_k), \triv(N)}_{G,I}$ over the unramified locus.  The maximal partition is particularly important to us as it is quite close to the external product case before the trivialization by $\triv(N)$.  However, a general sheaf $\zeta$ on the copy of $H$ given by $\Gr_{G,I}^{(I_1, \dots, I_n),\triv(N)} \to \Gr_{G,I}^{(I_1, \dots, I_n)}$ forgetting $\triv(N)$ is not external in a way that Proposition~\ref{prop: kunneth factorizable} and Corollary~\ref{corr: Kunneth factorizable} can be applied.  To handle these cases, we can instead use a variant where each $\mathcal{E}_i$ is trivialized.

\begin{definition}\label{def:grgtrivmax}
Let $I = \{1,\dots,n\}$.  Let $\Gr_{G,I}^{\triv(N)^I}$ classify as its $S$-points:
\begin{enumerate}
    \item $(x_i)_{i \in I}\in C^I(S)$,
    \item $\mathcal{E}_0, \dots, \mathcal{E}_{n}$ a set of $n+1$ $G$-bundles on $\Gamma_{\sum_i \infty x_i + \sum_j \infty y_j}$,
    \item $\phi_1, \dots, \phi_{n}$ Hecke modifications
    \[ \phi_k \colon \mathcal{E}_k |_{\Gamma_{\sum_{i\in I_k} \infty x_i + \sum_j \infty y_j} \setminus \Gamma_{\sum_{i\in I_k} x_i}} \simeq \mathcal{E}_{k+1} |_{\Gamma_{\sum_{i\in I_k} \infty x_i + \sum_j \infty y_j} \setminus \Gamma_{\sum_{i\in I_k} x_i}} \]
    and $\theta$ an isomorphism of the last bundle to the trivial bundle
    \[ \theta \colon \mathcal{E}_n \simeq \mathcal{G}. \]
    \item $\varepsilon_1, \dots, \varepsilon_n$ reductions of structure group such that for each $y_j$ in the support of $N$, $\varepsilon_i$ is a reduction of structure group of $\mathcal{E}_{i-1}$ from $G_{y_j}$ to $G_{y_j}^{a_j+}$.
\end{enumerate}
There is a map $\Gr_{G,I\cup J}^{\triv(N)^{I\cup J}} \to C^I$ that sends a tuple classifying data above to the underlying points $(x_i)_{i \in I}$, and we will use $\mathfrak{q}_I$ to denote this map.
\end{definition}

\begin{proposition}\label{prop: grgtrivmax multiplication}
The forgetful map $\Gr_{G,I}^{\triv(N)^I} \to \Gr_{G,I}^{(1, \dots, n), \triv(N)}$ is an $H^{n-1}$-torsor.  Over the scheme $(C \setminus N)^I$, the diagram
\begin{equation}
\begin{tikzcd}
    \Gr_{G,I}^{(1,\dots,n)} \times H^I \arrow{r}{m} \arrow{d}& \Gr_{G,I}^{(1,\dots,n)} \times H \arrow{d} \\
    \Gr_{G,I}^{\triv(N)^I} \arrow{r}& \Gr_{G,I}^{(1, \dots, n), \triv(N)}
\end{tikzcd}
\end{equation}
commutes where the vertical arrows are isomorphisms trivializing the torsor and the top horizontal arrow pulls back the multiplication map $m \colon H^I \to H$.
\end{proposition}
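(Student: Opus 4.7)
The $H^{n-1}$-torsor claim is immediate from the moduli descriptions. The space $\Gr_{G,I}^{\triv(N)^I}$ records reductions $\varepsilon_1,\dots,\varepsilon_n$ of $\mathcal{E}_0,\dots,\mathcal{E}_{n-1}$ at each $y_j$, while $\Gr_{G,I}^{(1,\dots,n),\triv(N)}$ records only $\varepsilon$, a reduction of $\mathcal{E}_0$. The forgetful map keeps $\varepsilon_1$ as $\varepsilon$ and discards $\varepsilon_2,\dots,\varepsilon_n$; each discarded reduction is a section of an associated $H$-bundle at $y_j$, so the fiber is an $H^{n-1}$-torsor.

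For the commutative diagram, the plan is to exploit the following rigidification. Over $(C \setminus N)^I$ the legs avoid the support of $N$, so each $\phi_k$ restricts to an isomorphism of $G$-bundles on a formal neighborhood $\hat y_j$ of every $y_j$. The global trivialization $\theta$ then propagates backward through the chain to yield canonical trivializations $\tau_i := \theta \circ \phi_n \circ \cdots \circ \phi_{i+1}$ of each $\mathcal{E}_i$ at $\hat y_j$. Via $\tau_i$, any reduction of $\mathcal{E}_i$ at $y_j$ from $G_{y_j}$ to $G_{y_j}^{a_j+}$ is encoded by an element of $H$.

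With this in place, I would define the right vertical isomorphism tautologically, sending $(x,h)$ to the same data together with the reduction $\varepsilon$ whose class under $\tau_0$ is $h$. The left vertical isomorphism is defined by a \emph{twist}: send $(x,(h_1,\dots,h_n))$ to the data whose reduction $\varepsilon_i$ of $\mathcal{E}_{i-1}$ has class $h_i h_{i+1} \cdots h_n \in H$ relative to $\tau_{i-1}$. This is a bijection because the assignment $(h_1,\dots,h_n)\mapsto(h_1\cdots h_n,\ h_2\cdots h_n,\ \dots,\ h_n)$ is an automorphism of $H^n$. Commutativity of the square is then a direct check: both paths send $(x,(h_1,\dots,h_n))$ to the pair $(x,\varepsilon)$ where $\varepsilon$ is the reduction of $\mathcal{E}_0$ of class $h_1\cdots h_n$ under $\tau_0$.

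The subtle point, and the one that motivates the twist, is that the naive trivialization — where $h_i$ simply encodes the class of $\varepsilon_i$ under $\tau_{i-1}$ — converts the bottom forgetful arrow into the projection $H^I \to H$ onto the first coordinate, not the multiplication. The content of the proposition is precisely that, after precomposing the naive left trivialization with the triangular automorphism of $H^I$ described above, the forgetful map is instead converted into the multiplication $m\colon H^I\to H$. This reparametrization is the only nontrivial point; everything else is a bookkeeping verification using that the $\phi_k$ and $\theta$ are isomorphisms in a neighborhood of each $y_j$.
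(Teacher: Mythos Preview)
Your argument is correct, and the $H^{n-1}$-torsor claim and commutativity verification are fine as written.

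The paper's proof takes a more direct route that avoids your twist step. Rather than first introducing the ``naive'' coordinates $g_i$ (the class of $\varepsilon_i$ against the propagated trivialization $\tau_{i-1}$) and then reparametrizing by a triangular automorphism of $H^I$, the paper chooses the left trivialization so that $h_i$ is \emph{defined} as the element of $H$ representing the isomorphism of associated $H$-bundles $(\phi_i)_H \colon (\mathcal{E}_{i-1})_H \to (\mathcal{E}_i)_H$, where both sides are trivialized by the reductions $\varepsilon_i$, $\varepsilon_{i+1}$ (and $\theta$ for the last bundle). In these coordinates the composite $\phi_n \circ \cdots \circ \phi_1$ is manifestly represented by the product $h_1 \cdots h_n$, so the square commutes with multiplication on top for free. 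Your twisted coordinates are in effect a change of basis that lands on (a conjugate of) the paper's coordinates; what you call ``the only nontrivial point'' disappears once one picks the transition-function parametrization from the outset. Your approach has the merit of making explicit why a naive choice fails, but the paper's choice is shorter and more conceptual.
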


\begin{proof}
    We consider the case $|I| = 2$, as the generalization to $I$ a finite set is clear.  The identification on the left gives the tuple $(h_i) \in H^I$ where $h_i$ is the element of $H$ represented by the isomorphism of associated $H$-bundles given by $\phi_i \colon \mathcal{E}_i \to \mathcal{E}_{i+1}$, both of which are trivial $H$-bundles by applying the reduction of structure group data.  Multiplication corresponds to the element representing the isomorphism of associated $H$-bundles given by the composition $\phi_n \circ \dots \circ \phi_1$, which is multiplication.
\end{proof}

Let $j_k$ denote any pullback of the inclusion of a power of the unramified locus into the whole curve $(C \setminus (N \cup R))^{I_k} \to C^{I_k}$.  Let $\mathfrak{q}_I$ denote the map $\Gr_{G,I\cup J}^{\triv(N)} \to C^I$ and its iterated variants.

\begin{proposition}\label{prop:psi-factorizable grassmannian}
The pullback constructed above agrees with the usual twisted external tensor product as in geometric Satake.

Let $\zeta$ be a rank-1 character sheaf corresponding to a representation of $H(\F_q)$.  Then the Satake sheaves on Beilinson-Drinfeld grassmannians as well as the pairs
\begin{equation} (\mathfrak{q}_I, (j_1)_! \mathcal{S}_{V_1} \widetilde{\boxtimes} \dots \widetilde{\boxtimes} (j_k)_! \mathcal{S}_{V_k} \widetilde{\boxtimes} j_! \mathcal{S}_{W} \boxtimes j_! \zeta \in D(\Gr^{(I_1, \dots, I_k), \triv(N)}_{G,I}, \qlbar)) \end{equation}
are $\Psi$-factorizable.
\end{proposition}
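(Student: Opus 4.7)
The plan is to exhibit the pair $(\mathfrak{q}_I, \mathcal{F})$, via smooth pullbacks and proper pushforwards, as a composition built from external products of constructible sheaves on individual copies of the curve $C$. The base case is Corollary~\ref{corr: Kunneth factorizable}, which handles sheaves on Grassmannians over a single curve; repeated application of Proposition~\ref{prop: kunneth factorizable} together with Propositions~\ref{prop: psi factorizability under pushforward} and~\ref{prop: psi factorizability under pullback} then propagates $\Psi$-factorizability along the convolution diagram.

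First I would dispatch the Satake sheaves on $\Gr_{G,I}$ with no $\triv(N)$ structure. For $V = V_1 \boxtimes \dots \boxtimes V_{|I|} \in \Rep(\widehat{G}^I)$, the standard fusion isomorphism gives $\mathcal{S}_V \cong m_!(\mathcal{S}_{V_1} \widetilde{\boxtimes} \dots \widetilde{\boxtimes} \mathcal{S}_{V_{|I|}})$, where $m \colon \Gr^{(\{1\},\dots,\{|I|\})}_{G,I} \to \Gr_{G,I}$ is the ind-proper convolution map (cf.\ Proposition~\ref{properness}). The twisted external product on the source is, by construction of the $\kappa$ map in Equation~\eqref{eqn: kappa map}, the smooth pullback of the ordinary external product $\boxtimes_l \mathcal{S}_{V_l}$ on $\prod_l [G_{\infty} \backslash \Gr_G]$; this identification is also the content of the first sentence of the proposition. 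Each factor $\mathcal{S}_{V_l}$ lives over a single copy of $C$ and so is $\Psi$-factorizable. Iterating Proposition~\ref{prop: kunneth factorizable} gives $\Psi$-factorizability of the external product over $C^I$, and then Propositions~\ref{prop: psi factorizability under pullback} and~\ref{prop: psi factorizability under pushforward} carry this through $\kappa$ and $m_!$.

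For the full statement, I would follow the same script using the version of $\kappa$ from Equation~\eqref{eqn:product grassmannian}. Over the unramified locus this lands in
\begin{equation*}
\prod_{k < n} [(G_N)_{I_k,\infty,\sum \infty y_j} \backslash \Gr_{G,I_k}] \times [G_{I_n,\infty,\sum \infty y_j} \backslash \Gr_{G,I_n}^{\triv(N)}] \times H,
\end{equation*}
as described around Equation~\eqref{eqn:unramified product grassmannian}, and pulling back the ordinary external product along this smooth $\kappa$ recovers $(j_1)_! \mathcal{S}_{V_1} \widetilde{\boxtimes} \dots \widetilde{\boxtimes} j_! \mathcal{S}_W \boxtimes j_! \zeta$ by the analogous torsor description. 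Each $(j_k)_! \mathcal{S}_{V_k}$ factor is $\Psi$-factorizable over $C^{I_k}$ by the previous paragraph, and $j_! \zeta$ on $H$ is trivially $\Psi$-factorizable over a point. Iterated Künneth and smooth pullback along $\kappa$ then finish the argument.

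The main technical hurdle will be making each smooth-pullback-along-$\kappa$ step legitimate, since $\kappa$ is only genuinely smooth after restricting to a Schubert stratum and passing to a finite-dimensional linear algebraic quotient. Our sheaves are constructible with respect to this stratification, so one carries out the argument stratum-by-stratum and then reassembles at the ind-scheme level. The one essentially new ingredient beyond the classical Satake setting is the behavior of the $H$-factor over the unramified locus, which is controlled by Propositions~\ref{prop:Ginfty construction} and~\ref{prop: grgtrivmax multiplication}.
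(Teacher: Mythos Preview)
Your argument for the trivial-$\zeta$ case (second paragraph) is correct and matches the paper.  The gap is in the non-trivial-$\zeta$ case.  You write that over the unramified locus the target of $\kappa$ splits off an $H$ factor, so that $j_!\zeta$ on $H$ is ``trivially $\Psi$-factorizable over a point,'' and then Künneth plus smooth pullback finishes.  But the hypotheses of Proposition~\ref{prop: kunneth factorizable} and Corollary~\ref{corr: Kunneth factorizable} require the \emph{ambient} space to be a product $X_1\times\cdots\times X_n$ over $S_1\times\cdots\times S_n$.  The splitting $\Gr^{(I_1,\dots,I_k),\triv(N)}_{G,I}\simeq \Gr^{(I_1,\dots,I_k)}_{G,I}\times H$ holds only over $(C\setminus N)^I$; over the full base the $H$-torsor is genuinely twisted and does not peel off as a separate factor.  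The $!$-extension of an external product on this open locus is therefore not an external product on the ambient space, and Künneth does not apply.  The paper warns about exactly this point in the paragraph preceding Definition~\ref{def:grgtrivmax}: a general $\zeta$ on the $H$-torsor ``is not external in a way that Proposition~\ref{prop: kunneth factorizable} and Corollary~\ref{corr: Kunneth factorizable} can be applied.''

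The missing idea is the one encoded in Definition~\ref{def:grgtrivmax} and Proposition~\ref{prop: grgtrivmax multiplication}, which you cite but do not actually use.  One pulls back further along the $H^{|I|-1}$-torsor $\Gr_{G,I}^{\triv(N)^I}\to\Gr_{G,I}^{(1,\dots,n),\triv(N)}$, giving one $H$-reduction \emph{per leg}.  The point of the rank-$1$ character sheaf hypothesis on $\zeta$ is precisely that $m^*\zeta\cong\zeta\boxtimes\cdots\boxtimes\zeta$, so on $\Gr_{G,I}^{\triv(N)^I}$ the pulled-back sheaf becomes a twisted external product of factors $(j_i)_!(\mathcal{S}_{V_i}\boxtimes\zeta)$, each honestly living over a single copy of $C$.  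Now Künneth applies, and one descends back by smooth surjectivity.  Without this step the character-sheaf hypothesis on $\zeta$ plays no role in your argument, which is a sign that something is missing.  For general (non-maximal) partitions, the paper then reduces to the maximal case by proper pushforward along the collapsing map of Proposition~\ref{properness}; this is cleaner than trying to run your $\kappa$-argument directly, since the first and last factors of \eqref{eqn:product grassmannian} still carry the $N$-structure and would require the same unwinding.
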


\begin{proof}
The first statement is essentially Theorem~1.17(c) of \cite{lafforgue2018chtoucas}.

Consider first the case that $(I_1, \dots, I_k)$ is a maximal partition such that each set $I_j$ has size $1$ in the constant sheaf case.  The external product of Satake sheaves is $\Psi$-factorizable by Proposition~\ref{prop: kunneth factorizable} and Corollary~\ref{corr: Kunneth factorizable}.  Being the pullback of an external product mapping to $\Gr_{G,I}^{(I_1, \dots, I_n)}$ along a smooth map forgetting the trivialization along $N$, the twisted external tensor products are $\Psi$-factorizable by Proposition~\ref{prop: psi factorizability under pullback} if $\zeta$ is $\qlbar$.  More generally, if $\zeta$ comes from a character sheaf on $H$, we can turn it into an external product by considering the pullback along $\Gr_{G,I}^{\triv(N)^I} \to \Gr_{G,I}^{(1, \dots, n), \triv(N)}$, which is an $H^{|I|-1}$-torsor.  Recall that for a character sheaf $\zeta$ on $H$, pullback on the multiplication map $m \colon H^I \to H$ gives $m^*(\zeta) = \zeta \boxtimes \dots \boxtimes \zeta$ as sheaves.  Along the identification in Proposition~\ref{prop: grgtrivmax multiplication}, the pullback of the sheaf in question when $W = W_1 \boxtimes \dots \boxtimes W_{|J|}$ is
\begin{equation}
\begin{aligned}
    (j_1)_! (\mathcal{S}_{V_1} \boxtimes \zeta) \widetilde{\boxtimes} \dots \widetilde{\boxtimes} (j_{|I|})_! (\mathcal{S}_{V_{|I|}} \boxtimes \zeta) \widetilde{\boxtimes} &\\
    (j_{|I|+1})_! (\mathcal{S}_{W_1} \boxtimes \zeta) \widetilde{\boxtimes} \dots \widetilde{\boxtimes} (j_{|I|+|J|})_! (\mathcal{S}_{W_{|J|}} \boxtimes \zeta)
    &\in D(\Gr_{G,I\cup J}^{\triv(N)^{I\cup J}}, \qlbar).
\end{aligned}
\end{equation}
The $\widetilde{\boxtimes}$ notation indicates that this sheaf is the pullback of an external direct product along a smooth map analogous to \eqref{eqn: kappa map} and is thus external.  Now, the result follows by the $\Psi$-factorizability of external products, pullback, and descent along a quotient.

Now consider case of a general partition.  Then the sheaves $\mathcal{S}_{V_1} \widetilde{\boxtimes} \dots \widetilde{\boxtimes} \mathcal{S}_{V_k} \boxtimes \zeta$ is a sum of pushforwards of twisted external products $\boxtimes \zeta$ along proper maps in Proposition~\ref{properness}.  Therefore, they are $\Psi$-factorizable by Proposition~\ref{prop: psi factorizability under pushforward}.
\end{proof}

\begin{remark}
    In particular, the above theorem applies for $\zeta$ a representation of the tori $T(\F_q)$ appearing when $G$ has Iwahori reduction at a point $y$ and $N = 0 \cdot y$ gives a reduction to the unipotent radical of the Iwahori.  The resulting local systems appear as isotypic components of Kummer covers.
\end{remark}
\begin{remark}
    In the case that $\zeta$ is trivial, a special case of this statement was given in \cite{salmon2023unipotent} and generalized in \cite[Theorem~3.2]{achar2023higher}.  The constructions in loc. cit. use a unipotent nearby cycles construction rather than the nearby cycles over general bases via the vanishing topos.
\end{remark}

For representations, we introduce the following definition to codify the hypothesis that a sheaf of the form $\mathcal{S}_{I,V} \boxtimes \zeta$ on $\Gr_{G,I}^{\triv(N)}$ is $\Psi$-factorizable.

\begin{definition}\label{def: factorizable representation}
    Let $N = \sum a_i y_i$ be a level structure with an identification of $H = \prod_i G_{y_i} / G_{y_i}^{a_i+}$ and $\zeta$ a representation of $H(\F_q)$.  We say that $\zeta$ is a $\Psi$-factorizable representation if $\zeta$ is a representation of $H(\F_q)$ in $\Lambda$ vector spaces and if for any tuple of representations $V \boxtimes W \in \Rep(\widehat{G}^{I\cup J})$, the pair $(\mathfrak{q}_I, j_! (\mathcal{S}_{I\cup J,V\boxtimes W} \boxtimes \zeta))$ is $\Psi$-factorizable.

    More generally, if $U \subset C$ is Zariski-open we say that $(N, \zeta)$ is a factorizable over $U$ if for any representation $V \boxtimes W \in \Rep(\widehat{G}^{I \cup J})$, the pair $(\mathfrak{q}_I, j_! (\mathcal{S}_{I\cup J,V \boxtimes W} \boxtimes \zeta))$ is $\Psi$-factorizable when restricted to $U^I$.
\end{definition}

To finish this section, we show that the main properties we are interested in can be reduced to the case of a divisor of the form $N = a \cdot y$ supported on a single point.

\begin{proposition}
    Let $N = N' + N''$ such that the support of $N'$ and $N''$ is disjoint.  For $N = \sum_j a_j y_j$, let $H$, resp. $H'$, $H''$ be the algebraic groups over $\F_q$ given by
    \[\prod_{y_j \in N} G_{y_j} / G_y^{a_j+},\]
    resp. for $N'$ and $N''$.  If $\zeta = \zeta' \boxtimes \zeta''$ as a representation of $H(\F_q) = H'(\F_q) \times H''(\F_q)$, and if $\zeta'$ is a $\Psi$-factorizable representation, then $\zeta''$ is factorizable over $C \setminus N''$.
\end{proposition}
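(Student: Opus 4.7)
Plan: The strategy is to exploit the disjointness of the supports of $N'$ and $N''$, which decouples the two pieces of level-structure data, and to transfer $\Psi$-factorizability from the $N'$-space (where the hypothesis lives) to the $N''$-space (where the conclusion lives), via the common intermediate $N$-space.

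I begin with the geometric splitting. Since $H = H' \times H''$ and the supports of $N'$ and $N''$ are disjoint, the $H^{|I\cup J|}$-torsor $\Gr_{G,I\cup J}^{\triv(N)^{I\cup J}} \to \Gr_{G,I\cup J}^{(I_1,\dots,I_n)}$ factors as the fiber product of the $(H')^{|I\cup J|}$- and $(H'')^{|I\cup J|}$-torsors coming from $\triv(N')$ and $\triv(N'')$. By the trivialization argument used in the proposition immediately following Definition~\ref{def:grgtriv}, the $(H'')^{|I\cup J|}$-torsor becomes canonically trivial over $(C\setminus N'')^{I\cup J}$, yielding
\[ \Gr_{G,I\cup J}^{\triv(N)^{I\cup J}}|_{(C\setminus N'')^{I\cup J}} \cong \Gr_{G,I\cup J}^{\triv(N')^{I\cup J}}|_{(C\setminus N'')^{I\cup J}} \times (H'')^{|I\cup J|}, \]
and analogously $\Gr_{G,I\cup J}^{\triv(N'')^{I\cup J}}|_{(C\setminus N'')^{I\cup J}} \cong \Gr_{G,I\cup J}^{(I_1,\dots,I_n)}|_{(C\setminus N'')^{I\cup J}} \times (H'')^{|I\cup J|}$. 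Under these identifications and the external-product description of Proposition~\ref{prop: grgtrivmax multiplication}, the sheaves $j_!(\mathcal{S}_{I\cup J,V\boxtimes W} \boxtimes \zeta)$ (with $\zeta = \zeta' \boxtimes \zeta''$) and $j_!(\mathcal{S}_{I\cup J,V\boxtimes W} \boxtimes \zeta'')$ become external tensor products that factor off a $(\zeta'')^{\boxtimes |I\cup J|}$ piece onto the trivialized $(H'')^{|I\cup J|}$ factor.

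Next, I translate the hypothesis and descend. The $\Psi$-factorizability of $\zeta'$ gives $\Psi$-factorizability of $(\mathfrak{q}_I, j_!(\mathcal{S}_{I\cup J,V\boxtimes W} \boxtimes \zeta'))$ on $\Gr_{G,I\cup J}^{\triv(N')^{I\cup J}}$; by Proposition~\ref{prop: kunneth factorizable} and Corollary~\ref{corr: Kunneth factorizable}, combining with the constant $(H'')^{|I\cup J|}$-factor promotes this to $\Psi$-factorizability of $(\mathfrak{q}_I, j_!(\mathcal{S}_{I\cup J,V\boxtimes W} \boxtimes \zeta))$ on the $N$-space restricted to $(C\setminus N'')^{I\cup J}$. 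I then descend to the $N''$-space along the smooth forgetful map (forgetting the $\triv(N')$-reductions), which is a trivial $(H')^{|I\cup J|}$-torsor over $(C\setminus N')^{I\cup J}$ and an $(H')^{|I\cup J|}$-torsor more generally; on the trivialization locus the pullback of $j_!(\mathcal{S}_{I\cup J,V\boxtimes W} \boxtimes \zeta'')$ from the $N''$-side matches $j_!(\mathcal{S}_{I\cup J,V\boxtimes W} \boxtimes \zeta)$ on the $N$-side up to the $\zeta'$-twist on the $(H')^{|I\cup J|}$ factor. Proposition~\ref{prop: psi factorizability under pullback}, together with the fact that $\Psi$-factorizability is detected on smooth covers in the relevant slices (a descent statement following from smooth base change for $R\Psi$ as in Proposition~\ref{prop:basic functorialities}), then transfers factorizability to the $N''$-space over $(C\setminus N'')^{I\cup J}$.

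The main obstacle is extending the argument from $(C\setminus N'')^{I\cup J}$ (where all legs avoid $N''$) to the full preimage $\mathfrak{q}_I^{-1}((C\setminus N'')^I)$ demanded by the conclusion, in which only the $I$-legs in the base are restricted and the $J$-legs may lie anywhere in $C$, including on $N''$. Since the $\Psi$-factorizability condition only concerns specializations in $C^I$, the $J$-legs serve as spectators; one argues that factorizability on the open locus above, combined with a stratification by the positions of the $J$-legs relative to $N'\cup N''$, suffices. Ensuring that the K\"unneth and smooth-descent steps commute functorially with these stratifications, and that the $\zeta'$-twist on the non-trivial part of the $(H')^{|I\cup J|}$-torsor does not obstruct the descent when $J$-legs land on $N'$, is the most delicate point of the proof.
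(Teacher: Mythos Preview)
Your approach is considerably more elaborate than the paper's, and the extra work stems from reading the conclusion literally. The paper's own proof treats the sheaf $\mathcal{S}_{I\cup J,V\boxtimes W}\boxtimes\zeta$ on the $N$-space throughout; the displayed isomorphism and the application of Proposition~\ref{prop: kunneth factorizable} establish that $(N,\zeta)$ is factorizable over $C\setminus N''$, so the ``$\zeta''$'' in the statement is almost certainly a typo for ``$\zeta$''. Once you realize this, there is nothing to descend --- the K\"unneth step in your second paragraph is already the whole argument, and the subsequent passage to the $N''$-space is superfluous.

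Concretely, the paper's proof is a single K\"unneth. Since the supports of $N'$ and $N''$ are disjoint, one identifies $j_!(\mathcal{S}_{I\cup J,V\boxtimes W}\boxtimes\zeta)$, viewed over $(C\setminus N'')^I$, with the external product of $j_!(\mathcal{S}_{I\cup J,V\boxtimes W}\boxtimes\zeta')$ on $\Gr_{G,I\cup J}^{\triv(N')}\to(C\setminus N'')^I$ (which is $\Psi$-factorizable by the hypothesis on $\zeta'$) and the local system $\zeta''$ on $H''\to\Spec k$ (trivially $\Psi$-factorizable, since the base is a point). Proposition~\ref{prop: kunneth factorizable} finishes. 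No passage to the maximal variant $\Gr^{\triv(N)^{I\cup J}}$, no smooth descent, and no stratification are needed.

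Your ``main obstacle'' --- that only the $I$-legs are forced into $C\setminus N''$, so a $J$-leg might hit $N''$ and spoil the product decomposition --- is not actually an obstacle. The $j_!$ in Definition~\ref{def: factorizable representation} is the extension by zero from $(C\setminus(N\cup R))^{I\cup J}$, so the sheaf already vanishes whenever \emph{any} leg, $I$ or $J$, lands on $N\supseteq N''$. On the support of the sheaf the $H''$-torsor is therefore globally trivial over the base $(C\setminus N'')^I$, and the product identification used in the K\"unneth step holds without further work.
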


\begin{proof}
    In this case, there is an isomorphism
    \begin{equation} \mathcal{S}_{I,V} \boxtimes \zeta \cong \mathcal{S}_{I\cup ,V\boxtimes W} \boxtimes \zeta' \boxtimes \zeta'', \end{equation}
    where $\zeta''$ is viewed as a local system over $H'' \to k$ with $k$ is the base field and $\mathcal{S}_{I\cup J,V \boxtimes W} \boxtimes \zeta'$ is viewed as a sheaf over $\mathfrak{q}_I \colon \Gr_{G,I\cup J}^{\triv(N')} \to (C \setminus N'')^I$.  The latter sheaf is $\Psi$-factorizable, and now Proposition~\ref{prop: kunneth factorizable} can be applied.
\end{proof}

\subsection{Global shtukas}

Let $C$ be a smooth, connected curve over $\mathbb{F}_q$.  Let $G$ be a parahoric group scheme over $\mathbb{F}_q$.  Let $R$ be the parahoric ramification of $G$, the finite subset of closed points in $C$ where the reduction of $G$ is not hyperspecial.  In this setting, there is a stack $\Bun_G$ of $G$-bundles on the curve (with parahoric structures according to the ramification $R$).

Given an $S$-point $\mathcal{E}$ of the stack of $G$-bundles on $C$, we denote the Frobenius pullback by ${}^{\tau} \mathcal{E} = (1 \times \Frob_S)^* \mathcal{E}$.  We recall the moduli space of shtukas \cite[Definition~2.1]{lafforgue2018chtoucas} \cite{xue2020cuspidal}.

\begin{definition}
An $S$-point of the moduli space $\Cht_{G,I}$ classifies a $G$-bundle $\mathcal{E} \in \Bun_G(S)$ on $C$, an $I$-tuple of points $(x_i)_{i \in I} \in C^I(S)$, and an isomorphism
\[ \phi \colon \mathcal{E}|_{C \times S \setminus \bigcup_{i\in I} \Gamma_{x_i}} \cong {}^{\tau}\mathcal{E}|_{C \times S \setminus \bigcup_{i\in I} \Gamma_{x_i}}. \]
\end{definition}

Let $\widehat{\Lambda}^{\mathbb{Q}}_G = \widehat{\Lambda}_G \otimes \mathbb{Q}$ be the rational vector space spanned by coweights of $G$.  There is a dominant cone $\widehat{\Lambda}^{+,\mathbb{Q}}_{G}$, a convex cone whose extreme rays are spanned by fundamental coweights.  There is a partial order on $\widehat{\Lambda}^{\mathbb{Q}}$ by $\mu_1 \ge \mu_2$ if and only if $\mu_1 - \mu_2$ is in the rational cone spanned by positive coroots.  To each $\mu \in \widehat{\Lambda}^{+,\mathbb{Q}}_{G}$, there is an open substack $\Bun_G^{\le \mu}$ which is of finite type as a stack for semisimple $G$, and running over a cofinal sequence of such $\mu$ give an exhaustive stratification of $\Bun_G$ known as the Harder-Narasimhan stratification \cite[Section~1.4]{xue2020cuspidal} and \cite[Section~7.3]{drinfeld2013compact}.  Similarly, for $G^{ad}$ the adjoint group and any $\mu \in \widehat{\Lambda}^{+,\mathbb{Q}}_{G^{ad}}$, there is an exhaustive stratification $\Cht^{\le \mu}_{G,I}$ given by open substacks which are each finite type \cite[Section~1.7]{xue2020cuspidal}.

Restricting to formal neighborhoods around the legs $(x_i)_{i \in I}$ gives a map
\[\epsilon \colon \Cht_{G,I} \to [G_{I,\infty} \backslash \Gr_{G,I}].\]
If $V \in \Rep(\widehat{G}^I)$, we can pull back the Satake sheaves $\mathcal{S}_V$ along the map $\epsilon$.  Let $Z$ be the center of the generic fiber of $G$.  If $\Xi$ is a discrete subgroup of $Z(\mathbb{A})$ such that $\Xi \cap Z(\mathbb{O}) Z(K) = \{ 1 \}$, then $\Xi$ maps to $\Bun_Z(\mathbb{F}_q)$ and acts on $\Cht_{G, I}$ via the $\Bun_Z(\mathbb{F}_q)$-action.  If furthermore $\Xi$ is a lattice in $Z(\mathbb{A})$, that is the quotient $\Bun_Z(\mathbb{F}_q) / \Xi$ is finite, then $\Cht_{G, I} / \Xi$ has finitely many connected components.  Furthermore, $\epsilon$ factors through $\Cht_{G,I} / \Xi =: \Cht_{G,\Xi,I}$, and thus we can speak of Satake sheaves $\mathcal{S}_V$ on $\Cht_{G,\Xi,I}$, which we will do as they have nice properties (in particular, the cohomology sheaves restricted to Harder-Narasimhan strata become constructible).

\begin{definition}\label{def: shtukas N level}
Let $N$ be the data of an effective $\mathbb{Q}$-divisor, a sum $\sum a_j y_j$ of points $y_j \in |C|$ with $a_j \in \mathbb{Q}$.  We may write $N \subset C$ to denote the collection of points $\{ y_i \}$ when no confusion may result.

The moduli space $\Cht_{G,\Xi,N,I}(S)$ of shtukas with $N$-level structure classifies the data of
\begin{enumerate}
    \item a shtuka $((x_i)_{i \in I}, \mathcal{E}, \phi) \in (\Cht_{G,\Xi,I} \times_{C^I} (C \setminus N)^I)(S)$,
    \item A trivialization $\mathcal{E}_H \simeq \mathcal{G}_H$ of the associated $H$-bundle of $\mathcal{E}$, compatible with the identification $\mathcal{E}_H \simeq {}^{\tau} \mathcal{E}_H$ coming from $\phi$.
\end{enumerate}
\end{definition}

For each $y_j$ in the support of $N$, a reduction of structure group of $\mathcal{E}$ from $G_{y_j}$ to $G_{y_j}^{a_j+}$ compatible with the shtuka structure in the sense that $\phi$ induces the same reduction of structure group ${}^{\tau} \mathcal{E}$ as the Frobenius pullback does.

\begin{proposition}\label{prop:etale cover shtukas}
The forgetful map $\Cht_{G,\Xi,N,I} \to \Cht_{G,\Xi,I}$ is a finite \'{e}tale cover over $(C \setminus N)^I$ with Galois group $\prod_j (G_{y_j} / G_{y_j}^{a_i+})(\mathbb{F}_q)$.
\end{proposition}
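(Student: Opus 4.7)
The plan is to reduce the statement to Lang's theorem applied to the smooth $\mathbb{F}_q$-group $H$. First, I would observe that over the base $(C \setminus N)^I$, the graphs $\Gamma_{x_i}$ of the legs are disjoint from $N \times S$. Consequently, the Hecke modification $\phi$ restricts to a genuine isomorphism on a formal neighborhood of $N \times S$, and pushing out along $G_{y_j} \to G_{y_j}/G_{y_j}^{a_j+}$ produces a globally defined isomorphism of $H$-torsors $\phi_H \colon \mathcal{E}_H \xrightarrow{\sim} {}^{\tau}\mathcal{E}_H$ on $S$. In particular, the level structure is precisely the data of an $H$-shtuka trivialization of $\mathcal{E}_H$.

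Second, I would reinterpret the fiber of the forgetful map over a fixed point $((x_i),\mathcal{E},\phi)$. A trivialization $\mathcal{E}_H \simeq \mathcal{G}_H$ compatible with $\phi$ corresponds, under the canonical identification ${}^{\tau}\mathcal{G}_H = \mathcal{G}_H$ of the trivial $H$-bundle with its Frobenius pullback, to a section of the $H$-torsor $T := \mathrm{Isom}(\mathcal{G}_H, \mathcal{E}_H)$ that is fixed by the twisted Frobenius $\phi_H^{-1}\circ\tau \colon T \to T$. Equivalently, the fiber is the equalizer of $\mathrm{id}_T$ and $\phi_H^{-1}\circ \tau$ in $S$-schemes.

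Third, since $H$ is a smooth affine $\mathbb{F}_q$-group by the N\'eron blowup construction of Proposition~\ref{prop:btconstruction} and the identification in Proposition~\ref{prop:Ginfty construction}(2), Lang's theorem applies: the twisted Lang map $T \to T$, $t \mapsto t^{-1}\cdot (\phi_H^{-1}\circ \tau)(t)$, is surjective and \'etale, and its fiber over the identity is a finite \'etale $H(\mathbb{F}_q)$-torsor on $S$. This identifies the forgetful map, restricted to $(C\setminus N)^I$, with a finite \'etale Galois cover with Galois group $H(\mathbb{F}_q) = \prod_j (G_{y_j}/G_{y_j}^{a_j+})(\mathbb{F}_q)$.

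The main subtlety I anticipate is bookkeeping around non-connectedness of $H$ (the reductive quotient of a parahoric need not be connected), which forces one to apply Lang-Steinberg on each connected component of $H$ and check that the pieces glue to a finite \'etale cover realizing the full finite group of $\mathbb{F}_q$-points as the Galois group. The relative version over $S$ presents no additional difficulty because \'etale-locally on $S$ the torsor $T$ is trivial, so the relative statement reduces to the absolute one, and the construction of the twisted Frobenius is manifestly functorial in $S$, hence glues across a cover.
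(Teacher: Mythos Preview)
Your argument is correct and gives a genuinely more direct proof than the one in the paper. You apply Lang's theorem in families directly to the $H$-torsor $T=\mathrm{Isom}(\mathcal{G}_H,\mathcal{E}_H)$ equipped with the twisted Frobenius $\phi_H^{-1}\circ\tau$, identifying the fiber of the forgetful map with the fixed locus $T^F$, which is then a finite \'etale $H(\mathbb{F}_q)$-torsor. By contrast, the paper does not argue directly: it first passes to a deeper \emph{integer} divisor $D\ge N$, invokes Varshavsky's result that $\Cht_{G,D,I}\to\Cht_{G,I}$ is Galois with group $G(\mathbb{O})/K_D$, and then identifies $\Cht_{G,N,I}$ with the intermediate quotient $\Cht_{G,D,I}/(K_N/K_D)$ using the N\'eron blowup formalism of Mayeux--Richarz--Scholbach. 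Your route is more self-contained; the paper's route has the side benefit of explicitly identifying $\Cht_{G,N,I}$ with shtukas for the dilated group $G_N$, a perspective used elsewhere.

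Two small remarks. First, your caution about non-connectedness of $H$ is unnecessary here: parahoric group schemes have connected special fibers by definition, and the Moy--Prasad quotients $G_{y_j}/G_{y_j}^{a_j+}$ are successive extensions of the connected reductive quotient by vector groups, hence connected. So ordinary Lang suffices. Second, you should say a word about the lattice $\Xi$: the statement is for $\Cht_{G,\Xi,N,I}\to\Cht_{G,\Xi,I}$, and you are implicitly working before this quotient. The reduction is immediate since the forgetful map is $\Xi$-equivariant for the free $\Xi$-action, but it is worth stating.
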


\begin{proof}
It suffices to prove this result before passing to the quotient by the discrete group $\Xi$, since the corresponding map before quotienting by $\Xi$ is $\Xi$-equivariant.

We can define an open subgroup $K_N$ of $G(\mathbb{O})$ as
\begin{equation}
K_N = G_N(\mathbb{O}) = \prod_{x \in |N|} G_x^{(a_j)+} \times \prod_{x \in |C \setminus N|} G_x \subset \prod_{x \in |C|} G_x = G(\mathbb{O}).
\end{equation}
Following the construction of $G_N$ as a N\'eron blowup, we can find a further divisor $D = \sum_j b_j y_j$ with $b_j > a_j$ and a subgroup
\begin{equation}
K_D = G_D(\mathbb{O}) = \prod_{x \in |N|} G_x^{b_j} \times \prod_{x \in |C \setminus N|} G_x \subset K_N,
\end{equation}
such that $K_N / K_D$ form the $\mathbb{F}_q$-rational points of some group scheme $J \subset G|_D$ for which $G_N$ is constructed by dilatation along the group scheme $J$ with respect to the divisor $D$.
We define the stack of shtukas $\Cht_{G,D,I}$ with level structures as the data of tuples $((x_i)_{i \in I}, \mathcal{E}, \phi, \varepsilon_D)$ of ordinary shtuka data away from the divisor $|D|$ with a level structure (trivialization)
\[ \varepsilon_D \colon \mathcal{E}|_{D \times S} \simeq \mathcal{G}|_{D \times S} \]
where $\mathcal{G}$ is the trivial $G$-bundle.  As a straightforward generalization of \cite[Proposition~2.16(b)]{varshavsky2004moduli}, the map $\Cht_{G,D,I} \to \Cht_{G,I}$ is Galois with Galois group $G(\mathbb{O}) / K_D$.  So it suffices to identify the map $\Cht_{G,N,I} \to \Cht_{G,I}$ with $\Cht_{G,D,I} / (K_N / K_D) \to \Cht_{G,I}$.  Such an identification can be made using \cite[Corollary~4.13]{mayeux2020n}, identifying $\Cht_{G,N,I}$ with shtukas for the group scheme $G_N$ and $\Cht_{G,D,I} / (K_N / K_D)$ with shtukas for $G$ with level-$(J,D)$-structures, both constructions occuring away from the level over $(C \setminus N)^I$.
\end{proof}

\subsection{A smoothness result}

In this section, our objects live over $\mathbb{F}_q$ or $\overline{\mathbb{F}_q}$ and we will often remember a Frobenius structure, or rather partial Frobenius structure.  We will occasionally add a subscript indicating the field to prevent ambiguity.  Recall that the moduli space of shtukas $\Cht_{G,\Xi,N,I}$ is equipped with a map forgetting all the information except for the legs
\[ \mathfrak{p} \colon \Cht_{G,\Xi,N,I} \to (C \setminus N)^I. \]

For fixed $\Cht_{G,\Xi,N,I}$, we say that the unramified locus of the moduli space of shtukas is the subset lying over $(C \setminus (N \cup R))^I$.  The significance of the unramified locus is that over this locus, the moduli space of shtukas is equipped with a smooth map
\[ \epsilon^{ur} \colon \Cht_{G,N,I} \to [G_{I,\infty} \backslash \Gr_G] \]
by restricting to formal neighborhoods around the legs.  Moreover, as explained in \cite[Section~1.3.2]{xue2020cuspidal}, the action of $G_{I,\infty}$ on $\Gr_G$ factors through $G^{ad}_{I,\infty}$, and the composition
\[ \Cht_{G,N,I} \to [G_{I,\infty} \backslash \Gr_G] \to [G^{ad}_{I,\infty} \backslash \Gr_G] \]
is $\Xi$-equivariant, thus producing
\begin{equation} \epsilon^{ur} \colon \Cht_{G,\Xi,N,I} \to [G^{ad}_{I,\infty} \backslash \Gr_G]. \end{equation}
See Definition~\ref{def:epsilon} and Theorem~\ref{thm:smooth model} for a generalization of this map.

Starting with Satake sheaves $\mathcal{S}_{I,V}$ for $V \in \Rep(\widehat{G}^I)$, we may pull them back along $\epsilon^{ur}$ to $\mathscr{F}_{I,V} := (\epsilon^{ur})^* \mathcal{S}_{I,V}$.  Then taking pushforward gives $\mathcal{H}_{I,V} := \mathfrak{p}_! \mathscr{F}_{I,V}$ as sheaves on $(C \setminus (N \cup R))^I_{\overline{\mathbb{F}_q}}$.  Similarly, for $\mu \in \widehat{\Lambda}^{+,\mathbb{Q}}_{\widehat{G}}$, we can form $\mathscr{F}^{\le \mu}_{I,V}$ by restricting to the open substack $\Cht^{\le \mu}_{G,\Xi,N,I}$ over the unramified locus and denote by $\mathcal{H}^{\le \mu}_{I,V}$ the corresponding pushforward.  These cohomology sheaves are in fact constructible.

We now recall some smoothness results of \cite{xue2020smoothness} and \cite{salmon2023unipotent}, using some of the framework of \cite{hemo2020constructible}.  Recall the following consequence of Xue's smoothness result:

\begin{theorem}\label{thm:Xue smoothness}
The sheaves $\mathcal{H}_{I,V}$ are smooth over $(C \setminus (N \cup R))^I$.
\end{theorem}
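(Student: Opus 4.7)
The plan is to deduce the theorem in the spirit of Xue, by translating smoothness into a statement about specialization maps and then commuting the non-proper pushforward $\mathfrak{p}_!$ past nearby cycles using the $\Psi$-factorizability established earlier. Smoothness of $\mathcal{H}_{I,V}$ over $(C\setminus(N\cup R))^I$ is equivalent to the assertion that, for every specialization $\bar t\to\bar s$ of geometric points in the unramified locus, the canonical map $(\mathcal{H}_{I,V})_{\bar s}\to(\Psi\,\mathcal{H}_{I,V})_{\bar s}$ is an isomorphism, where $\Psi$ denotes the nearby cycles functor along the chosen specialization. This is the standard characterization of a lisse constructible sheaf.

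First I would reduce to the case of a ``one-leg'' specialization. Because the base $(C\setminus(N\cup R))^I$ is a product of curves, any specialization factors as an iterated composition of specializations in a single factor of $C$ at a time. Since $\Psi$-factorizability holds automatically over $1$-dimensional regular bases and is preserved under external products by Proposition~\ref{prop: kunneth factorizable} and Corollary~\ref{corr: Kunneth factorizable}, it suffices to check that a one-leg specialization induces an isomorphism, while the remaining legs sit at geometric generic points.

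Next, and this is the heart of the matter, I would commute $\mathfrak{p}_!$ with $\Psi$. Since $\mathfrak{p}$ is not proper, this requires the Zorro lemma argument of \cite{xue2020smoothness}: using the $\Psi$-factorizability of the Satake sheaves on the Beilinson-Drinfeld grassmannian from Proposition~\ref{prop:psi-factorizable grassmannian} together with partial Frobenius, one constructs an inverse to the canonical map $\mathfrak{p}_!\Psi\,\mathscr{F}_{I,V}\to\Psi\,\mathfrak{p}_!\mathscr{F}_{I,V}$. The input here is precisely the fact that, under the smooth map $\epsilon^{\mathrm{ur}}$, the sheaves $\mathscr{F}_{I\cup\{j\},V\boxtimes W}$ pulled back from Satake sheaves enjoy the fusion identification identifying $\Psi$ with a Satake sheaf indexed by the tensor product representation, making the nearby cycles into another cohomology sheaf of shtukas with one fewer leg.

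Once commutation with $\mathfrak{p}_!$ is in place, one identifies $\Psi\,\mathcal{H}_{I\cup\{j\},V\boxtimes W}\cong \mathcal{H}_{I,V\otimes W}$ via fusion at the colliding legs, and the specialization map on stalks becomes an isomorphism by inspection. The main obstacle is genuinely the non-proper commutation step: this is the technical content of Xue's argument and precisely the reason $\Psi$-factorizability was introduced in Section~\ref{sec:nearby cycles}. The extension from Xue's original setting to the level structure $N$ follows formally from Proposition~\ref{prop:etale cover shtukas}, since the map $\Cht_{G,\Xi,N,I}\to\Cht_{G,\Xi,I}$ is finite \'etale over $(C\setminus N)^I$, so smoothness is preserved along the pushforward.
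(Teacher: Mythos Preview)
The paper does not give its own proof of this theorem---it is cited from Xue---but the surrounding discussion and the proof of Theorem~\ref{thm:smoothness} make clear the structure of Xue's argument, and your proposal misses its essential first half. Xue's method has two steps: first establish smoothness over $\eta^I$ (equivalently, constancy over $\overline\eta^I$) using the Eichler--Shimura relations to get finite generation over a product of local Hecke algebras and then invoking Drinfeld's lemma; only afterwards does the Zorro construction extend smoothness from $\eta^I$ to closed points. You jump directly to the Zorro step, but that step is not self-contained. In the chain of maps constructing the Zorro inverse (cf.\ the proof of Theorem~\ref{thm:xuezorro}), one must at some point collapse $\Psi_{J_1}\Psi_{J_2}$ to $\Psi_\Delta$ on a \emph{pushforward} sheaf $R\mathfrak{p}_!(\cdots)$; over the unramified locus this is exactly the statement that specialization to the diagonal is an isomorphism, i.e., smoothness over $\eta^{J_1\cup J_2}$, which is what step one supplies. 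Mentioning ``partial Frobenius'' in passing is not enough: the substantive input is the finite-generation statement from Eichler--Shimura that makes Drinfeld's lemma applicable.

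There is also a circularity in your reduction to one leg: Proposition~\ref{prop: kunneth factorizable} and Corollary~\ref{corr: Kunneth factorizable} apply to external tensor products, but $\mathcal{H}_{I,V}$ is not a priori such a product---that is a \emph{consequence} of smoothness plus partial Frobenius, not an input. And your final step conflates specialization to the diagonal (coalescence, which gives $\mathcal{H}_{I\cup\{j\},V\boxtimes W}|_\Delta\cong\mathcal{H}_{I,V\otimes W}$) with specialization to a closed point of the curve; the former does not by itself yield the latter.
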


We are particularly interested in the stalks at generic points of the sheaves $\mathcal{H}_{I,V}$.  Let the geometric generic point of $(C \setminus (N \cup R))^I$ be denoted $\overline{\eta_I}$, and write $H_{I,V}$ for the vector space $\mathcal{H}_{I,V}|_{\overline{\eta_I}}$.  Since these sheaves are also equipped with partial Frobenius maps, this includes the statement that the cohomology sheaves $H_{I,V}$, a priori equipped with an absolute Galois group action at the geometric generic point, factor through the $I$th power of the Weil group.  Recall that the Weil group is defined by a square
\begin{equation}
\begin{tikzcd}
    \Weil((C \setminus (N \cup R))_{\mathbb{F}_q}, \overline{\eta}) \arrow{r} \arrow{d} & \mathbb{Z} \arrow{d} \\
    \Gal((C \setminus (N \cup R))_{\mathbb{F}_q}, \overline{\eta}) \arrow{r} & \widehat{\mathbb{Z}}.
\end{tikzcd}
\end{equation}
Recall also the Frobenius-Weil group is defined as in \cite[Section~1.2.6]{xue2020smoothness} so that we have a commutative diagram
\begin{equation}
\begin{tikzcd}
    \Gal(\overline{\eta_I} / \eta^I) \arrow{r} \arrow{d} & F\Weil(\eta^I, \overline{\eta_I}) \arrow{r} \arrow{d} & \mathbb{Z}^{I} \arrow[d, equal] \\
    \Gal(\overline{\eta} / \eta)^I \arrow{r} & \Weil(\eta, \overline{\eta})^I \arrow{r} & \mathbb{Z}^I.
\end{tikzcd}
\end{equation}
Both groups are extensions of a profinite group by a finitely generated group.  So a consequence of Xue's statement is that
\begin{corollary}
The action of $F\Weil(\eta^I, \overline{\eta_I})$ on $H_{I,V}$ factors through $\Weil(\eta, \overline{\eta})^I$.
\end{corollary}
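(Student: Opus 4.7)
The plan is to deduce the corollary from Theorem~\ref{thm:Xue smoothness} via Drinfeld's lemma in its partial-Frobenius formulation, which is already one of the standard packaging tools in V.~Lafforgue's setup and in Xue's smoothness paper.

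First, I would recall that Theorem~\ref{thm:Xue smoothness} asserts that $\mathcal{H}_{I,V}$ is a lisse $\qlbar$-sheaf on $(C \setminus (N \cup R))^I_{\fqbar}$. Consequently, its stalk $H_{I,V}$ at the geometric generic point $\overline{\eta_I}$ carries a continuous action of the geometric étale fundamental group $\pi_1\bigl((C \setminus (N \cup R))^I_{\fqbar}, \overline{\eta_I}\bigr)$. Next, I would observe that $\Cht_{G,\Xi,N,I}$ carries partial Frobenius endomorphisms $\Frob_i$ indexed by $i \in I$ (one for each leg), and the sheaves $\mathscr{F}_{I,V}$ and hence $\mathcal{H}_{I,V}$ come equipped with isomorphisms intertwining these partial Frobenii with their counterparts $\Frob_i$ on the base $(C \setminus (N \cup R))^I$. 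It is precisely this extra structure that upgrades the geometric $\pi_1$-action on $H_{I,V}$ to an action of $F\Weil(\eta^I, \overline{\eta_I})$, fitting into the extension
\begin{equation*}
1 \to \pi_1\bigl((C \setminus (N \cup R))^I_{\fqbar}, \overline{\eta_I}\bigr) \to F\Weil(\eta^I, \overline{\eta_I}) \to \mathbb{Z}^I \to 1.
\end{equation*}

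With the lisse-plus-partial-Frobenius structure in hand, I would invoke Drinfeld's lemma, in the form used by L.~Lafforgue and V.~Lafforgue: a lisse $\qlbar$-sheaf on $U^I$ (for $U$ a smooth geometrically connected curve over $\F_q$) equipped with compatible partial Frobenius structures descends to a representation of the product Weil group $\Weil(U, \overline{\eta})^I$. Applied to $U = C \setminus (N \cup R)$ and the sheaf $\mathcal{H}_{I,V}$, this gives exactly that the $F\Weil(\eta^I, \overline{\eta_I})$-action on $H_{I,V}$ factors through $\Weil(\eta, \overline{\eta})^I$, matching the commutative diagram relating $F\Weil$ and $\Weil^I$ that was just recalled.

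The only real content beyond Theorem~\ref{thm:Xue smoothness} is checking the hypotheses of Drinfeld's lemma are met; the main obstacle to verify is the smoothness (lisse-ness) condition, which has been handed to us as Theorem~\ref{thm:Xue smoothness}. The compatibility of partial Frobenii on $\mathcal{H}_{I,V}$ with those on $(C \setminus (N \cup R))^I$ is formal from the construction of $\mathscr{F}_{I,V}$ as a pullback of Satake sheaves through $\epsilon^{ur}$ followed by proper-ish $\mathfrak{p}_!$, since partial Frobenius is defined functorially on shtukas with one leg moved and commutes with the forgetful map to $C^I$. No further finiteness input is needed since $\qlbar$-lisse sheaves on a normal connected base are equivalent to continuous finite-dimensional $\pi_1$-representations, so the factorization is literally a statement about the representation and not about any additional geometric data.
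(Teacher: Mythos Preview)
Your overall strategy—smoothness plus partial Frobenius structure plus Drinfeld's lemma—is exactly the right one, and it matches how the paper (following Xue) packages the argument. However, there is a genuine gap in your finiteness claim.

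You write that $\mathcal{H}_{I,V}$ is ``a lisse $\qlbar$-sheaf'' and that ``no further finiteness input is needed since $\qlbar$-lisse sheaves on a normal connected base are equivalent to continuous finite-dimensional $\pi_1$-representations.'' This is not correct: $\mathcal{H}_{I,V}$ is only \emph{ind}-lisse, being the filtered colimit $\varinjlim_\mu \mathcal{H}^{\le\mu}_{I,V}$ over Harder--Narasimhan truncations, and the stalk $H_{I,V}$ is typically infinite-dimensional. The classical form of Drinfeld's lemma you invoke applies to a finite-dimensional continuous representation of $\pi_1\bigl((C\setminus(N\cup R))^I_{\fqbar}\bigr)$ equipped with commuting partial Frobenii. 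To apply it here, you would need to exhaust $H_{I,V}$ by finite-dimensional subspaces that are simultaneously stable under the geometric $\pi_1$ \emph{and} under all partial Frobenii. The individual $\mathcal{H}^{\le\mu}_{I,V}$ are lisse but are \emph{not} partial-Frobenius stable (partial Frobenius shifts the truncation parameter), so they do not serve this purpose.

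This is precisely where the extra input you dismiss is required. In Xue's argument (recapitulated in the paragraph of the paper following Proposition~\ref{thm:Weil smoothness}), one constructs submodules $\mathfrak{M}_\mu \subset H_{I,V}$ by closing up the image of a truncation under both partial Frobenii and local excursion (Hecke) operators; the Eichler--Shimura relation then forces each $\mathfrak{M}_\mu$ to be finitely generated over the commutative Hecke algebra, hence suitable for the version of Drinfeld's lemma in \cite[Lemma~3.2.13]{xue2018finiteness}. Only then does one obtain the factorization through $\Weil(\eta,\overline{\eta})^I$ on each $\mathfrak{M}_\mu$, and passing to the colimit yields the corollary. In short, the corollary follows from the \emph{proof} of Theorem~\ref{thm:Xue smoothness} rather than from its statement together with the naive form of Drinfeld's lemma; your argument as written needs this finiteness step inserted.
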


The smoothness result of Theorem~\ref{thm:Xue smoothness} proved in \cite{xue2020smoothness} has been reinterpreted by \cite[Proposition~6.7]{hemo2020constructible} in a form that is directly useful to us.

Following \cite{bhatt2013pro} and \cite{hemo2020constructible}, inside $D(X, \Lambda)$, we may consider subcategories of lisse (smooth) and constructible (complexes of) sheaves which we denote $D_{\mathrm{lis}}(X, \Lambda)$ and $D_{\mathrm{cons}}(X, \Lambda)$, respectively.  We also have ind-lisse and ind-constructible sheaves which are filtered colimits of lisse and constructible (complexes of) sheaves, which we denote by $D_{\mathrm{indlis}}(X, \Lambda)$ and $D_{\mathrm{indcons}}(X, \Lambda)$.  We can artificially equip these categories with an additional action of partial Frobenius or many such partial Frobenius actions.  If $X = X_1 \times \dots \times X_n$, we define $\phi_{X_i} = \Frob_{X_i} \times \mathrm{id}_{\overline{\mathbb{F}_q}}$ and defining the Weil-pro\'{e}tale site $X_1^{\Weil} \times \dots \times X_n^{\Weil}$ as consisting of pairs $(U, (\varphi_1, \dots, \varphi_n))$ where $U \to X_1 \times \dots \times X_n$ is pro-\'{e}tale and $\varphi_i\colon U \to U$ is an endomorphism covering $\phi_{X_i}$ on the $i$th factor and the identity on all other factors (for a full treatment, see \cite[Section~4]{hemo2020constructible}).  We can similarly form the derived categories of ind-lisse and ind-constructible sheaves on this site.  Hemo, Richarz, and Scholbach prove

\begin{proposition}\label{thm:Weil smoothness}
The sheaves $\mathcal{H}_{I,V}$ lie in the essential image of the functor
\[ D_{\mathrm{indlis}}((C \setminus (N \cup R))^{\mathrm{Weil}}, \Lambda)^{\otimes I} \to D_{\mathrm{indcons}}(((C \setminus (N \cup R))^{\mathrm{Weil}})^I, \Lambda). \]
\end{proposition}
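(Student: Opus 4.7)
The plan is to combine Xue's smoothness (Theorem~\ref{thm:Xue smoothness}), the natural partial Frobenius structure on shtukas, and Drinfeld's lemma in its Weil-pro\'etale form from Hemo-Richarz-Scholbach.

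The first step is to package the Harder-Narasimhan-truncated cohomology sheaves $\mathcal{H}^{\le\mu}_{I,V}$ as lisse sheaves on the Weil-pro\'etale site $((C\setminus(N\cup R))^{\mathrm{Weil}})^I$. By Theorem~\ref{thm:Xue smoothness}, each $\mathcal{H}^{\le\mu}_{I,V}$ is a lisse constructible sheaf on $(C\setminus(N\cup R))^I_{\overline{\F_q}}$. Since $\Cht_{G,\Xi,N,I}$ carries partial Frobenius endomorphisms $F_i$ covering $\phi_{C_i}$ on the base for each $i\in I$, and these respect Harder-Narasimhan strata by functoriality, pushforward along $\mathfrak{p}$ equips $\mathcal{H}^{\le\mu}_{I,V}$ with compatible partial Frobenius structures. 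This gives an object of $D_{\mathrm{lis}}(((C\setminus(N\cup R))^{\mathrm{Weil}})^I, \Lambda)$, and taking the filtered colimit over $\mu$ yields $\mathcal{H}_{I,V} \in D_{\mathrm{indlis}}(((C\setminus(N\cup R))^{\mathrm{Weil}})^I, \Lambda)$.

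The second step is to apply Drinfeld's lemma, which asserts that the external product functor
\[ D_{\mathrm{lis}}(X^{\mathrm{Weil}}, \Lambda)^{\otimes I} \to D_{\mathrm{lis}}((X^{\mathrm{Weil}})^I, \Lambda) \]
is an equivalence for $X$ a connected normal scheme of finite type over $\F_q$, applied with $X = C \setminus (N \cup R)$. For each $\mu$ this produces lisse sheaves on each factor whose external product recovers $\mathcal{H}^{\le\mu}_{I,V}$, and passing to the filtered colimit along the Harder-Narasimhan exhaustion presents $\mathcal{H}_{I,V}$ as an object in the essential image of the tensor product of ind-lisse categories.

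The main obstacle is ensuring that the Drinfeld lemma equivalence is sufficiently functorial to extend from individual lisse sheaves to the ind-category. The transition maps $\mathcal{H}^{\le\mu}_{I,V} \to \mathcal{H}^{\le\mu'}_{I,V}$ arise from open immersions $\Cht^{\le\mu}_{G,\Xi,N,I}\hookrightarrow\Cht^{\le\mu'}_{G,\Xi,N,I}$ that are not themselves manifestly of external product form, so one must use that the external product functor is fully faithful with respect to morphisms of lisse sheaves on the Weil site; this naturality is precisely what is verified in \cite[Proposition~6.7]{hemo2020constructible}, so the result follows by assembling the colimit diagram inside the essential image.
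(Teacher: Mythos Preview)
Your argument has two genuine gaps, both in the first step.

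First, Theorem~\ref{thm:Xue smoothness} asserts smoothness only for the ind-object $\mathcal{H}_{I,V}=\varinjlim_\mu \mathcal{H}^{\le\mu}_{I,V}$, not for the individual Harder--Narasimhan truncations. Each $\mathcal{H}^{\le\mu}_{I,V}$ is merely constructible on $(C\setminus(N\cup R))^I$; it is lisse only over some open $\Omega\subset(C\setminus(N\cup R))^I$ depending on $\mu$ (the paper says this explicitly in the paragraph preceding Theorem~\ref{thm:smoothness}). So you cannot feed the truncations into the lisse Drinfeld lemma one at a time.

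Second, partial Frobenius does \emph{not} preserve Harder--Narasimhan strata: the morphisms $F_{I_1}$ send $\Cht^{\le\mu}$ into $\Cht^{\le\mu+\kappa}$ for some shift $\kappa$ depending on the modification type (this is exactly the notion of being ``filtered by partial Frobenius'' in \cite[Definition~3.2]{salmon2023unipotent}, invoked just below). Hence $\mathcal{H}^{\le\mu}_{I,V}$ does not carry a partial Frobenius structure in the sense required to define an object of $D_{\mathrm{lis}}(((C\setminus(N\cup R))^{\mathrm{Weil}})^I,\Lambda)$, and your packaging in step one fails.

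The actual proof (which the paper attributes to \cite[Proposition~6.7]{hemo2020constructible} without reproducing it) does not argue truncation by truncation. Instead one works over the generic point $\eta^I$, builds for each $\mu$ the submodule $\mathfrak{M}_\mu\subset\varinjlim\mathcal{H}^{\le\mu}$ generated by the image of the $\mu$-truncation under the local excursion/Hecke algebras \emph{and} all partial Frobenii, uses the Eichler--Shimura relation to show $\mathfrak{M}_\mu$ is finitely generated over a tensor product of commutative Hecke algebras, and only then applies Drinfeld's lemma in the form \cite[Lemma~3.2.13]{xue2018finiteness} to these finitely generated modules. The paper sketches precisely this argument in the two paragraphs following the statement you are trying to prove. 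Your proposal skips the Eichler--Shimura/finite-generation step entirely, and without it there is no object at finite level to which Drinfeld's lemma applies.
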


We will see that being in the essential image of this functor suffices to prove the following corollary.

\begin{corollary}
Let $j \colon (C \setminus (N \cup R))^I \to C^I$ be the open embedding.  The sheaves $j_! \mathcal{H}_{I,V}$ are $\Psi$-factorizable with respect to the identity map $C^I \to C^I$.
\end{corollary}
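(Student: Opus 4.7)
The strategy is to combine the smoothness statement of Proposition~\ref{thm:Weil smoothness} (which realizes $\mathcal{H}_{I,V}$ as an ind-external product of ind-lisse sheaves on the individual factors) with the K\"unneth form of $\Psi$-factorizability established in Corollary~\ref{corr: Kunneth factorizable}, after commuting $j_!$ past the external product. The upshot is that everything reduces to the $1$-dimensional case, where $\Psi$-factorizability is automatic.

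In more detail, let $j_0 \colon C \setminus (N \cup R) \hookrightarrow C$ so that $j = j_0^I$ (up to the obvious identification) and the structure map under consideration is $\mathrm{id} \colon C^I \to C^I$. By Proposition~\ref{thm:Weil smoothness}, up to taking filtered colimits of lisse sheaves on $C \setminus (N \cup R)$, we may write $\mathcal{H}_{I,V}$ as $\varinjlim_\alpha L_{\alpha,1} \boxtimes \cdots \boxtimes L_{\alpha,|I|}$ for lisse $L_{\alpha,i}$ on $C \setminus (N\cup R)$. Applying the lemma preceding Corollary~\ref{corr: Kunneth factorizable} to commute $j_!$ with the external product, we obtain
\[
 j_! \mathcal{H}_{I,V} \;\cong\; \varinjlim_\alpha\; (j_0)_! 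L_{\alpha,1} \boxtimes \cdots \boxtimes (j_0)_! L_{\alpha,|I|},
\]
so the problem reduces to showing that each ind-constructible external tensor product $\boxtimes_i (j_0)_! L_{\alpha,i}$ on $C^I$ is $\Psi$-factorizable with respect to the identity. Each $(j_0)_! L_{\alpha,i}$ is a constructible sheaf on the smooth curve $C$, hence $\Psi$-factorizable by the corollary following Proposition on traits. Corollary~\ref{corr: Kunneth factorizable}, applied to the identity structure maps $C \to C$, then gives $\Psi$-factorizability of each external product, and the ind-variant of $\Psi$-factorizability (as extended in the preamble to this subsection) handles the filtered colimit.

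The only genuinely delicate points are (i) that $j_!$ really does distribute over the ind-external product, which is where the lemma before Corollary~\ref{corr: Kunneth factorizable} is used (and requires separatedness of $j_0$, which is clear), and (ii) that factorization through $D_{\mathrm{indlis}}((C\setminus(N\cup R))^{\mathrm{Weil}},\Lambda)^{\otimes I}$ supplied by Proposition~\ref{thm:Weil smoothness} can be interpreted, at the level of underlying sheaves (forgetting Frobenius), as an ind-external product. The Weil structure plays no role in the $\Psi$-factorizability assertion itself, so we need only the underlying ind-constructible presentation.

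The main obstacle, such as it is, is therefore bookkeeping around the ind-structure: $\Psi$-factorizability is preserved under filtered colimits essentially because the relevant specialization maps $R(\Psi_f)_t^s$ and their iterates commute with filtered colimits of ind-constructible sheaves (this is the content of the extension of the theory asserted at the start of the subsection on nearby cycles over a product of a curve). Given that, the three inputs—Proposition~\ref{thm:Weil smoothness}, the K\"unneth formula for $j_!$, and Corollary~\ref{corr: Kunneth factorizable}—combine to give the conclusion without further work.
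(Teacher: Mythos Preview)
Your proposal is correct and follows essentially the same approach as the paper: the corollary is a special case of Theorem~\ref{thm:smoothness}, whose proof (for the identity map $J \to I$) uses exactly your three ingredients---the ind-external product presentation from Proposition~\ref{thm:Weil smoothness}, the lemma distributing $j_!$ over $\boxtimes$, and Corollary~\ref{corr: Kunneth factorizable}. The one place where the paper is more explicit than you is the colimit step: rather than appealing to a blanket ``ind-variant of $\Psi$-factorizability,'' the paper observes that for the identity map $f$ the functor $R\Psi_f$ is exact (since $f$ is finite), which is what justifies commuting nearby cycles past the filtered colimit.
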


As in \cite{salmon2023unipotent}, we will generalize this corollary to Theorem~\ref{thm:smoothness}, which includes various slightly modified settings, such as pushforwards of certain nearby cycles sheaves that we will define later.  There is a functorial characterization of the properties that are sufficient to ensure that Xue's smoothness argument can be run.  Let us recall how this smoothness argument goes.  Let $\FinS$ be the category of finite sets and set maps between them.  We can define two categories $\Rep(\widehat{G}^{\bullet})$ and $D^b_c(X^{\bullet}, \Lambda)$, both of which are cofibered over $\FinS$, following \cite[Section~3]{salmon2023unipotent}.

Let $\mathcal{G}^{\le \mu}$ be a filtered collection of cocartesian functors $\Rep(\widehat{G}^{\bullet}) \to D^b_c(X^{\bullet}, \Lambda)$ for each rational dominant coweight $\mu$.  The archetypical example are the cohomology sheaves $\mathcal{H}^{\le \mu}_{I,V}$ constructed above.  Considering dominant coweights as a monoid with ordering given by positive coroots, we have a notion of $\mathcal{G}^{\le \mu}$ being filtered by partial Frobenius maps $F_{\bullet}$ as in \cite[Definition~3.2]{salmon2023unipotent}.  We are especially interested in
\[ \varinjlim_{\mu} \mathcal{G}^{\le \mu}(V) := \varinjlim_{\mu} \mathcal{G}^{\le \mu}(I,V) \]
as an ind-constructible sheaf over $X^I$ whose stalk at the geometric generic point $\overline{\eta_I}$ is equipped with an action of the Frobenius-Weil group $F\Weil(\eta^I, \overline{\eta_I})$.

Being constructible, $\mathcal{G}^{\le \mu}(V)$ is smooth over some open subset $\Omega \subset X^I$.  The Eichler-Shimura relation is proved in detail in \cite[Theorem~3.5]{salmon2023unipotent} for this setting.  Let $v$ be a closed point in this open subset and let $v_i$ be its projections to each coordinate $i \in I$.  Let $\mathscr{H}_{G, v_i}$ be the local excursion algebra at $v_i$ generated by generalized $S$-excursion operators as in \cite[Definition~3.4]{salmon2023unipotent}.  This will turns out to be equivalent to an action of a local Hecke algebra but we do not know this (or need this) a priori, but we do know that this algebra is finitely generated since it is isomorphic to $\mathscr{O}(\widehat{G})^{\widehat{G}}$, that is, $\widehat{G}$-conjugation-invariant regular functions on $\widehat{G}$.  We can construct a submodule $\mathfrak{M}_{\mu}$ of $\varinjlim \mathcal{G}^{\le \mu}$ generated by the image of $\mathcal{G}^{\le \mu} \to \varinjlim \mathcal{G}^{\le \mu}$ under the action of the tensor product of local excursion algebras $\otimes_{i \in I} \mathscr{H}_{G,v_i}$ together with the partial Frobenius maps.  The Eichler-Shimura relation produces a relation in this action that implies finite generation of this module under the tensor product of Hecke algebras.  As a result, we can apply Drinfeld's lemma in the form \cite[Lemma~3.2.13]{xue2018finiteness} and argue as in \cite[Proposition~1.4.3]{xue2020smoothness} that any specialization map between points over $\eta^I$ induces an isomorphism.  This allows us to conclude that $\varinjlim \mathcal{G}^{\le \mu}$ are ind-smooth over $\eta^I$, which is a Zariski open subset.

Thus, the corollary is a special case of the following theorem.

\begin{theorem}\label{thm:smoothness}
Let $\mathcal{G}^{\le \mu}$ be a collection of cocartesian functors $\Rep(\widehat{G}^{\bullet}) \to D^b_c(X^{\bullet}, \Lambda)$ between categories cofibered over $\FinS$ and filtered with respect to partial Frobenius maps $F_{\bullet}$, where $X$ is a Zariski open subset of $C$.  Let $j \colon X^I \to C^I$ be the open embedding.  Then the sheaves $j_! \left(\varinjlim_{\mu} \mathcal{G}^{\le \mu}(V)\right)$ are $\Psi$-factorizable with respect to any projection $X^I \to X^J$ coming from a map $J \to I$ of finite sets.
\end{theorem}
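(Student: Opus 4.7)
The plan is to combine Xue's smoothness argument, in the form already sketched just above the statement, with the Künneth-type results for $\Psi$-factorizability of the previous subsection.

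First, I would run the Eichler-Shimura and Drinfeld's lemma argument indicated before the statement: fix a closed point $v$ in the Zariski open $\Omega \subset X^I$ where $\mathcal{G}^{\le\mu}(V)$ is constructible, and consider the submodule $\mathfrak{M}_\mu \subset \varinjlim_\mu \mathcal{G}^{\le\mu}(V)$ generated by the image of $\mathcal{G}^{\le\mu}(V)$ under the tensor product $\bigotimes_{i\in I}\mathscr{H}_{G,v_i}$ of local excursion algebras together with all partial Frobenius maps. The Eichler-Shimura relation forces $\mathfrak{M}_\mu$ to be finitely generated over this tensor product of local Hecke algebras, and Drinfeld's lemma in the form of \cite[Lemma~3.2.13]{xue2018finiteness} then shows that every specialization between geometric points over $\eta^I$ is an isomorphism. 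By Proposition~\ref{thm:Weil smoothness}, the resulting ind-lisse sheaf lies in the essential image of the external tensor product functor on the Weil-proétale sites, so each stage of the colimit decomposes (on a suitable Zariski open of $X^I$) as a finite external product $K_1 \boxtimes \cdots \boxtimes K_n$ of lisse sheaves on $X$.

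Next, the lemma preceding Corollary~\ref{corr: Kunneth factorizable} gives $j_!(K_1 \boxtimes \cdots \boxtimes K_n) \cong (j_1)_! K_1 \boxtimes \cdots \boxtimes (j_n)_! K_n$ on $C^I$, and since each $(j_i)_! K_i$ is a sheaf on the curve $C$, it is $\Psi$-factorizable by the corollary for smooth curves. A projection $X^I \to X^J$ coming from a map $J \to I$ may be written (up to reordering) as the product of identity maps on the factors indexed by $J$ with a projection to a point on the remaining factors; a relative application of Proposition~\ref{prop: kunneth factorizable}, combined with the triviality of nearby cycles along the identity, then yields $\Psi$-factorizability of this external product with respect to the projection. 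The passage across the filtered colimit in $\mu$ relies on the extension of $\Psi$-factorizability to ind-constructible sheaves mentioned at the beginning of this subsection.

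The main obstacle is the passage between the Weil-proétale statement of Proposition~\ref{thm:Weil smoothness} and the étale-topos setting in which the nearby cycles formalism and Proposition~\ref{prop: kunneth factorizable} are stated. One must verify that at each finite stage the underlying étale sheaf inherits a compatible external product structure on a suitable Zariski open, which I expect to follow from combining the Drinfeld factorization of the geometric Galois action with the constructibility of each $\mathcal{G}^{\le\mu}(V)$. The boundary behavior along $C^I \setminus X^I$ is controlled by $j_!$-extension together with the local acyclicity of lisse sheaves on $X^I$, so the only nontrivial nearby cycles contributions come from the external factors on each curve, where the one-dimensional $\Psi$-factorizability already proved takes over.
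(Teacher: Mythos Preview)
Your approach is essentially the same as the paper's: smoothness over $\eta^I$ via Eichler--Shimura plus Drinfeld's lemma, then an external-product decomposition combined with the K\"unneth results of the previous subsection.  Two points deserve tightening.

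First, it is not the individual stages $\mathcal{G}^{\le\mu}(V)$ that decompose as external products; rather, Drinfeld's lemma shows that the $F\Weil(\eta^I,\overline{\eta_I})$-action on the stalk of the \emph{colimit} factors through $\Weil(\eta,\overline{\eta})^I$, so the colimit (restricted over $\eta^I$) is a filtered colimit of external tensor products of Weil local systems on $\eta$.  The filtered system of external products need not coincide with the Harder--Narasimhan filtration indexed by $\mu$.

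Second, your sentence ``the passage across the filtered colimit relies on the extension of $\Psi$-factorizability to ind-constructible sheaves'' is where the actual work lies.  The paper makes this step precise: for the maps $p_h$ in question (the identity $C^I\to C^I$, or a coordinate projection $C^I\to C^J$ with $J\hookrightarrow I$), the functor $R\Psi_{p_h}$ is \emph{exact} on the relevant sheaves---for the identity map because $R\Psi_f$ is exact whenever $f$ is finite, and for a projection because on an external product $K_J\boxtimes K_{I\setminus J}$ with $K_{I\setminus J}$ ULA over $X^J$ the nearby cycles acts only on the first factor.  Exactness is what lets $R\Psi$ commute with the filtered colimit, after which K\"unneth handles each external product separately.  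Your ``obstacle'' about Weil-pro\'etale versus \'etale is not really one: the external-product structure you need is already on the \'etale side, coming from the factorization of the Weil action.
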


\begin{proof}[Proof of Theorem~\ref{thm:smoothness}]
By \cite[Theorem~3.6]{salmon2023unipotent} and the argument above, for any $I$ and $V \in \Rep(\widehat{G}^I)$, the sheaves $\varinjlim_{\mu} \mathcal{G}^{\le \mu}(V)$ are constant over $\overline{\eta}^I$ and smooth over $\eta^I$.

Consider first the case $J \to I$ is the identity map.  Then the sheaves are $\Psi$-good because they are ULA over the generic point and the complement of this point is quasi-finite over the base \cite[Theorem~6.1]{orgogozo2006modifications} \cite[Example~1.7(c)]{illusie2017around}.  We conclude that $\varinjlim_{\mu} \mathcal{G}^{\le \mu}$ is a filtered colimit of external tensor products of Weil local systems over the generic point $\eta$.  For the identity map, $R\Psi$ commutes with colimits because $f$ is the identity and $R\Psi_f$ is \emph{exact} if $f$ is finite by \cite[Theorem~1.14]{illusie2017around}.  Thus, the K\"unnneth theorem in Proposition~\ref{prop: kunneth factorizable} and its corollary~\ref{corr: Kunneth factorizable} implies $\Psi$-factorizability of an individual external tensor product (and its $!$ pushforward away from the unramified locus) and $R\Psi$ commuting with colimits will allow us to conclude the full statement for $\varinjlim_{\mu} \mathcal{G}^{\le \mu}$.

More generally, suppose $h \colon J \to I$ is a map of finite sets that is an injection.  Denote $p_h \colon X^I \to X^J$ the corresponding projection map on powers of $X$ and consider also the pullback $h^* \colon \Rep(\widehat{G}^I) \to \Rep(\widehat{G}^J)$.  Then over the generic point $\eta^I$ we can write $\varinjlim_{\mu} \mathcal{G}^{\le \mu}(V)$ as a filtered colimit of external products of Weil sheaves over the generic point $\eta$.  Writing $\Weil^I \cong \Weil^J \times \Weil^{I \setminus J}$, we can write simple objects in this category of Weil sheaves in the form $K_{J,\mu} \boxtimes K_{I \setminus J,\mu}$ where $K_{I \setminus J, \mu}$ extends to a sheaf on $\eta^{I \setminus J} \times X^J$ ULA over $X^J$.  Thus, for any given external product, the resulting sheaf is $\Psi$-factorizable with respect to $p_h$ and $R\Psi_{p_h}$ is exact with respect to the natural t-structure, since we can write $R\Psi_{p_h} (K_J \boxtimes K_{I \setminus J})$ as $R\Psi_{p_h} K_J \boxtimes K_{I \setminus J}$ where it exact on the first factor, related by base change to nearby cycles for the identity map $X^J \to X^J$ for the sheaf $K_J$.  For any filtered colimit of external product sheaves, $R\Psi_{p_h}$ is exact and we can conclude $\Psi$-factorizability with respect to the map $p_h$ by passing to the colimit.
\end{proof}

\section{Fusion on restricted shtukas}\label{sec:fusion}

\subsection{Restricted shtukas}

Restricted shtukas appeared in \cite{genestier2017chtoucas} and \cite{xiao2017cycles}.  They are more convenient for us than local shtukas.  Heuristically, for a normal (open compact) subgroup $K$ of a parahoric group $P$ of the loop group $LG$, local shtukas for $P$ ``look like'' a Frobenius-twisted quotient $LG /_{\sigma} P$, but restricted shtukas look like $[K \backslash LG / K] /_{\sigma} (P / K)$ on the special fiber.  If we consider just the single point given by $P \subset LG$, sheaves over the point $P /_{\sigma} P$ are representations of $P(\mathbb{F}_q)$, a category that is quite complicated.  On the other hand, restricted shtukas should only be able to ``see'' a residual $(P / K)(\mathbb{F}_q)$-representation.

\begin{definition}
Let $\Cht\mathcal{R}_{G,N,I}$ classify as its $S$-points:
\begin{enumerate}
    \item $(x_i)_{i \in I}\in C^I(S)$,
    \item $\mathcal{E}$ and $\mathcal{F}$ two $G_{I,\infty,\sum \infty y_j}$-torsors on $S$,
    \item $\phi$ a Hecke modification
    \[ \phi \colon \mathcal{E} |_{\Gamma_{\sum_i \infty x_i} \setminus \Gamma_{\sum_i x_i}} \simeq \mathcal{F} |_{\Gamma_{\sum_i \infty x_i} \setminus \Gamma_{\sum_i x_i}}, \]
    \item and an isomorphism of associated $H$-bundles $\varepsilon \colon \mathcal{F}_H \simeq {}^{\tau} \mathcal{E}_H$.
\end{enumerate}

Similarly, for a partition $(I_1, \dots, I_n)$ of $I$, let $\Cht\mathcal{R}^{(I_1, \dots, I_n)}_{G,N,I}$ classify as its $S$-points:
\begin{enumerate}
    \item $(x_i)_{i \in I}\in C^I(S)$,
    \item $\mathcal{E}_0, \dots, \mathcal{E}_n$ a set of $n+1$ $G$-bundles on $\Gamma_{\sum_i \infty x_i + \sum_j \infty y_j}$,
    \item $\phi_1, \dots, \phi_{n}$ Hecke modifications
    \[ \phi_k \colon \mathcal{E}_k |_{\Gamma_{\sum_i \infty x_i} \setminus \Gamma_{\sum_i x_i}} \simeq \mathcal{E}_{k+1} |_{\Gamma_{\sum_i \infty x_i} \setminus \Gamma_{\sum_i x_i}}, \]
    \item and an isomorphism of associated $H$-bundles $\varepsilon \colon (\mathcal{E}_n)_H \simeq {}^{\tau} (\mathcal{E}_0)_H$.
\end{enumerate}
\end{definition}


When the legs $(x_i)_{i \in I}$ live in $C \setminus N$, we can consider the left and right action of $H$ on $\Gr_{G,I}^{\triv(N)}$.  We can then define the Frobenius-twisted action of $H$ by letting $H$ act through $\Frob(h)$ on the left and $h^{-1}$ on the right.

\begin{proposition}\label{prop:restricted shtuka quotient definition}
There is a natural map $\Gr_{G,I}^{\triv(N)} \to \Cht\mathcal{R}_{G,N,I}$ that is a $G_{I,\infty,\sum \infty y_j}$-torsor.  The $G_{I,\infty,\sum \infty y_j}$ action here is a twisted action that acts on the left by the usual action from Proposition~\ref{prop:Ginfty action}~(3) and on the right through $G_{I,\infty,\sum \infty y_j} \to H$ by changing the trivialization of ${}^{\tau} \mathcal{G}_H$ by $\Frob(h)$.

Over $C \setminus N$, restricted shtukas $\Cht\mathcal{R}_{G,N,I}$ are the quotient of $\Gr_{G,I}^{\triv(N)} \cong \Gr_{G,I} \times H$ by $G_{I,\infty,\sum \infty y_j}$ where the action on $\Gr_{G,I}^{\triv(N)}$ is the usual action and the action on $H$ is the twisted adjoint action through the map $G_{I,\infty,\sum \infty y_j} \to H$.  In particular, the stack identifies with $[G_{I,\infty} \backslash \Gr_{G,I}] \times [U_H \backslash BH(\mathbb{F}_q)]$ where $U_H$ is the unipotent group
\[ \prod_{j \in J} G_{y_j}^{a_j+}. \]
\end{proposition}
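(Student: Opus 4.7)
The plan is to construct the natural map on $S$-points, verify the torsor property by comparing loop group quotient presentations of both sides, and then analyze the structure over $(C \setminus N)^I$ using Lang's theorem on $H$.

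First, given an $S$-point $((x_i), \mathcal{E}, \phi \colon \mathcal{E} \to \mathcal{G}, \varepsilon)$ of $\Gr_{G,I}^{\triv(N)}$, I send it to $((x_i), \mathcal{E}, \mathcal{F} := \mathcal{G}, \phi, \varepsilon_{\Cht}) \in \Cht\mathcal{R}_{G,N,I}$, where $\varepsilon_{\Cht} \colon \mathcal{F}_H = \mathcal{G}_H \simeq {}^\tau \mathcal{E}_H$ is the inverse of the Frobenius twist of the trivialization $\varepsilon \colon \mathcal{E}_H \simeq \mathcal{G}_H$, using the canonical identification ${}^\tau \mathcal{G}_H = \mathcal{G}_H$.

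To verify the torsor property, I use trivializations of the underlying bundles to present both stacks as explicit loop group quotients.  By Proposition~\ref{prop:Ginfty action}(1,2), fixing a trivialization of $\mathcal{E}$ presents $\Gr_{G,I}^{\triv(N)}$ as the quotient $(L_I G_{\sum \infty y_j} \times H) / G_{I,\infty,\sum \infty y_j}$, where $\phi \in L_I G_{\sum \infty y_j}$ records the Hecke modification, $h \in H$ records the reduction $\varepsilon$, and $G_{I,\infty,\sum \infty y_j}$ acts by changing the trivialization of $\mathcal{E}$.  Trivializing both $\mathcal{E}$ and $\mathcal{F}$ similarly presents $\Cht\mathcal{R}_{G,N,I}$ as $(L_I G_{\sum \infty y_j} \times H)/(G_{I,\infty,\sum \infty y_j})^2$, where the first factor changes the trivialization of $\mathcal{E}$ and contributes a $\Frob$ to the $H$-factor through the appearance of ${}^\tau \mathcal{E}_H$ in $\varepsilon_{\Cht}$, while the second factor changes the trivialization of $\mathcal{F}$.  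Comparing the two presentations, the forgetful map becomes the further quotient by the second $G_{I,\infty,\sum \infty y_j}$-factor, and the induced action on $\Gr_{G,I}^{\triv(N)}$ is precisely the left-usual-plus-right-Frobenius action described in the proposition.

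For the identification over $(C \setminus N)^I$, I combine the splitting $\Gr_{G,I}^{\triv(N)} \cong \Gr_{G,I} \times H$ over the unramified locus (observed in the discussion after Proposition~\ref{prop:stratification}) with the product decomposition $G_{I,\infty,\sum \infty y_j}|_{(C \setminus N)^I} \cong G_{I,\infty} \times \prod_j G_{y_j}$ from Proposition~\ref{prop:Ginfty construction}(1).  Under the twisted action, $G_{I,\infty}$ acts on $\Gr_{G,I}$ in the usual way and trivially on $H$, giving $[G_{I,\infty} \backslash \Gr_{G,I}]$ after quotient.  The factor $\prod_j G_{y_j}$ acts trivially on $\Gr_{G,I}$ (the legs lie away from $N$) and on $H$ through the surjection $\prod_j G_{y_j} \twoheadrightarrow H$ by Frobenius-twisted conjugation $h \mapsto \bar g \cdot h \cdot \Frob(\bar g)^{-1}$.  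Lang's theorem applied to $H$ identifies the quotient of $H$ by this twisted conjugation with $BH(\mathbb{F}_q)$, and the pro-unipotent kernel $U_H = \prod_j G_{y_j}^{a_j+}$ of the surjection acts trivially, contributing a factor of $BU_H$ and producing the claimed $[U_H \backslash BH(\mathbb{F}_q)]$.

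The main obstacle I anticipate is the careful bookkeeping of conventions in the second paragraph: the Frobenius on the right component of the twisted action is exactly what is needed to match the ${}^\tau$ on the target of $\varepsilon_{\Cht}$, so the verification reduces to keeping directions and conventions straight when comparing the two loop-group quotients.  A secondary subtlety is applicability of Lang's theorem to $H$, which I would address by reducing the twisted-conjugation quotient to the identity component and checking that the full stacky quotient still gives $BH(\mathbb{F}_q)$.
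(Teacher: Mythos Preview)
Your proposal is essentially correct and follows the same strategy as the paper: exhibit $\Cht\mathcal{R}_{G,N,I}$ as a quotient of $\Gr_{G,I}^{\triv(N)}$ by a single copy of $G_{I,\infty,\sum \infty y_j}$, then over $(C\setminus N)^I$ split off the $H$-factor and apply Lang's theorem to identify the Frobenius-twisted conjugation quotient of $H$ with $BH(\mathbb{F}_q)$, with $U_H$ acting trivially.

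One point of comparison worth noting: the paper's argument runs in the opposite direction from your first paragraph. Rather than building a map $\Gr^{\triv(N)} \to \Cht\mathcal{R}$ by setting $\mathcal{F}:=\mathcal{G}$, the paper starts from an $S$-point $((x_i),\mathcal{E},\mathcal{F},\phi,\varepsilon)$ of $\Cht\mathcal{R}$, \emph{adds} a trivialization of $\mathcal{E}$, and observes that $((x_i),\mathcal{F},\phi^{-1},\varepsilon)$ is then literally a point of $\Gr^{\triv(N)}$ (the reduction of $\mathcal{F}$ coming from $\mathcal{F}_H\simeq {}^\tau\mathcal{E}_H\simeq {}^\tau\mathcal{G}_H\simeq\mathcal{G}_H$). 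This makes the torsor structure immediate without the loop-group presentations of your second paragraph, and it makes the twisted action described in the statement transparent: changing the trivialization of $\mathcal{E}$ by $g$ acts on the $\Gr^{\triv(N)}$ data via the usual action on $\mathcal{G}$, and on the $\varepsilon$-component via $\Frob(\bar g)$ because ${}^\tau\mathcal{E}_H={}^\tau\mathcal{G}_H$ is involved. Your convention (trivializing $\mathcal{F}$ instead) also works, but the $\Frob$ then enters through your $\varepsilon_{\Cht}=({}^\tau\varepsilon)^{-1}$, and matching this to the action in the statement requires an extra unwinding. Your loop-group double-quotient presentation is a valid alternative way to see the torsor property, just slightly heavier than the paper's one-line moduli description.
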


\begin{proof}
Let $((x_i)_{i \in I}, \mathcal{E}, \mathcal{F}, \phi, \varepsilon)$ be a point in $\Cht\mathcal{R}_{G,N,I}(S)$.  Consider adding the data of a $G_{I,\infty,\sum_{j} \infty y_j}$-trivialization of $\mathcal{E}$.  Then $((x_i)_{i \in I}, \mathcal{F}, \phi^{-1}, \varepsilon)$ defines a point in $\Gr_{G,I}^{\triv(N)}$, which identifies
\[\Cht\mathcal{R}_{G,N,I} \cong [G_{I,\infty,\sum_j \infty y_j} \backslash \Gr_{G,I}^{\triv(N)}].\]

Note that over $C \setminus N$, any $S$-point in $\Gr_{G,I}^{\triv(N)}$ decomposes as a point in $\Gr_{G,I}$ and a reduction of structure group, since these two pieces of data do not interact with each other.  Similarly,
\[G_{I, \infty, N} \cong G_{I, \infty} \times H \cong G_{I,\infty,\sum_j \infty y_j} / U_H \]
over $C \setminus N$, and the $U_H$ quotient of $G_{I,\infty,\sum_j \infty y_j}$ acts trivially on $\Gr_{G,I}^{\triv(N)}$.  So what remains is how to describe the $H$ action on this locus.

Away from $N$, we have two trivializations of the associated $H$-bundle $\mathcal{F}_H$.  The Hecke modification restricts to $\phi_H \colon \mathcal{F}_H \simeq \mathcal{E}_H \simeq \mathcal{G}_H$ where $\mathcal{G}$ is a trivial bundle.  At the same time, we have an additional trivialization of $\mathcal{F}_H$ coming from the reduction of structure group.  When we pass to the quotient by $G_{I,\infty,\sum \infty y_j}$, the composition of the two trivializations gives $\mathcal{G}_H \simeq \mathcal{E}_H \simeq_{\phi_H^{-1}} \mathcal{F}_H \simeq {}^{\tau} \mathcal{E}_H$, where action of $h \in H$ changes of trivialization of $\mathcal{E}_H$ modifies the left composition by $h^{-1}$ and the right composition by $\Frob(h)$.  So the $H$ action is as described.  The quotient of $H$ by this action is $BH(\mathbb{F}_q)$ as this is a Lang map.  We are left with a residual action of the unipotent group $U_H$ that acts trivially.
\end{proof}

We also have an ``adjoint group analogue'' of the definition of restricted shtukas that is necessary for compatibility with the action of the discrete group $\Xi$.  Let $Z$ be the center of $G$ and let $G^{ad}$ denote the adjoint quotient $G / Z$.  The corresponding dilatation $Z_N$ of the center is compatible with the formation of the dilatations $G_N$ and $G^{ad}_N$, where the latter group is the $N$-dilatation of $G^{ad}$.  That is, we have a diagram
\begin{equation}\label{eq:center-adjoint sequence}
    \begin{tikzcd}
    1 \arrow{r} & Z_N \arrow{r}\arrow{d}& G_N \arrow{r}\arrow{d}& G_N^{ad} \arrow{r}\arrow{d}& 1, \\
    1 \arrow{r}& Z \arrow{r} \arrow{d}& G \arrow{r} \arrow{d}& G^{ad} \arrow{r} \arrow{d}& 1, \\
    1 \arrow{r}& j_*(Z|_D / Z_D^{a_j+}) \arrow{r}& j_*(G|_D / G_D^{a_j+}) \arrow{r}& j_*(G^{ad}_D / (G^{ad})^{a_j+}_D) \arrow{r}& 1
    \end{tikzcd}
\end{equation}
where the maps on the top row are constructed by the commutativity of the bottom squares, and the maps on the bottom row are constructed by compatibilities of Moy-Prasad filtrations on the formal disk.

We can consider subgroups $Z_{I,\infty} \subset G_{I,\infty}$ and $(Z_N)_{I,\infty} \subset (G_N)_{I,\infty}$, similarly $Z_{I,\infty,\sum \infty y_j}$.  We observe that the group $(Z_N)_{I,\infty,\sum \infty y_j}$ acts trivially on $\Gr^{\triv(N)}_{G,I}$, and the action of $Z_{I,\infty,\sum \infty y_j}$ on $\Gr^{\triv(N)}_{G,I}$ factors through the central subgroup
\[Z_H = \prod (Z_G)_{y_j} / (Z_G)_{y_j}^{(a_j)+} \subset H.\]

\begin{definition}
$\Cht\mathcal{R}^{ad}_{G,N,I}$ is defined as the quotient $[G^{ad}_{I,\infty,\sum \infty y_j} \backslash (Z_H \backslash \Gr^{\triv(N)}_{G,I})]$ using the action from Proposition~\ref{prop:restricted shtuka quotient definition}.
\end{definition}

Let us explain the two quotients above.  The first quotient by $Z_H$ is the Frobenius adjoint action by the action of $Z_H \to H$.  On the quotient by $Z_H$, the central subgroup $Z_{I,\infty}$ acts trivially by Proposition~\ref{prop:Ginfty action}.  This allows us to form the diagram
\begin{equation}
\begin{tikzcd}
    \Cht\mathcal{R}_{G,N,I} \arrow{r}\arrow{d} & \Cht\mathcal{R}^{ad}_{G,N,I} \arrow{d} \\
    \left[G_{I,\infty} \backslash \Gr_{G,I}\right] \arrow{r} & \left[G^{ad}_{I,\infty} \backslash \Gr_{G,I}\right],
\end{tikzcd}
\end{equation}
where the vertical arrows forget reductions of structure group.

\begin{example}
Consider the case of restricted shtukas with $|I| = 1$ and Iwahori structure at $x$ with $N = 0 \cdot x$.  Let $I_x^+$ denote the unipotent radical of the Iwahori at $x$.  Over $C \setminus \{ x \}$ this is just $[G_{I,\infty} \times I_x^+ \backslash \Gr_{G,I} \times BT(\mathbb{F}_q)]$, where $T$ is the Cartan and where $BT(\mathbb{F}_q)$ is the classifying space of the finite group $T(\mathbb{F}_q)$.  Over the point $x$ this is $\left[ \frac{I_x^+ \backslash LG / I_x^+}{\Ad_{\Frob}(T)} \right]$.
\end{example}

\subsection{The smooth model}

\begin{definition}\label{def:epsilon}
The map
\[ \epsilon \colon \Cht_{G,I} \to \Cht\mathcal{R}_{G,N,I} \]
is defined by sending $((x_i)_{i \in I}, \mathcal{E}, \phi \colon \mathcal{E}_0 |_{C \setminus \Gamma_{\sum_i x_i}} \to {}^{\tau} \mathcal{E} |_{C \setminus \Gamma_{\sum_i x_i}})$ to:
\begin{enumerate}
    \item $(x_i)_{i \in I}$ the same tuple of $S$-points in $C$,
    \item the associated $G_{I,\infty,\sum \infty y_j}$-bundle coming from Weil restricting $\mathcal{E}$ and $\mathcal{F} = {}^{\tau} \mathcal{E}$ on a formal neighborhood $\sum \infty x_i + \sum \infty y_j$, forgetting the fact that $\mathcal{F}$ is actually the Frobenius pullback,
    \item the underlying Hecke modification coming from the restriction to the formal neighborhood $\sum \infty x_i + \sum \infty y_j$, and
    \item the tautological isomorphism of associated $H$-bundles $\mathcal{F}_H \cong {}^{\tau} \mathcal{E}_H$.
\end{enumerate}
\end{definition}

The following is a smooth local model theorem analogous to \cite[Proposition~2.11]{genestier2017chtoucas} but also handling more general ``Moy-Prasad'' level structures.

\begin{theorem}\label{thm:smooth model}
The map
\[ \epsilon \colon \Cht_{G,I} \to \Cht\mathcal{R}_{G,N,I} \]
is smooth over $C^I$.  Let $\Xi$ be a discrete subgroup of $Z(\mathbb{A})$ such that $\Xi \cap Z(\mathbb{O}) Z(K) = \{ 1 \}$.  This map factors to a map
\[ \epsilon \colon \Cht_{G,\Xi,I} \to \Cht\mathcal{R}^{ad}_{G,N,I} \]
which is $\Xi$-equivariant for the trivial action on the right.  Over $(C \setminus N)^I$, this map restricts to a map of the form
\[ \epsilon \colon \Cht_{G,\Xi,I} \to [G^{ad}_{I,\infty} \backslash \Gr_{G,I}] \times BH(\mathbb{F}_q) \times BU \]
for some unipotent group $U$, where the map to $BH(\mathbb{F}_q)$ classifies the $H(\mathbb{F}_q)$-cover $\Cht_{G,\Xi,N,I} \to \Cht_{G,\Xi,I}$.
\end{theorem}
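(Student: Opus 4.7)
The plan is to address the three assertions in sequence: first the smoothness of $\epsilon$, then the descent through the $\Xi$-quotient and identification of the adjoint target, and finally the product description over the unramified locus.

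For the smoothness, the strategy is to view $\epsilon$ as a variant of the classical Varshavsky--Gaitsgory local model map $\Cht_{G,I} \to [G_{I,\infty} \backslash \Gr_{G,I}]$ enriched by the Moy-Prasad data at the points of $N$. Concretely, I would factor through the fiber product presentation
\[ \Cht_{G,I} \cong \Hecke_{G,I} \times_{\Bun_G \times \Bun_G, (\mathrm{id},\Frob)} \Bun_G, \]
and use Beauville-Laszlo gluing to identify the restriction of $\mathcal{E}$ and ${}^{\tau}\mathcal{E}$ to $\Gamma_{\sum \infty x_i + \sum \infty y_j}$ with the structure defining an $S$-point of $\Cht\mathcal{R}_{G,N,I}$. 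The key smoothness input is that restricting a $G$-bundle on $C \times S$ to this formal neighborhood gives a formally smooth map to $BG_{I,\infty,\sum \infty y_j}$, since the fibers are moduli of $G$-bundles on the complementary open subscheme. The Frobenius-twisted fiber product then inherits smoothness because Frobenius pullback is an equivalence on $\Bun_G$.

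For the descent through $\Xi$, the point is that the central cocycle action of $\Xi \subset Z(\mathbb{A})$ on $\Cht_{G,I}$ is absorbed on the target by the combined quotient by $Z_H$ and the passage from $G_{I,\infty,\sum \infty y_j}$ to its adjoint version. The diagram \eqref{eq:center-adjoint sequence} gives the necessary compatibility between dilatation and the center-adjoint sequence, so that tensoring with an element of $\Xi$ modifies the underlying bundle by a central twist that vanishes after both quotients are taken. This makes $\epsilon$ trivially equivariant for the trivial $\Xi$-action on $\Cht\mathcal{R}^{ad}_{G,N,I}$. Over $(C \setminus N)^I$ the description follows directly from Proposition~\ref{prop:restricted shtuka quotient definition}, which identifies the restricted stack with $[G_{I,\infty} \backslash \Gr_{G,I}] \times [U_H \backslash BH(\mathbb{F}_q)]$; passing to the adjoint quotient gives $G^{ad}_{I,\infty}$ in the first factor, and since the unipotent group $U_H = \prod_j G_{y_j}^{a_j+}$ acts trivially the second factor splits canonically as $BH(\mathbb{F}_q) \times BU_H$, giving the stated form with $U = U_H$. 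The identification of the $BH(\mathbb{F}_q)$ factor with the cover of Proposition~\ref{prop:etale cover shtukas} is then a tautology from the Lang torsor computation in the proof of Proposition~\ref{prop:restricted shtuka quotient definition}: $H / \Ad_{\Frob}(H) = BH(\F_q)$ classifies $H(\F_q)$-trivializations of the associated $H$-bundle compatible with Frobenius, which is precisely the datum of Definition~\ref{def: shtukas N level}.

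The main obstacle will be the clean verification of smoothness at the level points $y_j$, since the groups $G_{y_j}^{a_j+}$ are only smooth via dilatation and the Beauville-Laszlo step needs to be checked carefully in this pro-unipotent setting. The expected resolution uses the product decomposition of $(G_N)_{I \cup J,\infty}$ in Proposition~\ref{prop:Ginfty construction}(1) together with the observation that the kernel of the further truncation $G_{I,\infty,\sum \infty y_j} \to H$ is pro-unipotent, so that torsors under it are smooth and Weil restriction along the corresponding pro-finite-flat cover preserves the smoothness of $\Bun_G$.
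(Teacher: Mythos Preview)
Your approach to smoothness via the global Hecke-stack fiber product is a valid route, but the paper takes a more direct and local argument in the style of Genestier--Lafforgue. Rather than passing through $\Hecke_{G,I}$ and $\Bun_G$, the paper works pointwise on the target: given an $S$-point of $\Cht\mathcal{R}_{G,N,I}$, it writes the fiber of $\epsilon$ as the equalizer of two maps $b_1, b_2 \colon BG_{I,\infty,N} \to BH$, where $b_1$ (pass to the associated $H$-bundle) is a $(G_N)_{I,\infty}$-torsor and hence smooth, while $b_2$ (Hecke-modify, apply relative Frobenius, compare via $\varepsilon$) has zero differential because of the Frobenius. The equalizer of a smooth map and a map with vanishing differential is smooth, and this finishes the argument immediately. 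Your phrase ``Frobenius pullback is an equivalence on $\Bun_G$'' is the right moral ingredient but not the precise mechanism; what is actually used is that Frobenius has zero differential, and the paper isolates exactly this. The paper's argument also dissolves your ``main obstacle'': there is no need to run Beauville--Laszlo gluing or analyze pro-unipotent Weil restrictions at the level points, because the equalizer presentation already packages the Moy--Prasad data into the finite-dimensional quotient $H$. For the $\Xi$-descent and the description over $(C\setminus N)^I$ your outline matches the paper, which simply composes through the adjoint quotient and cites Propositions~\ref{prop:restricted shtuka quotient definition} and~\ref{prop:etale cover shtukas}.
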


\begin{proof}
We note that in the case of level subgroups, this is the smoothness of the restriction map in the case $r = 0$ in \cite[Proposition~2.15]{genestier2017chtoucas}.  Our situation is simpler because we do not need to remove any graphs of Frobenius in $X^I$ (in the notation of \cite{genestier2017chtoucas}, $U_{I,0}$ is just the curve itself).  We can reduce to the case when the level structure is $N = a y$ at a single point $y$.  Consider an $S$-point $z$ of $\Cht\mathcal{R}_{G,N,I}$ with the goal of showing that $\Cht_{G,\Xi,I} \times_{\Cht\mathcal{R}_{G,N,I}} S$ is smooth.  We can write this fiber product as the equalizer of two maps
\begin{equation}
    b_1, b_2 \colon BG_{I,\infty,N} \to BH.
\end{equation}
The first map starts with $\mathcal{F}$ and forgets the level structure by passing to the associated $H$-bundle, and is a $(G_N)_{I,\infty}$-torsor, and is smooth.  The second map is the composition of modification by the underlying Hecke modification giving $\mathcal{E}$ as an $H$-bundle, applies the relative Frobenius over $X^I$, then takes the isomorphism to recover $\mathcal{F}_H$.  This composition has zero differential.  Therefore, the coequalizer is smooth over $S$.

Consider the composition
\[ \Cht_{G,I} \to [G_{I,\infty,\sum \infty y_j} \backslash \Gr_{G,I}^{\triv(N)}] \to [G^{ad}_{I,\infty,\sum \infty y_j} \backslash \Gr_{G,I}^{\triv(N)}] = \Cht\mathcal{R}^{ad}_{G,N,I}. \]
By the assumption above, the map above is equivariant for the action of the discrete group $\Xi$ where it acts trivially on $\Cht\mathcal{R}^{ad}_{G,N,I}$.  Therefore, this constructs the second map.  The last claim comes from the moduli description of the unramified locus in Proposition~\ref{prop:restricted shtuka quotient definition} and Proposition~\ref{prop:etale cover shtukas}.
\end{proof}

\subsection{$\Psi$-factorizable sheaves on shtukas}

Using Theorem~\ref{thm:smooth model} above, computing nearby cycles of sheaves pulled back from $\Cht\mathcal{R}^{ad}_{G,N,I}$ on $\Cht_{G,\Xi,I}$ reduces to computing nearby cycles first on $\Cht\mathcal{R}^{ad}_{G,N,I}$ and then pulling back.  Recall that the stack $[(G_N)_{I,\infty, \sum \infty y_j} \backslash \Gr_{G,I}^{\triv(N)}]$ classifies as its $S$-points the following data:
\begin{enumerate}
    \item $(x_i)_{i \in I}\in C^I(S)$,
    \item $\mathcal{E}$ and $\mathcal{F}$ two $G_{I,\infty,\sum \infty y_j}$-torsors on $S$,
    \item $\phi$ a Hecke modification
    \[ \phi \colon \mathcal{E} |_{\Gamma_{\sum_i \infty x_i + \sum_j \infty y_j} \setminus \Gamma_{\sum_i x_i}} \simeq \mathcal{F} |_{\Gamma_{\sum_i \infty x_i + \sum_j \infty y_j} \setminus \Gamma_{\sum_i x_i}}, \]
    \item and two isomorphisms of associated $H$-bundles $\varepsilon_1 \colon \mathcal{F}_H \simeq \mathcal{G}_H$ and $\varepsilon_2 \colon \mathcal{E}_H \simeq \mathcal{G}_H$.
\end{enumerate}

Let $\sigma \colon \mathcal{G}_H \to {}^{\tau} \mathcal{G}_H$ be a fixed isomorphism.  We can consider the action map
\begin{equation}
H \times [(G_N)_{I,\infty,\sum_j \infty y_j} \backslash \Gr_{G,I}^{\triv(N)}] \to [(G_N)_{I,\infty,\sum_j \infty y_j} \backslash \Gr_{G,I}^{\triv(N)}]
\end{equation}
giving
\begin{equation}
H \backslash [(G_N)_{I,\infty, \sum_j \infty y_j} \backslash \Gr_{G,I}^{\triv(N)}] \cong \Cht\mathcal{R}_{G,N,I}
\end{equation}
as a quotient.  We note that $G_y / G_y^{(a_y+)} \cong H$ and $G_y^{(a_y+)}$ acts trivially.  Consider the diagonal action changing the trivialization on both $\varepsilon_1$ and $\varepsilon_2$ simultaneously.  If $h \colon \mathcal{G}_H \to \mathcal{G}_H$, and then we get an isomorphism $\Frob(h) \colon {}^{\tau} \mathcal{G}_H \to {}^{\tau} \mathcal{G}_H$ such that the diagram commutes:
\begin{equation}
\begin{tikzcd}
\mathcal{G}_H \arrow{r}{h} \arrow{d}{\sigma} & \mathcal{G}_H \arrow{d}{\sigma} \\
{}^{\tau} \mathcal{G}_H \arrow{r}{\Frob(h)} & {}^{\tau} \mathcal{G}_H.
\end{tikzcd}
\end{equation}

So in the natural map $q \colon [(G_N)_{I,\infty,\sum_j \infty y_j} \backslash \Gr_{G,I}^{\triv(N)}] \to \Cht\mathcal{R}_{G,N,I}$ sending $(\varepsilon_1, \varepsilon_2)$ to the composition
\begin{equation}
\mathcal{F}_H \simeq \mathcal{G}_H \simeq {}^{\tau} \mathcal{G}_H \simeq {}^{\tau} \mathcal{E}_H,
\end{equation}
the quotient map is invariant under the action of $H$ changing the trivialization of $\varepsilon_1$ and ${}^{\tau} \varepsilon_2$ by composing with $(h, \Frob(h))$.  For this reason, we denote the action of $H$ by $\Ad_F(H)$.

Consider the diagram
\begin{equation}\label{HResolution}
    \begin{tikzcd}
    H \times \left[(G_N)_{I,\infty,\sum_j \infty y_j} \backslash \Gr_{G,I}^{\triv(N)}\right] \arrow[r,yshift=0.7ex,"p"] \arrow[r,yshift=-0.7ex,swap,"a"] & \left[(G_N)_{I,\infty,\sum_j \infty y_j} \backslash \Gr_{G,I}^{\triv(N)}\right] \arrow{r}{q} & \Cht\mathcal{R}_{G,N,I},
    \end{tikzcd}
\end{equation}
where $p$ and $a$ are projection and action maps, respectively, and $q$ is a quotient map.  Let $V \in \Rep(\widehat{G}^I)$.  We have Satake sheaves with the sheaves $\mathcal{S}_V \boxtimes \zeta$ on the locus of $\Cht\mathcal{R}_{G,N,I}$ living over $(C \setminus (N \cup R))^I$.  Consider $\zeta$ as an equivariant sheaf on $H / \Ad_{\Frob}(H)$, which is the same as supplying its pullback $q^* \zeta$ to a local system on $H$, together with an isomorphism $\vartheta \colon p^* q^* \zeta \simeq a^* q^* \zeta$ satisfying additional coherence diagrams with respect to multiplication maps.

Now suppose for simplicity $I = \{ 1 \}$, $N = \{ y \}$ corresponding to the divisor $a y$ and we wanted to compute nearby cycles of $\mathcal{S}_V \boxtimes \zeta$ on $\Cht\mathcal{R}_{G,N,I}$ to the special fiber $y_j$.  Nearby cycles gives an equivariant sheaf on the special fiber, which can be identified with
\[ \frac{[G_{y}^{a+} \backslash LG / G_{y}^{a+}]}{\Ad_{\Frob}(H)}. \]
In terms of descent data, the nearby cycles can be computed by supplying the sheaf $q^* \Psi(\mathcal{S}_V \boxtimes \zeta) \cong \Psi(\mathcal{S}_V \boxtimes q^* \zeta)$ together with the isomorphism
\[ p^* q^* \Psi(\mathcal{S}_V \boxtimes \zeta) \cong \Psi(\mathcal{S}_V \boxtimes p^* q^* \zeta) \cong \Psi(\mathcal{S}_V \boxtimes a^* q^* \zeta) \cong a^* q^* \Psi(\mathcal{S}_V \boxtimes \zeta). \]
These considerations clearly carry over to the case of multiple legs.  Such nearby cycles were considered in \cite{genestier2017chtoucas} in the case when $\zeta$ corresponds to the regular representation.

\begin{definition}
For $\zeta \in \Rep(H(\mathbb{F}_q))$ and $V \in \Rep(\widehat{G}^I)$, we may consider $\mathcal{S}_V \boxtimes \zeta$ on the part of $\Cht\mathcal{R}^{ad}_{G,N,I}$ over the unramified locus.  We define $\mathscr{F}_{I,V,\zeta}$ on $\Cht_{G,\Xi,N,I}|_{(C \setminus (N \cup R))^I}$ as $\epsilon^*(\mathcal{S}_V \boxtimes \zeta)$.
\end{definition}

Letting $j \colon \Cht_{G,\Xi,N,I}|_{(C \setminus (N \cup R))^I} \to \Cht_{G,\Xi,N,I}$ be the open embedding, the sheaf $j_! \mathscr{F}_{I,V,\zeta}$ is constructible on $\Cht_{G,\Xi,N,I}$.

\begin{theorem}\label{thm:nearbyfusion}
Let $J = \{ j_1, \dots, j_n \}$, and let $\mathfrak{p}_J \colon \Cht_{G,I\cup J} \to C^J$.  Let $V \in \Rep(\widehat{G}^I)$ and $W \in \Rep(\widehat{G}^J)$.  Geometric Satake produces sheaves $\mathscr{F}_{V \boxtimes W, \zeta}$ over $\Cht_{G,I}$ for any $\zeta \in \Rep(H(\mathbb{F}_q))$, considered as a sheaf on $H / \Ad_{\Frob}(H)$.

Let $U$ be an Zariski open subset of $C$ such that $C \setminus (N \cup R) \subseteq U \subseteq C$, and assume $(N,\zeta)$ is factorizable over $U$ in the sense of Definition~\ref{def: factorizable representation}.  Then the pairs $(\mathfrak{p}_J, j_! \mathscr{F}_{V \boxtimes W, \zeta})$ are $\Psi$-factorizable, where $\mathfrak{p}_J$ denotes the composite map
\[\Cht_{G,I \cup J}|_{U^{I\cup J}} \to U^{I \cup J} \to U^J.\]
\end{theorem}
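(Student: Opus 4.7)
The plan is to reduce successively to $\Cht\mathcal{R}^{ad}_{G,N,I\cup J}$ and then to the Beilinson--Drinfeld Grassmannian $\Gr_{G,I\cup J}^{\triv(N)}$, where the factorizability hypothesis on $(N,\zeta)$ applies directly.

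First, Theorem~\ref{thm:smooth model} gives a smooth map $\epsilon \colon \Cht_{G,\Xi,I\cup J} \to \Cht\mathcal{R}^{ad}_{G,N,I\cup J}$ with $\mathscr{F}_{I\cup J,V\boxtimes W,\zeta} = \epsilon^*(\mathcal{S}_{V\boxtimes W}\boxtimes \zeta)$. Writing $\tilde{\mathfrak{p}}_J \colon \Cht\mathcal{R}^{ad}_{G,N,I\cup J}|_{U^{I\cup J}} \to U^J$ for the projection, the factorization $\mathfrak{p}_J = \tilde{\mathfrak{p}}_J \circ \epsilon$ together with smooth base change ($\epsilon^* j_! \simeq j_! \epsilon^*$, since $j$ is an open embedding) reduces the theorem, via Proposition~\ref{prop: psi factorizability under pullback}, to proving $\Psi$-factorizability of $(\tilde{\mathfrak{p}}_J, j_!(\mathcal{S}_{V\boxtimes W}\boxtimes \zeta))$ on $\Cht\mathcal{R}^{ad}_{G,N,I\cup J}|_{U^{I\cup J}}$.

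Next, by Proposition~\ref{prop:restricted shtuka quotient definition} (and its adjoint variant), the natural map $\pi \colon \Gr_{G,I\cup J}^{\triv(N)} \to \Cht\mathcal{R}^{ad}_{G,N,I\cup J}$ exhibits the target as a smooth torsor quotient by the action of $G^{ad}_{I\cup J,\infty,\sum \infty y_j}$ and $Z_H$, and up to a cohomological shift $\pi^*(\mathcal{S}_{V\boxtimes W}\boxtimes \zeta)$ is the twisted external product $\mathcal{S}_{I\cup J,V\boxtimes W}\boxtimes q^*\zeta$ featuring in Definition~\ref{def: factorizable representation}. Applying the hypothesis that $(N,\zeta)$ is factorizable over $U$, with the roles of $I$ and $J$ of Definition~\ref{def: factorizable representation} swapped to match the present setup, the pair $(\mathfrak{q}_J, j_!(\mathcal{S}_{I\cup J,V\boxtimes W}\boxtimes q^*\zeta))$ is $\Psi$-factorizable with respect to specializations in $U^J$. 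Since $\mathfrak{q}_J = \tilde{\mathfrak{p}}_J \circ \pi$, this says that the $\pi^*$-pullback of our target pair on $\Cht\mathcal{R}^{ad}_{G,N,I\cup J}$ is $\Psi$-factorizable.

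The main technical obstacle is to descend $\Psi$-factorizability along the smooth surjection $\pi$. The point is that $\pi^*$ is conservative (this is clear Schubert stratum by Schubert stratum, since there each Schubert cell in $\Cht\mathcal{R}^{ad}$ is covered by a torsor under a finite-dimensional smooth linear group quotient of $G^{ad}_{I\cup J,\infty,\sum \infty y_j}$) and that, as in the proof of Proposition~\ref{prop: psi factorizability under pullback}, the canonical comparison maps
\[ R(\Psi_{\tilde{\mathfrak{p}}_J})_u^s K \longrightarrow R(\Psi_{\tilde{\mathfrak{p}}_J})_t^s R(i_t)_* R(\Psi_{\tilde{\mathfrak{p}}_J})_u^t K \]
for $K$ on $\Cht\mathcal{R}^{ad}_{G,N,I\cup J}$ correspond under smooth pullback by $\pi$ to the analogous canonical maps for $\pi^* K$, using the base-change isomorphism \cite[Equation~(4.4)]{lu2019duality} for nearby cycles along smooth morphisms. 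Consequently the comparison map for $K$ is an isomorphism whenever its $\pi^*$-pullback is, and the same remark applied to arbitrary base changes of $\tilde{\mathfrak{p}}_J$ handles the $\Psi$-good property. This descent completes the proof.
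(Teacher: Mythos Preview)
Your proposal is correct and follows essentially the same approach as the paper: reduce along the smooth map $\epsilon$ to $\Cht\mathcal{R}_{G,N,I\cup J}$ (resp.\ its adjoint variant), then descend from the $\Psi$-factorizability on $\Gr_{G,I\cup J}^{\triv(N)}$ that is packaged in the hypothesis on $(N,\zeta)$. The only cosmetic difference is that the paper spells out the descent step via the action/projection resolution in diagram~\eqref{HResolution}, checking that the $\Psi$-factorizability comparison map is compatible with the equivariance isomorphism $a^*(\mathcal{S}_{V\boxtimes W}\boxtimes\zeta)\simeq p^*(\mathcal{S}_{V\boxtimes W}\boxtimes\zeta)$, whereas you phrase the same descent more abstractly through conservativity of $\pi^*$ and naturality of the smooth base-change isomorphism for sliced nearby cycles; both arguments amount to the same verification.
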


\begin{proof}
Since $\mathscr{F}_{V \boxtimes W,\zeta} \cong \epsilon^* (\mathcal{S}_{V \boxtimes W} \boxtimes \zeta)$, it suffices to prove that $(\mathfrak{p}_J, j_!(\mathcal{S}_{V \boxtimes W} \boxtimes \zeta))$ are $\Psi$-factorizable where $\mathfrak{p}_J$ now denotes the composite map $\Cht\mathcal{R}_{G, N, I \cup J}|_{U^{I \cup J}} \to U^{I \cup J} \to U^J$ and $j$ now denotes the open embedding $\Cht\mathcal{R}_{G, N, I \cup J}|_{(C \setminus (N \cup R))^{I\cup J}} \to \Cht\mathcal{R}_{G, N, I \cup J}|_{U^{I \cup J}}$.  We note that the data of the sheaf $\mathcal{S}_{V \boxtimes W} \boxtimes \zeta$ and its nearby cycle descend along the action maps and projection maps in equation~\ref{HResolution}.  Let $u \to t \to s$ be a composition of specializations of geometric points on $U^J$.  We can handle the descent data by noting that squares of the form
\begin{equation}
    \begin{tikzcd}
    R(\Psi_{\mathfrak{p}_J})_{u}^{s} a^* \mathcal{S}_{V \boxtimes W} \boxtimes \zeta \arrow[r] \arrow[d] & R(\Psi_{\mathfrak{p}_J})_{t}^{s} R(\Psi_{\mathfrak{p}_J})_{u}^{t} a^* \mathcal{S}_{V \boxtimes W} \boxtimes \zeta \arrow[d] \\
    R(\Psi_{\mathfrak{p}_J})_{u}^{s} p^* \mathcal{S}_{V \boxtimes W} \boxtimes \zeta \arrow[r] & R(\Psi_{\mathfrak{p}_J})_{t}^{s} R(\Psi_{\mathfrak{p}_J})_{u}^{t} p^* \mathcal{S}_{V \boxtimes W} \boxtimes \zeta
    \end{tikzcd}
\end{equation}
commute where the vertical maps are isomorphisms given by the descent data on $\mathcal{S}_{V \boxtimes W} \boxtimes \zeta$.  Therefore, we can reduce to checking that $(\mathfrak{p}_J, j_!(\mathcal{S}_{V \boxtimes W} \boxtimes \zeta))$ is $\Psi$-factorizable after pulling back to $\Gr^{\triv(N)}_{G,I}$, where $\mathcal{S}_{V \boxtimes W}$ are sheaves constructed by geometric Satake and $\zeta$ is now a local system on $H$.  But this $\Psi$-factorizability statement is our hypothesis on $(N,\zeta)$.
\end{proof}

\begin{remark}
In \cite[Remarque~3.5]{genestier2017chtoucas}, Genestier and Lafforgue wonder whether the nearby cycles for restricted shtukas over general bases is equivalent to iterated nearby cycles.  The above proof gives a criterion under which these sheaves are equivalent in a special case where $r = 0$.
\end{remark}

\begin{remark}
Let $N$ be supported on a single point $y$ and $H = G_y / G_y^{a+}$.  Consider the categories
\begin{align*}
    \mathscr{C}_1 &= D\left(\frac{[G_{y}^{a+} \backslash LG / G_{y}^{a+}]}{\Ad_{\Frob}(H)}, \qlbar\right), \\
    \mathscr{C}_{n+2} &= D\left(\frac{[G_{y}^{a+} \backslash LG / G_{y} \times (G_y \backslash LG / G_y)^n \times G_{y} \backslash LG / G_{y}^{a+}]}{\Ad_{\Frob}(H)}, \qlbar\right).
\end{align*}
For any fixed sheaf $\zeta$ on $H / \Ad_{\Frob}(H)$ giving a $\Psi$-factorizable representation, the nearby cycles on restricted shtukas can be seen as giving a functor
\[ Z_{n,\zeta} \colon \Rep(\widehat{G}^n) \rightarrow \mathscr{C}_n. \]
Note that $\mathscr{C}_n$ is not naturally monoidal, but is equipped with certain contraction maps $m_g \colon \mathscr{C}_n \to \mathscr{C}_m$ for each surjective, order-preserving map $g \colon [n] \to [m]$.

Over a two-dimensional base, the fusion property of nearby cycles exhibited above can be seen as encoding a braiding compatibility analogous to \cite[Property~3]{gaitsgory2004braiding}.
\begin{equation}
    \begin{tikzcd}
    m_{2,1} Z_{2,\zeta}(V \boxtimes W) \arrow{r} \arrow{d} & m_{2,1} Z_{2,\zeta}(W \boxtimes V) \arrow{d} \\
    Z_{1,\zeta}(V \otimes W) \arrow{r} & Z_{1,\zeta}(W \otimes V).
    \end{tikzcd}
\end{equation}
\end{remark}

\subsection{Applying the Zorro lemma}

We keep the notations of $N$ and $H$ from the previous section, and let $s \in N$ be a fixed point.  Let $\mathfrak{p}_J$ denote the map $\Cht_{G,I \cup J} \to C^{I \cup J} \to C^J$.  We let $\Psi_J K$ be $R(\Psi_{\mathfrak{p}_J})_{\overline{\eta_J}}^{\Delta(s)}$ be a nearby cycles over arbitrary bases.  The following is an analogue of \cite[Proposition~3.3]{salmon2023unipotent}.  Let $\zeta \in \Rep(H(\mathbb{F}_q))$ be a finite-dimensional representation in $\Lambda$-vector spaces, where $\Lambda$ is a finite extension of $\mathbb{Q}_{\ell}$ or $\qlbar$, which we also consider as a sheaf on $BH(\mathbb{F}_q)$ so we can form $\mathcal{S}_{I,V} \boxtimes \zeta$ on $\Cht\mathcal{R}_{G,N,I}$ which pulls back to $\mathscr{F}_{I, V, \zeta}$ on $\Cht_{G, \Xi, I}$.  We also consider restrictions of this sheaf to various Harder-Narasimhan truncations for any dominant coroot of $G$, which we denote $\mathscr{F}^{\le \mu}_{I, V, \zeta}$, which lives in the bounded derived category of constructible sheaves on a finite type Deligne-Mumford stack.

\begin{proposition}
The functor $(I,V) \mapsto \mathfrak{p}_! \Psi_J \mathscr{F}^{\le \mu}_{I \cup J, W \boxtimes V, \zeta}$ is a collection of cocartesian functors $\Rep(\widehat{G}^{\bullet}) \to D^b_c((C \setminus (N \cup R))^{\bullet}, \Lambda)$, filtered by partial Frobenius, between categories cofibered over $\FinS$.
\end{proposition}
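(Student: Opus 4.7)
The plan is to assemble the assertion from three ingredients already established in the paper: the cocartesian functoriality of the geometric Satake construction $V \mapsto \mathcal{S}_V \boxtimes \zeta$ on the Beilinson-Drinfeld grassmannian; the smoothness of $\epsilon \colon \Cht_{G,\Xi,N,I\cup J} \to \Cht\mathcal{R}^{ad}_{G,N,I\cup J}$ from Theorem~\ref{thm:smooth model}, which transports Satake sheaves to the sheaves $\mathscr{F}_{I \cup J, W \boxtimes V, \zeta}$; and the $\Psi$-factorizability of the resulting complexes proved in Theorem~\ref{thm:nearbyfusion}. The Harder-Narasimhan truncation $\le \mu$ ensures that $\mathfrak{p} \colon \Cht^{\le \mu}_{G, \Xi, N, I \cup J} \to C^{I \cup J}$ is of finite type, so at each stage we remain inside $D^b_c$.

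First I would check cocartesianness for maps $\phi \colon I_1 \to I_2$ in $\FinS$ whose associated partial diagonal involves only legs in $I$. On the representation side the transition functor is tensor contraction of $V$ along the fibers of $\phi$; on the sheaf side it is $!$-pullback along the corresponding partial diagonal $C^{I_2} \hookrightarrow C^{I_1}$. The fusion isomorphism for $V \mapsto \mathcal{S}_V$ on the Beilinson-Drinfeld grassmannian transports through $\epsilon^*$ by smooth base change to the corresponding fusion isomorphism for $\mathscr{F}^{\le \mu}$, commutation with $\mathfrak{p}_!$ is proper base change on the Cartesian square defined by the diagonal (available because $\mathfrak{p}$ is finite type on the truncation), and commutation with $\Psi_J$ for such a diagonal reduces to a smooth base change for a nearby cycle functor, using Proposition~\ref{prop:basic functorialities} and the $\Psi$-goodness contained in Theorem~\ref{thm:nearbyfusion}.

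Second, for diagonals that collapse two legs both lying in $J$, one must genuinely compare $\Psi_J$ with $\Psi_{J'}$ after collapsing. This is precisely the fusion compatibility contained in Theorem~\ref{thm:nearbyfusion} together with Construction~\ref{map to iterated}: $\Psi$-factorizability lets one realize $\Psi_J$ as an iterated or sliced nearby cycle and then identify it with the collapsed version, and the fusion structure on $\mathcal{S}_V$ transports through $\epsilon^*$ to do the representation-side bookkeeping. Third, the partial Frobenius structure is inherited from the partial Frobenius maps $F_i$ on $\Cht_{G, \Xi, N, I \cup J}$, which cover the partial Frobenius on $C^{I \cup J}$ in the $i$th factor, commute with $\epsilon$ and with $\mathfrak{p}_!$ by base change, and act compatibly on $\Psi_J$ via the Frobenius structure on $\mathcal{S}_V$ and $\zeta$; the Harder-Narasimhan filtration indexed by $\mu$ is compatible with partial Frobenius in the standard way, yielding the filtered structure of \cite[Definition~3.2]{salmon2023unipotent}.

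The main obstacle I anticipate is the cocartesianness across partial diagonals involving legs in $J$, since there the fusion interaction with nearby cycles is nontrivial and is not purely a formal smooth or proper base change. Fortunately, the $\Psi$-factorizability of Satake sheaves on the Beilinson-Drinfeld grassmannian (Proposition~\ref{prop:psi-factorizable grassmannian}) and its transport to the shtuka setting (Theorem~\ref{thm:nearbyfusion}) are tailored precisely to this purpose, so once this step is handled the remaining verifications follow the same template as \cite[Section~3]{salmon2023unipotent}.
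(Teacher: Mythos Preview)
Your outline has the right overall shape, but there is a genuine gap in the cocartesianness verification. The map $\Delta_\mu \colon \Cht_{G,\Xi,I_2 \cup J} \hookrightarrow \Cht_{G,\Xi,I_1 \cup J}$ induced by a partial diagonal in the $I$-legs is a closed immersion over $C^J$, not a smooth morphism, so Proposition~\ref{prop:basic functorialities} does not give you $\Delta_\mu^* \Psi_J \simeq \Psi_J \Delta_\mu^*$. Nor does $\Psi$-goodness: that notion concerns base changes $S' \to S$ on the nearby-cycles base $S = C^J$, whereas $\Delta_\mu$ does not touch $C^J$ at all---it is a restriction on the total space. Invoking Theorem~\ref{thm:nearbyfusion} here is therefore a misapplication; that theorem establishes $\Psi$-factorizability with respect to $\mathfrak{p}_J$, which is a statement about compositions of specializations on $C^J$, not about pullback along diagonals in $C^I$.

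The paper closes this gap by a different mechanism: pass to the iterated version $\Gr_{G,I_i \cup J}^{(I_i,J),\triv(N)}$ and use the properness of the convolution map (Proposition~\ref{properness}). On the iterated space the sheaf is the twisted external product $\mathcal{S}_{I_i,V} \,\widetilde{\boxtimes}\, (\mathcal{S}_{J,W} \boxtimes \zeta)$, so $\Psi_J$ affects only the $J$-factor while $\Delta_\mu^*$ affects only the $I$-factor; the two operations then genuinely do not interact, and one reduces to fusion $\Delta_\mu^* \mathcal{S}_{I_1,V} \cong \mathcal{S}_{I_2,V^\mu}$ of Satake sheaves on the first factor. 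Properness of convolution then transports this back to $\Gr_{G,I\cup J}^{\triv(N)}$.

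Two smaller remarks. First, your second paragraph, treating diagonals that collapse legs in $J$, is unnecessary: in this proposition $J$ and $W$ are fixed and only $I$ varies, so such maps never arise in the cocartesian structure. Second, for the partial Frobenius filtration the paper singles out two specific points you leave implicit: partial Frobenii are local homeomorphisms and hence commute with $\Psi_J$, and $\mathscr{F}_{I\cup J, V \boxtimes W, \zeta}$ is identified (via smoothness of $\epsilon$) with the IC extension of the local system $\epsilon^*(\Lambda \boxtimes \zeta)$ on the open Schubert stratum, which is what makes the argument of \cite[Proposition~3.3]{lafforgue2018chtoucas} go through with the extra $\zeta$-twist.
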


\begin{proof}
For any Harder-Narasimhan truncation, $\mu$, we check constructibility of the pushforward.  Since $\mathscr{F}^{\le \mu}_{I \cup J, W \boxtimes V, \zeta}$ is $\Psi$-good, we conclude that $\Psi_J \mathscr{F}^{\le \mu}_{I \cup J, W \boxtimes V, \zeta}$ is constructible, so the pushfoward by $\mathfrak{p}$ is constructible.  So the functor $\Rep(\widehat{G}^{I}) \to D^b_c((C \setminus (N \cup R))^I, \Lambda)$ is well-defined for any $I$.

We need to check that the cocartesian arrows are sent to cocartesian arrows.  Let $\mu \colon I_1 \to I_2$ be any morphism of finite sets and let $\Delta_{\mu} \colon C^{I_2} \to C^{I_1}$ be the map on a power of a curve, which restricts to the locus $(C \setminus (N \cup R))^{\bullet}$.  Just as in \cite[Proposition~3.5]{salmon2023unipotent}, we can reduce to the following statement on $\Gr_{G,I \cup J}^{\triv(N)}$
\begin{equation}
\Delta_{\mu}^* \Psi_J (\mathcal{S}_{I_1 \cup J, V \boxtimes W} \boxtimes \zeta) \cong \Psi_J (\mathcal{S}_{I_2 \cup J, V^{\mu} \boxtimes W} \boxtimes \zeta).
\end{equation}
Using properness of convolution on $\Gr_{G,I_i \cup J}^{(I_i, J),\triv(N)}$, this can be further reduced to
\begin{equation}
\Delta_{\mu}^* \mathcal{S}_{I_1,V} \tilde{\boxtimes} \Psi_J (\mathcal{S}_{J, W} \boxtimes \zeta) \cong \mathcal{S}_{I_2,V^\mu} \tilde{\boxtimes} \Psi_J (\mathcal{S}_{J, W} \boxtimes \zeta),
\end{equation}
where the result now follows by fusion of Satake sheaves on the Beilinson-Drinfeld grassmannian.

Finally, we need to check that these functors are filtered by partial Frobenius.  By the discussion in \cite[Proposition~3.5]{salmon2023unipotent} and \cite[Proposition~3.1, Proposition~3.3]{lafforgue2018chtoucas}, it suffices to prove that
\begin{equation}
    \left( \mathrm{Fr}_{I_1}^{(I_1, \dots, I_n, J)} \right)^* \left( \Psi_J \mathscr{F}^{\le \mu + \kappa}_{I \cup J, V \boxtimes W, \zeta} \right) \cong \Psi_J \mathscr{F}^{\le \mu}_{I \cup J, V \boxtimes W, \zeta}
\end{equation}
for any partition $(I_1, \dots, I_n)$ of $I$.  Since partial Frobenius are local homeomorphisms, they commute with nearby cycles.  The only difference from \cite[Proposition~3.3]{lafforgue2018chtoucas} is that when $V \boxtimes W$ is irreducible, we identify that
\[\mathscr{F}_{I \cup J, V \boxtimes W, \zeta}\]
as IC sheaf extensions for a local system on the open stratum pulling back the corresponding Schubert cell on the Beilinson-Drinfeld grassmannian, rather than just the IC extension of the constant sheaf.  The local system in question is the pullback of the local system $\Lambda \boxtimes \zeta$ on the open Schubert cell in $[G_{I,\infty} \backslash \Gr_{G,I\cup J,V \boxtimes W}] \times [U \backslash BH(\mathbb{F}_q)]$ corresponding to the representation $V \boxtimes W$, and the identification follows by smooth base change, using Theorem~\ref{thm:smooth model}.  That is, if $j$ is the inclusion of this cell, there is an identification
\[ j_{!*} \epsilon^* (\Lambda \boxtimes \zeta) \cong \epsilon^* j_{!*} (\Lambda \boxtimes \zeta) \cong \mathscr{F}_{I \cup J, V \boxtimes W, \zeta}. \]
\end{proof}

The argument proving the following theorem originated in the work of Xue \cite[Section~3.2]{xue2020smoothness} and is the key to making the pushforward $\mathfrak{p}_!$ commute with nearby cycles $\Psi_{J}$.

\begin{theorem}\label{thm:xuezorro}
For the functor $\mathscr{G} = \mathscr{F}_{I \cup J, W \boxtimes V, \zeta}$ as above, assume further that $(N,\zeta)$ is factorizable over $U$ in the sense of Definition~\ref{def: factorizable representation}.  Then the natural map
\begin{equation}
\mathfrak{p}_! \Psi_{J} \mathscr{G}_{I \cup J, W \boxtimes V} \to \Psi_{J} \mathfrak{p}_! \mathscr{G}_{I \cup J, W \boxtimes V}
\end{equation}
is an isomorphism.
\end{theorem}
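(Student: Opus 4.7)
The plan is to adapt Xue's Zorro lemma from \cite{xue2020smoothness}, using the cocartesian structure in $V$ together with the rigidity of $\Rep(\widehat{G}^I)$ to produce an inverse to $\mathrm{can}$. By the preceding proposition, the functor $(I, V) \mapsto \mathfrak{p}_! \Psi_J \mathscr{F}^{\le \mu}_{I \cup J, W \boxtimes V, \zeta}$ is cocartesian and filtered by partial Frobenius. The analogous functor $(I, V) \mapsto \Psi_J \mathfrak{p}_! \mathscr{F}^{\le \mu}_{I \cup J, W \boxtimes V, \zeta}$ on the right is also cocartesian and filtered: the $\Psi$-factorizability of $\mathscr{F}^{\le \mu}$ supplied by Theorem~\ref{thm:nearbyfusion} (which uses the hypothesis that $(N, \zeta)$ is $\Psi$-factorizable over $U$) ensures that $\Psi_J$ commutes with the diagonal and partial Frobenius pullbacks defining the cocartesian arrows. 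Applying Theorem~\ref{thm:smoothness} to both functors, I conclude that the extensions by zero of the colimits $\varinjlim_\mu \mathfrak{p}_! \Psi_J \mathscr{F}^{\le \mu}_{I \cup J, W \boxtimes V, \zeta}$ and $\varinjlim_\mu \Psi_J \mathfrak{p}_! \mathscr{F}^{\le \mu}_{I \cup J, W \boxtimes V, \zeta}$ are $\Psi$-factorizable and ind-smooth over $\eta^I$.

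The canonical map $\mathrm{can}$ is natural in $V$ and compatible with partial Frobenius, since partial Frobenius are local homeomorphisms and therefore commute with nearby cycles. Hence it is a morphism of $F\Weil$-equivariant cocartesian functors filtered by partial Frobenius. For the Zorro step, I would use the zig-zag identity in the rigid monoidal category $\Rep(\widehat{G}^I)$, which factors $\mathrm{id}_V$ as the composition $V \to V \otimes V^* \otimes V \to V$ via coevaluation and evaluation. Geometrically, $V \otimes V^* \otimes V$ corresponds via the cocartesian structure to a pullback along a small diagonal of a triple-leg external product over $C^{3I}$. Applying either side of $\mathrm{can}$ to this factorization yields a composition of sheaf maps equal to the identity. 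Using the compatibility of $\mathrm{can}$ with these compositions, together with the fact that $\mathrm{can}$ for the trivial representation reduces, via the proper collapse maps of Proposition~\ref{properness}, to an instance of proper base change, one can extract a two-sided inverse to $\mathrm{can}$ for arbitrary $V$ by the snake identities.

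The main obstacle will be the coherence check: one must verify that the zig-zag diagram interacts correctly with $\Psi_J$ at each vertex, which requires repeatedly invoking the fusion compatibility in Theorem~\ref{thm:nearbyfusion} and the base-change formalism of Proposition~\ref{prop:basic functorialities}. A related technical point is the interaction with Harder-Narasimhan truncation and passage through the dual representation $V^*$, which may enlarge the support beyond a chosen truncation $\mu$; this is handled by working in the ind-system over $\mu$ and descending via the filtered partial Frobenius structure, following Xue's original argument.
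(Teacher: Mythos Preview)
Your strategy is the paper's: construct an explicit inverse to $\mathrm{can}$ via the zig-zag identity, relying on $\Psi$-factorizability both at the sheaf level (Theorem~\ref{thm:nearbyfusion}) and at the cohomology-sheaf level (Theorem~\ref{thm:smoothness} via the preceding proposition). The one substantive correction is that the Zorro must be performed on the $J$-legs, where $\Psi_J$ acts, not over $C^{3I}$. The paper introduces three disjoint copies $J_1,J_2,J_3$ of $J$ and dualizes the $J$-representation, producing a chain
\[
\Psi_J R\mathfrak{p}_!\,\mathscr{G}_{I\cup J,\,W\boxtimes V}
\;\cong\;\Psi_{J_1} R\mathfrak{p}_!\,\Psi_{J_2}\mathscr{G}_{I\cup J_1\cup J_2,\,W\boxtimes V\boxtimes 1}
\;\to\;\cdots\;\to\;
R\mathfrak{p}_!\,\Psi_J\,\mathscr{G}_{I\cup J,\,W\boxtimes V}.
\]
The reason Zorro must happen on $J$ is that the base case needs $\mathrm{can}$ to be an isomorphism when the \emph{$J$-representation} is trivial, since then the sheaf is constant along the $J$-legs and $\Psi_J$ is vacuous; trivializing the $I$-representation does nothing to decouple $\Psi_J$ from $\mathfrak{p}_!$. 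Relatedly, your appeal to Proposition~\ref{properness} for this base case is off: the relevant input is the fusion identity $\Psi_{J'}\mathscr{G}_{\cdots\boxtimes 1}\cong\mathscr{G}_{\cdots}$ coming from the cocartesian structure (the preceding proposition and Theorem~\ref{thm:nearbyfusion}), not proper base change along the convolution map.

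One smaller point: you do not need to verify separately that $(I,V)\mapsto \Psi_J\mathfrak{p}_!\mathscr{F}$ is cocartesian and then apply Theorem~\ref{thm:smoothness} to it. In the chain, Theorem~\ref{thm:smoothness} is invoked only for the functors $\mathfrak{p}_!\Psi_{J'}\mathscr{G}$ (those of the preceding proposition), to justify the collapse $\Psi_{J_1}\Psi_{J_2}R\mathfrak{p}_!\Psi_{J_3}\cong \Psi_{J_2}R\mathfrak{p}_!\Psi_{J_3}$; the right-hand side of $\mathrm{can}$ never needs its own $\Psi$-factorizability.
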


\begin{remark}
    This holds over points with parahoric reduction or unipotent radical of the Iwahori reduction by Proposition~\ref{prop:psi-factorizable grassmannian}.  Later, Theorem~\ref{thm: psi-factorizable GFq} will prove that this also holds over $\F_q$-points $y$ where $G_y$ is the positive loop group $L^+ G$ and the divisor $N$ contains $y$ with multiplicity $0$, so that $G_y^{0+}$ corresponds to the unipotent radical of the positive loop group.
\end{remark}

\begin{proof}
We construct the inverse map.  Let $J_1, J_2, J_3$ be three identical, disjoint copies of $J$.
\begin{align*}
    \Psi_J R\mathfrak{p}_! \mathscr{G}_{I \cup J,W \boxtimes V} &\cong \Psi_{J_1} R\mathfrak{p}_! \Psi_{J_2} \mathscr{G}_{I \cup J_1 \cup J_2, W \boxtimes V \boxtimes 1} \\
    &\to \Psi_{J_1} R\mathfrak{p}_! \Psi_{J_2} \mathscr{G}_{I \cup J_1 \cup J_2, W \boxtimes V \boxtimes (V^* \otimes V)} \\
    &\cong \Psi_{J_1} R\mathfrak{p}_! \Psi_{J_2} \Psi_{J_3} \mathscr{G}_{I \cup J_1 \cup J_2 \cup J_3, W \boxtimes V \boxtimes V^* \boxtimes V} \\
    &\to \Psi_{J_1} \Psi_{J_2} R\mathfrak{p}_! \Psi_{J_3} \mathscr{G}_{I \cup J_1 \cup J_2 \cup J_3, W \boxtimes V \boxtimes V^* \boxtimes V} \\
    &\cong \Psi_{J_2} R\mathfrak{p}_! \Psi_{J_3} \mathscr{G}_{I \cup J_2 \cup J_3, W \boxtimes (V \otimes V^*) \boxtimes V} \\
    &\to \Psi_{J_2} R\mathfrak{p}_! \Psi_{J_3} \mathscr{G}_{I \cup J_2 \cup J_3, W \boxtimes 1 \boxtimes V} \\
    &\cong R\mathfrak{p}_! \Psi_{J} \mathscr{G}_{I \cup J, V \boxtimes W}.
\end{align*}
The remaining considerations are exactly identical to \cite[Theorem~5.2]{salmon2023unipotent}, except now nearby cycles are not unipotent but taken over general bases.
\end{proof}

\begin{remark}
If $s$ is an unramified point, this recovers Xue's original argument showing that specialization maps are isomorphisms, using the fact that Satake sheaves are ULA over the unramified locus \cite{richarz2014new}.  If $s$ is a point with parahoric reduction, this recovers \cite[Theorem~5.2]{salmon2023unipotent}.
\end{remark}

\section{Lusztig's horocycle correspondence for restricted shtukas}\label{sec:horocycle}

\subsection{Review of non-unipotent representations}\label{subsec:review nonunipotent}

We now further restrict our assumption on $G$.  Let $G$ be a split reductive group with connected center, with a Frobenius action $\sigma$ that gives a $\mathbb{F}_q$-rational structure.  Our assumption of connected center will be to simplify the discussion of blocks.  In this section, we fix a Borel $B$ and a maximal torus $T$, such that $\sigma(B) = B$ and $\sigma(T) = T$. Recall that the Weyl group $W$ can be identified with $N(T) / T$ where $N(T)$ is the normalizer of $T$, and it is equipped with a map $N(T) \to W$ which admits a cross-section $W \to N(T)$, which we denote by $w \mapsto \dot{w}$ so that if $\ell(w) + \ell(w') = \ell(ww')$, then $\dot{(ww')} = \dot{w} \dot{w'}$.  Let $U$ be the unipotent radical of $B$.  Throughout this section and the next, we will use $D(X)$ as an abbreviation for the derived category $D^b_c(X, \qlbar)$ of constructible sheaves with $\qlbar$ coefficients.

\subsubsection{Rank $1$ character sheaves and monodromic sheaves on $U \backslash G / U$}

Let $m \colon T \times T \to T$ be the multiplication map and let $e \colon \Spec \fqbar \to T$ be the identity.  A rank one character sheaf on $T$ is a local system $\mathcal{L}$ such that
\[
\begin{aligned}
    m^*(\mathcal{L}) &\cong \mathcal{L} \boxtimes \mathcal{L}, \\
    e^*(\mathcal{L}) &\cong \qlbar,
\end{aligned}
\]
satisfying associativity and commutativity axioms as in \cite[Appendix~A]{yun2014rigidity}.  All our character sheaves will be taken to live over $\fqbar$.  Such character sheaves will satisfy $\mathcal{L}^{\otimes n} \cong \qlbar$ for some $n > 0$.

Let's suppose we give $T$ an $\mathbb{F}_q$-rational structure that is not necessarily split.  A rank one character sheaf on $T$ over $\mathbb{F}_q$ defines a trace function $t_{\mathcal{L}} \colon T(\mathbb{F}_q) \to \qlbar^{\times}$.  The Lang isogeny itself gives a map
\[ \pi^t_1(T/\mathbb{F}_q) \to T(\mathbb{F}_q), \]
and the trace function is related to the representation of $\pi^t_1(T/\mathbb{F}_q)$ as a local system by the composition \cite[Lemma~2.14]{sawintemplier2021ramanujan}
\[
\begin{tikzcd}
    \pi^t_1(T/\mathbb{F}_q) \arrow{r}& T(\mathbb{F}_q) \arrow{r}{t_{\mathcal{L}}^{-1}}& \qlbar^{\times}.
\end{tikzcd}
\]
On the other hand, $\pi^t_1(T/\mathbb{F}_q) \cong X_*(T) \otimes_{\mathbb{Z}} \widehat{\mathbb{Z}}'(1)$ using the prime-to-$p$ completion of the integers, so choosing a generator for the tame fundamental group gives a map $X_*(T) \to \qlbar^{\times}$ and thus an element in $X^*(T) \otimes \qlbar^{\times} \cong \widehat{T}(\qlbar)$.

We denote $G / B$ as $\mathcal{B}$ and $G / U$ as $\btilde$.  We let $T \times T$ act on $U \backslash G / U \cong G \backslash (\btilde \times \btilde)$ by
\begin{equation}
\begin{aligned}
    a \colon T \times U \backslash G / U \times T &\to U \backslash G / U \\
    a(t',g,t) &= t'gt^{-1}.
\end{aligned}
\end{equation}
Following \cite{lusztig2015non} and \cite{lusztig2020endoscopy}, we can consider $T \times T$-monodromic sheaves on $U \backslash G / U$ or equivalently, $G$-equivariant monodromic sheaves on $\btilde^2$.  Specifically, if $\mathcal{L}$ and $\mathcal{L}'$ are two rank $1$ character sheaves on $T$, we say that a sheaf $M$ on $U \backslash G / U$ is $(T \times T, \mathcal{L}' \boxtimes \mathcal{L})$-monodromic if
\[ a^*(M) \cong \mathcal{L}' \boxtimes M \boxtimes \mathcal{L}^{-1}. \]
We denote the category of $(T \times T, \mathcal{L}' \boxtimes \mathcal{L}^{-1})$-monodromic sheaves as ${}_{\mathcal{L}'} D_{\mathcal{L}}(U \backslash G / U)$.  We note that these subcategories are denoted ${}_{\mathcal{L'}} \mathcal{\underline{D}}_{\mathcal{L}}$ in \cite[Section~2.7]{lusztig2020endoscopy}.  Inside this derived category, there is a heart of a perverse $t$-structure giving an abelian monodromic Hecke category ${}_{\mathcal{L}'} \mathcal{P}_{\mathcal{L}}(U \backslash G / U)$ consisting of perverse sheaves.

The connected center assumption on $G$ will simplify some of the details in \cite{lusztig2020endoscopy}, and we do not believe it is an essential assumption.  For a character sheaf $\mathcal{L}$ on $T$, there is a group $W_{\mathcal{L}} \subset W$ that stabilizes $\mathcal{L}$.  There is a subgroup $W_{\mathcal{L}}^{\circ} \subset W_{\mathcal{L}}$ that is the Weyl group of a root system defined in terms of cocharacters whose pullback trivializse $\mathcal{L}$.  When the center is connected, these two groups coincide.  As a result, the category ${}_{\mathcal{L}'} D_{\mathcal{L}}(U \backslash G / U)$ only contains a single block, and two-sided cells will be in bijection with unipotent conjugacy classes in the endoscopic group.

\subsubsection{The stack $G /_{\sigma} G$}
To describe representations of the finite group $G(\mathbb{F}_q)$, we need to the consider action of $G$ on itself by $g.h = gh\sigma(g)^{-1}$. This action is transitive and the stabilizer at identity 1 is $G^{\sigma} = G(\mathbb{F}_q)$. Therefore, $G$-equivariant simple perverse sheaves on $G$ are equivalently the $\qlbar$-representations of the finite group $G(\mathbb{F}_q)$.  We will temporarily adopt the perverse shift by $\Delta =: \dim G$, noting that when we make the connection to global shtukas, this shift will no longer be used for technical reasons.  We denote the category of $G$-equivariant sheaves on $G$ with the Frobenius-twisted action by $D_G(G) = D(G /_{\sigma} G) = D(G / \Ad_F(G))$.  Crucially, unlike many other categories we will consider, this category is semisimple.

\subsubsection{The ``finite'' horocycle correspondence}
We define varieties
\[\dot{Z} = \{(B', B'', g)) \in \mathcal{B} \times \mathcal{B} \times G| g\sigma(B')g^{-1} = B''\}\]
and 
\[Z = \{(B', B'', g\sigma(U_{B'})) \in \mathcal{B} \times \mathcal{B} \times G/\sigma(U_{B'})| g\sigma(B')g^{-1} = B''\}.\]
There is a smooth map
\begin{equation}
\begin{aligned}
f \colon \dot{Z} &\to Z \\
(B', B'', g) &\mapsto (B', B'', g\sigma(U_{B'})).
\end{aligned}
\end{equation}
The $G$-action on $\dot{Z}$ is given by $h \colon (B', B'', g) \mapsto (hBgh^{-1}, hB'h^{-1}, hg\sigma(h)^{-1})$ and similarly on $Z$. We have a proper map $\pi\colon \dot{Z} \to G$ being the projection to the third component. Both $f$ and $\pi$ are $G$-equivariant. We will use correspondences to relate sheaves on $G$ and sheaves on $Z$. In particular, we have an induction functor $\chi= \pi_!f^* \colon D(Z) \to D(G)$ which is compatible with equivariant structures.

The variety $Z$ is closely related to $\btilde^2$. 
In fact, we have a map
\begin{equation}\label{eqn:gu epsilon}
\epsilon \colon \btilde^2 \to Z,
\end{equation}
by $(xU, yU) \mapsto (xBx^{-1}, yBy^{-1}, yU\sigma(x)^{-1})$. 
There is an action of $T$ on $\btilde^2$ given by $t \colon (xU, yU) \mapsto (xtU, y\sigma(t)U)$. It is easy to see that $\epsilon$ induces a map $\epsilon \colon T\backslash \btilde^2 \to Z$ and in \cite[Section~4.1(a)]{lusztig2016non} Lusztig shows this is an isomorphism. Since we now allow monodromy, simple objects on $Z$ are labeled by two parameters $\lambda \colon T \to \qlbar$ and $w\in W$.

Define the variety $X$ as the fiber product $\dot{Z} \times_Z \btilde^2$.  So our horocycle correspondence consists of the following diagram:
\begin{equation}\label{eqn: finite horocycle correspondence}
    \begin{tikzcd}
        & X \arrow{ld} \arrow{d} \arrow{r} & \btilde^2 \arrow{d}{\epsilon} \\
        G & \dot{Z} \arrow{l}{\pi} \arrow{r}{f} & Z.
    \end{tikzcd}
\end{equation}
Using the definition of the map $\epsilon$ and $f$, it follows that the moduli problem for $X$ can be written as classifying triples
\begin{equation}\label{eqn: X moduli problem}
    X = \{ (xU, yU, g) \in \btilde \times \btilde \times G | g \sigma(xU) = yU \}.
\end{equation}

A sheaf in ${}_{L_{\lambda'}} D_{L_{\lambda}}(U \backslash G / U)$ descends to $Z$ if and only if $\sigma(\lambda') = \lambda$.  When we consider $\pi_! f^* M$ for a sheaf $M$ in ${}_{L_{\lambda'}} D_{L_{\lambda}}(U \backslash G / U)$, it is equivalent to take $\pi_! f^* \epsilon_! M$ with the sheaf $\epsilon_! M$ on $D(Z)$ that it descends to, noting that this is $0$ if $\sigma(\lambda') \ne \lambda$.

\subsubsection{The sheaves $\mathbb{L}_{\lambda}^{\dot{w}}$ and $\mathcal{L}_{\lambda}^{\dot{w}}$}
A semisimple parameter is a $W$-orbit of torus characters of finite order.  That is, let $T_n$ denote $n$-division points of $T$, and set $\mathfrak{s}_n = \textrm{Hom}(T_n, \qlbar^{\times})$.  We have $\mathfrak{s}_n \subset \mathfrak{s}_{n'}$ if $n'/n \in \mathbb{Z}$, and form the union $\mathfrak{s}_{\infty}$, so each $\mathfrak{s}_n$ and $\mathfrak{s}_{\infty}$ admits an action by $W$ and also by $\sigma$ where $\sigma(\lambda)(t) = \lambda(\sigma^{-1}(t))$.  Set $I = W \times \mathfrak{s}_{\infty}$.  We write $w \cdot \lambda$ instead of $(w, \lambda)$ and the action of $w$ on $\lambda$ by $w(\lambda)$.  We say that $w \cdot \lambda$ is fixed by $\sigma$ if $w(\lambda) = \sigma^{-1}(\lambda)$, which defines the set $I^1$ in \cite[Section~2.5]{lusztig2015non}.

Here $\mathcal{O}_w$ is the $G$-orbit in $\mathcal{B}^2$ labeled by $w$. In order words, $B'$ and $B''$ have relative position $w$. Let $\tilde{\mathcal{O}}_w$, resp. $Z_w$, be the preimage of $\mathcal{O}_w$ under the map $\btilde^2 \to \mathcal{B}^2$, resp. $Z \to \mathcal{B}^2$. For $\omega \in N(T)$ such that $\kappa(\omega) = w$, we define a map $j^{\omega} \colon \tilde{\mathcal{O}}_w \to T$ by $j^{\omega}(xU, yU) = t$ where $t \in T$ is the unique element such that $Ux^{-1} yU = U \omega t U$.

If $\lambda \in \mathfrak{s}_n$, then since $T_n$ is the kernel of the $n$th power map, we can take the push forward of the trivial local system $[n]_*(\qlbar)$ along the $n$th power map and take the $\lambda$-isotypic component.  The choice of using the $\lambda$-isotypic component is the \emph{opposite} of the sign convention in \cite[(A.5)]{yun2014rigidity}.  In particular, under our convention, the trace of the local system $L_{\lambda}$ gives a restriction of $\lambda^{-1}$ to $T(\mathbb{F}_q)$-points and the representation attached to the local system $L_{\lambda}$ is the composition $\pi_1^t(T) \to T_n \to \qlbar$ where the second map is $\lambda$.

This defines a rank 1 character sheaf $L_{\lambda}$ on $T$ labeled by $\lambda$.  For any $\omega \in N(T)$, we obtain a rank 1 local system $L_{\lambda}^\omega = (j^{\omega})^* L_{\lambda}$ on $\tilde{\mathcal{O}}_w$.  In fact, all $L_{\lambda}^{\omega}$ are non-canonically isomorphic to $L_{\lambda}^{\dot{w}}$ for any $\omega \in \kappa^{-1}(w)$. We denote by $\mathcal{L}_{\lambda}^{\dot{w}}$ the extension by $0$ and $\mathbb{L}_{\lambda}^{\dot{w}}$ the intermediate extension of $L_{\lambda}^{\dot{w}}$. $\mathbb{L}_{\lambda}^{\dot{w}}$ is a simple perverse sheaf.  In particular, $\chi(\mathbb{L}_{\lambda}^{\dot{w}}) = 0$ if $w \cdot \lambda$ is not fixed by $\sigma$.

We can consider $\mathbb{L}_{\lambda}^{\dot{w}}$ as living in a category of monodromic sheaves.  The local system $L_{\lambda}$ lives in $T /_n T$ where $T$ acts on itself through the modified action $t_1 \cdot t_2 = t_1^n t_2$.  The pullback along $j^{\dot{w}}$ gives a $G \times T$-equivariant sheaf on $\tilde{\mathcal{O}}_w$ that descends to $Z_w$ if $w(\lambda) = \sigma^{-1}(\lambda)$.  So the sheaves $\mathbb{L}_\lambda^{\dot{w}}$ and $\mathcal{L}_{\lambda}^{\dot{w}}$ are naturally $G \times T \times T$-equivariant sheaves on $\btilde^2$ where the $T$ action on $\btilde$ is by $t \cdot gU = g t^n U$.  Equivalently, we may consider these as sheaves on $(U \backslash G / U) /_n (T \times T)$.  In fact, both $\mathcal{L}_{\lambda}^{\dot{w}}$ and $\mathbb{L}_{\lambda}^{\dot{w}}$ are $(T \times T, L_{w(\lambda)} \boxtimes L_{\lambda}^{-1})$-monodromic and can be viewed as living in ${}_{L_{w(\lambda)}} D_{L_{\lambda}}(U \backslash G / U)$.

We define $D_G(\btilde^2)$ as the category limit over $n$ of $\varinjlim_n D((U \backslash G / U) /_n (T \times T))$ where the arrows in the limit are pullback maps coming from arrows $n \to n'$ where $n | n'$ are positive integers.  There is a monoidal structure on $D_G(\btilde^2)$.  Let $p_{ij} \colon \btilde^3 \to \btilde^2$ be projection identifying the $i$ and $j$ copies together.  Then for sheaves $M_1$ and $M_2$ on $\btilde^2$, we define the convolution $M_1 * M_2$ as $(p_{13})_! (p_{12}^* M_1 \otimes p_{13}^* M_2)$.  The category $D_G(\btilde^2)$ natural contains as subcategories the categories ${}_{L_{\lambda'}} D_{L_{\lambda}}(U \backslash G / U)$.  As in \cite[Section~3]{lusztig2020endoscopy}, the convolution above defines a collection of bifunctors
\[ * \colon {}_{\mathcal{L}''} D_{\mathcal{L}'}(U \backslash G / U) \times {}_{\mathcal{L}'} D_{\mathcal{L}}(U \backslash G / U) \to {}_{\mathcal{L}''} D_{\mathcal{L}}(U \backslash G / U). \]
For the monoidal structure on ${}_{L_{\lambda}} D_{L_{\lambda}}(U \backslash G / U)$ with respect to which $L_{\lambda}^{1}$ is a monoidal unit.

Now we look at the image of these sheaves under $\chi$. Notice that $\pi^{-1}(1) \cap f^{-1}(Z_w) = X_w$, the Deligne-Lusztig variety on $G$. Similarly, there is a local system $\mathcal{F}_{\lambda}^{\dot{w}}$ on $X_w$ that restricts $f^*(\mathcal{L}_{\lambda}^{\dot{w}})$ to the fiber over $1$. The image $\chi(\mathcal{L}_{\lambda}^{\dot{w}})^j$ ($j$-th cohomology sheaf) is a $G$-equivariant local system on $G$, and the stalk at 1 is $H_c^j(X_w, \mathcal{F}_{\lambda}^{\dot{w}})$. 
The image $\chi(\mathbb{L}_{\lambda}^{\dot{w}})^j$ has stalk at 1 being $IH^j(X_w, \mathcal{F}_{\lambda}^{\dot{w}})$. The cohomology of these varieties has the structure of a $G(\mathbb{F}_q)$-representation, and we may talk about multiplicities of an irreducible representation $\rho$ inside another representation $W$, denoted $(\rho : W) = \dim \Hom(\rho, W)$.
Recall from \cite{deligne1976representations} and \cite{Lusztig1984}, for any irreducible representation $\rho$ of $G(\mathbb{F}_q)$,
\begin{enumerate}
    \item there exists $w \cdot \lambda$ fixed by $\sigma$, such that $(\rho: \oplus_j H_c^j(X^w, \mathcal{F}_{\lambda}^{\dot{w}})) \neq 0$, and 
    \item there exists $w \cdot \lambda$ fixed by $\sigma$, such that $(\rho: \oplus_j IH^j(X^w, \mathcal{F}_{\lambda}^{\dot{w}})) \neq 0$. 
\end{enumerate}
Let us restate the above in terms of sheaves.  To each irreducible representation $\zeta \in \Rep(G(\mathbb{F}_q))$, we attach an irreducible local system on $G /_{\sigma} G$.  In the language of the above proposition, for every irreducible $\zeta$, there is a sheaf $\mathbb{L}_{\lambda}^w$ such that $\zeta$ appears in $H^j(\pi_* f^* \mathbb{L}_{\lambda}^w)$ for some $j$, that is, $\zeta$ is a simple factor of $\chi(\mathbb{L}_{\lambda}^{\dot{w}})$ for some $(w, \lambda)$.  Following the notation of Lusztig \cite[Section~6.1]{lusztig2016non}, let $\mathfrak{R}^{\dot{w}}_{\lambda} = \chi(\mathcal{L}_{\lambda}^{\dot{w}})$ and $R^{\dot{w}}_{\lambda} = \chi(\mathbb{L}_{\lambda}^{\dot{w}})$.

The set of $\lambda \in \mathfrak{s}_\infty$ such that this multiplicity is nonvanishing is a closed orbit under $W$ and defines a semisimple parameter $\mathfrak{o}$, which we identify with this $W$-orbit $W\lambda$.  Later, we will see how this semisimple parameter will give us the semisimple conjugacy class containing the image of the tame generator in $\widehat{G}$ for Langlands parameters coming from the cohomology of shtukas.  Philosophically, we should then believe there is some refinement of this semisimple data that captures the unipotent part of the image of the tame generator.  Such a unipotent part is a unipotent conjugacy class in the endoscopic group, which is in an order-preserving bijection with two-sided cells.

\subsubsection{Two-sided cells}
There is a notion of two-sided cells $\textbf{c}$ attached to these $w\cdot \lambda$.  The set $I = W \times \mathfrak{s}_\infty$ admits a partition into two-sided cells, that is, $I = \sqcup_{\textbf{c}} \textbf{c}$.  By our assumption of connected center, the two-sided cells in $W \times \mathfrak{o}$ where $\mathfrak{o} = W \lambda$ are in bijection with two-sided cells for the stabilizer $W_{\lambda}$.  This bijection is characterized by the property that $W_{\lambda} \times \{ \lambda \} \cap \textbf{c}$ is a two-sided cell for $W_{\lambda}$.  In the general case where the center is not assumed to be connected, see \cite[11.4]{lusztig2020endoscopy} for a characterization of two-sided cells.

Each two-sided cell is contained in the finite set $W \times \mathfrak{o}$ for some unique $W$-orbit $\mathfrak{o} \subset \mathfrak{s}_{\infty}$.  If $\widehat{H} \subset \widehat{G}$ is the centralizer of $\lambda \in \widehat{T}$, then we call $H$ the endoscopic group.  If $\mathfrak{o} = W \lambda$ as an orbit, two-sided cells in $W \times \mathfrak{o}$ are in bijection with two-sided cells of the Weyl group of $H$, which is in turn in bijection with unipotent conjugacy classes in $\widehat{H}$.

For a two-sided cell $\textbf{c}$, we consider the subcategory $D^{\textbf{c}}_G(\btilde^2)$, resp. $D^{\le \textbf{c}}_{G}(\btilde^2)$, $D^{< \textbf{c}}_{G}(\btilde^2)$, of $D_G(\btilde^2)$ whose simple factors are $\mathbb{L}_{\lambda}^{\dot{w}}$ for $w \cdot \lambda \in \textbf{c}$, resp. $w \cdot \lambda \in \textbf{c}'$ for $\textbf{c}' \le \textbf{c}$, $< \textbf{c}$.  Denote the heart of perverse sheaves in these derived categories by $\mathcal{P}^{\textbf{c}}_G(\btilde^2)$, $\mathcal{P}^{\le \textbf{c}}_G(\btilde^2)$, and $\mathcal{P}^{< \textbf{c}}_G(\btilde^2)$, respectively.  Lusztig also attaches a two-sided cell to each irreducible representation of $G(\mathbb{F}_q)$.  Let $\Rep(G(\mathbb{F}_q))_{\textbf{c}}$ be the category whose objects are sums of irreducible representations attached to $\textbf{c}$.  Similarly, let $\mathcal{P}(G /_{\sigma} G)_{\textbf{c}}$, resp. $D(G /_{\sigma} G)_{\textbf{c}}$ be the subcategory of the category consisting of perverse sheaves, resp. complexes of sheaves that correspond to representations in $\Rep(G(\mathbb{F}_q))_{\textbf{c}}$.

There is a related category ${}_{L_{\lambda}} \mathcal{P}_{L_{\lambda}}(U \backslash G / U)_{\textbf{c}}$ defined as a Serre quotient.  For $W_{\lambda}$ the stabilizer of $\lambda$ in $W$, we can consider the category of perverse sheaves ${}_{L_{\lambda}} \mathcal{P}_{L_{\lambda}}(U \backslash G / U)_{\le \textbf{c}}$, resp. ${}_{L_{\lambda}} \mathcal{P}_{L_{\lambda}}(U \backslash G / U)_{< \textbf{c}}$ such that simple composition factors are $\mathbb{L}^{\dot{w}}_{\lambda}$ for $w \in \textbf{c}$.  Then we define the Serre quotient category
\begin{equation}
{}_{L_{\lambda}} \mathcal{P}_{L_{\lambda}}(U \backslash G / U)_{\textbf{c}} := {}_{L_{\lambda}} \mathcal{P}_{L_{\lambda}}(U \backslash G / U)_{\le \textbf{c}} / {}_{L_{\lambda}} \mathcal{P}_{L_{\lambda}}(U \backslash G / U)_{< \textbf{c}}
\end{equation}
whose simple objects are of the form $\mathbb{L}_{\lambda}^{\dot{w}}[|w|]$ for $w \in W_{\lambda}$, the subset of $W$ fixing $\lambda$.  We denote the corresponding derived category as ${}_{L_{\lambda}} D_{L_{\lambda}}(U \backslash G / U)_{\textbf{c}}$.

There is an $a$-function from two-sided cells to integers.  The main role that the $a$-function will play for us is in defining truncated induction and truncated convolution.  For an element $z \cdot \lambda \in \textbf{c}$, we define the number $n_z = a(\textbf{c}) + |z| + \Delta$, where $|z|$ is the length of $z$ in the Weyl group.  For any representation $\rho$, there exists $w \cdot \lambda$ fixed by $\sigma$, such that $(\rho: IH_c^{n_w}(X^w, \mathcal{F}_{\lambda}^{\dot{w}})) \neq 0$.

We restate this result on the level of sheaves.  The following proposition is from \cite[Section~6.1-6.2]{lusztig2016non}.
\begin{proposition}[Lusztig]
Every irreducible perverse sheaf $\zeta$ in $\mathcal{P}(G /_{\sigma} G)_{\textbf{c}}$ is a summand of a sheaf of the form $(R^{\dot{w}}_{\lambda})^{n_w}$ for some $w \cdot \lambda$.  Moreover, if $\zeta$ is a summand of $R^{\dot{w}}_{\lambda}$, then the two-sided cell $\textbf{c}'$ containing $w \cdot \lambda$ must satisfy $\textbf{c} \le \textbf{c}'$.
\end{proposition}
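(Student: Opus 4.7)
The plan is to follow Lusztig's argument in \cite[Section~6.1-6.2]{lusztig2016non}, translating through the horocycle correspondence $\chi = \pi_! f^*$ between the monodromic Hecke category and $D(G/_\sigma G)$. Since $\zeta$ is the Springer-type $G$-equivariant sheaf corresponding to an irreducible representation of $G(\F_q)$ in the family attached to $\mathbf{c}$, classical Deligne-Lusztig theory (Deligne-Lusztig, recalled above) supplies a pair $(w,\lambda)$ with $w(\lambda) = \sigma^{-1}(\lambda)$ such that $\zeta$ occurs as a simple summand of some cohomology sheaf $\chi(\mathcal{L}_\lambda^{\dot{w}})^j$. A standard devissage argument, expressing $\mathbb{L}_\lambda^{\dot{w}}$ as the difference of $\mathcal{L}_\lambda^{\dot{w}}$ and contributions from smaller Schubert cells (for which the analogous statement holds by induction on $|w|$) upgrades this to the intersection cohomology version, so $\zeta$ appears in $R_\lambda^{\dot{w}}$ for some $w \cdot \lambda$.

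To isolate the precise cohomological degree $n_w = a(\mathbf{c}) + |w| + \Delta$, we use Lusztig's refinement of Deligne-Lusztig theory via the $a$-function. The key input is that for $w \cdot \lambda \in \mathbf{c}$, the associated variety $X_w$ satisfies a purity and weight bound under which $(R_\lambda^{\dot{w}})^{n_w}$ realizes the ``top'' contribution in the cell, and every representation in $\Rep(G(\F_q))_{\mathbf{c}}$ whose corresponding Lusztig family meets the orbit $W\lambda$ appears here. This is encoded in Lusztig's theory of non-unipotent representations in a similar way to the principal series block and extends by the endoscopic reduction recalled in Section~\ref{subsec:review nonunipotent}, which is clean under our connected center hypothesis since then $W_\lambda = W_\lambda^\circ$.

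For the second statement, the constraint $\mathbf{c} \le \mathbf{c}'$ expresses the compatibility of the two-sided cell filtration with Deligne-Lusztig induction. Concretely, one shows that the convolution / induction image $R_\lambda^{\dot{w}} = \chi(\mathbb{L}_\lambda^{\dot{w}})$ lies, as an object of $D(G/_\sigma G)$, in the subcategory $D(G/_\sigma G)_{\le \mathbf{c}'}$ whenever $w \cdot \lambda \in \mathbf{c}'$. On the monodromic side this amounts to $\mathbb{L}_\lambda^{\dot{w}} \in {}_{L_{w(\lambda)}} \mathcal{P}_{L_\lambda}(U \backslash G / U)_{\le \mathbf{c}'}$ by definition, and the transport of this filtration through $\chi$ is Lusztig's compatibility result \cite[Theorem~6.2]{lusztig2016non}.

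The main obstacle is the degree-precise statement in the first part: the classical Deligne-Lusztig theorem only guarantees some cohomological degree, and forcing the single degree $n_w$ requires the full strength of Lusztig's $a$-function machinery together with the structure of Lusztig families inside $W \times \mathfrak{o}$. Once this is granted, the second part is a purely formal consequence of the cell-filtered structure of the monodromic Hecke category and the exactness properties of $\chi$ with respect to the cell filtration.
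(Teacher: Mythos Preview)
The paper does not give its own proof of this proposition: it is stated with the attribution ``The following proposition is from \cite[Section~6.1-6.2]{lusztig2016non}'' and then used as a black box to motivate the definition of the truncated induction functor $\underline{\chi}$. Your proposal is therefore not comparable to a proof in the paper, but rather an attempt to sketch Lusztig's original argument.

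As a sketch of Lusztig's proof, your outline is broadly on target: the existence of some $(w,\lambda)$ via Deligne--Lusztig theory, the passage from $\mathcal{L}_\lambda^{\dot w}$ to $\mathbb{L}_\lambda^{\dot w}$ by induction on length, and the cell-filtration compatibility for the second assertion are indeed the ingredients. The one place where your sketch remains genuinely incomplete is the isolation of the precise degree $n_w = a(\mathbf{c}) + |w| + \Delta$. You acknowledge this as ``the main obstacle'' and invoke ``Lusztig's $a$-function machinery,'' but this is exactly the heart of the matter and your sketch does not indicate the mechanism (vanishing bounds on $H^j$ in terms of $a(\mathbf{c})$, and non-vanishing at the critical degree coming from the structure of families). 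Without that, the argument does not go beyond what the paper already records two paragraphs earlier, namely that $(\rho : \oplus_j IH^j(X_w,\mathcal{F}_\lambda^{\dot w})) \ne 0$ for some $w \cdot \lambda$. If you intend this as more than a pointer to Lusztig, that step needs to be filled in.
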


In light of the above proposition, we define the functor
\[\underline{R^{n_w} \pi_! f^*} = \underline{\chi} \colon \mathcal{P}^{\textbf{c}}(\btilde^2) \to \mathcal{P}(G /_{\sigma} G)_{\textbf{c}} \]
as the \textit{truncated induction} functor, where the underline denotes that we pass to the summand of $R^{n_w} \pi_! f^* M$ such that any irreducible occuring in the representation is associated to the two-sided cell $\textbf{c}$.

\subsubsection{Truncated convolution}

The category $\mathcal{P}^{\textbf{c}}(\btilde^2)$ is not only an abelian category but also monoidal under truncated convolution.  Following Lusztig \cite[Section~6.19]{lusztig2016non}, we define truncated convolution on $\mathcal{P}^{\textbf{c}}(\btilde^2)$ as
\[M \bullet M' := \underline{R^{2 a(\textbf{c})+\dim T} (p_{13})_! (p_{12}^* M \otimes p_{23}^* M')},\]
where the underline is the Serre quotient functor sending $\mathcal{P}^{\le \textbf{c}}(\btilde^2) \to \mathcal{P}^{\textbf{c}}(\btilde^2)$.  Truncated convolution defines a functor
\[ \bullet \colon \mathcal{P}^{\textbf{c}}(\btilde^2) \times \mathcal{P}^{\textbf{c}}(\btilde^2) \to \mathcal{P}^{\textbf{c}}(\btilde^2). \]
It turns out this is essentially taking the ``top cohomology'' of convolution.  We note that if we consider our sheaves as living on the quotient space $\btilde^2 /_n (T \times T)$, the correct dimension is to take $R^{2 a(\textbf{c})} (p_{13})_!$ in the above definition.

Inside a Weyl group $W$ there are special elements $d$ called Duflo involutions.  If $W_{\lambda}$ is the stabilizer of $\lambda$, let the set of Duflo involutions in $W_{\lambda}$ be $\textbf{D}_{\lambda}$, and let $\textbf{D} \subset I$ be the set of all $w \cdot \lambda$ such that $w \in \textbf{D}_{\lambda}$ as in \cite[Section~1.11~(vi)]{lusztig2015non}.  Let $\textbf{D}_{\textbf{c}} = \textbf{c} \cap \textbf{D}$ be the Duflo involutions inside $\textbf{c}$.  The corresponding sheaf $\bigoplus \mathbb{L}_{\lambda}^{\dot{d}}$ with the sum running over all Duflo involutions $d \cdot \lambda \in \textbf{D}_{\textbf{c}}$ serves as a monoidal unit in the truncated convolution category $\mathcal{P}^{\textbf{c}}(\btilde^2)$ \cite[Section~1.5]{lusztig1989cells}.  Duflo involutions further partition two-sided cells into left cells where $w \cdot \lambda$ is in the left cell defined by $d \cdot \lambda'$ if and only if $\mathbb{L}_{\lambda'}^{\dot{d}} \bullet \mathbb{L}_{\lambda}^{\dot{w}} = \mathbb{L}_{\lambda}^{\dot{w}}$.

Truncated convolution restricts to a functor
\[ \bullet \colon {}_{L_{\lambda}} \mathcal{P}_{L_{\lambda}}(U \backslash G / U)_{\textbf{c}} \times {}_{L_{\lambda}} \mathcal{P}_{L_{\lambda}}(U \backslash G / U)_{\textbf{c}} \to {}_{L_{\lambda}} \mathcal{P}_{L_{\lambda}}(U \backslash G / U)_{\textbf{c}} \]
Similarly have a monoidal unit $\bigoplus \mathbb{L}_{\lambda}^{\dot{d}}[|d|]$ running over Duflo involutions $d \cdot \lambda \in \textbf{D}_{\textbf{c}} \cap \textbf{D}_{\lambda}$.  We denote the latter set by $\textbf{D}_{\textbf{c},\lambda}$.

\subsubsection{Blocks and minimal IC sheaves}

Let $\mathfrak{o}$ be a $W$-orbit in $\mathfrak{s}_{\infty}$.  We have already indicated that our notion of two-sided cells on $W \times \mathfrak{o}$ arises from two-sided cells on two-sided cells on $W_{\lambda}$ for $\lambda \in \mathfrak{o}$.  To elaborate more on this point, we will explain that the category $\mathcal{P}^{\textbf{c}}(\btilde^2)$ can be built from the categories ${}_{L_{\lambda}} D_{L_{\lambda}}(U \backslash G / U)_{\textbf{c}}$ together with certain minimal IC sheaves.

Let ${}_{\lambda'} W_{\lambda} = \{ w \in W | w(\lambda) = \lambda' \}$.  If $\lambda,\lambda' \in \mathfrak{o}$, then ${}_{\lambda'} W_{\lambda}$ is nonempty, and in this case it is a $(W_{\lambda'}, W_{\lambda})$-bitorsor and contains a unique minimal element, which we denote by $w(\lambda',\lambda)$.  Moreover, this minimal element satisfies $w(\lambda'',\lambda') w(\lambda',\lambda) = w(\lambda'',\lambda)$.  Moreover, minimal IC sheaves induce equivalences of categories under convolution \cite[Proposition~5.2]{lusztig2020endoscopy}.
\begin{proposition}[Special case of Lusztig-Yun]\label{prop: minimal equivalence}
    Let $w = w(\lambda',\lambda)$.  Then
    \begin{equation}
        M \mapsto M * \mathbb{L}^{\dot{w}}_{\lambda} \colon {}_{L_{\lambda''}} D_{L_{\lambda'}}(U \backslash G / U) \to {}_{L_{\lambda''}} D_{L_{\lambda}}(U \backslash G / U)
    \end{equation}
    is an equivalence of categories with inverse $M \mapsto M * \mathbb{L}^{\dot{w}^{-1}}_{\lambda'}$.
\end{proposition}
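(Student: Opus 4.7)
The plan is to prove the two identities
\begin{equation*}
\mathbb{L}^{\dot{w}}_{\lambda} * \mathbb{L}^{\dot{w}^{-1}}_{\lambda'} \cong \mathbb{L}^{\dot{1}}_{\lambda'}, \qquad \mathbb{L}^{\dot{w}^{-1}}_{\lambda'} * \mathbb{L}^{\dot{w}}_{\lambda} \cong \mathbb{L}^{\dot{1}}_{\lambda},
\end{equation*}
where $\mathbb{L}^{\dot{1}}_{\bullet}$ denotes the unit for $*$, supported on the closed diagonal $G$-orbit $\mathcal{O}_1 \subset \btilde^2$ inside the appropriate monodromic category ${}_{L_{\bullet}} D_{L_{\bullet}}(U \backslash G / U)$. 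Given these identities, associativity of $*$ shows immediately that the two functors in the statement are mutually inverse equivalences, so everything reduces to these two convolution computations.

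The main preparation is cleanness of minimal IC sheaves. For $w = w(\lambda', \lambda)$ minimal in ${}_{\lambda'}W_{\lambda}$, I would verify that the natural maps $\mathcal{L}^{\dot{w}}_{\lambda} \to \mathbb{L}^{\dot{w}}_{\lambda} \to R j_{*} L^{\dot{w}}_{\lambda}$ are both isomorphisms, where $j \colon \tilde{\mathcal{O}}_w \hookrightarrow \overline{\tilde{\mathcal{O}}_w}$. By minimality of $w$, for every simple reflection $s$ with $sw < w$ one has $s \notin W_{\lambda'}$, and symmetrically on the right; so the Kummer local system $L_{\lambda'}$, respectively $L_\lambda$, restricts nontrivially to the $\mathbb{G}_m$-direction in the $\mathbb{P}^1$-fiber of the partial resolution at $s$. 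Since $H^{*}_c(\mathbb{G}_m, L_\bullet|_{\mathbb{G}_m}) = 0$ for a nontrivial Kummer sheaf, no boundary stalks are contributed on passage from $j_!$ to $j_{!*}$ or $j_*$. The same vanishing mechanism yields the length-additive convolution rule: whenever $w_1$ and $w_2$ are minimal in their respective bitorsors and $\ell(w_2 w_1) = \ell(w_2) + \ell(w_1)$, one has $\mathbb{L}^{\dot{w_2}}_{w_1(\lambda)} * \mathbb{L}^{\dot{w_1}}_{\lambda} \cong \mathbb{L}^{\dot{w_2 w_1}}_{\lambda}$ up to a cohomological shift.

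With these in hand, I would compute $\mathbb{L}^{\dot{w}}_{\lambda} * \mathbb{L}^{\dot{w}^{-1}}_{\lambda'}$ using the convolution diagram $\btilde^2 \xleftarrow{(p_{12}, p_{23})} \btilde^3 \xrightarrow{p_{13}} \btilde^2$, restricted to the fiber product $\overline{\tilde{\mathcal{O}}_w} \times_{\btilde} \overline{\tilde{\mathcal{O}}_{w^{-1}}}$, and stratify the target by $G$-orbits $\mathcal{O}_{w'}$. Over $\mathcal{O}_1$ the $p_{13}$-fiber is a single point $B_1 = B_2 = B_3$ where the stalks of $L^{\dot{w}}_{\lambda}$ and $L^{\dot{w}^{-1}}_{\lambda'}$ are compatibly identified with $L_{\lambda'}$, producing exactly $\mathbb{L}^{\dot{1}}_{\lambda'}$ on that stratum. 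The main obstacle is showing that the contribution over every $\mathcal{O}_{w'}$ with $w' \ne 1$ vanishes: the $p_{13}$-fiber over such a point is swept out by intermediate flags $B_2$ with $(B_1,B_2) \in \overline{\tilde{\mathcal{O}}_w}$ and $(B_2,B_3) \in \overline{\tilde{\mathcal{O}}_{w^{-1}}}$, which forms a union of affine cells parameterized by reduced-expression data, and the tensor product of the two twisted local systems restricts to a nontrivial Kummer sheaf along at least one $\mathbb{G}_m$-factor by the minimality of $w$, so its fiber integral vanishes by the same argument as above. The second identity follows by the symmetric argument with $\lambda \leftrightarrow \lambda'$.
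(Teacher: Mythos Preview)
The paper does not prove this proposition; it is quoted from Lusztig--Yun \cite[Proposition~5.2]{lusztig2020endoscopy}. The paper does, however, sketch the Lusztig--Yun argument when proving the affine analogue later: one inducts on $\ell(w)$, writing $w = w's$ for a simple reflection $s$ with $\ell(w') = \ell(w) - 1$, verifies that $w' = w(\lambda', s(\lambda))$ is again minimal, and uses associativity to reduce to the case of a single simple reflection $s$ with $s(\lambda) \ne \lambda$.

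Your ingredients---cleanness of minimal IC sheaves and the length-additive convolution rule---are correct and are exactly what Lusztig--Yun use. The gap is in the direct computation of $\mathbb{L}^{\dot{w}}_{\lambda} * \mathbb{L}^{\dot{w}^{-1}}_{\lambda'}$: the $p_{13}$-fiber over a point of $\tilde{\mathcal{O}}_1$ is \emph{not} a single point. Over $(x_1 U, x_3 U)$ with $x_1 B = x_3 B$, the two constraints $(x_1 U, x_2 U) \in \tilde{\mathcal{O}}_w$ and $(x_2 U, x_3 U) \in \tilde{\mathcal{O}}_{w^{-1}}$ are the \emph{same} condition (relative position inverts under swapping the pair), so $x_2 U$ ranges over the full Schubert cell in relative position $w$ to $x_1$, which has positive dimension whenever $w \ne 1$. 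One must then integrate the tensored local system over this cell, and there is no reason the answer should be obvious for general minimal $w$; likewise the vanishing over each $\mathcal{O}_{w'}$ with $w' \ne 1$ requires controlling fibers over many strata at once.

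The clean fix is already in your toolkit: use your length-additive rule to write $\mathbb{L}^{\dot{w}}_{\lambda} \cong \mathbb{L}^{\dot{w'}}_{s(\lambda)} * \mathbb{L}^{\dot{s}}_{\lambda}$ and $\mathbb{L}^{\dot{w}^{-1}}_{\lambda'} \cong \mathbb{L}^{\dot{s}}_{s(\lambda)} * \mathbb{L}^{\dot{(w')^{-1}}}_{\lambda'}$, and induct on $\ell(w)$. The unit identity then collapses to the rank-one statement $\mathbb{L}^{\dot{s}}_{\lambda} * \mathbb{L}^{\dot{s}}_{s(\lambda)} \cong \mathbb{L}^{\dot{1}}_{s(\lambda)}$, where the convolution is supported on $\overline{\tilde{\mathcal{O}}_s}$, the only nontrivial fiber is a single $\mathbb{G}_m$, and your Kummer-vanishing argument applies directly. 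This is precisely the Lusztig--Yun route.
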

By the uniqueness of minimal elements any element $w \in W$ can be written as $w' m$ where $w' \in W_{w(\lambda)}$ and $m$ is the minimal element $w(w(\lambda),\lambda)$.  For such a decomposition,
\begin{equation}
    \mathbb{L}^{\dot{w}}_{\lambda} \cong \mathbb{L}^{\dot{w'}}_{w(\lambda)} * \mathbb{L}^{\dot{m}}_{\lambda}.
\end{equation}

\subsection{The affine monodromic Hecke category}

\subsubsection{Definition of the affine monodromic Hecke category}

As before, we let $G$ be a split group, let $B \subset G$ be a Borel subgroup, and let $U \subset B$ be its unipotent radical.  Let $C$ be a fixed curve and $y \in C(\mathbb{F}_q)$ a closed $\mathbb{F}_q$-point.  Let $L G$ denote the loop group of $G$, $L^+ G$ the positive loop group, $L^{\circ} G$ the unipotent radical of the positive loop group, and $I$ the Iwahori subgroup such that $I \subset L^+ G$ gives $B \subset G$ upon passing to the quotient by $L^{\circ} G$.  Let $I^{\circ}$ denote the unipotent radical of $I$.  Recall the ind-schemes
\[
\begin{aligned}
    \Gr_G &= LG / L^+ G, \\
    \Fl_G &= LG / I, \\
    \fltilde_G &= LG / I^{\circ}.
\end{aligned}
\]
Allowing $L^+ G$ to act on $\mathcal{B}$ and $\btilde$ through the quotient $G$, we have the following identifications
\[
\begin{aligned}
    LG \backslash (\Fl_G \times \Fl_G) &\cong [I \backslash LG / I] \cong [L^+ G \backslash (\Fl_G \times \mathcal{B})], \\
    LG \backslash (\fltilde_G \times \fltilde_G) &\cong [I^{\circ} \backslash LG / I^{\circ}] \cong [L^+ G \backslash (\fltilde_G \times \btilde)],
\end{aligned}
\]
where the first isomorphism in each row is ``set-theoretic'' but the second can be understood as an isomorphism of ind-schemes acted on by an infinite-dimensional group such that the quotient is a nice quotient.  We can make sense of equivariant derived categories for these and we will implicitly use the equivalence between these two equivariant derived categories coming from the isomorphisms.  The first row gives the affine Hecke category, while the second will give us an affine monodromic Hecke category once we define a $T \times T$-action.

We may define the action map
\begin{equation}
    \begin{aligned}
        a \colon T \times [I^{\circ} \backslash LG / I^{\circ}] \times T &\to [I^{\circ} \backslash LG / I^{\circ}] \\
        a(t',g,t) &= t'gt^{-1},
    \end{aligned}
\end{equation}
analogous to and compatible with the corresponding action map on $U \backslash G / U$.  We also have corresponding $n$-th power actions
\begin{equation}
    \begin{aligned}
        a^n &\colon T \times [I^{\circ} \backslash LG / I^{\circ}] \times T \to [I^{\circ} \backslash LG / I^{\circ}] \\
        a^n(t',g,t) &= (t')^ngt^{-n},
    \end{aligned}
\end{equation}
and define an equivariant derived category
\[ D_{LG}(\fltilde^2 /_n (T \times T)) = D([I^{\circ} \backslash LG / I^{\circ}] /_n (T \times T)).\]
Taking the limit over the $n$-th power actions gives a category of monodromic sheaves that we will denote $D_{LG}(\fltilde^2)$.  The notation here $D_{LG}(\fltilde^2)$ is only by analogy with the category $D_G(\btilde^2)$, and we do \emph{not} claim that we are making sense of the action $LG$ here.  We will call the category $D_{LG}(\fltilde^2)$ the affine monodromic Hecke category.

\subsubsection{Blocks and minimal IC sheaves}

We now discuss the block structure of the affine monodromic Hecke category, which is simplified by the fact that we have assumed that $G$ has connected center.  For two character sheaves $\mathcal{L}$ and $\mathcal{L}'$, the category ${}_{\mathcal{L}'} D_{\mathcal{L}}(I^{\circ} \backslash LG / I^{\circ})$ is the derived category of constructible sheaves on $I^{\circ} \backslash LG / I^{\circ}$ that is equivariant with respect to $(T \times T, \mathcal{L}' \boxtimes \mathcal{L}^{-1})$.  Such a category was studied in \cite[Section~3]{li2020endoscopy} where the mixed version of this category is denoted there by ${}_{\mathcal{L}'} D_{\mathcal{L}}$, giving affine generalizations of the results from \cite{lusztig2020endoscopy}.  In particular, there is a convolution product \cite[Section~4]{li2020endoscopy}
\[ * \colon {}_{\mathcal{L}''} D_{\mathcal{L}}'(I^{\circ} \backslash LG / I^{\circ}) \times {}_{\mathcal{L}'} D_{\mathcal{L}}(I^{\circ} \backslash LG / I^{\circ}) \to {}_{\mathcal{L}''} D_{\mathcal{L}}(I^{\circ} \backslash LG / I^{\circ}), \]
analogous to the finite convolution product and restricting to it along the embedding coming from $\btilde \to \fltilde$.

Let $\wtilde$ denote the Weyl group of $LG$.  We choose a cross section $w \mapsto \dot{w} \colon \wtilde \to NT$ extending the cross section for the finite group, where $NT$ is now the normalizer of the torus inside the loop group.  The action of $W$ on the set $\mathfrak{s}_n$ and character sheaves extends to an action of $\tilde{W}$ on this set by making $X_*(T)$ act trivially.  There are IC sheaves
\[ \mathbb{L}^{\dot{w}}_{\lambda} \in {}_{L_{w(\lambda)}} D_{L_{\lambda}}(I^{\circ} \backslash LG / I^{\circ}). \]
These sheaves called ``positive IC sheaves'' in \cite[Section~3.3.1]{li2020endoscopy} and give a representatives of simple objects in the affine monodromic Hecke category.

Let $\wtilde_{\lambda} \subset \wtilde$ be the subset of this Weyl group that fixes $\lambda \in \mathfrak{s}_\infty$.  For a two-sided cell $\textbf{c}$ in $I = \wtilde \times \mathfrak{s}_{\infty}$, the intersection $\wtilde_{\lambda} \times \{ \lambda \} \cap \textbf{c}$ defines a corresponding two-sided cell decomposition of $\wtilde_{\lambda}$.

Under the inclusion $\btilde \to \fltilde$, the minimal IC sheaves on the finite monodromic Hecke category are naturally minimal IC sheaves on the affine monodromic Hecke category under this embedding.  We will need a property of minimal IC sheaves, which is implicit in \cite[Section~5]{li2020endoscopy}.
\begin{proposition}
    Let $w = w(\lambda',\lambda)$.  Then
    \begin{equation}
        M \mapsto M * \mathbb{L}^{\dot{w}}_{\lambda} \colon {}_{L_{\lambda''}} D_{L_{\lambda'}}(I^{\circ} \backslash LG / I^{\circ}) \to {}_{L_{\lambda''}} D_{L_{\lambda}}(I^{\circ} \backslash LG / I^{\circ})
    \end{equation}
    is an equivalence of categories with inverse $M \mapsto M * \mathbb{L}^{\dot{w}^{-1}}_{\lambda'}$.
\end{proposition}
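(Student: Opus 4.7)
The plan is to reduce this affine statement to its finite analogue, Proposition~\ref{prop: minimal equivalence}. By associativity and unitality of convolution in the affine monodromic Hecke category, the claimed equivalence will follow once we establish the two identities
\begin{equation*}
    \mathbb{L}^{\dot{w}}_{\lambda} * \mathbb{L}^{\dot{w}^{-1}}_{\lambda'} \cong \mathbb{L}^{\dot{e}}_{\lambda'}, \qquad \mathbb{L}^{\dot{w}^{-1}}_{\lambda'} * \mathbb{L}^{\dot{w}}_{\lambda} \cong \mathbb{L}^{\dot{e}}_{\lambda},
\end{equation*}
in the respective endomorphism categories, since each unit $\mathbb{L}^{\dot{e}}_{\mu}$ acts trivially under convolution.

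The key observation is that both $w = w(\lambda', \lambda)$ and its inverse $w^{-1} = w(\lambda, \lambda')$ lie in the finite Weyl group $W \subset \wtilde$, because by construction $w(\lambda',\lambda)$ is the minimal length element of ${}_{\lambda'}W_\lambda \subset W$. Consequently the sheaves $\mathbb{L}^{\dot{w}}_{\lambda}$ and $\mathbb{L}^{\dot{w}^{-1}}_{\lambda'}$ are supported on the image of $\btilde \subset \fltilde$, and their convolution is supported on the image of $\btilde \cdot \btilde \subset G/I^{\circ} \subset LG/I^{\circ}$. On this finite-dimensional locus, the convolution computed in the affine monodromic Hecke category agrees with the convolution computed in the finite monodromic category $D_G(\btilde^2)$, because the affine multiplication map $\fltilde \widetilde{\times} \fltilde \to \fltilde$ restricts to the finite multiplication map on the relevant strata, and the $(T \times T)$-monodromy structures match under the Levi decomposition $I = T \ltimes I^{\circ}$ mirroring $B = T \ltimes U$.

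Having reduced to the finite setting, both required isomorphisms follow from Proposition~\ref{prop: minimal equivalence}: the existence of a two-sided quasi-inverse established there forces the compositions of convolutions to be isomorphic to the respective identity functors, and evaluating these on the monoidal units $\mathbb{L}^{\dot{e}}_{\lambda'}$ and $\mathbb{L}^{\dot{e}}_{\lambda}$ yields precisely the displayed identities.

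The main obstacle I expect is verifying the rigorous compatibility between the affine and finite convolution operations on sheaves supported on the finite flag variety, particularly the matching of the monodromy data with respect to the two different torus actions; this rests on the Levi compatibility cited above. A secondary, more formal point is that one must verify that the bifunctor $*$ behaves correctly across the various categories ${}_{L_{\lambda''}} D_{L_{\lambda'}}(I^{\circ} \backslash LG / I^{\circ})$ as $\lambda''$ varies, which follows from the framework developed in \cite[Section~4]{li2020endoscopy}.
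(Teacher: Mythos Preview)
Your approach is correct but differs from the paper's. The paper reproves the statement in the affine setting by induction on the length of $w$: one writes $w = w's$ with $s$ a simple reflection, applies the inductive hypothesis to $w' = w(\lambda', s(\lambda))$, and is left with showing that $(-) * \mathbb{L}^{\dot{s}}_{\lambda}$ is an equivalence with inverse $(-) * \mathbb{L}^{\dot{s}}_{s(\lambda)}$, which is then deduced from \cite[Lemma~5.4 and Lemma~5.5]{li2020endoscopy}. In other words, the paper re-runs the Lusztig--Yun induction inside the affine category and quotes the affine simple-reflection lemmas as the base case.

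Your argument instead exploits the observation that $w(\lambda',\lambda)$ lies in the finite Weyl group $W$ (because $X_*(T)$ acts trivially on $\mathfrak{s}_\infty$), so both minimal IC sheaves are supported on $L^+G/I^\circ \cong G/U = \btilde$ inside $\fltilde$. Since $I^\circ = U \cdot L^\circ G$ and $L^\circ G$ acts trivially on that locus, affine convolution of two such sheaves agrees with finite convolution on $\btilde$, and one can simply import the identities $\mathbb{L}^{\dot{w}}_{\lambda} * \mathbb{L}^{\dot{w}^{-1}}_{\lambda'} \cong \mathbb{L}^{\dot{e}}_{\lambda'}$ and $\mathbb{L}^{\dot{w}^{-1}}_{\lambda'} * \mathbb{L}^{\dot{w}}_{\lambda} \cong \mathbb{L}^{\dot{e}}_{\lambda}$ from Proposition~\ref{prop: minimal equivalence}. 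This is more direct and avoids redoing the induction. The cost is that you must justify the finite/affine convolution compatibility (which you correctly flag); this is routine once one writes $L^+G \times^{I^\circ} L^+G/I^\circ \cong G \times^U G/U$ using that $L^\circ G$ is normal and pro-unipotent, and checks that the $(T\times T)$-monodromy data match. The paper's inductive approach, by contrast, stays entirely inside the affine category and would generalize to minimal elements not lying in $W$, though that extra generality is not needed here.
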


\begin{proof}
    The proof of \cite[Proposition~5.2]{lusztig2020endoscopy} is by induction on the length of $w$, noting the base case is trivial, by writing $w$ as a product of $w' s$ for $s$ a simple reflection.  The proof shows that $w' = w(\lambda',s(\lambda))$ and by the inductive hypothesis $(-) * \mathbb{L}^{\dot{w'}}_{s(\lambda)}$ is an equivalence of categories, so by associativity it suffices to show that
    \[ (-) * \mathbb{L}^{\dot{s}}_{\lambda} \colon {}_{L_{\lambda''}} D_{L_{s(\lambda)}}(I^{\circ} \backslash LG / I^{\circ}) \to {}_{L_{\lambda''}} D_{L_{\lambda}}(I^{\circ} \backslash LG / I^{\circ}) \]
    is an equivalence of categories with inverse $(-) * \mathbb{L}^{\dot{s}}_{s(\lambda)}$.  But this follows by \cite[Lemma~5.4]{li2020endoscopy} and \cite[Lemma~5.5]{li2020endoscopy}.
\end{proof}

\subsubsection{Central sheaves}

There are certain sheaves $Z_V^{\lambda}$ in the center of the category ${}_{L_{\lambda}} D_{L_{\lambda}}(I^{\circ} \backslash LG / I^{\circ})$ with respect to the monoidal convolution product.  In the ``classical'' case $\lambda = 1$, there are two constructions of central sheaves in the literature.  The first, due to Gaitsgory \cite{gaitsgory1999construction}, defines central sheaves $Z_V$ as nearby cycles sheaves along a variety, which we will denote $\Gr\mathcal{B}_G$.
\begin{definition}
Let $\Gr\mathcal{B}_G$, resp. $\Gr\btilde_G$, be the variety classifying as its $S$-points:
\begin{enumerate}
    \item $x \in C(S)$,
    \item $\mathcal{E}$ an $S$-point of $\Bun_G$,
    \item $\phi$ a Hecke modification
    \[ \phi \colon \mathcal{E} |_{C \times S \setminus \Gamma_{x}} \simeq \mathcal{G} |_{C \times S \setminus \Gamma_{x_i}}, \]
    \item and a reduction of structure group at $y$ to $B \subset G$, resp. $U \subset G$.
\end{enumerate}
\end{definition}

The general fiber of $\Gr\mathcal{B}_G$ over $C \setminus \{ y \}$ is $\Gr_G \times G / B$, while the special fiber over $y$ is $\Fl_G = LG / I$.  Gaitsgory defines central sheaves as $\Psi(\mathcal{S}_V \boxtimes \delta_e)[1]$ where $\mathcal{S}_V$ is the Satake sheaf shifted to be perverse and $\delta_e$ is a skyscraper sheaf on $eB$.  The shift appears in the definition to ensure that the resulting sheaf is perverse.  In what follows, we will drop the shift from nearby cycles to preserve perversity with respect to a map to the base, in general a power of a curve, rather than the perversity with respect to the map to $\mathbb{F}_q$ or $\fqbar$.

\begin{definition}
The central sheaf $Z_V$ is defined as $\Psi(\mathcal{S}_V \boxtimes \delta_e)$, taking nearby cycles on $\Gr\mathcal{B}_G$.

Let $\lambda \in \mathfrak{s}_{\infty}$.  Then the central sheaf $Z_V^{\lambda}$ is defined as $\Psi(\mathcal{S}_V \boxtimes L_{\lambda})$, taking nearby cycles on on $\Gr\btilde_G$.
\end{definition}

A second construction appears in the work of Zhu \cite{pappas2013local} \cite{zhu2011geometric}, see also Richarz \cite{richarz2016affine}.  For this construction, we let $G$ be a parahoric group scheme over $C$ which is split over $C \setminus \{ y \}$ and has Iwahori reduction at $y$.  Then the Beilinson-Drinfeld grassmannian has general fiber $\Gr_G$ and special fiber $\Fl_G$.  Indeed, we have a closed embedding of this Beilinson-Drinfeld grassmannian into $\Gr\mathcal{B}_G$, and the two definitions can be seen to be equivalent by noting that pushforward along a proper map commutes with nearby cycles.

\subsubsection{Compatibility between finite convolution and nearby cycles}

In this section, we will be using the first degeneration, $\Gr\mathcal{B}_G$, and various generalizations.  The importance of this degeneration for us is that $\Gr\mathcal{B}_G$ interpolates between convolution on the finite flag variety and convolution on the affine flag variety.  This property was first noted by Gaitsgory \cite[Proposition~6]{gaitsgory1999construction}, and we will recall the construction of a convolution diagram used there, which we also adapt to the monodromic setting.

\begin{definition}
Let $\Gr G_G$ be the variety classifying as its $S$-points:
\begin{enumerate}
    \item $x \in C(S)$,
    \item $\mathcal{E}$ an $S$-point of $\Bun_G$,
    \item $\phi$ a Hecke modification
    \[ \phi \colon \mathcal{E} |_{C \times S \setminus \Gamma_{x}} \simeq \mathcal{G} |_{C \times S \setminus \Gamma_{x}}, \]
    \item and a trivialization $\varepsilon \colon \mathcal{E}_y \simeq \mathcal{G}_y$ where $\mathcal{G}$ is the trivial $G$-bundle.
\end{enumerate}
\end{definition}
We now define the relevant ``interpolating'' convolution diagram.
\begin{definition}
The maps $f$ and $m$ in the diagram
\begin{equation}\label{eqn:convolution diagram}
\begin{tikzcd}
    \Gr\mathcal{B}_G \times [B \backslash G / B] & \Gr G_G \times^B G / B \arrow{l}{f} \arrow{r}{m} & \Gr\mathcal{B}_G.
\end{tikzcd}
\end{equation}
are defined as follows.  The map $f$ sends a tuple $(x, \mathcal{E}, \phi, \varepsilon, gB)$ to $(x, \mathcal{E}, \phi, \varepsilon_B, BgB)$, where $\varepsilon_B$ is the reduction of structure group $\varepsilon_B \colon \mathcal{E}_y \simeq G \times^B \mathcal{F}_y$ where $\mathcal{F}_y$ is the (trivial) $B$-bundle.  The map $m$ sends a tuple $(x, \mathcal{E}, \phi, \varepsilon, gB)$ to $(x, \mathcal{E}, \phi, \varepsilon')$ where $\varepsilon'$ is the composition $\mathcal{E} \simeq G \simeq G$ where the first equivalence is $\varepsilon$ and the second equivalence is the reduction of structure group of the trivial $G$-bundle to $B$ represented by $gB \in G / B$.

A priori these maps are maps with source $\Gr G_G \times G / B$, but we may observe that they factor through the quotient by $B$ which acts on $G / B$ by the left action and on $\Gr G_G$ by changing the trivialization of the trivial $G$-bundle.

Similarly, define the maps $\tilde{f}$ and $\tilde{m}$ as
\begin{equation}
\begin{aligned}
\tilde{f} = (x, \mathcal{E}, \phi, \varepsilon, gU) \mapsto (x, \mathcal{E}, \phi, \varepsilon_U, UgU) &\colon \Gr G_G \times^U \btilde \to \Gr\btilde_G \times [U \backslash G / U] \\
\tilde{m} = (x, \mathcal{E}, \phi, \varepsilon, gU) \mapsto (x, \mathcal{E}, \phi, \varepsilon') &\colon \Gr G_G \times^U \btilde \to \Gr\btilde_G
\end{aligned}
\end{equation}
analogous to the definitions of $f$ and $m$ above.  These maps are $T \times T$-equivariant under the action of $T \times T$ through the $n$th power, and we define $f_n$ and $m_n$ as quotients by this action, so that $m_1 = m$ and $f_1$ is equal to $f$ up to an additional quotient on the target
\[ \Gr\btilde_G \times^T [U \backslash G / B] \to \Gr\mathcal{B}_G \times [B \backslash G / B]. \]
\end{definition}

\begin{proposition}
$f_n$ is smooth and $m_n$ is proper.
\end{proposition}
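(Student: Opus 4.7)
The plan is to reduce both claims to properties of the analogous ``non-monodromic'' convolution diagram, namely the untilded version of \eqref{eqn:convolution diagram}, which is the setup of Gaitsgory's original construction \cite{gaitsgory1999construction}, and then transfer the properties to the quotient $f_n$, $m_n$.

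For smoothness of $f_n$, I would first verify smoothness of $\tilde{f} \colon \Gr G_G \times^U \btilde \to \Gr\btilde_G \times [U \backslash G / U]$ directly by a fiber computation. Given a point of the target sitting over the Schubert stratum $[U \backslash UgU / U]$, a lift consists of a trivialization $\varepsilon$ refining $\varepsilon_U$ (forming a $U$-torsor) together with a coset $g'U \in UgU/U$, modulo the diagonal action of $U$ on both factors. This quotient is isomorphic to the affine space $UgU/U \cong \mathbb{A}^{\ell(w)}$, exhibiting $\tilde{f}$ stratum-by-stratum as a fibration with smooth fibers; smoothness also follows because the forgetful map $\Gr G_G \to \Gr\btilde_G$ is a $U$-torsor. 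Smoothness is preserved by passing to the quotient $f_n$ since the $T \times T$-action through the $n$-th power factors through the finite quotient $T/T[n]$ on each side.

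For properness of $m_n$, I would exhibit a Cartesian square
\begin{equation*}
\begin{tikzcd}
\Gr G_G \times^U \btilde \arrow{r}{\tilde{m}} \arrow{d} & \Gr\btilde_G \arrow{d} \\
\Gr G_G \times^B \mathcal{B} \arrow{r}{m} & \Gr\mathcal{B}_G
\end{tikzcd}
\end{equation*}
in which both vertical arrows are $T$-torsors passing from a $U$-reduction to the induced $B$-reduction.  Properness of the lower map $m$ is as in Gaitsgory's construction: the fiber over $(x, \mathcal{E}, \phi, \varepsilon')$ is parameterized by $(\varepsilon, gB)$ with $\varepsilon$ any trivialization of $\mathcal{E}_y$ and $gB \in G/B$ the corresponding coset making $\varepsilon \cdot gB = \varepsilon'$, taken up to the diagonal $B$-action, yielding a copy of $G/B$.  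Hence $m$ is a $G/B$-fibration, and so proper.  Pulling back along the $T$-torsor gives properness of $\tilde{m}$, and passing to the quotient $m_n$ by $T \times T$ through the $n$-th power preserves properness as a map of nice stacks.

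The main obstacle, I expect, will be careful bookkeeping of the two copies of $T$ acting on source and target: one copy acts via the $U$-reduction on $\Gr\btilde_G$ (yielding $\Gr\mathcal{B}_G$ after quotienting) and compatibly on the right factor $\btilde = G/U$ of the source (yielding $\mathcal{B}$), while the other acts on the trivialization $\varepsilon$ entering $\Gr G_G$ from the left and on the corresponding $T$-factor on $[U \backslash G/U]$.  One must verify that these actions intertwine $\tilde{f}$ and $\tilde{m}$ correctly, that the Cartesian square above is indeed Cartesian under this matching, and that the $n$-th power quotient yields a well-defined map of stacks with the expected smoothness or properness on each Schubert stratum.
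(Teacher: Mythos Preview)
Your smoothness argument is essentially the paper's: both reduce to $\tilde{f}$ being a $U$-torsor, hence smooth, and descend. (One slip: $T/T[n]$ is not finite; you mean the kernel $T[n]$ is finite.)

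Your properness argument, however, has a genuine gap. The square
\[
\begin{tikzcd}
\Gr G_G \times^U \btilde \arrow{r}{\tilde m} \arrow{d} & \Gr\btilde_G \arrow{d} \\
\Gr G_G \times^B \mathcal{B} \arrow{r}{m} & \Gr\mathcal{B}_G
\end{tikzcd}
\]
is \emph{not} Cartesian: the right vertical is a $T$-torsor, but the left vertical is a $T\times T$-torsor (one copy from enlarging $U$ to $B$ in the twisted product, the other from passing $\btilde\to\mathcal{B}$), so the relative dimensions do not match. Worse, $\tilde m$ is simply not proper: over a fixed point of the base, the fiber of $\tilde m$ is the space of full trivializations $\varepsilon$ modulo $U$, i.e.\ a copy of $G/U$, which is affine. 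So the route ``$m$ proper $\Rightarrow \tilde m$ proper $\Rightarrow m_n$ proper'' cannot work.

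The paper avoids $\tilde m$ entirely. It observes that after the $n$th-power $T\times T$ quotient, the source of $m_n$ is a $T[n]^2$-gerbe over $\Gr G_G \times^B \mathcal{B}$ and the target is a $T[n]$-gerbe over $\Gr\mathcal{B}_G$; both of the latter are ind-proper Deligne--Mumford stacks over $C$ (fibers are $G/B$-bundles over $\Fl_G$ or $\Gr_G\times G/B$). A morphism between ind-proper stacks over the same base is automatically proper. If you want to salvage your approach, this is the missing ingredient: you must pass to the \emph{full} $T\times T$ quotient first (recovering Gaitsgory's proper $m$) and then note that $m_n$ sits over $m$ via finite gerbes, rather than trying to lift properness up to $\tilde m$.
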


\begin{proof}
The smoothness of $f_n$ follows from that of $\tilde{f}$, noting that $\tilde{f}$ is in fact a principal $U$-bundle.  The properness of $m_n$ follows by noting the quotients of the source and target are ind-proper Deligne-Mumford stacks.
\end{proof}

Using the above proposition, we can now interpret the convolution diagram~\eqref{eqn:convolution diagram} as interpolating between the finite and affine flag varieties.  Over $C \setminus \{ y \}$, the diagram gives the convolution diagram on $G / B$ with an additional constant factor of the Beilinson-Drinfeld grassmannian $\Gr_G$.  Over the special fiber, the diagram gives
\begin{equation}
    \begin{tikzcd}
    \tilde{\Fl}_G \times [U \backslash G / U] & LG \times^{I^{\circ}} \btilde \arrow{l} \arrow{r} & \tilde{\Fl_G},
    \end{tikzcd}
\end{equation}
which identifies with a closed subset of the convolution diagram for the $T$-cover of the affine flag variety under the identification $G / B \cong L^+ G / I \subset LG / I$.

The following proposition generalizes \cite[Proposition~6]{gaitsgory1999construction}.

\begin{proposition}\label{prop:nearby central convolution}
Let $M$ be in $D(B \backslash G / B)$.  We may consider $M$ as an $I$-equivariant sheaf on the flag variety under the inclusion $G / B \to LG / I$.  Then,
\begin{equation}
    \Psi(\mathcal{S}_V \boxtimes M) \cong Z_V * M,
\end{equation}
where $*$ denotes convolution in the affine flag variety.

More generally, consider ${}_{\mathcal{L}'} M_{\mathcal{L}}$, a sheaf in ${}_{\mathcal{L}'} D_{\mathcal{L}}(U \backslash G / U)$.  Then,
\begin{equation}
    \Psi(\mathcal{S}_V \boxtimes {}_{\mathcal{L}'} M_{\mathcal{L}}) \cong Z_V^{\mathcal{L}'} * {}_{\mathcal{L}'} M_{\mathcal{L}}.
\end{equation}
\end{proposition}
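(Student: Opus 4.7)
The strategy is to adapt Gaitsgory's original proof \cite{gaitsgory1999construction} by exploiting the convolution diagram \eqref{eqn:convolution diagram}. The key geometric features are that $f$ is smooth, $m$ is proper, the diagram specialises generically over $C \setminus \{y\}$ to a constant factor $\Gr_G$ times the finite convolution diagram for $G/B$, and specialises at $y$ to a closed substratum of the affine convolution diagram for $\Fl_G$ via the embedding $G/B \cong L^+G/I \hookrightarrow \Fl_G$.

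First, I rewrite $Z_V * M$ using the special fibre. Since the special fibre of $\Gr G_G \times^B G/B$ over $y$ is identified with $LG \times^I (L^+G/I)$ and $m$ specialises to the restriction of the affine convolution multiplication $LG \times^I \Fl_G \to \Fl_G$, the convolution of the $I$-equivariant sheaves $Z_V$ and $M$ (the latter pushed forward from $L^+G/I \subset \Fl_G$) may be rewritten as
\[
Z_V * M \;\cong\; m_{!} f^{*}\bigl(Z_V \boxtimes M\bigr),
\]
with both sides supported on the special fibre of $\Gr \mathcal{B}_G$. Next, I commute $\Psi$ through this expression. The definition $Z_V = \Psi(\mathcal{S}_V \boxtimes \delta_e)$, combined with the K\"unneth formula of Proposition~\ref{kunneth} applied with $[B\backslash G/B]$ viewed as a constant family over $C$, yields $Z_V \boxtimes M \cong \Psi(\mathcal{S}_V \boxtimes \delta_e \boxtimes M)$. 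Smooth base change for $f$ and proper base change for $m$ (Proposition~\ref{prop:basic functorialities}) then let me pull $\Psi$ past $m_! f^*$, giving
\[
Z_V * M \;\cong\; \Psi\bigl(m_! f^*(\mathcal{S}_V \boxtimes \delta_e \boxtimes M)\bigr).
\]

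Finally, I compute $m_! f^*(\mathcal{S}_V \boxtimes \delta_e \boxtimes M)$ generically. Over $C \setminus \{y\}$ the diagram \eqref{eqn:convolution diagram} decouples as $\Gr_G$ times the finite convolution diagram for $G/B$; under this decoupling the expression reduces to $\mathcal{S}_V \boxtimes (\delta_e * M)$, and since $\delta_e$ is the monoidal unit of $(D(B\backslash G/B), *)$ this equals $\mathcal{S}_V \boxtimes M$. Applying $\Psi$ to both sides completes the first assertion.

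For the monodromic statement I run the same argument with $\Gr\mathcal{B}_G, f, m$ replaced by $\Gr\btilde_G, \tilde f, \tilde m$, and with $\delta_e$ replaced by the extension of $L_{\mathcal{L}'}$ to $\btilde$ appearing in the definition of $Z_V^{\mathcal{L}'}$, which plays the role of the monoidal unit in the relevant monodromic Hecke subcategory ${}_{\mathcal{L}'}\mathcal{P}_{\mathcal{L}'}(U\backslash G/U)$. The main extra bookkeeping concerns the $(T\times T,\, \mathcal{L}' \boxtimes \mathcal{L}^{-1})$-equivariant structure: one must verify that the K\"unneth and base-change isomorphisms respect the $n$-th power $T\times T$-actions used to define $D_{LG}(\fltilde^2)$, and that $\tilde f, \tilde m$ are equivariant for compatible actions on source and target. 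This compatibility is forced by the definitions, but explicitly tracking it through the chain of K\"unneth and base-change isomorphisms is the main obstacle; the rest is formal manipulation exactly as in Gaitsgory's original argument.
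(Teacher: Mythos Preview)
Your proposal is correct and follows essentially the same approach as the paper's proof: both exploit the interpolating convolution diagram~\eqref{eqn:convolution diagram}, use properness of $m$ and smoothness of $f$ to commute $m_! f^*$ with $\Psi$, and identify the generic fibre with finite convolution against the monoidal unit ($\delta_e$ or $\mathcal{L}'$) while the special fibre gives the affine convolution defining $Z_V * M$. The only cosmetic difference is ordering: the paper starts directly from $m_! f^* \Psi(\mathcal{S}_V \boxtimes \mathcal{L}' \boxtimes {}_{\mathcal{L}'}M_{\mathcal{L}})$ and reads off both sides, whereas you begin on the special fibre and insert an explicit K\"unneth step $Z_V \boxtimes M \cong \Psi(\mathcal{S}_V \boxtimes \delta_e \boxtimes M)$ before commuting---this extra step is harmless but unnecessary, since one can simply write down the sheaf over the whole curve and apply $\Psi$ once.
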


\begin{proof}
We prove the monodromic statement since the first statement follows from the case $\mathcal{L} = \qlbar$.  Consider the sheaf $\mathcal{S}_V \boxtimes \mathcal{L}'$ on the generic fiber of $\Gr\btilde_G /_n T$.  The properness of $m$ and smoothness of $f$ implies that
\[ m_! f^* \Psi(\mathcal{S}_V \boxtimes \mathcal{L}' \boxtimes {}_{\mathcal{L}'} M_{\mathcal{L}}) \cong \Psi(\mathcal{S}_V \boxtimes m_! f^*(\mathcal{L}' \boxtimes {}_{\mathcal{L}'} M_{\mathcal{L}})), \]
where $m$ and $f$ are finite convolution.  But $\mathcal{L}'$ acts by the identity on the convolution category ${}_{\mathcal{L}'} D_{\mathcal{L}}(U \backslash G / U)$ by Proposition~\ref{prop: minimal equivalence}, so the right hand side is equivalent to $\Psi(\mathcal{S}_V \boxtimes {}_{\mathcal{L}'} M_{\mathcal{L}})$.  Moreover, the left hand side is equivalent to $Z^{\mathcal{L}'}_V * {}_{\mathcal{L}'} M_{\mathcal{L}}$ using the definition of $Z^{\mathcal{L}'}_V$.
\end{proof}

\subsubsection{Affine monodromic truncated convolution categories}

In addition to affine monodromic Hecke categories, there are also affine monodromic truncated convolution categories, studied in \cite{bezrukavnikov2009tensor}.  We will fix $\lambda \in \mathfrak{s}_\infty$ and $\mathfrak{o} = W \lambda$, the $W$-orbit of $\lambda$.
\begin{definition}
Let $\mathcal{H}^{\lambda} = {}_{L_{\lambda}} D_{L_{\lambda}}(I^{\circ} \backslash LG / I^{\circ})$, and define the category
\[ \mathcal{H}^{\mathfrak{o}} = \bigoplus_{\lambda',\lambda'' \in \mathfrak{o}} {}_{L_{\lambda''}} D_{L_{\lambda'}}(I^{\circ} \backslash LG / I^{\circ}) \]
as a full subcategory of $D_{LG}(\fltilde^2)$.
\end{definition}

Simple objects in $\mathcal{H}^{\lambda}$ are indexed by $\wtilde_{\lambda}$ and simple objects in $\mathcal{H}^{\mathfrak{o}}$ are indexed by the discrete set $\wtilde \times \mathfrak{o}$.  There is a two-sided cell theory on $\wtilde_{\lambda}$ which we extend to $\wtilde \times \mathfrak{o}$ by rules analogous to \cite[Section~11.4]{lusztig2020endoscopy}.  Given a two-sided cell $\textbf{c} \subset \wtilde \times \mathfrak{o}$, we may consider the corresponding two-sided cell $\textbf{c} \cap \wtilde_{\lambda} \times \{ \lambda \}$, which by abuse of notation, we also denote $\textbf{c}$.

\begin{definition}
The full subcategories $\mathcal{H}^{\lambda}_{\le \textbf{c}}$, resp. $\mathcal{H}^{\lambda}_{< \textbf{c}}$, are the full subcategories of the $\mathcal{H}^{\lambda}$ generated by $\mathbb{L}_{\lambda}^{\dot{w}}$ (where we apply appropriate shifts so these objects are perverse) for $w \in \bigcup_{\textbf{c}' \le \textbf{c}} \textbf{c}'$, resp. $w \in \bigcup_{\textbf{c}' < \textbf{c}} \textbf{c}'$.  Similarly, we define $\mathcal{H}^{\mathfrak{o}}_{\le \textbf{c}}$ and $\mathcal{H}^{\mathfrak{o}}_{< \textbf{c}}$.

The truncated convolution category $\mathcal{A}^{\lambda}_{\textbf{c}}$, resp. $\mathcal{A}^{\mathfrak{o}}_{\textbf{c}}$, is subcategory of the Serre quotient $\mathcal{H}^{\lambda}_{\le \textbf{c}} / \mathcal{H}^{\lambda}_{< \textbf{c}}$, resp. $\mathcal{H}^{\mathfrak{o}}_{\le \textbf{c}} / \mathcal{H}^{\mathfrak{o}}_{< \textbf{c}}$, generated by objects $Z_V^{w(\lambda)} * \mathbb{L}_{\lambda}^{\dot{w}}$ for $w \in \textbf{c}$, understood as a two-sided cell in $\wtilde_{\lambda}$, resp. $\wtilde \times \mathfrak{o}$.  The tensor category structure on $\mathcal{A}^{\lambda}_{\textbf{c}}$ is given by truncated convolution $M \bullet M' = {}^p H^{a(\textbf{c})}(M * M')$.

Let $d$ be a Duflo involution.  The Tannakian category $\mathcal{A}^{\lambda}_d$ is the subcategory of $\mathcal{A}^{\lambda}_{\textbf{c}}$ generated by objects that are subquotients of $Z_V^{\lambda} * \mathbb{L}_{\lambda}^{\dot{d}}$.
\end{definition}

The Tannakian structure on can be seen by the fact that $\mathbb{L}_{\lambda}^{\dot{d}}$ is a unit object under truncated convolution and the map $\Res_d := V \mapsto Z_V^{\lambda} * \mathbb{L}_{\lambda}^{\dot{d}}$ gives a fiber functor $\Rep(\widehat{G}) \to \mathcal{A}^{\lambda}_d$.  The category $\mathcal{A}_d$ is also equipped with a tensor automorphism $\mathfrak{M}_d$ coming from monodromy on $Z_V^{\lambda} * \mathbb{L}_{\lambda}^{\dot{d}}$.

Recall that fixing a generator of the tame inertia allows us to view $L_{\lambda}$ as an element of $\widehat{G}$.  Let $\lambda$ be the element of $\widehat{G}$ represented by $L_{\lambda}$, and let $G_{\lambda}$ denote the centralizer of $\lambda$.  Suppose we fix a two-sided cell $\textbf{c} \subset W_{\lambda}$.  It is contained a unique two-sided cell $\textbf{c}'$ in the affine Weyl group analogue $\wtilde_{\lambda}$.  Moreover, because of our connected center assumption $\textbf{c}'$ corresponds to a special unipotent conjugacy class of $\widehat{G}_{\lambda}$ and $\mathbf{c}' \cap W = \mathbf{c}$ \cite[Section~4.8]{lusztig1989cells}.

By \cite[Corollary~1.9(d)]{lusztig1987cells}, the $a$-invariant of $\mathbf{c}$ computed in the finite Weyl group is the same as $a(\mathbf{c}')$ computed in the affine Weyl group. Therefore the inclusion $L^+ G \subset LG$ gives a monoidal functor for the truncated convolution categories
\[Z^{\lambda}_V * (-) \colon {}_{L_{\lambda}} D_{L_{\lambda}}(U \backslash G / U)_{\textbf{c}} \to \mathcal{A}^{\lambda}_{\textbf{c}'}.\]

\begin{proposition}
Let $w \in \textbf{c} \cap W_{\lambda}$.  In the truncated convolution category $\mathcal{A}^{\lambda}_{\textbf{c}}$,
\begin{equation}
    \Psi(\mathcal{S}_V \boxtimes \mathbb{L}_{\lambda}^{\dot{w}}) \cong \bigoplus_d (Z^{\lambda}_V * \mathbb{L}^{\dot{d}}_{\lambda}) \bullet \mathbb{L}^{\dot{w}}_{\lambda}
\end{equation}
where the sum runs over Duflo involutions $d \in \textbf{D}_{\textbf{c},\lambda}$.
\end{proposition}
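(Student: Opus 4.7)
The plan is to combine the formula in Proposition~\ref{prop:nearby central convolution} with the structure of the monoidal unit in the truncated convolution category $\mathcal{A}^{\lambda}_{\textbf{c}}$. First I would apply Proposition~\ref{prop:nearby central convolution} with $M = \mathbb{L}_{\lambda}^{\dot w}$ and $\mathcal{L}' = L_\lambda$ (the hypothesis $w \in W_\lambda$ gives $w(\lambda) = \lambda$, so $\mathbb{L}_{\lambda}^{\dot w}$ indeed lives in ${}_{L_\lambda} D_{L_\lambda}(U \backslash G / U)$). This yields the identification
\[
\Psi(\mathcal{S}_V \boxtimes \mathbb{L}_{\lambda}^{\dot w}) \;\cong\; Z_V^{\lambda} * \mathbb{L}_{\lambda}^{\dot w}
\]
inside the full affine monodromic Hecke category $\mathcal{H}^{\lambda}$, where $*$ denotes ordinary convolution on the affine flag variety.

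Next I would project to $\mathcal{A}^{\lambda}_{\textbf{c}}$. In this truncated convolution category the object $\bigoplus_{d \in \textbf{D}_{\textbf{c},\lambda}} \mathbb{L}^{\dot d}_{\lambda}$ is a monoidal unit (the affine analogue of the Lusztig–Yun unit on the finite side), so in $\mathcal{A}^{\lambda}_{\textbf{c}}$ one has
\[
\mathbb{L}^{\dot w}_{\lambda} \;\cong\; \bigoplus_{d \in \textbf{D}_{\textbf{c},\lambda}} \mathbb{L}^{\dot d}_{\lambda} \bullet \mathbb{L}^{\dot w}_{\lambda}.
\]
The summands for which $w$ does not lie in the left cell of $d$ vanish, but that is automatically accounted for by the truncation. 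Applying $Z_V^{\lambda} * (-)$ to both sides gives
\[
Z_V^{\lambda} * \mathbb{L}^{\dot w}_{\lambda} \;\cong\; \bigoplus_{d} Z_V^{\lambda} * \bigl(\mathbb{L}^{\dot d}_{\lambda} \bullet \mathbb{L}^{\dot w}_{\lambda}\bigr),
\]
where the equality is in $\mathcal{A}^{\lambda}_{\textbf{c}}$, so we are implicitly projecting from $\mathcal{H}^{\lambda}_{\le\textbf{c}}$.

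The remaining step is to commute $Z_V^{\lambda}*$ past the truncation $\bullet$. Here I would use the fact that $Z_V^{\lambda}$ is perverse and that convolution with the central nearby-cycles sheaf $Z_V^{\lambda}$ preserves the perverse $t$-structure on $\mathcal{H}^{\lambda}$ (centrality plus the $t$-exactness inherited from Gaitsgory's construction as a nearby cycle of a perverse sheaf). Consequently, taking $\mathrm{}^p H^{a(\textbf{c})}$ of the associative convolution $Z_V^{\lambda} * \mathbb{L}^{\dot d}_\lambda * \mathbb{L}^{\dot w}_\lambda$ may be performed either by first forming $\mathbb{L}^{\dot d}_\lambda \bullet \mathbb{L}^{\dot w}_\lambda$ and then convolving with $Z_V^{\lambda}$, or by first forming $Z_V^{\lambda} * \mathbb{L}^{\dot d}_\lambda$ and then truncating against $\mathbb{L}^{\dot w}_\lambda$, yielding the natural isomorphism
\[
Z_V^{\lambda} * \bigl(\mathbb{L}^{\dot d}_{\lambda} \bullet \mathbb{L}^{\dot w}_{\lambda}\bigr) \;\cong\; \bigl(Z_V^{\lambda} * \mathbb{L}^{\dot d}_{\lambda}\bigr) \bullet \mathbb{L}^{\dot w}_{\lambda}
\]
in $\mathcal{A}^{\lambda}_{\textbf{c}}$. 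Chaining these identifications gives the asserted decomposition.

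The main obstacle I expect is the bookkeeping needed to justify the last $t$-exactness step rigorously in the monodromic affine setting: one must verify that the perverse shifts used to define $\bullet$ in $\mathcal{A}^{\lambda}_{\textbf{c}}$ are consistent with Gaitsgory's shift in the definition of $Z_V^{\lambda}$, and that convolution with $Z_V^{\lambda}$ is genuinely $t$-exact on $\mathcal{H}^{\lambda}$ (the monodromic generalization of the classical $\lambda = 1$ result). Once these compatibilities are spelled out, the proposition follows essentially formally, as the rest of the argument only uses Proposition~\ref{prop:nearby central convolution}, associativity of $*$, and the unit property of $\bigoplus_{d} \mathbb{L}^{\dot d}_{\lambda}$ in the truncated convolution category.
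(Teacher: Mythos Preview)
Your overall strategy is the same as the paper's: identify $\Psi(\mathcal{S}_V \boxtimes \mathbb{L}_\lambda^{\dot w})$ with $Z_V^\lambda * \mathbb{L}_\lambda^{\dot w}$ via Proposition~\ref{prop:nearby central convolution}, insert the Duflo-involution unit, and then commute $Z_V^\lambda *$ past the truncation. The step where you commute $Z_V^\lambda *$ past $\bullet$ using perverse $t$-exactness is exactly what the paper encapsulates in the phrase ``associativity of convolution.''

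There is, however, a genuine slip in your justification of the unit identity. You assert that $\bigoplus_{d \in \textbf{D}_{\textbf{c},\lambda}} \mathbb{L}^{\dot d}_{\lambda}$ is the monoidal unit of the \emph{affine} truncated convolution category $\mathcal{A}^{\lambda}_{\textbf{c}}$. But $\textbf{D}_{\textbf{c},\lambda}$ consists of Duflo involutions in the \emph{finite} Weyl group $W_\lambda$; the unit of $\mathcal{A}^{\lambda}_{\textbf{c}}$ would instead be indexed by Duflo involutions in the affine Weyl group $\wtilde_\lambda$, a different (typically infinite) set. The paper avoids this by placing the unit identity $\mathbb{L}_\lambda^{\dot w} \cong \bigoplus_d \mathbb{L}_\lambda^{\dot d} \bullet \mathbb{L}_\lambda^{\dot w}$ in the \emph{finite} category ${}_{L_\lambda}D_{L_\lambda}(U\backslash G/U)_{\textbf{c}}$, where it is correct, and then uses the agreement of the $a$-function between the finite cell $\textbf{c}$ and the affine cell containing it to transport this identity into $\mathcal{A}^\lambda_{\textbf{c}}$ under the monoidal functor $Z_V^\lambda*(-)$. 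You never invoke the $a$-function agreement, and without it the passage from the finite unit to an identity in the affine truncated category is not justified. Once you make that correction, your argument and the paper's coincide.
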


\begin{proof}
    Since $\bigoplus_{d\in \textbf{D}_{\textbf{c},\lambda}} \mathbb{L}_{\lambda}^{\dot{d}} \in {}_{L_{\lambda}} D_{L_{\lambda}}(U\backslash G/U)_{\textbf{c}}$ is a monoidal unit of truncated convolution, we may write
    \[ \mathbb{L}_{\lambda}^{\dot{w}} \cong \bigoplus_d \mathbb{L}_{\lambda}^{\dot{d}} \bullet \mathbb{L}_{\lambda}^{\dot{w}}, \]
    where the result now follows from Proposition~\ref{prop:nearby central convolution} together with the associativity of convolution and the agreement of the $a$-function.
\end{proof}

The following is a straightforward application of \cite[Proposition~1 and Lemma~5]{bezrukavnikov2004tensor} to the affine monodromic truncated convolution category given by a Duflo involution $d$.

\begin{proposition}
There exists a pair $\widehat{H}_d$, $N_d$ with $\widehat{H}_d \subset \widehat{G}_{\lambda} \subset \widehat{G}_{\qlbar}$, and $N_d$ a unipotent element commuting with $\widehat{H}_d$; an equivalence of tensor categories $\Phi_d \colon \Rep(\widehat{H}_d) \simeq \mathcal{A}^{\lambda}_d$, and an isomorphism $\Res_{\widehat{H}_d}^{\widehat{G}} \cong \Phi_d \circ \Res_d$, which intertwines the tensor automorphisms $\mathrm{Aut}_{N_d}$ and $\mathfrak{M}_d$.  The pair $(\widehat{H}_d, N_d)$ is unique up to conjugacy.
\end{proposition}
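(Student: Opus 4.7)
The plan is to follow the Tannakian reconstruction argument of \cite[Proposition~1 and Lemma~5]{bezrukavnikov2004tensor} in the monodromic setting. First I would verify that $\mathcal{A}^{\lambda}_d$ is a neutral Tannakian category under truncated convolution, with monoidal unit $\mathbb{L}^{\dot{d}}_{\lambda}$. Semisimplicity follows from the decomposition theorem applied to the proper pushforward appearing in the definition of truncated convolution, together with the Lusztig-Yun $a$-function bounds that ensure the only composition factors surviving in the Serre quotient $\mathcal{H}^{\lambda}_{\le \textbf{c}} / \mathcal{H}^{\lambda}_{< \textbf{c}}$ are those in the cell $\textbf{c}$ containing $d$. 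A fiber functor $\omega$ on $\mathcal{A}^{\lambda}_d$ is then constructed by exploiting the fact that $\Res_d$ is a faithful tensor functor from $\Rep(\widehat{G})$ whose essential image generates $\mathcal{A}^{\lambda}_d$: composing a section of $\Res_d$ with the standard fiber functor on $\Rep(\widehat{G})$ provides the desired functor to $\qlbar$-vector spaces. The symmetric braiding on $\mathcal{A}^{\lambda}_d$ is inherited via $\Res_d$ from the commutativity constraint on the Satake category, the compatibility coming from Proposition~\ref{prop:nearby central convolution} together with fusion on the Beilinson-Drinfeld grassmannian.

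Applying Tannakian reconstruction to $(\mathcal{A}^{\lambda}_d, \omega)$ would yield $\Phi_d \colon \Rep(\widehat{H}_d) \simeq \mathcal{A}^{\lambda}_d$ for some affine group scheme $\widehat{H}_d$; moreover $\Res_d$ corresponds to a homomorphism $\widehat{H}_d \to \widehat{G}$, which is a closed immersion by rigidity of $\mathcal{A}^{\lambda}_d$ and full faithfulness of $\Res_d$ on the subcategory generated by simples in the image. To see that the image lies in $\widehat{G}_{\lambda}$, I would use that the sheaves $Z^{\lambda}_V * \mathbb{L}^{\dot{d}}_{\lambda}$ are $(T \times T, L_{\lambda} \boxtimes L_{\lambda}^{-1})$-monodromic: under the fixed identification of $L_{\lambda}$ with $\lambda \in \widehat{T}$, this monodromicity forces the image of $\widehat{H}_d$ to centralize $\lambda$. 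Finally, the tensor automorphism $\mathfrak{M}_d$ of $\Res_d$ coming from geometric monodromy on the nearby cycles $Z^{\lambda}_V$ transports via Tannakian duality to a central element $N_d$ of $\mathrm{Aut}^{\otimes}(\omega \circ \Res_d)$, i.e.\ to an element of $\widehat{G}$ commuting with the image of $\widehat{H}_d$. Its unipotence follows from the fact that, after passing to the monodromic cover trivializing the $L_{\lambda}$-twist, the residual monodromy on $Z^{\lambda}_V$ is unipotent; this is the monodromic extension of Gaitsgory's theorem \cite{gaitsgory1999construction} that monodromy on $Z_V$ acts unipotently.

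Uniqueness of $(\widehat{H}_d, N_d)$ up to conjugacy follows from the uniqueness of the fiber functor on a neutral Tannakian category up to non-canonical isomorphism, combined with the fact that any two choices are related by an inner automorphism of $\widehat{G}$. The main obstacle I anticipate is the verification that the braiding on $\mathcal{A}^{\lambda}_d$ is genuinely symmetric (as opposed to merely braided) and compatibly so with the standard symmetric braiding on $\Rep(\widehat{G})$ through $\Res_d$; in the nonmonodromic setting this is precisely the content of \cite[Lemma~5]{bezrukavnikov2004tensor}, and its monodromic extension requires a careful analysis of how minimal IC sheaves $\mathbb{L}^{\dot{w}(\lambda',\lambda)}_{\lambda}$ interweave with central sheaves under the equivalences of Proposition~\ref{prop: minimal equivalence}, together with control over the commutator in truncated convolution of two central sheaves via the Beilinson-Drinfeld degeneration.
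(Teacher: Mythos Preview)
Your approach matches the paper's: the paper gives no proof and simply declares the proposition a ``straightforward application of \cite[Proposition~1 and Lemma~5]{bezrukavnikov2004tensor}'' to the monodromic truncated convolution category, which is exactly the Tannakian reconstruction you outline.

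Two points in your sketch deserve correction. First, the fiber functor cannot be built by ``composing a section of $\Res_d$ with the standard fiber functor on $\Rep(\widehat{G})$'': the functor $\Res_d$ is not essentially surjective in general and has no section. The actual mechanism in \cite[Proposition~1]{bezrukavnikov2004tensor} is different---one uses that a semisimple rigid abelian tensor category receiving a central functor from $\Rep(\widehat{G})$ and generated by its image is automatically of the form $\Rep(\widehat{H})$ for a closed subgroup $\widehat{H} \subset \widehat{G}$; the fiber functor arises from the internal algebra object $\Res_d(\mathcal{O}_{\widehat{G}})$ rather than from any section. Second, the containment $\widehat{H}_d \subset \widehat{G}_{\lambda}$ is more cleanly obtained not from monodromicity of the sheaves but from the result of \cite[Theorem~1]{bezrukavnikov2009tensor} (used just after this proposition in the paper) that the central functor $V \mapsto Z^{\lambda}_V$ already extends to a central functor $\Rep(\widehat{G}_{\lambda}) \to {}_{L_{\lambda}} D_{L_{\lambda}}(I^{\circ} \backslash LG / I^{\circ})$; applying Bezrukavnikov's Proposition~1 to this extended functor lands $\widehat{H}_d$ inside $\widehat{G}_{\lambda}$ directly.
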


\subsection{The horocycle correspondence for restricted shtukas}

Throughout this section, we will fix an identification of group schemes $G \simeq {}^{\tau} G$ over $y$ in order to make sense of the compatibility with Frobenius structure.


Let $G$ be a reductive group over $C$ with parahoric reduction.  Let $y$ be a fixed $\mathbb{F}_q$-point with $y = y_1 \in N$ and let $N = \sum a_j y_j$ with $a_1 = 0$.  Fix the standard maximal parahoric group $G_y = L^+ G$, let $G_y^{\circ} \subset I_y \subset G_y$ be a fixed Iwahori with unipotent radical $I^{\circ}_y$.  Define a torus $T = I_y / I^{\circ}_y$.  Since $G_y / G_y^{\circ} \cong G$, this torus can also be identified with $B / U$, where $B = I_y / G^{\circ}_y$ is a Borel subgroup and $U = I^{\circ}_y / G^{\circ}_y$ is its unipotent radical.

\begin{definition}
Let $\Cht\btilde_{G,N,I,y}$, resp. $\Cht\mathcal{B}_{G,N,I,y}$ classify as its $S$-points:
\begin{enumerate}
    \item $(x_i)_{i \in I}\in C^I(S)$,
    \item $\mathcal{E}$ and $\mathcal{F}$ two $G_{I,\infty,\sum \infty y_j}$-torsors on $S$,
    \item $\phi$ a Hecke modification
    \[ \phi \colon \mathcal{E} |_{\Gamma_{\sum_i \infty x_i} \setminus \Gamma_{\sum_i x_i}} \simeq \mathcal{F} |_{\Gamma_{\sum_i \infty x_i} \setminus \Gamma_{\sum_i x_i}}, \]
    \item an isomorphism of associated $G$-bundle fibers at $y$, $\varepsilon \colon \mathcal{F}_G \simeq {}^{\tau} \mathcal{E}_G$,
    \item and two reductions of structure group of $\mathcal{E} |_{\Gamma_{\infty y_1}}$ and $\mathcal{F} |_{\Gamma_{\infty y_1}}$ to $I^{\circ}_y$, resp. $I_y$, such that these reductions of structure group are compatible with the Frobenius structure.
\end{enumerate}

Let us be more explicit about what we mean by compatibility with Frobenius structure.  Restricting from $N$ to the single point $y$ (that, is, taking the associated bundle along the quotient map $H \to G = G_y / G_y^{\circ}$) gives an isomorphism of associated $G$-bundles
\[ \varepsilon_y \colon \mathcal{F}_G \simeq {}^{\tau} \mathcal{E}_G. \]
At the same time, we have identifications of these bundles on both sides as
\begin{align*}
\varepsilon_{\mathcal{F}} &\colon \mathcal{F}_G \simeq \mathcal{F}_U \times_U G \\
\varepsilon_{\mathcal{E}} &\colon \mathcal{E}_G \simeq \mathcal{E}_U \times_U G,
\end{align*}
where $U \subset G$ is the quotient $I^{\circ}_y / G_y^{\circ}$.  The compatibility condition is then that the isomorphism $\varepsilon_y$ must come from a Frobenius structure on $U$-bundles as
\[ \varepsilon_U \colon \mathcal{F}_U \simeq {}^{\tau} \mathcal{E}_U. \]

Let $\Gr\btilde_{G,N,I,y}$, resp. $\Gr\mathcal{B}_{G,N,I,y}$, classify as its $S$-points:
\begin{enumerate}
    \item $(x_i)_{i \in I}\in C^I(S)$,
    \item $\mathcal{E}$ and $\mathcal{F}$ two $G_{I,\infty,\sum \infty y_j}$-torsors on $S$,
    \item $\phi$ a Hecke modification
    \[ \phi \colon \mathcal{E} |_{\Gamma_{\sum_i \infty x_i} \setminus \Gamma_{\sum_i x_i}} \simeq \mathcal{F} |_{\Gamma_{\sum_i \infty x_i} \setminus \Gamma_{\sum_i x_i}}, \]
    \item and two reductions of structure group of $\mathcal{E} |_{\Gamma_{\infty y_1}}$ and $\mathcal{F} |_{\Gamma_{\infty y_1}}$ to $I^{\circ}_y$, resp. $I_y$.
\end{enumerate}
\end{definition}

The versions with $\btilde$ are $T \times T$-torsors over the versions with $\mathcal{B}$.  That is, we have an action of $T \times T$ on $\Cht\btilde_{G,N,I,y}$ and $\Gr\btilde_{G,N,I,y}$ by changing the reduction of structure group of $\mathcal{E}$ and $\mathcal{F}$ from $I_y$ to $I^{\circ}_y$ such that the quotients are $\Cht\mathcal{B}_{G,N,I,y}$ and $\Gr\mathcal{B}_{G,N,I,y}$.

We have forgetful maps
\[
\begin{aligned}
    \tilde{\pi} &\colon \Cht\btilde_{G,N,I,y} \to \Cht\mathcal{R}_{G,N,I}, \\
    \tilde{f} &\colon \Cht\btilde_{G,N,I,y} \to \Gr\btilde_{G,N,I,y}, \\
    \pi &\colon \Cht\mathcal{B}_{G,N,I,y} \to \Cht\mathcal{R}_{G,N,I},
\end{aligned}
\]
such that $\tilde{f}$ is $T \times T$-equivariant and such that $\tilde{\pi}$ is the composition of the quotient of the $T \times T$-torsor and $\pi$.

Let $T$ act on itself by the $n$th power, that is, $t_1 \cdot t_2 = t_1^n t_2$.  Then the quotient $T /_n T \cong BT_n$, the classifying space of the finite group of $n$-division points of $T$.  Consider $T \times T$ acting on $\Cht\btilde_{G,N,I,y}$ and $\Gr\btilde_{G,N,I,y}$ through the $n$th power on both copies of $T$.  For any $n > 0$, forgetful map $\tilde{\pi}$ can be written as the composition $\pi_n \circ q_n$ where $q_n$ is the quotient by the $n$th power action of $T \times T$ and $\pi_n$ is a forgetful map from a twisted quotient of $\Cht{\btilde}_{G,N,I,y} /_n (T \times T)$.  Along the same lines, we can consider a map $f_n \colon \Cht{\btilde}_{G,N,I,y} /_n (T \times T) \to \Gr\btilde_{G,N,I,y} /_n (T \times T)$ as well as perverse sheaves and objects in the derived category of constructible sheaves on these spaces.

\begin{proposition}
    The following hold:
    \begin{enumerate}
    \item Over $(C \setminus N)^I$, we have the following identifications.
    \begin{equation}
        \begin{aligned}
            \Cht\mathcal{R}_{G,N,I} &\simeq [G_{I,\infty} \backslash \Gr_{G,I}] \times H /_{\sigma} G_{\sum \infty y_j}, \\
            \Cht\btilde_{G,N,I,y} &\simeq [G_{I,\infty} \backslash \Gr_{G,I}] \times X / G_{\sum \infty y_j}, \\
            \Gr\btilde_{G,N,I,y} &\simeq [G_{I,\infty} \backslash \Gr_{G,I}] \times (\btilde \times \btilde) / G_{\sum \infty y_j}.
        \end{aligned}
    \end{equation}
    \item The map $\tilde{f} = (1 \times \tilde{g}) \circ (1 \times f)$ where
    \begin{equation}
    f \colon X \to \btilde^2
    \end{equation}
    is the map in Equation~\eqref{eqn: finite horocycle correspondence} and $\tilde{g} \colon BG(\mathbb{F}_q) \to BH(\mathbb{F}_q)$ comes from writing $H$ as a product of the form $G \times H'$.
    \item The map $\tilde{\pi}$ pulls back the map $\pi \colon X \to G$ in Equation~\eqref{eqn: finite horocycle correspondence}.
    \end{enumerate}
\end{proposition}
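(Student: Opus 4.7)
The plan is to unwind the moduli descriptions on both sides and to use the quotient description of restricted shtukas from Proposition~\ref{prop:restricted shtuka quotient definition}. Over the open locus $(C \setminus N)^I$, the legs $(x_i)_{i \in I}$ avoid the support of $N$, so the formal neighborhoods $\Gamma_{\sum_i \infty x_i}$ and $\Gamma_{\sum_j \infty y_j}$ are disjoint. This gives an isomorphism of pro-groups
\[ G_{I,\infty,\sum \infty y_j} |_{(C\setminus N)^I} \;\cong\; G_{I,\infty}|_{(C\setminus N)^I} \;\times\; G_{\sum \infty y_j}, \]
and a corresponding product decomposition of $G_{I,\infty,\sum \infty y_j}$-torsors into a pair consisting of a $G_{I,\infty}$-torsor on $(C \setminus N)^I$ and a $G_{\sum \infty y_j}$-torsor at $y_j$'s. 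The Hecke modification $\phi$ is supported away from the $y_j$, so it lives entirely in the first factor, while any additional data attached to $N$ lives in the second factor.

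For the first identification, Proposition~\ref{prop:restricted shtuka quotient definition} already writes $\Cht\mathcal{R}_{G,N,I}$ over $C \setminus N$ as $[G_{I,\infty,\sum \infty y_j} \backslash (\Gr_{G,I} \times H)]$ with the Frobenius-twisted adjoint action on $H$; using the decomposition above, the $G_{I,\infty}$ piece absorbs into $[G_{I,\infty}\backslash\Gr_{G,I}]$ and the $G_{\sum \infty y_j}$ piece gives the quotient $H /_\sigma G_{\sum \infty y_j}$. For $\Gr\btilde_{G,N,I,y}$, the additional data over $\Cht\mathcal{R}$ is a pair of reductions at $y$ of $\mathcal{E}$ to $I^\circ_y$ and of $\mathcal{F}$ to $I_y$; since $G \cong G_y/G_y^\circ = H_1$ is a direct factor of $H$, trivializing the $G$-associated bundles at $y$ turns this into a pair $(xU, yU) \in \btilde \times \btilde$, yielding the stated identification. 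For $\Cht\btilde_{G,N,I,y}$, we add the further datum of the Frobenius identification $\varepsilon \colon \mathcal{F}_G \simeq {}^\tau\mathcal{E}_G$ compatible with the Borel reductions; once both bundles are trivialized, this is exactly an element $g \in G$ satisfying $g\sigma(xU) = yU$, so the moduli description of $X$ in Equation~\eqref{eqn: X moduli problem} plugs in to give the third identification.

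For parts (2) and (3), the maps $\tilde{f}$ and $\tilde{\pi}$ are defined intrinsically (forget the Frobenius identification on $\mathcal{F}_G \simeq {}^\tau \mathcal{E}_G$; forget the Borel reductions and pass to the $H$-identification, respectively), so under the identifications above they reduce to maps on the second factor. Forgetting the Frobenius identification sends the triple $(xU, yU, g)$ to $(xU, yU)$, which is precisely $f \colon X \to \btilde^2$; the factor $1 \times \tilde{g}$ encodes the extension of structure group from the $G$-part to the full $H$-part, reflecting the fact that $\Gr\btilde$ does not record the Frobenius identification at the $y_j$'s for $j > 1$ while the target of $\tilde{\pi}$ does. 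Similarly, the map $\tilde{\pi}$ extracts the Frobenius identification $g$ and descends the remaining data; on the second factor this is exactly $\pi \colon X \to G$ followed by the natural inclusion $G \hookrightarrow H$.

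The main obstacle is bookkeeping: verifying that the $T \times T$-actions on $\Cht\btilde$ and $\Gr\btilde$ (changing reductions from $I_y$ to $I^\circ_y$) match the $T$-action on $X$ used in the identification $\epsilon\colon T\backslash \btilde^2 \simeq Z$ coming from \cite[Section~4.1(a)]{lusztig2016non}, and tracking how the Frobenius twist enters when passing from a $(T \times T)$-equivariant description to the finite horocycle picture. This requires a careful comparison of how the quotient by $G_{\sum \infty y_j}$ acts versus the mixed Frobenius-twisted action of $T$ on $\btilde^2$, but once one fixes a trivialization compatible with the basepoint in the building defining $I^\circ_y \subset I_y \subset G_y$, this comparison is routine.
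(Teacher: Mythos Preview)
Your approach is essentially the same as the paper's: decompose $G_{I,\infty,\sum \infty y_j}$ over $(C\setminus N)^I$, invoke Proposition~\ref{prop:restricted shtuka quotient definition} for $\Cht\mathcal{R}$, and then identify the residual data at $y$ with $\btilde^2$ (for $\Gr\btilde$) and with the moduli description~\eqref{eqn: X moduli problem} of $X$ (for $\Cht\btilde$), after which the forgetful maps $\tilde{f},\tilde{\pi}$ visibly become $f,\pi$ on the second factor.

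Two minor slips worth correcting. First, in $\Gr\btilde_{G,N,I,y}$ both reductions (on $\mathcal{E}$ and on $\mathcal{F}$) are to $I^\circ_y$, not one to $I^\circ_y$ and one to $I_y$; the ``resp.\ $I_y$'' in the definition distinguishes $\Gr\btilde$ from $\Gr\mathcal{B}$. Second, $\Gr\btilde$ is not ``additional data over $\Cht\mathcal{R}$'': they share items (1)--(3) of the moduli problem but differ in the remaining data (Frobenius identification versus pair of reductions). For part~(2), the paper makes the role of $\tilde{g}$ more explicit than you do, by writing $\Cht\mathcal{R}_{G,N,I}$ as a fiber product over $[G_{I,\infty}\backslash\Gr_{G,I}]\times[*/G_{\sum_j \infty y_j}]$ of the level-$N'=y$ and level-$N''=\sum_{j\ge 2}a_j y_j$ restricted shtukas, and using the section coming from the splitting $H\cong G\times H'$; your description of $\tilde g$ as ``extension of structure group'' is the right idea but a bit vague.
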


\begin{proof}
\begin{enumerate}
    \item The identification of $\Cht\mathcal{R}$ has already been observed.  The other identifications, as we note that the data of $(1),(2),(3)$ classifies the stack $[G_{I,\infty,\sum_j \infty y_j} \backslash \Gr_{G,I}]$ which is almost independent of the data of $(4)$ and $(5)$, resp. $(4)$, which is local on $y$ and becomes independent of it if we give a trivialization of $\mathcal{E}_G|_{\Gamma_{\sum_j \infty y_j}}$.  Additionally, the reduction of structure group from $\Gamma_{\infty y}$ to $I_y^{\circ}$ is the same thing as a reduction of structure group from $G$ to $U$, naturally identified with $G / U = \btilde$.  A trivialization of $\mathcal{E}|_{\Gamma_{\sum_j \infty y_j}}$ gives a trivialization of $\mathcal{E}_G \simeq G$, under which we inherit two trivializations of $\mathcal{F}_G$, one under the Hecke modification and another under a trivialization of ${}^{\tau} \mathcal{E}_G \simeq \mathcal{F}_G$ under the identification $G \simeq {}^{\tau} G$.  Comparing these two trivializations gives an element of $G$.  The compatibility of the $U$ reductions of $\mathcal{E}_G$ and $\mathcal{F}_G$ translates to the moduli problem of $X$ in Equation~\eqref{eqn: X moduli problem}.
    \item The factorization results from noting that the data defining restricted shtukas decomposes as ``restricted shtukas for the level $N' = y$'' and ``restricted shtukas for the level $N'' = \sum_{j\ge 2} a_j y_j$''.  More specifically, away from $N$, $\Cht\mathcal{R}_{G,N,I}$ fits into a Cartesian diagram
    \[
    \begin{tikzcd}
   \Cht\mathcal{R}_{G,N,I} \arrow{r} \arrow{d} & \Cht\mathcal{R}_{G,N',I} \times \left[*/G_{\sum_{j\ge 2} \infty y_j}\right] \arrow{d} \\
    \Cht\mathcal{R}_{G,N'',I} \times \left[*/G_{\infty y_1}\right] \arrow{r} & \left[G_{I,\infty} \backslash \Gr_{G,I}\right] \times \left[*/G_{\sum_{j} \infty y_j}\right].
    \end{tikzcd}
    \]
    Moreover, both maps on the right and bottom have sections that come from writing $H$ as a product $G \times H'$.  Now the result follows by noting that forgetting the reductions to $\mathcal{E}_U$ and $\mathcal{F}_U$ maps to the moduli space $\Cht\mathcal{R}_{G,N',I} \times [*/G_{\sum_{j\ge 2} \infty y_j}]$ and the map to $\Cht\mathcal{R}_{G,N,I}$ is given by the section noted above.
    \item The maps $\pi \colon X \to \btilde^2$ in the top and bottom rows are projections to different factors, the left vertical arrow is the identity, and the right vertical arrow is the map $\epsilon$ in Equation~\eqref{eqn:gu epsilon}.
\end{enumerate}
\end{proof}

\begin{remark}
    The constructions above are similar to parabolic character sheaves constructed by Lusztig \cite{lusztig2010parabolic}.  The horocycle correspondence for parabolic character sheaves has also been considered by Li, Nadler, and Yun \cite{li2023functions}.
\end{remark}

The proposition above allows us to describe sheaves over the portion of $\Gr\btilde_{G,N,I,y}$ over $(C \setminus N)^I$ in terms of categories of perverse sheaves already in the literature.  The action of $G_{\sum \infty y_j}$ on $\btilde^2$ factors through $G$.  Thus, we can form sheaves over the unramified locus
\begin{equation}
    \mathcal{S}_{I,V} \boxtimes {}_{\mathcal{L}'} M_{\mathcal{L}} \in D(\Gr\btilde_{G,N,I,y})
\end{equation}
for a Satake sheaf $\mathcal{S}_{I,V}$ and a sheaf ${}_{\mathcal{L}'} M_{\mathcal{L}} \in {}_{\mathcal{L}'} D_{\mathcal{L}}(U \backslash G / U) \subset D_G(\btilde^2)$.  We will use the same notation to denote the $!$-extension to all of $\Gr\btilde_{G,N,I,y}$.  We wish to understand the objects on restricted shtukas related to these objects under the horocycle transform and nearby cycles in terms of what we have already discussed for the finite and affine monodromic Hecke categories and Lusztig's horocycle correspondence.

\begin{proposition}\label{prop: horocycle proper smooth}
$\pi_n$ is proper and $f_n$ is smooth.
\end{proposition}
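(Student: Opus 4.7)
\emph{Proof plan}. The plan is to realize $\pi_n$ and $f_n$ as Beilinson-Drinfeld extensions of the proper map $\pi \colon \dot{Z} \to G$ and the smooth map $f \colon \dot{Z} \to Z$ from the finite horocycle correspondence~\eqref{eqn: finite horocycle correspondence}, and then to conclude by base change and fpqc descent. Both properties can be checked smooth-locally on the target, so I would pull back $\pi_n$ along the $G_{I,\infty,\sum \infty y_j}$-torsor $\Gr_{G,I}^{\triv(N)} \to \Cht\mathcal{R}_{G,N,I}$ from Proposition~\ref{prop:restricted shtuka quotient definition}, and pull back $f_n$ along an analogous smooth cover of $\Gr\btilde_{G,N,I,y}/_n(T\times T)$.

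The key input is a global moduli-theoretic description extending the identifications of the previous proposition from the unramified locus $(C \setminus N)^I$ to all of $C^I$. Under this description, the pullback of $\pi_n$ along the cover decomposes, up to a finite \'etale factor coming from the splitting $H = G \times H'$ (namely the map $BG(\F_q) \to BH(\F_q)$), as
\[ \mathrm{id}_{\Gr_{G,I}^{\triv(N)}} \times \pi' \colon \Gr_{G,I}^{\triv(N)} \times (X /_n (T \times T)) \to \Gr_{G,I}^{\triv(N)} \times G, \]
where $\pi'$ is obtained from $\pi$ in~\eqref{eqn: finite horocycle correspondence} after passing to the $T \times T$-quotient with respect to the $n$-th power action. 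Since $\pi \colon \dot{Z} \to G$ is proper (its fiber over $g$ being isomorphic to the proper flag variety $\mathcal{B}$), and $X /_n (T \times T) \to \dot{Z}$ is a finite \'etale $T_n \times T_n$-cover, the map $\pi'$ is proper, and properness descends back to $\pi_n$ along the smooth surjective cover. The analogous computation for $f_n$ identifies its pullback with $\mathrm{id} \times f'$, where $f'$ descends from $f \colon \dot{Z} \to Z$ along the smooth $T$-torsor $\btilde^2 \to Z$; since $f$ is smooth, so is $f'$, and smoothness descends to $f_n$.

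The hard part is to establish the global moduli-theoretic identifications above when some of the legs $x_i$ collide with $y$: there the ``flag-at-$y$'' data no longer decouples from the Hecke modification along the legs in a naive product way, since both pieces of data are jointly encoded in the $G_{I,\infty,\sum \infty y_j}$-torsor structure. A careful fiber-product analysis, in the spirit of the coequalizer argument used to prove Theorem~\ref{thm:smooth model}, should yield the required Cartesian squares with $\pi$ and $f$. Once these are in place, properness of $\pi_n$ and smoothness of $f_n$ follow formally by base change and fpqc descent.
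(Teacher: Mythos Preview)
Your approach has a real gap. The product decomposition you rely on—writing the pullback of $\pi_n$ along the $G_{I,\infty,\sum\infty y_j}$-torsor as $\mathrm{id}\times\pi'$ for the finite horocycle map—holds only over $(C\setminus N)^I$, as you yourself note. When a leg collides with $y$, the moduli spaces specialize to quotients built from the \emph{affine} flag variety, not products with the finite $\dot{Z}$ or $X$; there is simply no Cartesian square with the finite $\pi$ or $f$ to base-change from over that locus. Your promised ``careful fiber-product analysis in the spirit of Theorem~\ref{thm:smooth model}'' would therefore have to produce something other than a reduction to the finite correspondence, and you do not say what that would be.

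The paper avoids this entirely by arguing directly from the moduli descriptions, globally over $C^I$, without ever reducing to the finite maps. For $\pi_n$: the map $\tilde{\pi}\colon\Cht\btilde\to\Cht\mathcal{R}$ factors as a closed embedding (forget the Frobenius compatibility in condition~(5)) followed by a $\btilde^2$-bundle (forget the two $I_y^\circ$-reductions of $\mathcal{E}$ and $\mathcal{F}$); after quotienting by the twisted $T\times T$-action the bundle acquires proper fibers, so $\pi_n$ is proper. For $f_n$: a Cartesian square reduces the question to smoothness of $\tilde{f}$, whose fiber over any $S$-point of $\Gr\btilde$ is the set of choices of the compatibility isomorphism $\varepsilon_U$, a torsor under the smooth unipotent group $\sigma(U)$. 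Both arguments exploit only that the additional data (the two flag reductions and the Frobenius compatibility) lives at the fixed point $y$ and hence has a uniform description independent of where the legs sit—this is exactly what makes the direct argument work where your product decomposition fails.
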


\begin{proof}
    $\Cht \btilde \to \Cht\mathcal{R}$ is a composition of an inclusion that forgets the Frobenius compatibility condition together with a projection that forgets the $I_y^{\circ}$ structures on $\mathcal{F}$ and $\mathcal{E}$.  The first map is a closed embedding, while the second map is a $\btilde^2$-bundle.  After quotienting by the twisted action of $T \times T$, the latter map becomes proper, so the properness of $\pi_n$ is proved.

    Since the diagram
    \begin{equation}
        \begin{tikzcd}
        \Cht{\mathcal{B}}_{G,N,I,y} \arrow{r} \arrow{d}& \Gr\btilde_{G,N,I,y} \arrow{d} \\
        \Cht{\mathcal{B}}_{G,N,I,y} /_n (T \times T) \arrow{r}& \Gr\btilde_{G,N,I,y} /_n (T \times T)
        \end{tikzcd}
    \end{equation}
    is Cartesian, it suffices to show that $\tilde{f}$ is smooth.  The fiber of $\tilde{f}$ over an $S$-point $((x_i)_{i\in I}, \mathcal{E}, \mathcal{F}, \varepsilon_{\mathcal{E}}, \varepsilon_{\mathcal{F}})$ can be identified with choices of $\varepsilon_{U} \colon \mathcal{E}_U \simeq {}^{\tau} \mathcal{F}_U$, which is thus a choice of a point in $\sigma(U)$.
\end{proof}

\subsection{$\Psi$-factorizability and the horocycle correspondence}

The horocycle correspondence in the previous section will be the key tool that we will use to ``unwind'' a representation $\zeta$ of $G(\F_q)$ to a rank-1 character sheaf $L_{\lambda}$ on the torus.  The character sheaf property of pulling back under multiplication will give us $\Psi$-factorizability for $\mathcal{S}_{I,V} \boxtimes \zeta$.

Let $\mathfrak{p}_J$ be the map $\Gr\btilde_{G,N,I\cup J,y} \to C^J$.  We also define an iterated version $\Gr\btilde^{(I_1, \dots, I_n)}_{G,I\cup J,y}$.

\begin{definition}
    For a partition $(I_1, \dots, I_n)$ of $I$, let $\Gr\btilde^{(I_1, \dots, I_n)}_{G,I,y}$ classify as its $S$-points:
\begin{enumerate}
    \item $(x_i)_{i \in I}\in C^I(S)$,
    \item $\mathcal{E}_0, \dots, \mathcal{E}_n$ a collection of $n+1$ $G_{I,\infty,\infty y}$-torsors on $S$,
    \item $\phi_i$ Hecke modifications
    \[ \phi_i \colon \mathcal{E}_{i-1} |_{\Gamma_{\sum_i \infty x_i} \setminus \Gamma_{\sum_i x_i}} \simeq \mathcal{E}_i |_{\Gamma_{\sum_i \infty x_i} \setminus \Gamma_{\sum_i x_i}}, \]
    \item and reductions of structure group $\varepsilon_i$ of $\mathcal{E}_0 |_{\Gamma_{\infty y}}$ and $\mathcal{E}_{n}|_{\Gamma_{\infty y}}$ to $I^{\circ}_y$.
\end{enumerate}
\end{definition}

The map collapsing the legs $c \colon \Gr\btilde^{(I_1, \dots, I_n)}_{G,I\cup J,y} \to \Gr\btilde_{G,0\cdot y,I\cup J,y}$ is a proper map that pulls back the map $\Gr_{G,I}^{(I_1, \dots, I_n)} \to \Gr_{G,I}$ on Beilinson-Drinfeld grassmannians.

\begin{proposition}\label{prop:Grbtilde psi factorizable}
    For ${}_{\mathcal{L}'} M_{\mathcal{L}} \in {}_{\mathcal{L}'} D_{\mathcal{L}}(U \backslash G / U)$, the pairs $(\mathfrak{p}_J, j_!(\mathcal{S}_{I \cup J,V \boxtimes W} \boxtimes {}_{\mathcal{L}'} M_{\mathcal{L}}))$ are $\Psi$-factorizable.
\end{proposition}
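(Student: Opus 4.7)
The plan is to perform a dévissage in the monodromic Hecke category ${}_{\mathcal{L}'} D_{\mathcal{L}}(U \backslash G / U)$ to reduce the assertion to Proposition~\ref{prop:psi-factorizable grassmannian} applied to rank-1 character sheaves on the torus $T$, which is the setting explicitly treated in the remark following that proposition (Iwahori reduction at $y$ with $H = T$).

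First, since ${}_{\mathcal{L}'} \mathcal{P}_{\mathcal{L}}(U \backslash G / U)$ is finite length and $\Psi$-factorizability is preserved under cones of morphisms between $\Psi$-factorizable sheaves (because nearby cycles respect the triangulated structure), I would reduce to the case that ${}_{\mathcal{L}'} M_{\mathcal{L}} = \mathbb{L}_{\lambda}^{\dot{w}}[|w|]$ is a simple perverse sheaf with $\mathcal{L} = L_\lambda$ and $\mathcal{L}' = L_{w(\lambda)}$. A further dévissage via the distinguished triangle
\begin{equation*}
    \mathcal{L}_{\lambda}^{\dot{w}}[|w|] \to \mathbb{L}_{\lambda}^{\dot{w}}[|w|] \to C \to [1],
\end{equation*}
whose cone $C$ is supported on Schubert cells $\tilde{\mathcal{O}}_{w'}$ with $w' < w$, combined with induction on the dimension of support, reduces the problem to the case ${}_{\mathcal{L}'} M_{\mathcal{L}} = \mathcal{L}_{\lambda}^{\dot{w}} = (j_w)_!(j^{\dot{w}})^* L_{\lambda}$, the $!$-extension of a pullback character sheaf.

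Next, I would exploit the smooth surjection $j^{\dot{w}} \colon \tilde{\mathcal{O}}_w \to T$ together with the product description $\Gr\btilde_{G,N,I\cup J,y}|_{(C \setminus N)^{I \cup J}} \simeq [G_{I \cup J,\infty} \backslash \Gr_{G,I \cup J}] \times [(\btilde \times \btilde)/G_{\sum \infty y_j}]$ over the unramified locus. Setting up a Cartesian square that realizes $j^{\dot w}$ on the $\btilde^2$-factor (suitably extended by $j_!$ across the ramified locus), Proposition~\ref{prop: psi factorizability under pullback} shows that $\Psi$-factorizability of $j_!(\mathcal{S}_{I \cup J, V \boxtimes W} \boxtimes \mathcal{L}_{\lambda}^{\dot{w}})$ follows from that of an analogous sheaf $j_!(\mathcal{S}_{I \cup J, V \boxtimes W} \boxtimes L_{\lambda})$ on the corresponding Grassmannian variant whose factor at $y$ is $T$ itself.

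Finally, by passing to the Iwahori group scheme $G'$ at $y$ (whose parahoric reduction at $y$ is the Iwahori $I_y$) with level divisor $N' = 0 \cdot y$, the resulting variant is identified with $\Gr^{\triv(N')}_{G',I \cup J}$ for which $H' = I_y/I_y^\circ = T$, so that $L_{\lambda}$ is a rank-1 character sheaf on $H'$. Proposition~\ref{prop:psi-factorizable grassmannian}, specifically the remark addressing representations of $T(\mathbb{F}_q)$ arising from Iwahori reductions, then delivers the desired $\Psi$-factorizability. The main obstacle I anticipate is the second step: rigorously constructing the Cartesian square realizing $\mathcal{L}_{\lambda}^{\dot{w}}$ as a smooth pullback of $L_{\lambda}$ and verifying that the $j_!$-extension across the boundary (where legs collide with $y$) commutes with this pullback, because the product decomposition of $\Gr\btilde_{G,N,I\cup J,y}$ is only available over the unramified locus, so additional care is needed to control the boundary behavior across the diagonal locus in $C^{I \cup J}$.
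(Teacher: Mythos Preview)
Your d\'evissage to $!$-extensions of cell sheaves $\mathcal{L}_{\lambda}^{\dot{w}}$ is fine, but step~2 has the gap you yourself flag, and it is a real one. Proposition~\ref{prop: psi factorizability under pullback} requires a smooth morphism $h \colon X' \to X$ of total spaces \emph{over the base} $S = C^J$. The map $j^{\dot{w}} \colon \tilde{\mathcal{O}}_w \to T$ and the product decomposition of $\Gr\btilde$ exist only over the unramified locus $(C \setminus \{y\})^{I \cup J}$; on the special fiber $\Gr\btilde_0 \cong [I^{\circ} \backslash LG / I^{\circ}]$ there is no analogous projection to $T$. Extending the sheaf by $j_!$ does not produce a smooth map over the ramified locus, so the hypotheses of Proposition~\ref{prop: psi factorizability under pullback} are not met. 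There is a second mismatch: $\Gr\btilde$ carries \emph{two} reductions to $I_y^{\circ}$ (a $T \times T$-structure), whereas the Iwahori-level $\Gr^{\triv(N')}_{G'}$ of Proposition~\ref{prop:psi-factorizable grassmannian} carries only one, so the identification you propose in step~3 is not correct as stated.

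The paper takes a different route that avoids d\'evissage entirely and works uniformly for arbitrary ${}_{\mathcal{L}'} M_{\mathcal{L}}$. The idea is to place ${}_{\mathcal{L}'} M_{\mathcal{L}}$ on a single $\Gr\btilde_G$-factor and the monoidal unit $\mathcal{L} \in {}_{\mathcal{L}} D_{\mathcal{L}}(U \backslash G / U)$ on each remaining factor of the product $(\Gr\btilde_G)^{I \cup J}$, obtaining a genuine external tensor product
\[
(j_1)_!(\mathcal{S}_{V_1} \boxtimes {}_{\mathcal{L}'} M_{\mathcal{L}}) \boxtimes \Bigl( \mathop{\boxtimes}_{i \ge 2} (j_i)_!(\mathcal{S}_{V_i} \boxtimes \mathcal{L}) \Bigr),
\]
which is $\Psi$-factorizable by the K\"unneth formula (Corollary~\ref{corr: Kunneth factorizable}). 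One then passes through a convolution diagram
\[
(\Gr\btilde_G)^{I \cup J} \xleftarrow{\ f\ } \Gr G_G \times_U \cdots \times_U \Gr G_G \xrightarrow{\ q\ } \cdots \xrightarrow{\ m\ } \Gr\btilde^{(1,\dots,|I|+|J|)}_{G,I\cup J,y}
\]
with $f$ smooth and $m$ proper, \emph{both defined globally over} $C^{I \cup J}$. Since $m_! f^*$ realizes convolution on the $U \backslash G / U$-factors and $\mathcal{L}$ is the monoidal unit, the output is exactly $j_!(\mathcal{S}_{V_1} \widetilde{\boxtimes} \cdots \widetilde{\boxtimes} \mathcal{S}_{V_{|I|+|J|}} \boxtimes {}_{\mathcal{L}'} M_{\mathcal{L}})$, which is $\Psi$-factorizable by Propositions~\ref{prop: psi factorizability under pushforward} and~\ref{prop: psi factorizability under pullback}. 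A final proper pushforward along the leg-collapsing map $c$ gives the claim. The essential difference from your approach is that the convolution maps $f$, $m$ are honestly smooth and proper over the entire base, which is precisely what your cell-by-cell reduction cannot supply.
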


\begin{proof}
    The proof is similar to the proof of Proposition~\ref{prop:psi-factorizable grassmannian}.  We will work at the infinite level, noting that we could work with maps at the finite level using maps $\kappa$ similar to those used for Beilinson-Drinfeld grassmannians.
    
    First, consider the ind-scheme $(\Gr\btilde_G)^{I\cup J}$.  Write our desired Satake sheaf as $\mathcal{S}_{I \cup J,V \boxtimes W}$ and assume without loss of generality that $V \boxtimes W \cong V_1 \boxtimes \dots \boxtimes V_{|I|+|J|}$.  Over $(C \setminus (\{ y \} \cup R))^{I\cup J}$, we can construct a Satake sheaves
    \begin{equation} (\mathcal{S}_{V_1} \boxtimes {}_{\mathcal{L}'} M_{\mathcal{L}}) \boxtimes \left(\mathop{\boxtimes}_{i=2}^{|I|+|J|} (\mathcal{S}_{V_i} \boxtimes \mathcal{L}) \right), \end{equation}
    where we consider $\mathcal{L}$ as the identity in ${}_{\mathcal{L}} D_{\mathcal{L}}(U \backslash G / U)$.  Let $j_i$ denote open immersion of the $i$-th factor $C \setminus (\{y\} \cup R) \to C$ and any pullback of this map when this is clear, $j$ the open immersion $j_1 \times \dots \times j_{|I|}$.  So the sheaf
    \begin{equation}\label{eqn: GrB sheaf} (j_1)_! (\mathcal{S}_{V_1} \boxtimes {}_{\mathcal{L}'} M_{\mathcal{L}}) \boxtimes \left(\mathop{\boxtimes}_{i=2}^{|I|+|J|} (j_i)_! (\mathcal{S}_{V_i} \boxtimes \mathcal{L}) \right), \end{equation}
    is an external tensor product, so Proposition~\ref{prop: kunneth factorizable} shows that this sheaf is $\Psi$-factorizable with respect to the map projecting to $C^I$.  Now consider the convolution diagram
    \begin{equation}
        \begin{tikzcd}
            (\Gr\btilde_G)^{I\cup J} & \Gr G_G \times_U \dots \times_U \Gr G_G \arrow{l}{f} \arrow{r}{q}& \frac{\Gr G_G \times_B \dots \times_B \Gr G_G}{G_{1,\infty,\sum_{\infty y}}^{I\cup J}} \arrow{r}{m}& \Gr \btilde^{(1,\dots,|I|+|J|)}_{G,I\cup J,y}.
        \end{tikzcd}
    \end{equation}
    The pullback $f$ is smooth, $q$ descends by the quotient of a group $G_{1,\infty,\sum_{\infty y}}^{I\cup J} \times T^{|I|+|J|-1}$ such that the sheaf \eqref{eqn: GrB sheaf} is equivariant with respect to this group, and the composite functor $m_! f^*$ was shown earlier to give a convolution product on the $U \backslash G / U$ factors.  Since the convolution is the identity on all except for one factor, the result after applying these functors is the sheaf
    \begin{equation}
        j_! \left(\mathcal{S}_{V_1} \widetilde{\boxtimes} \dots \widetilde{\boxtimes} \mathcal{S}_{V_{|I|+|J|}} \boxtimes {}_{\mathcal{L}'} M_{\mathcal{L}}\right),
    \end{equation}
    and by the properness of $m$ and smoothness of $f$, this sheaf is $\Psi$-factorizable with respect to maps to the base $C^J$.  By pushing forward along the map $c$ collapsing the legs, we arrive at the sheaf $j_! ( \mathcal{S}_{I\cup J,V \boxtimes W} \boxtimes {}_{\mathcal{L}'} M_{\mathcal{L}} )$, which is the sheaf we wanted to show to be $\Psi$-factorizable.
\end{proof}

\begin{remark}\label{remark: psi-factorizable GrB}
The relationships that are encoded by the $\Psi$-factorizability of the sheaves $(\mathfrak{p}_I, \mathcal{S}_{I,V} \boxtimes L^{\dot{w}}_{\lambda})$ together with the compatibility with the collapsing of the legs map $c$ should encode all the compatibilities showing that the nearby cycles is a central functor. 
\end{remark}

\begin{theorem}\label{thm: psi-factorizable GFq}
    Let $N = 0 \cdot y$ as above, so that $H = G$.  Let $\zeta$ be a representation of $G(\F_q)$.  Then $\zeta$ is a $\Psi$-factorizable representation in the sense of Definition~\ref{def: factorizable representation}.
\end{theorem}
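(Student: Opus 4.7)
The plan is to reduce the statement to Proposition~\ref{prop:Grbtilde psi factorizable} via the horocycle correspondence of the previous subsection. Since $\Psi$-goodness and the iterated-nearby-cycles composition property each commute with finite direct sums, $\Psi$-factorizability is inherited by direct summands, so I may assume $\zeta$ is an irreducible representation. A further trivial reduction is that $\Psi$-factorizability of $j_!\mathscr{G}$ on $\Gr_{G,I\cup J}^{\triv(0\cdot y)}$ follows from $\Psi$-factorizability of $\mathscr{G}$ over the unramified locus, since $j_!$-extension along an open embedding preserves the property.

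By Lusztig's theorem recalled in Section~\ref{subsec:review nonunipotent}, any irreducible $\zeta\in\Rep(G(\F_q))$, viewed as a sheaf on $G/\Ad_F(G)=H/\Ad_F(H)$, is (up to a cohomological shift, which is harmless for $\Psi$-factorizability) a direct summand of $\chi(\mathbb{L}_{\lambda}^{\dot w})=\pi_!f^*\epsilon_!\mathbb{L}_{\lambda}^{\dot w}$ for some pair $w\cdot\lambda\in I^1$, where $\mathbb{L}_{\lambda}^{\dot w}\in{}_{L_{w(\lambda)}}D_{L_{\lambda}}(U\backslash G/U)$ descends to $Z$. Combining this with the identification of the unramified locus of $\Cht\mathcal{R}_{G,0\cdot y,I\cup J}$ as $[G_{I\cup J,\infty}\backslash\Gr_{G,I\cup J}]\times G/\Ad_F(G)$ from Proposition~\ref{prop:restricted shtuka quotient definition} globalizes the summand statement: on $\Cht\mathcal{R}_{G,0\cdot y,I\cup J}$, the sheaf $\mathcal{S}_{I\cup J,V\boxtimes W}\boxtimes\zeta$ is a direct summand of $(\pi_n)_!(f_n)^*(\mathcal{S}_{I\cup J,V\boxtimes W}\boxtimes\mathbb{L}_{\lambda}^{\dot w})$, where $\pi_n$ and $f_n$ are the global horocycle maps from the degeneration setup of Proposition~\ref{prop: horocycle proper smooth}, so that $\pi_n$ is proper and $f_n$ is smooth. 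Pulling back everything along the smooth torsor quotient $\Gr_{G,I\cup J}^{\triv(0\cdot y)}\to\Cht\mathcal{R}_{G,0\cdot y,I\cup J}$ transfers the same summand statement to $\Gr_{G,I\cup J}^{\triv(0\cdot y)}$.

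Finally, apply Proposition~\ref{prop:Grbtilde psi factorizable} to get $\Psi$-factorizability of $\mathcal{S}_{I\cup J,V\boxtimes W}\boxtimes\mathbb{L}_{\lambda}^{\dot w}$ on $\Gr\btilde_{G,0\cdot y,I\cup J,y}$, then propagate this through the smooth pullback $(f_n)^*$ by Proposition~\ref{prop: psi factorizability under pullback} and through the proper pushforward $(\pi_n)_!$ by Proposition~\ref{prop: psi factorizability under pushforward}. Passing to the direct summand and $j_!$-extending from the unramified locus yields the desired $\Psi$-factorizability of $\mathcal{S}_{I\cup J,V\boxtimes W}\boxtimes\zeta$. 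The main obstacle I expect is the careful bookkeeping of the globalization: one must verify that Lusztig's summand decomposition on $G/\Ad_F(G)$ lifts compatibly through the $n$-th power torus quotients $q_n$ and the smooth quotient from $\Gr^{\triv(0\cdot y)}$ to $\Cht\mathcal{R}$, and that the horocycle diagram respects the projection $\mathfrak{q}_I$ to $C^I$ --- which is immediate because the horocycle acts only on the factor at $y$ and leaves the legs $(x_i)_{i\in I\cup J}$ untouched.
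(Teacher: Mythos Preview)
Your proposal is correct and follows essentially the same route as the paper's proof: write $\zeta$ as a direct summand of $\pi_! f^* \mathbb{L}^{\dot w}_{\lambda}$ via Lusztig's theory, globalize to the horocycle correspondence over restricted shtukas, and then apply Proposition~\ref{prop:Grbtilde psi factorizable} together with the properness of $\pi_n$ and smoothness of $f_n$ from Proposition~\ref{prop: horocycle proper smooth} (using Propositions~\ref{prop: psi factorizability under pushforward} and~\ref{prop: psi factorizability under pullback}). The paper's own proof is terser but identical in structure; your additional remarks about the passage between $\Gr^{\triv(0\cdot y)}$ and $\Cht\mathcal{R}$ and about compatibility with $\mathfrak{q}_I$ are correct and simply make explicit what the paper leaves implicit.
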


\begin{proof}
    Write $\zeta$ as a summand of $\pi_! f^* \mathbb{L}^{\dot{w}}_{\lambda}$ for some $w \cdot \lambda$.  Then $j_! (\mathcal{S}_{I,V} \boxtimes \zeta)$ is a summand of $j_! (\pi_n)_! (f_n)^* (\mathcal{S}_{I,V} \boxtimes \mathbb{L}^{\dot{w}}_{\lambda})$, where $\pi_n$ and $f_n$ are now coming from the horocycle correspondence over restricted shtukas.  The result now follows from Proposition~\ref{prop: horocycle proper smooth}.
\end{proof}

\begin{remark}
    In Section~\ref{sec:depth 0}, we will freely use the fact that the above theorem implies that $\mathfrak{p}_!$ commutes with nearby cycles.
\end{remark}

\subsection{Truncated induction for the affine truncated convolution category}

Let $\Cht\mathcal{R}_0$ be the special fiber of restricted shtukas at an unramified point $y \in C(\mathbb{F}_q)$ as before, with $N = 0 \cdot y$ and with a single leg $I = \{ 1 \}$.  Similarly, we consider the special fibers $\Cht\btilde_{G,N,I,y}$ at $y$, which we denote $\Cht\btilde_0$, and $\Gr\btilde_{G,N,I,y}$ at $y$, which we denote $\Gr\btilde_0$.  The latter moduli space can be identified with $[I^{\circ} \backslash LG / I^{\circ}] \times [* / L^{\circ} G]$.  As in the previous section, we will keep a semisimple parameter $\mathfrak{o} = W \lambda$ fixed.

We consider the following diagram of derived categories of sheaves, where the vertical arrows are given by nearby cycles and residual unipotent parts of groups are ignored:
\usetikzlibrary {decorations.pathmorphing}
\begin{equation}
    \begin{tikzcd}
    D(G /_{\sigma} G \times L^+ G \backslash \Gr_G) \arrow[d, squiggly] & D(X \times L^+ G \backslash \Gr_G) \arrow{l}{\pi} \arrow{r}{f} \arrow[d, squiggly] & D(U \backslash G / U \times L^+ G \backslash \Gr_G) \arrow[d, squiggly] \\
    D(\Cht\mathcal{R}_0) & D(\Cht\btilde_0) \arrow{r}{f} \arrow{l}{\pi} & D(I^{\circ} \backslash LG / I^{\circ}).
    \end{tikzcd}
\end{equation}

The goal of this section will be to construct a category $\mathcal{P}(\Cht\mathcal{R}_0)_{\textbf{c}}$ and functors $\underline{\chi} \colon \mathcal{A}^{\lambda}_{\textbf{c}} \to \mathcal{P}(\Cht\mathcal{R}_0)_{\textbf{c}}$ and $\Psi \colon \mathcal{P}(G /_{\sigma} G)_{\textbf{c}} \to \mathcal{P}(\Cht\mathcal{R}_0)_{\textbf{c}}$.  These functors fit into a commutative diagram
\begin{equation}\label{eqn: truncated induction functors}
\begin{tikzcd}
    \mathcal{P}^{\textbf{c}}_{G}(\btilde^2) \arrow{r}{\underline{\chi}} \arrow{d}{\Psi}& \mathcal{P}(G /_{\sigma} G)_{\textbf{c}} \arrow{d}{\Psi} \\
    \mathcal{A}^{\mathfrak{o}}_{\textbf{c}} \arrow{r}{\underline{\chi}}& \mathcal{P}(\Cht\mathcal{R}_0)_{\textbf{c}}.
\end{tikzcd}
\end{equation}

\begin{definition}
    We define the category $\mathcal{P}(\Cht\mathcal{R}_0)_{\le \textbf{c}}$ as the category generated by the image of all subquotients of $R^j \pi_! f^* (Z^{w(\lambda)}_V * \mathbb{L}^{\dot{w}}_{\lambda})$ as $j\in \mathbb{Z}$, $V$ runs over all representations in $\Rep(\widehat{G})$, and $w \cdot \lambda$ runs over elements of all two-sided cells $\textbf{c}' \le \textbf{c} \subset \wtilde \times \mathfrak{o}$.  Similarly, we define $\mathcal{P}(\Cht\mathcal{R}_0)_{< \textbf{c}}$ using the two-sided cells strictly less than $\textbf{c}$.

    We define $\mathcal{P}(\Cht\mathcal{R}_0)_{\textbf{c}}$ as the Serre quotient $\mathcal{P}(\Cht\mathcal{R}_0)_{\le \textbf{c}} / \mathcal{P}(\Cht\mathcal{R}_0)_{< \textbf{c}}$.  By construction, this comes with a functor $\underline{\chi} \colon \mathcal{A}^{\mathfrak{o}}_{\textbf{c}} \to \mathcal{P}(\Cht\mathcal{R}_0)_{\textbf{c}}$.  The functor $\Psi$ on the right hand side of diagram~\eqref{eqn: truncated induction functors} is defined as nearby cycles of sheaves supported on $G /_{\sigma} G$ from the generic fiber of restricted shtukas with one leg to the special fiber.  Since this closed subset of restricted shtukas is constant over the curve, this is the same as the functor by direct image under the map $i_G \colon G /_{\sigma} G \to \Cht\mathcal{R}_0$.
\end{definition}

We want to justify that the functor $(i_G)_* = \Psi_{G /_{\sigma} G}$ is conservative after passing to Serre quotient categories.  This is justified in the following lemma.

\begin{lemma}
    If $\zeta$ is a representation of $G(\mathbb{F}_q)$ attached to the two-sided cell $\textbf{c}$, then considered as a sheaf on $G /_{\sigma} G$ embedded in $\Cht\mathcal{R}_0$, $\zeta$ does not live in $\mathcal{P}(\Cht\mathcal{R}_0)_{< \textbf{c}}$.
\end{lemma}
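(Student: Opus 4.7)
The plan is to restrict along the closed embedding $i_G\colon G/_\sigma G \hookrightarrow \Cht\mathcal{R}_0$ identifying $G/_\sigma G$ as the fiber over the unit stratum of $\Cht\mathcal{R}_0 \to [L^+G\backslash\Gr_G]$, and to argue that $i_G^*\mathcal{P}(\Cht\mathcal{R}_0)_{<\textbf{c}}\subseteq \mathcal{P}(G/_\sigma G)_{<\textbf{c}}$ in Lusztig's sense. Since $\zeta$ is concentrated on this substack we have $i_G^*\zeta\cong \zeta$, and by the very definition of the cell attached to an irreducible representation $\zeta\notin \mathcal{P}(G/_\sigma G)_{<\textbf{c}}$; this will give the contradiction required for the lemma.

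The first step is base change. The moduli description of the restricted horocycle correspondence in Proposition~\ref{prop: horocycle proper smooth} and the preceding discussion identifies $\tilde\pi$ and $\tilde f$ over the unit stratum, after the $T\times T$-quotient, with the finite horocycle correspondence~\eqref{eqn: finite horocycle correspondence}; properness of $\pi_n$ and smoothness of $f_n$ then yield by proper and smooth base change an isomorphism $i_G^* R\pi_!f^*K \cong R\pi^{\mathrm{fin}}_!(f^{\mathrm{fin}})^*(j^*K)$, where $j\colon U\backslash G/U\hookrightarrow I^\circ\backslash LG/I^\circ$ is the inclusion of the finite flag variety. The problem thus reduces to tracking composition factors of $j^*(Z_V^{w(\lambda)}*\mathbb{L}^{\dot w}_\lambda)$ when $w\cdot\lambda$ lies in a cell $\textbf{c}'<\textbf{c}$ of $\wtilde\times\mathfrak{o}$.

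For this, I will use that the central sheaves $Z_V^\mu$ admit a Wakimoto filtration and hence lie in the subcategory of the affine monodromic Hecke category generated by the unit cell; by the general cell-compatibility of convolution in $\mathcal{H}^\mathfrak{o}$, the convolution $Z_V^{w(\lambda)}*\mathbb{L}^{\dot w}_\lambda$ has all composition factors $\mathbb{L}^{\dot w'}_\lambda$ in cells $\le\textbf{c}'$. Restricting via $j^*$ keeps only those factors with $w'\in W_\lambda$, which under the connected-center bijection between cells of $W_\lambda$ and of $\wtilde_\lambda$ from Section~\ref{subsec:review nonunipotent} correspond to finite cells strictly below $\textbf{c}$. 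Finally, the Deligne-Lusztig bound recalled in Section~\ref{subsec:review nonunipotent} implies that the finite induction $R\pi^{\mathrm{fin}}_!(f^{\mathrm{fin}})^*$ sends each such finite cell into representations attached to cells $<\textbf{c}$, completing the containment. The main obstacle is this cell analysis: controlling the interplay of central-sheaf convolution with the restriction $j^*$ and matching the two cell structures on $\wtilde_\lambda$ and $W_\lambda$ will require the minimal-IC equivalences of Proposition~\ref{prop: minimal equivalence} together with the agreement of $a$-functions used in the construction of the monoidal functor $Z^\lambda_V*(-)$.
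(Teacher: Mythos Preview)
Your approach is essentially the same as the paper's: restrict along $i_G$, use proper/smooth base change to reduce to the finite horocycle correspondence applied to $i_{\btilde}^*(Z_V^{w(\lambda)}*\mathbb{L}^{\dot w}_\lambda)$, and observe that this restriction lands in the finite Hecke category in cells $\le\textbf{c}'\cap W$, so its induction only sees representations in cells $\le\textbf{c}'<\textbf{c}$.

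Two small corrections. First, your claim that ``$Z_V^\mu$ lie in the subcategory generated by the unit cell'' is not right and is not what the Wakimoto filtration shows; the correct (and simpler) reason that $Z_V^{w(\lambda)}*\mathbb{L}^{\dot w}_\lambda\in\mathcal{H}^{\mathfrak{o}}_{\le\textbf{c}'}$ is just that $\mathcal{H}^{\mathfrak{o}}_{\le\textbf{c}'}$ is a two-sided ideal under convolution, so convolving any object against $\mathbb{L}^{\dot w}_\lambda$ with $w\cdot\lambda\in\textbf{c}'$ stays in $\mathcal{H}^{\mathfrak{o}}_{\le\textbf{c}'}$. Second, the restriction $j^*$ picks out simple factors with $w'\in W$ (the finite Weyl group), not $W_\lambda$; the paper states this as $w'\in\textbf{c}''\le\textbf{c}'\cap W$.
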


\begin{proof}
Let $i_G \colon G /_{\sigma} G \to \Cht\mathcal{R}_0$ be the embedding, which is closed up to a torsor for a unipotent group.  We also have closed (up to a torsor for a unipotent group) embeddings $i_X \colon X / G \to \Cht\btilde_0$ and $i_{\btilde} \colon (\btilde^2)/G \to [ I^{\circ} \backslash LG / I^{\circ} ]$.  By proper base change,
\begin{align*}
    i_G^* \pi_! f^* (Z^{w(\lambda)}_V * \mathbb{L}_{\lambda}^{\dot{w}}) &\cong \pi_! i_X^* f^* (Z^{w(\lambda)}_V * \mathbb{L}_{\lambda}^{\dot{w}}) \\
    &\cong \pi_! f^* i_{\btilde}^* (Z^{w(\lambda)}_V * \mathbb{L}^{\dot{w}}_{\lambda}).
\end{align*}
If $w \in \textbf{c}'$, the object $(Z^{w(\lambda)}_V * \mathbb{L}^{\dot{w}}_{\lambda})$ lives in the subcategory $\mathcal{H}^{\lambda}_{\le \textbf{c}'}$, and so $i_{\btilde}^* (Z^{w(\lambda)}_V * \mathbb{L}^{\dot{w}}_{\lambda})$ lives in the subcategory of the finite Hecke category generated by objects $\mathbb{L}^{\dot{w'}}_{\lambda}$ such that $w' \in \textbf{c}'' \le \textbf{c}' \cap W$.
\end{proof}

\begin{corollary}
The functor $(i_G)_* = \Psi_{G /_{\sigma} G}$ is faithful.
\end{corollary}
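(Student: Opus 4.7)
The plan is to reduce faithfulness to conservativity, and then to check conservativity on simple objects using the preceding lemma. Since $i_G$ is a closed immersion up to a torsor for a unipotent group, the functor $(i_G)_*$ is exact, and for exact functors between abelian categories faithfulness is equivalent to conservativity. Because $\mathcal{P}(G /_{\sigma} G)_{\textbf{c}}$ is a block of representations of the finite group $G(\F_q)$ over $\qlbar$, it is semisimple, so it suffices to verify that no simple object $\zeta$ is sent to zero in the Serre quotient $\mathcal{P}(\Cht\mathcal{R}_0)_{\textbf{c}} = \mathcal{P}(\Cht\mathcal{R}_0)_{\le \textbf{c}} / \mathcal{P}(\Cht\mathcal{R}_0)_{< \textbf{c}}$.

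For each irreducible $\zeta \in \mathcal{P}(G /_{\sigma} G)_{\textbf{c}}$ two things must be checked: (i) that $(i_G)_* \zeta$ lies in $\mathcal{P}(\Cht\mathcal{R}_0)_{\le \textbf{c}}$ so that its class in the Serre quotient is defined, and (ii) that this class is nonzero. Point (ii) is exactly the content of the preceding lemma, namely $(i_G)_* \zeta \notin \mathcal{P}(\Cht\mathcal{R}_0)_{< \textbf{c}}$. For (i), I would invoke the classical theorem of Deligne--Lusztig and Lusztig reviewed in Section~\ref{subsec:review nonunipotent}: every irreducible $\zeta$ attached to $\textbf{c}$ appears as a summand of $R^{n_w} \pi_! f^* \mathbb{L}_{\lambda}^{\dot{w}}$ for some $w\cdot\lambda \in \textbf{c}$, where $\pi$ and $f$ are the maps of the finite horocycle correspondence. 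Using the identification in the preceding proposition that the restricted-shtuka horocycle maps $\tilde{\pi}, \tilde{f}$ pull back the finite horocycle $\pi, f$ along the embedding of the fiber over $y$, proper base change yields $(i_G)_* R^{n_w}\pi_! f^* \mathbb{L}_{\lambda}^{\dot{w}} \simeq R^{n_w} \tilde{\pi}_! \tilde{f}^* (i_{\btilde})_* \mathbb{L}_{\lambda}^{\dot{w}}$. Taking $V$ to be the trivial representation, so that $Z_V^{w(\lambda)}$ is the monoidal unit of the affine monodromic Hecke category, identifies $(i_{\btilde})_* \mathbb{L}_{\lambda}^{\dot{w}}$ with $Z_V^{w(\lambda)} * \mathbb{L}_{\lambda}^{\dot{w}}$, an object that appears in the definition of $\mathcal{P}(\Cht\mathcal{R}_0)_{\le \textbf{c}}$. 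Thus $(i_G)_*\zeta$ is a subquotient of a generator and lies in $\mathcal{P}(\Cht\mathcal{R}_0)_{\le \textbf{c}}$.

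The main obstacle is bookkeeping the passage from the finite to the affine setting: one must know that an element $w \cdot \lambda$ lying in a two-sided cell $\textbf{c}$ of $W \times \mathfrak{s}_{\infty}$ lies in the corresponding cell of $\wtilde \times \mathfrak{s}_{\infty}$ \emph{of the same $a$-value}, and that extending sheaves from the finite monodromic Hecke category to the affine one via $G/U \hookrightarrow LG/I^{\circ}$ does not introduce contributions from strictly larger cells. Both compatibilities are already in place: the agreement of $a$-functions between the finite and affine Weyl groups is the classical result of Lusztig recalled in the discussion of affine truncated convolution categories, and the second is the convolution computation of Proposition~\ref{prop:nearby central convolution} together with the fact that $\mathbb{L}_{\lambda}^{\dot{w}}$ is itself absorbed by the trivial central sheaf. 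Once these ingredients are combined, the lemma produces $(i_G)_* \zeta \neq 0$ in $\mathcal{P}(\Cht\mathcal{R}_0)_{\textbf{c}}$ for every simple $\zeta$, yielding conservativity and hence the claimed faithfulness.
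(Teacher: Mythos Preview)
Your proposal is correct and follows essentially the same approach as the paper: reduce faithfulness to the nonvanishing of $(i_G)_*\zeta$ in the Serre quotient for each irreducible $\zeta$, then invoke the preceding lemma. The paper's one-line proof simply records that irreducibles are sent to irreducibles attached to $\textbf{c}$, leaving your point (i) implicit; your verification that $(i_G)_*\zeta \in \mathcal{P}(\Cht\mathcal{R}_0)_{\le \textbf{c}}$ via the identification $(i_{\btilde})_*\mathbb{L}_{\lambda}^{\dot{w}} \cong Z_1^{w(\lambda)} * \mathbb{L}_{\lambda}^{\dot{w}}$ makes this step explicit and is a welcome addition, but otherwise the arguments coincide.
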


\begin{proof}
    By the above lemma, $(i_G)_*$ sends irreducible local systems attached to the two-sided cell $\textbf{c} \cap W \times \mathfrak{o}$ for the finite Weyl group to irreducible local systems attached to $\textbf{c}$.
\end{proof}

\begin{proposition}
    The diagram~\eqref{eqn: truncated induction functors} commutes.
\end{proposition}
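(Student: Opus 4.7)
The plan is to derive commutativity from a Cartesian square that realizes the finite horocycle correspondence as the fiber of the restricted-shtuka horocycle correspondence over the closed stratum where the Hecke modification is trivial. Up to the unipotent torsors that acted trivially in the moduli descriptions of $\Cht\mathcal{R}_0$, $\Cht\btilde_0$, and $\Gr\btilde_0$, the closed immersions $i_G$, $i_X$, $i_{\btilde}$ fit into a diagram
\begin{equation*}
\begin{tikzcd}
G /_{\sigma} G \arrow{d}{i_G} & X \arrow{l}{\pi} \arrow{r}{f} \arrow{d}{i_X} & \btilde^2 \arrow{d}{i_{\btilde}} \\
\Cht\mathcal{R}_0 & \Cht\btilde_0 \arrow{l}{\pi_0} \arrow{r}{f_0} & \Gr\btilde_0
\end{tikzcd}
\end{equation*}
with both squares Cartesian, where the horizontal maps pull back the finite horocycle correspondence.

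First I will combine proper base change for the left square (using the properness of $\pi_0$ from Proposition~\ref{prop: horocycle proper smooth}) with smooth base change for the right square (using that $f_0$ is smooth, also from Proposition~\ref{prop: horocycle proper smooth}). For any $M \in \mathcal{P}^{\textbf{c}}_G(\btilde^2)$, this yields
\begin{equation*}
(i_G)_* \bigl( \pi_! f^* M \bigr) \simeq (\pi_0)_! (i_X)_* f^* M \simeq (\pi_0)_! (f_0)^* (i_{\btilde})_* M
\end{equation*}
in the ambient derived categories. The left-hand side computes $\Psi \circ \underline{\chi}(M)$ before the truncation passing to $\mathcal{P}(G /_{\sigma} G)_{\textbf{c}}$, while the right-hand side computes $\underline{\chi} \circ \Psi(M)$ before the truncation to $\mathcal{P}(\Cht\mathcal{R}_0)_{\textbf{c}}$.

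Second I will check that the two truncation procedures agree. The key point is that the $a$-invariant of the two-sided cell $\textbf{c}$ is the same whether $\textbf{c}$ is regarded inside the finite Weyl group $W_{\lambda}$ or inside the affine Weyl group $\wtilde_{\lambda}$ (the cited [Lusztig 1987, Cor.~1.9(d)]), so the cohomological degrees selected by $\underline{\chi}$ on the top and bottom rows agree. Since $(i_G)_*$ is a pushforward along a closed immersion (up to a unipotent torsor), it is $t$-exact and sends $\mathcal{P}(G /_{\sigma} G)_{\le \textbf{c}}$ into $\mathcal{P}(\Cht\mathcal{R}_0)_{\le \textbf{c}}$ and $\mathcal{P}(G /_{\sigma} G)_{< \textbf{c}}$ into $\mathcal{P}(\Cht\mathcal{R}_0)_{< \textbf{c}}$, so it descends to Serre quotients compatibly with the truncations defining $\underline{\chi}$ on both rows; conservativity of $(i_G)_*$ on these quotients (proved in the corollary just above) then lets us conclude commutativity as an honest isomorphism of functors, not merely up to the Serre ideal.

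The main obstacle is verifying the Cartesian property of the right square given the unipotent-torsor subtleties in the moduli description of $\Cht\btilde_0 \to \Gr\btilde_0$, whose fibers along $\tilde{f}$ involved a copy of $\sigma(U)$ in the proof of Proposition~\ref{prop: horocycle proper smooth}, and keeping track of dimension shifts when comparing ${}^pH^{n_w}$ in the finite setting to ${}^pH^{a(\textbf{c})}$ in the affine truncated convolution category. This is essentially bookkeeping with the perverse $t$-structure conventions and the $a$-function agreement, following the pattern established in Lusztig's truncated induction framework.
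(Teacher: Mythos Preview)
Your approach is correct but differs from the paper's. The paper argues directly with the horocycle correspondence viewed as a family over the curve: since $\pi$ (in the restricted-shtuka horocycle correspondence) is proper and $f$ is smooth, nearby cycles along the curve commutes with $\pi_!$ and $f^*$ by Proposition~\ref{prop:basic functorialities}, and since nearby cycles is exact it commutes with the cohomological truncation $R^{n_{\textbf{c}}}$. That is the entire argument before passing to Serre quotients; the paper then simply observes that commutativity descends because the diagram is already known to be well defined on the quotients.

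You instead work entirely on the special fiber, identifying both vertical arrows $\Psi$ with closed immersions $(i_{\btilde})_*$ and $(i_G)_*$ (valid here since on the left one is implicitly taking the Satake factor to be trivial) and then running proper and smooth base change across the two Cartesian squares. This is a legitimate alternative and arguably more elementary in that it avoids invoking the family nearby cycles functor; the price is the extra verification of the Cartesian property modulo unipotent torsors that you flag. One small point: your appeal to conservativity of $(i_G)_*$ is unnecessary. Once the untruncated diagram commutes and each functor is known to descend to the Serre quotients, commutativity on the quotients is automatic; faithfulness of $(i_G)_*$ plays no role in this step.
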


\begin{proof}
Nearby cycles commutes with $R^{n_\textbf{c}} \pi_!$ and $f^*$ since $\pi$ is proper, $f$ is smooth, and the nearby cycles functor is exact, so the diagram commutes before passing to Serre quotient categories.  The diagram is well defined in terms of Serre quotient categories, so it is also commutative after passing to Serre quotient categories.
\end{proof}

\subsection{Computing monodromy of nearby cycles}

Our aim in this section will be to compute the monodromy of nearby cycles.  This surveys \cite{bezrukavnikov2004tensor} and \cite{bezrukavnikov2009tensor}.

\begin{definition}
    For $\lambda \in \mathfrak{s}_\infty$, we may form the character sheaf $L_{\lambda}$.  Making a choice of a tame generator produces an element in $\widehat{T}(\qlbar)$, which we also denote $\lambda$.  A two-sided cell $\textbf{c}$ in the affine Weyl group stabilizer $\wtilde_\lambda$ gives a unipotent conjugacy class in $\widehat{G}_{\lambda}$, let $\mathfrak{o}_{\textbf{c}}$ denote the conjugacy class in $\widehat{G}$ having semisimple part representing the element in $X^*(T) \otimes \qlbar^{\times}$ representing the map
    \[ \begin{tikzcd} X_*(T) \arrow{r}& \pi_1^t(T,\overline{1}) \arrow{r}& T[n] \arrow{r}{\lambda}& \qlbar^{\times} \end{tikzcd} \]
    and unipotent part given by the unipotent conjugacy class in $\widehat{G}_{\lambda} \subset \widehat{G}$ corresponding to $\textbf{c} \subset \wtilde_\lambda$.
\end{definition}

\begin{proposition}\label{prop:conjugacy}
Let $d$ be a Duflo involution in the two-sided cell $\textbf{c} \cap W_{\lambda} \times \mathfrak{o}$.  In the truncated convolution category $\mathcal{A}^{\lambda}_{\textbf{c}}$, the action of monodromy on $\Psi(S_V \boxtimes \mathbb{L}^{\dot{d}}_{\lambda})$ factors through the tame fundamental group, where the tame generator acts by an element of $\widehat{G}$ and has semisimple part given by $\lambda^{-1}$ and unipotent part given by an element $N_{d}$ in the unipotent conjugacy class of $\widehat{G}_{\lambda}(\qlbar) \subset \widehat{G}(\qlbar)$ associated to $\textbf{c}$.

That is, the functor $\Rep(\widehat{G}) \to \mathcal{A}_{\textbf{c}}$ given by $V \mapsto Z^{\lambda}_V * \mathbb{L}^{\dot{d}}_{\lambda}$ extends to a functor $\Rep(\widehat{H}) \to \mathcal{A}_{\textbf{c}}$ for some subgroup $\widehat{H} \subset \widehat{G}_{\lambda} \subset \widehat{G}$, under which the monodromy automorphism acts on $Z^{\lambda}_V * \mathbb{L}^{\dot{d}}_{\lambda}$ by $v \mapsto m \cdot v \colon V \to V$ as a morphism in $\Rep(\widehat{H})$.  Here, $m \in \widehat{G}$ given by $\lambda^{-1} N_d \in \mathfrak{o}_{\textbf{c}}$.
\end{proposition}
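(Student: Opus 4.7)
The plan is to decompose the tame monodromy on $\Psi(\mathcal{S}_V \boxtimes \mathbb{L}^{\dot{d}}_\lambda)$ into its Jordan (semisimple times unipotent) components, compute each separately, and then check that the resulting element of $\widehat{G}$ lies in $\mathfrak{o}_{\textbf{c}}$. First I would verify that the action factors through tame inertia. The Satake sheaf $\mathcal{S}_V$ is ULA over the unramified locus of $\Gr_{G,I}$, so its contribution to the monodromy of nearby cycles is unipotent; the rank-$1$ character sheaf $L_\lambda$ is tame by construction, being the $\lambda$-isotypic component of an $n$-th power Kummer cover of $T$. Convolution with the IC sheaf $\mathbb{L}^{\dot{d}}_\lambda$, whose support carries a tame Kummer-type local system, preserves tameness, so the full action factors through the chosen tame generator $\gamma$.

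Next I would invoke the Tannakian description of $\mathcal{A}^\lambda_d$ recalled just before the proposition: the fiber functor $\Res_d \colon V \mapsto Z^\lambda_V * \mathbb{L}^{\dot{d}}_\lambda$ identifies $\mathcal{A}^\lambda_d$ with $\Rep(\widehat{H}_d)$ for a reductive subgroup $\widehat{H}_d \subset \widehat{G}_\lambda$, and intertwines $\mathfrak{M}_d$ with $\mathrm{Aut}_{N_d}$ for a unipotent $N_d \in \widehat{H}_d$. Composed with the restriction $\widehat{H}_d \subset \widehat{G}$, this shows that the unipotent part of the monodromy acts on $V$ as multiplication by $N_d$ viewed as an element of $\widehat{G}(\qlbar)$, and this $N_d$ represents the unipotent conjugacy class in $\widehat{G}_\lambda$ attached to $\textbf{c}$ under the connected center hypothesis.

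The last and most delicate step, which I expect to be the main obstacle, is to identify the semisimple part of the monodromy as $\lambda^{-1} \in \widehat{T}(\qlbar)$. The heuristic is that along the degeneration defining $Z^\lambda_V$, looping the leg around $y$ ``rotates'' the $\btilde$-factor of the generic fiber $\Gr_G \times \btilde$ as it is absorbed into the loop group, so $L_\lambda$ contributes its own tame monodromy, which under the fixed tame generator equals $\lambda^{-1}$ by the sign convention of Section~\ref{subsec:review nonunipotent}. One way to make this precise is to reduce to the case of a minuscule coweight $V = V_\mu$, compute $Z^\lambda_{V_\mu}$ directly as a Kummer-like local system on a Schubert stratum of $\fltilde_G$, and then extend by the tensor functoriality of $\Res_d$. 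Because $N_d \in \widehat{H}_d$ centralizes $\lambda$, the semisimple and unipotent parts commute and assemble into the Jordan decomposition $\lambda^{-1} \cdot N_d$, which then lies in $\mathfrak{o}_{\textbf{c}}$ by definition of that conjugacy class.
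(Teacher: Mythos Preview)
Your proposal has the weight distributed backwards: the step you call ``the main obstacle'' (the semisimple part being $\lambda^{-1}$) is in fact the easy part, while the step you treat as already settled (that $N_d$ lies in the unipotent conjugacy class of $\widehat{G}_\lambda$ attached to $\textbf{c}$) is the entire content of the proposition.

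The Tannakian statement recalled just before the proposition only says that there exists \emph{some} unipotent $N_d$ commuting with $\widehat{H}_d$ such that $\mathfrak{M}_d$ is intertwined with $\mathrm{Aut}_{N_d}$. It says nothing about which unipotent orbit of $\widehat{G}_\lambda$ this $N_d$ lies in. Your sentence ``and this $N_d$ represents the unipotent conjugacy class in $\widehat{G}_\lambda$ attached to $\textbf{c}$ under the connected center hypothesis'' is precisely the assertion to be proved, and the connected center hypothesis alone does not pin it down. The paper's argument is K-theoretic: one passes to the Grothendieck group, where the central functor gives a map $K_0(\Rep(\widehat{G}_\lambda))_{\mathcal{A}} \to Z(\mathbf{H}_\lambda)$ and further to $Z(\mathbf{J}^\lambda_{\textbf{c}})_{\mathcal{A}}$. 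The Wakimoto filtration on $Z^\lambda_V$ coincides with the monodromy filtration (which in turn matches the weight filtration for a pure sheaf), yielding the identity $\phi_{\textbf{c}}(B(V)) = \sum_d \sum_i v^i [\mathrm{gr}^i(V)]$. Evaluating on a simultaneous eigenvector in an irreducible $\mathbf{J}^\lambda_{\textbf{c}}$-module then forces the relation $s'_u = s^v_{N_d} \cdot s_u$ between the two associated semisimple classes, and this relation is exactly the one that characterizes the unipotent orbit $N_{\textbf{c}}$ attached to the cell. None of this is visible from the Tannakian formalism alone.

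For the semisimple part, the paper simply cites \cite[Section~5.2]{arkhipov2009perverse}: the semisimple part of monodromy on $Z^\lambda_V$ is already computed there to be $\lambda^{-1}$, so no reduction to minuscule coweights is needed.
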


This proof is a straightforward adaptation of \cite[Theorem~2]{bezrukavnikov2004tensor} and we will indicate what needs to be changed in the proof.  First, what \cite[Theorem~1]{bezrukavnikov2009tensor} already prove is that the functor $V \mapsto Z^{\lambda}_V$ can be upgraded to a central functor
\[ \Rep(\widehat{G}_{\lambda}) \to {}_{L_{\lambda}} D_{L_{\lambda}}(I^{\circ} \backslash LG / I^{\circ}), \]
and we may write $Z^{\lambda}_V$ for any $V \in \Rep(\widehat{G}_{\lambda})$, which is unambiguous because this functor is compatible with restriction from $\Rep(\widehat{G})$.

We will review some facts about the monodromic Hecke algebra $\mathbf{H}$.  For any algebra $A$, let $Z(A)$ denote the center of $A$, and let $\mathcal{A} = \mathbb{Z}[v,v^{-1}]$, using a subscript $\mathcal{A}$ to denote tensoring with $\mathcal{A}$ over the integers.  Following \cite{bezrukavnikov2009tensor}, let $\mathbf{H}_{\lambda}$ be the affine Hecke algebra of the reductive stabilizer $\widehat{G}_{\lambda}$, which is possibly disconnected, which is an algebra over $\mathbb{Z}[v,v^{-1}]$ with a Kazhdan-Lusztig basis given by $C_w$ for $w \in W_{\lambda}$.  We have an isomorphism
\[ \mathbf{H}_{\lambda} \cong K_0({}_{L_{\lambda}} D_{L_{\lambda}}(I^{\circ} \backslash LG / I^{\circ}))_{\mathcal{A}}, \]
given by sending $C_w$ to the class $[\mathbb{L}^{\dot{w}}_{\lambda}]$.  Two-sided cells define two-sided ideals in $\mathbf{H}_{\lambda}$ and the quotients of these two sided ideals define algebras $\mathbf{J}^{\lambda}_{\textbf{c}}$, summands of the affine asymptotic Hecke algebra $\mathbf{J}^{\lambda}$, which are Grothendieck groups of the categories $\mathcal{A}^{\lambda}_{\textbf{c}}$:
\[ \mathbf{J}^{\lambda}_{\textbf{c}} \cong K_0(\mathcal{A}^{\lambda}_{\textbf{c}}). \]
The algebra $\mathbf{J}^{\lambda}_{\textbf{c}}$ has a basis $t_w = [\mathbb{L}^{\dot{w}}_{\lambda}]$ for $w \in \wtilde_{\lambda}$.  The construction of central sheaves give algebra maps
\begin{equation}
\begin{aligned}
B := V \mapsto [Z_V^{\lambda}] &\colon K_0(\Rep(\widehat{G}_{\lambda}))[v,v^{-1}] \to \mathbf{H}_{\lambda}, \\
V \mapsto \left[\bigoplus_d Z_V^{\lambda} * \mathbb{L}^{\dot{d}}\right] &\colon K_0(\Rep(\widehat{G}_{\lambda}))_{\mathcal{A}} \to (\mathbf{J}^{\lambda}_{\textbf{c}})_{\mathcal{A}},
\end{aligned}
\end{equation}
into the center of the target algebras.  There is also an algebra map $\phi_{\textbf{c}} \colon Z(\mathbf{H}_{\lambda}) \to Z(\mathbf{J}^{\lambda}_{\textbf{c}})_{\mathcal{A}}$ sending an element $h$ to $h * [\sum_{d \in \mathbf{D}_{\textbf{c}} \cap \wtilde_{\lambda} \times \{ \lambda \}} t_d]$ and an algebra map $\phi_d$ using just a single Duflo involution $d$.

We use the following characterization of the unipotent orbit defined by $\textbf{c}$.  A unipotent element $N$ of $\widehat{G}_{\lambda}$ defines a map $SL_2 \to \widehat{G}_{\lambda}$ sending the matrix $E = \begin{pmatrix} 1 & 1 \\ 0 & 1 \end{pmatrix}$ to $N$.  Let $s^v_N \in \widehat{G}_{\lambda}(\qlbar(v))$ be the image diagonal matrix $\begin{pmatrix} v & 0 \\ 0 & v^{-1} \end{pmatrix}$ under the map to $\widehat{G}_{\lambda}$.

We now fix a Duflo involution $d$.  For Tannakian reasons (\cite[Theorem~1]{bezrukavnikov2004tensor} and ), there is a subgroup $\widehat{H}_d \subset \widehat{G}_{\lambda}$ such that $\Rep(\widehat{H}_d) \cong \mathcal{A}^{\lambda}_d$ compatible with the upgraded central sheaf functor $\Rep(\widehat{G}_{\lambda}) \to \mathcal{A}^{\lambda}_d$.  The monodromy automorphism defines an element $N_d$ of $\widehat{G}_{\lambda}$ that commutes with $\widehat{H}_d$.  Semisimple conjugacy classes of $\widehat{H}_d$ correspond to characters of $K_0(\Rep(\widehat{H}_d))$ and therefore characters of $K_0(\mathcal{A}^{\lambda}_d)$.  So given a semisimple conjugacy class $s$, we arrive at a character of $K_0(\mathcal{A}^{\lambda}_d)$ and by extension of scalars a character of $\mathbf{H}_{\lambda}$, which in turn defines a semisimple conjugacy class $\Omega_s$ of $\widehat{G}_{\lambda}$, which is characterized as being of the form $s^v_{N_{\textbf{c}}} \cdot s$ for some $N_{\textbf{c}}$ in the unipotent orbit determined by $\textbf{c}$.

\begin{proof}[Proof of Proposition~\ref{prop:conjugacy}]
We can write
\begin{equation}
\Psi(S_V \boxtimes \mathbb{L}^{\dot{d}}_{\lambda}) \cong Z^{\lambda}_V * \mathbb{L}^{\dot{d}}_{\lambda},
\end{equation}
and we note that the semisimple part of monodromy is determined by its action on $Z^{\lambda}_V$, where it acts by $\lambda^{-1}$ by \cite[Section~5.2]{arkhipov2009perverse}.  When $d$ is trivial, the unipotent part of monodromy was considered in \cite{bezrukavnikov2009tensor}, and we review the generalization to arbitrary Duflo involutions. In the case that $\lambda$ is trivial, the computation of the unipotent part reduces to \cite[Theorem~2]{bezrukavnikov2004tensor}.  In the general case, we need to compute the element $N_d$ above and show that it corresponds to the conjugacy class of the two-sided cell $\textbf{c}$.

The proof in \cite[Theorem~2]{bezrukavnikov2004tensor} uses Bernstein's isomorphism, which we need for monodromic Hecke algebras and their summands.  The logarithm of $N_d$ on $Z^{\lambda}_V * \mathbb{L}^{\dot{w}}_{\lambda}$ induces a monodromy filtration of this object, given by the Wakimoto filtration coming from \cite{bezrukavnikov2009tensor}  First, we need that for any $V \in \Rep(\widehat{G}_{\lambda})$,
\begin{equation}\label{eqn:bez-monodromy-weight}
    \phi_{\textbf{c}}(B(V)) = \sum_{d} \sum_i v^i \left[ \mathrm{gr}^i(V) \right].
\end{equation}
This property follows by the coincidence of the monodromy filtration and the weight filtration for a perverse sheaf pure of weight $0$, after which the proof in \cite[Lemma~6]{bezrukavnikov2004tensor} applies.

Now for any irreducible $\mathbf{J}^{\lambda}_{\textbf{c}}$-module $M$ over $\qlbar$ such that $t_d$ acts by a nonzero operator, we produce an action of $t_d \mathbf{J}^{\lambda}_{\textbf{c}} t_d$ on $t_d M$, which we can think of as a coherent sheaf over the affine space of semisimple conjugacy classes in $\widehat{H}_d$, as the latter algebra is commutative and identified with $K_0(\mathcal{A}^{\lambda}_d)$.  Since $t_d M$ is finite dimensional, we find a simultaneous eigenvector $u$ which defines a semisimple conjugacy class $s_u$.  The character of $Z(\mathbf{H}^{\lambda})$ acts as well and defines a semisimple conjugacy class $s'_u$.  Equation~\eqref{eqn:bez-monodromy-weight} computes the relation between these two conjugacy classes $s'_u$ and $s_u$ by $s'_u = s^v_{N_d} \cdot s_u$.  But this property also characterizes $N_{\textbf{c}}$, so $N_d$ and $N_{\textbf{c}}$ must coincide as conjugacy classes in $\widehat{G}_{\lambda}$.
\end{proof}

\begin{corollary}\label{corr:conjugacy Alambda}
    Let $w \in W_\lambda$ be an element of the finite Weyl group stabilizing $\lambda$ such that $w \cdot \lambda$ lives in the two-sided cell $\textbf{c}$.  Suppose that, inside this two-sided cell, $w$ lives in the left cell containing the Duflo involution $d \in W_{\lambda}$ in the finite Weyl group.  In the truncated convolution category $\mathcal{A}^{\lambda}_{\textbf{c}}$, the funcator $V \mapsto \Psi(S_V \boxtimes \mathbb{L}^{\dot{w}}_{\lambda})$ can be written as
    \[ V \mapsto (Z^{\lambda}_V * \mathbb{L}^{\dot{d}}_{\lambda}) \bullet \mathbb{L}^{\dot{w}}_{\lambda}. \]
    The action of monodromy on nearby cycles factors through the tame fundamental group, and in the truncated convolution category, the tame generator gives a monodromy automorphism that represents an element in $\mathfrak{o}_{\textbf{c}}$ in $\widehat{G}_{\lambda}$.  That is, monodromy acts with semisimple part $\lambda^{-1}$ and unipotent part $N_d \bullet 1$ from Proposition~\ref{prop:conjugacy}.
\end{corollary}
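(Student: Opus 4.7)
The corollary should follow from Proposition~\ref{prop:conjugacy} in a largely formal way, by ``pushing'' the Duflo involution case across a left cell.  My plan is the following.

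First, I would apply Proposition~\ref{prop:nearby central convolution} in the monodromic setting to obtain the identification
\[ \Psi(\mathcal{S}_V \boxtimes \mathbb{L}_{\lambda}^{\dot{w}}) \cong Z_V^{\lambda} * \mathbb{L}_{\lambda}^{\dot{w}} \]
in ${}_{L_{\lambda}} D_{L_{\lambda}}(I^{\circ} \backslash LG / I^{\circ})$, where $*$ is affine convolution.  Here I use that the pullback of $\mathbb{L}_{\lambda}^{\dot{w}}$ along $\fltilde \hookrightarrow \fltilde$ agrees with the monodromic sheaf appearing in the proposition.  Passing to the Serre quotient $\mathcal{A}^{\lambda}_{\textbf{c}}$, I then invoke the left cell characterization recalled in Section~\ref{subsec:review nonunipotent}: by hypothesis $w$ lies in the left cell whose Duflo involution is $d$, so $\mathbb{L}_{\lambda}^{\dot{d}} \bullet \mathbb{L}_{\lambda}^{\dot{w}} \cong \mathbb{L}_{\lambda}^{\dot{w}}$ in $\mathcal{A}^{\lambda}_{\textbf{c}}$.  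Associativity of convolution combined with the fact that $Z_V^{\lambda} * (-)$ is exact (by centrality, as established in \cite{li2020endoscopy}, generalizing Gaitsgory's argument) shows that the image of $Z_V^{\lambda} * \mathbb{L}_{\lambda}^{\dot{w}}$ in $\mathcal{A}^{\lambda}_{\textbf{c}}$ is isomorphic to
\[ (Z_V^{\lambda} * \mathbb{L}_{\lambda}^{\dot{d}}) \bullet \mathbb{L}_{\lambda}^{\dot{w}}. \]
This gives the first claim, namely the description of the functor $V \mapsto \Psi(\mathcal{S}_V \boxtimes \mathbb{L}_{\lambda}^{\dot{w}})$.

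Next, I would transport the monodromy computation across this isomorphism.  The sheaf $\mathbb{L}_{\lambda}^{\dot{w}}$ carries no monodromy on the nearby base (it is constant over the specialization), so the monodromy action on $\Psi(\mathcal{S}_V \boxtimes \mathbb{L}_{\lambda}^{\dot{w}})$ is induced entirely by the monodromy on the central factor $Z_V^{\lambda}$; equivalently, in the $\bullet$-decomposition above, it is induced by the monodromy of $Z_V^{\lambda} * \mathbb{L}_{\lambda}^{\dot{d}}$ by functoriality of $(-) \bullet \mathbb{L}_{\lambda}^{\dot{w}}$.  Applying Proposition~\ref{prop:conjugacy} to the Duflo involution $d$, the monodromy on $Z_V^{\lambda} * \mathbb{L}_{\lambda}^{\dot{d}}$ factors through tame inertia, and the tame generator acts by an element of $\widehat{G}$ with semisimple part $\lambda^{-1}$ and unipotent part $N_d$ in the unipotent class associated to $\textbf{c}$.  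Consequently the induced tame monodromy on $(Z_V^{\lambda} * \mathbb{L}_{\lambda}^{\dot{d}}) \bullet \mathbb{L}_{\lambda}^{\dot{w}}$ has the form $N_d \bullet 1$, which represents the same conjugacy class $\mathfrak{o}_{\textbf{c}}$ in $\widehat{G}_{\lambda}$.

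The main obstacle I anticipate is bookkeeping rather than substance: making the identity $Z_V^{\lambda} * \mathbb{L}_{\lambda}^{\dot{w}} \cong (Z_V^{\lambda} * \mathbb{L}_{\lambda}^{\dot{d}}) \bullet \mathbb{L}_{\lambda}^{\dot{w}}$ precise in the Serre quotient category requires checking that the outer perverse truncation $^p H^{a(\textbf{c})}$ commutes with the exact functor $Z_V^{\lambda} * (-)$, and that under this commutation the monodromy automorphism on the (un-truncated) $Z_V^{\lambda} * \mathbb{L}_{\lambda}^{\dot{d}} * \mathbb{L}_{\lambda}^{\dot{w}}$ is induced by the same element as on the truncated object.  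Exactness of $Z_V^{\lambda} * (-)$ reduces this to the compatibility of the central monodromy with the Wakimoto/weight filtration used in the proof of Proposition~\ref{prop:conjugacy}, which is already implicit in the arguments of \cite{bezrukavnikov2009tensor}; once this is noted, the corollary follows.
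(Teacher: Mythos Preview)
Your proposal is correct and follows essentially the same route as the paper: use Proposition~\ref{prop:nearby central convolution} to identify $\Psi(\mathcal{S}_V \boxtimes \mathbb{L}^{\dot{w}}_{\lambda}) \cong Z_V^{\lambda} * \mathbb{L}^{\dot{w}}_{\lambda}$, then insert the Duflo unit $\mathbb{L}^{\dot{d}}_{\lambda}$ via the left-cell relation and associativity to obtain the displayed formula in $\mathcal{A}^{\lambda}_{\textbf{c}}$, and finally read off monodromy from the $d$-case handled in Proposition~\ref{prop:conjugacy}.

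The one place where the paper is slightly more direct than your sketch is the monodromy transport. Rather than invoking exactness of $Z_V^{\lambda} * (-)$ and checking that it commutes with ${}^p H^{a(\textbf{c})}$ and preserves the cell filtration (the ``bookkeeping obstacle'' you flag), the paper simply rewrites $(Z_V^{\lambda} * \mathbb{L}_{\lambda}^{\dot{d}}) \bullet \mathbb{L}_{\lambda}^{\dot{w}}$ as $R^{2a(\textbf{c})+\dim T} m_! f^* \Psi(\mathcal{S}_V \boxtimes \mathbb{L}_{\lambda}^{\dot{d}} \boxtimes \mathbb{L}_{\lambda}^{\dot{w}})$ via the finite convolution diagram~\eqref{eqn:convolution diagram}, and then uses that $\Psi$ commutes with proper $m_!$ and smooth $f^*$; this makes the identification of monodromies automatic and sidesteps the exactness argument entirely. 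Your route via centrality and exactness of $Z_V^{\lambda} * (-)$ works too, but the paper's phrasing avoids the need to separately verify cell-preservation.
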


\begin{proof}
    The truncated convolution $(Z^{\lambda}_V * \mathbb{L}^{\dot{d}}_{\lambda}) \bullet \mathbb{L}^{\dot{w}}_{\lambda}$ is itself a nearby cycle
    \[ R^{2a(\textbf{c})+\dim T} m_! f^* \Psi (\mathcal{S}_V \boxtimes \mathbb{L}^{\dot{d}}_{\lambda} \boxtimes \mathbb{L}^{\dot{w}}_{\lambda}), \]
    mod the Serre subcategory $< \textbf{c}$, where $m_! f^*$ give convolution.  The result follows as $\Psi$ commutes with $m_! f^*$ by the properness of $m$ and smoothness of $f$ in the convolution diagram~\eqref{eqn:convolution diagram}.  This base change of $\Psi$ against $m_! f^*$ also preserves the monodromy automorphism, which allows us to characterize the monodromy using the monodromy of $(Z^{\lambda}_V * \mathbb{L}^{\dot{d}}_{\lambda})$ computed in Proposition~\ref{prop:conjugacy}.
\end{proof}

\begin{corollary}\label{corr:conjugacy Ao}
    Let $w \cdot \lambda \in \textbf{c}$.  In the truncated convolution category $\mathcal{A}^{\mathfrak{o}}_{\textbf{c}}$, the functor
    \begin{equation}
    \begin{aligned}
    \Rep(\widehat{G}) &\to \mathcal{A}^{\mathfrak{o}}_{\textbf{c}},\\
    V &\mapsto \Psi(S_V \boxtimes \mathbb{L}^{\dot{w}}_{\lambda}) \cong Z^{w(\lambda)}_{V} * \mathbb{L}^{\dot{w}}_{\lambda}
    \end{aligned}
    \end{equation}
    admits a monodromy action that factors through the tame fundamental group, and the monodromy automorphism represents an element in $\mathfrak{o}_{\textbf{c}}$.
\end{corollary}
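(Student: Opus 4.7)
The plan is to reduce Corollary~\ref{corr:conjugacy Ao} to Corollary~\ref{corr:conjugacy Alambda} by means of the minimal-length decomposition of $w \in \wtilde$ with respect to the bitorsor ${}_{w(\lambda)}\wtilde_\lambda$.  Let $m = w(w(\lambda), \lambda)$ denote the minimal element and write $w = w' m$ with $w' \in \wtilde_{w(\lambda)}$, the stabilizer of $w(\lambda)$.  From the identification $\mathbb{L}^{\dot{w}}_{\lambda} \cong \mathbb{L}^{\dot{w'}}_{w(\lambda)} * \mathbb{L}^{\dot{m}}_{\lambda}$ (the affine extension of the formula stated in the finite case in Section~\ref{subsec:review nonunipotent}) and the associativity of convolution, I rewrite
\begin{equation*}
Z^{w(\lambda)}_V * \mathbb{L}^{\dot{w}}_{\lambda} \;\cong\; \bigl( Z^{w(\lambda)}_V * \mathbb{L}^{\dot{w'}}_{w(\lambda)} \bigr) * \mathbb{L}^{\dot{m}}_{\lambda}.
\end{equation*}
By the affine monodromic analogue of the minimal-equivalence proposition, right convolution by $\mathbb{L}^{\dot{m}}_{\lambda}$ is an equivalence of categories ${}_{L_{w(\lambda)}} D_{L_{w(\lambda)}}(I^{\circ}\backslash LG/I^{\circ}) \to {}_{L_{w(\lambda)}} D_{L_\lambda}(I^{\circ}\backslash LG/I^{\circ})$ and is functorial; therefore the monodromy automorphism on the left-hand side is transported from the monodromy on $Z^{w(\lambda)}_V * \mathbb{L}^{\dot{w'}}_{w(\lambda)}$, which lives entirely in the stabilizer case.

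Next, I apply Corollary~\ref{corr:conjugacy Alambda} with $\lambda$ replaced by $w(\lambda) \in \mathfrak{o}$ and $w$ replaced by $w' \in \wtilde_{w(\lambda)}$, using that the two-sided cell $\textbf{c} \cap (\wtilde_{w(\lambda)} \times \{w(\lambda)\})$ of $\wtilde_{w(\lambda)}$ contains $w'$.  This shows that the monodromy factors through the tame fundamental group and that the tame generator acts through an element of $\widehat{G}$ with semisimple part $w(\lambda)^{-1}$ and unipotent part in the unipotent conjugacy class of $\widehat{G}_{w(\lambda)}$ attached to this cell.  A Weyl lift carrying $w(\lambda)$ to $\lambda$ conjugates $\widehat{G}_{w(\lambda)}$ onto $\widehat{G}_\lambda$ and sends the relevant unipotent class to the class attached to $\textbf{c} \cap (\wtilde_\lambda \times \{\lambda\})$, so the resulting total conjugacy class in $\widehat{G}$ is precisely $\mathfrak{o}_{\textbf{c}}$.

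The principal obstacle is the bookkeeping required to verify that the construction $\textbf{c} \mapsto \mathfrak{o}_{\textbf{c}}$ is intrinsic to the pair $(\textbf{c}, \mathfrak{o})$ rather than to the auxiliary choice of orbit representative $\lambda \in \mathfrak{o}$.  Concretely, one must check that under Weyl conjugation the bijection between two-sided cells of $\wtilde_\lambda$ and unipotent conjugacy classes in $\widehat{G}_\lambda$ is compatible with the analogous bijection for $\wtilde_{w(\lambda)}$.  This is ensured by the connected-center hypothesis on $G$ and the characterization of two-sided cells of $\wtilde \times \mathfrak{o}$ recalled in Section~\ref{subsec:review nonunipotent}; once in place, the two computations of the monodromy element above produce the same $\widehat{G}$-conjugacy class, completing the corollary.
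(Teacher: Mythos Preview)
Your proposal is correct and follows essentially the same route as the paper: both decompose $w = w'm$ with $m = w(w(\lambda),\lambda)$ minimal, use that right convolution by the minimal IC sheaf $\mathbb{L}^{\dot{m}}_{\lambda}$ is an equivalence transporting the monodromy, and then invoke Corollary~\ref{corr:conjugacy Alambda} for $w'$ in the stabilizer of $w(\lambda)$. Your additional paragraph checking that $\mathfrak{o}_{\textbf{c}}$ is independent of the choice of orbit representative is a welcome clarification that the paper leaves implicit.
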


\begin{proof}
    Write $w = w' m$ with $m = w(w(\lambda),\lambda)$.  Convolution with minimal IC sheaves give an isomorphism, and on monodromy, we have
    \[ Z^{w(\lambda)}_{V} * \mathbb{L}^{\dot{w}}_{\lambda} \cong Z^{w(\lambda)}_{V} * \mathbb{L}^{\dot{w'}}_{w(\lambda)} * \mathbb{L}^{\dot{m}}_{\lambda}, \]
    where monodromy acts on the left by $\mathfrak{M}_{V} * 1$ and on the right by $\mathfrak{M}_{V,w'} * 1$, where $\mathfrak{M}_{V,w'}$ is the monodromy of nearby cycles on $\Psi(S_V \boxtimes \mathbb{L}^{\dot{w'}}_{\lambda})$ in the category $\mathcal{A}^{\lambda}_{\textbf{c}}$.  So the result follows from Corollary~\ref{corr:conjugacy Alambda} together with the fact that convolution with a minimal IC sheaf is an equivalence of categories.
\end{proof}

\begin{theorem}\label{thm: monodromy chtr}
    Let $\zeta \in D(G /_{\sigma} G)$ correspond to an irreducible representation and let $\textbf{c}$ be its associated two-sided cell.  Consider the exact functor $\Rep(\widehat{G}) \to D(\Cht\mathcal{R}_0)_{\textbf{c}}$ given by $V \mapsto \Psi(\mathcal{S}_V \boxtimes \zeta)$.  Then there is a subgroup $\widehat{H} \subset \widehat{G}$ such that this functor extends to an exact fiber functor $\Rep(\widehat{H}) \to D(\Cht\mathcal{R}_0)_{\textbf{c}}$ and the monodromy automorphism functor corresponds to an element $m \in \mathfrak{o}_{\textbf{c}}$.
\end{theorem}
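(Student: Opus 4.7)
The plan is to reduce Theorem~\ref{thm: monodromy chtr} to the monodromy computation in the affine monodromic truncated convolution category (Corollary~\ref{corr:conjugacy Ao}) via the horocycle correspondence for restricted shtukas. Since $\zeta$ is an irreducible representation of $G(\mathbb{F}_q)$ attached to $\textbf{c}$, Lusztig's result recalled in Section~\ref{subsec:review nonunipotent} exhibits $\zeta$ as a direct summand of $R^{n_w}\pi_! f^* \mathbb{L}^{\dot{w}}_{\lambda}$ for some $w \cdot \lambda \in \textbf{c}$ fixed by $\sigma$. Combining this with the horocycle diagram for restricted shtukas, where $\pi_n$ is proper and $f_n$ is smooth by Proposition~\ref{prop: horocycle proper smooth}, the sheaf $\mathcal{S}_V \boxtimes \zeta$ on the generic fiber arises as a summand of $R^{n_w}(\pi_n)_!(f_n)^*(\mathcal{S}_V \boxtimes \mathbb{L}^{\dot{w}}_{\lambda})$.

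Next I would apply the nearby cycles functor $\Psi$ specializing to $y$. Since $\pi_n$ is proper and $f_n$ is smooth, $\Psi$ commutes with $R^{n_w}(\pi_n)_!$ and with $(f_n)^*$ by Proposition~\ref{prop:basic functorialities}, and it is exact, so $\Psi(\mathcal{S}_V \boxtimes \zeta)$ appears as a summand of $R^{n_w}(\pi_n)_!(f_n)^*\Psi(\mathcal{S}_V \boxtimes \mathbb{L}^{\dot{w}}_{\lambda})$. Passing to the Serre quotient $\mathcal{P}(\Cht\mathcal{R}_0)_{\textbf{c}}$ and invoking the commutative diagram~\eqref{eqn: truncated induction functors}, this last object is identified with $\underline{\chi}(Z^{w(\lambda)}_V * \mathbb{L}^{\dot{w}}_{\lambda})$. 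Because $\underline{\chi}$ is built from sheaf-theoretic operations that themselves commute with nearby cycles, the monodromy automorphism on $\Psi(\mathcal{S}_V \boxtimes \mathbb{L}^{\dot{w}}_{\lambda})$ translates into the monodromy automorphism of $\underline{\chi}(Z^{w(\lambda)}_V * \mathbb{L}^{\dot{w}}_{\lambda})$.

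Corollary~\ref{corr:conjugacy Ao}, combined with the Bezrukavnikov-Finkelberg-Ostrik Tannakian description, then supplies exactly what is required inside $\mathcal{A}^{\mathfrak{o}}_{\textbf{c}}$: the functor $V \mapsto Z^{w(\lambda)}_V * \mathbb{L}^{\dot{w}}_{\lambda}$ extends to an exact fiber functor on $\Rep(\widehat{H})$ for some $\widehat{H} \subset \widehat{G}$, and its monodromy automorphism corresponds to an element $m \in \mathfrak{o}_{\textbf{c}}$. Exactness of $\underline{\chi}$ propagates these statements to the composite functor $V \mapsto \underline{\chi}(Z^{w(\lambda)}_V * \mathbb{L}^{\dot{w}}_{\lambda})$, and the desired Tannakian factorization and monodromy statement descend to the direct summand $\Psi(\mathcal{S}_V \boxtimes \zeta)$.

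The main obstacle will be verifying that the summand extraction is compatible with both the tensor structure and the monodromy automorphism. Concretely, the idempotent natural transformation of the composite functor that cuts out $\Psi(\mathcal{S}_V \boxtimes \zeta)$ must commute with the tensor automorphism realizing $m$; equivalently, the $\zeta$-isotypic piece must be stable under $m$. This should follow from the naturality of monodromy together with the fact that an isotypic decomposition is automatically respected by every natural endomorphism of a functor landing in a semisimple (or suitably semisimple) category, but checking this carefully in the Serre quotient $\mathcal{P}(\Cht\mathcal{R}_0)_{\textbf{c}}$ is the technical heart of the proof. Once this is in hand, the conjugacy class of $m$ in $\widehat{G}$ is preserved under passage to the summand, yielding the claimed element of $\mathfrak{o}_{\textbf{c}}$.
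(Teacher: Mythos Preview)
Your proposal is correct and follows essentially the same route as the paper: write $\zeta$ as a summand of $\underline{\chi}(\mathbb{L}^{\dot{w}}_{\lambda})$, use properness of $\pi_n$ and smoothness of $f_n$ together with diagram~\eqref{eqn: truncated induction functors} to identify $\Psi(\mathcal{S}_V \boxtimes \zeta)$ as a summand of $\underline{\chi}\Psi(\mathcal{S}_V \boxtimes \mathbb{L}^{\dot{w}}_{\lambda})$, and then invoke Corollary~\ref{corr:conjugacy Ao}. The concern you raise in the last paragraph is not a genuine obstacle: the monodromy automorphism on nearby cycles is a natural transformation of the functor $V \mapsto \Psi(\mathcal{S}_V \boxtimes -)$, so it automatically commutes with the idempotent cutting out the $\zeta$-summand, and the conjugacy class of $m$ is unchanged under passage to a direct summand of a tensor functor.
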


\begin{proof}
    The sheaf $\zeta$ is a summand of $\underline{\chi}(\mathbb{L}^{\dot{w}}_{\lambda})$ for some $w \in \textbf{c}$ by assumption.  The monodromy of $\Psi(\mathcal{S}_V \boxtimes \zeta)$ is a summand of the monodromy $\underline{\chi}(\mathfrak{M})$ of $\underline{\chi} \Psi(\mathcal{S}_V \boxtimes \mathbb{L}^{\dot{w}}_{\lambda})$ by the commutative diagram~\eqref{eqn: truncated induction functors} combined with the fact that nearby cycles commutes with pushforward along a proper map and pullback along a smooth map.  So the result follows from Corollary~\ref{corr:conjugacy Ao}.
\end{proof}

We also recall the following proposition from \cite[Section~11.3]{bezrukavnikov2016two}.  We introduce notation, setting $\mathrm{Coh}(X)$ as the category of coherent sheaves on $X$ and let $D^b(\mathrm{Coh}(X))$ be the bounded derived category of coherent sheaves.  For the base field $\qlbar$, we will write $\tilde{\mathcal{N}}$ for the Springer resolution of the nilpotent cone, $\tilde{\mathfrak{g}}$ for the Grothendieck resolution of the Lie algebra with map $\tilde{\mathfrak{g}} \to \mathfrak{g}$, and use $\mathrm{St}'$ to denote a one-sided Steinberg variety $\tilde{\mathfrak{g}} \times_{\mathfrak{g}} \tilde{\mathcal{N}}$.  Bezrukavnikov introduces an equivalence $\Phi_{I^{\circ} I} \colon D(I^{\circ} \backslash LG / I, \qlbar) \cong D^b(\mathrm{Coh}^{\widehat{G}}(\mathrm{St}'))$ and notes that both sides are stratified by two-sided cells.  The left side is stratified by $\wtilde$ in the same way that the usual affine Hecke algebra is, and we form a stratification by two-sided cells, closed under convolution, by considering subcategory $D(I^{\circ} \backslash LG / I, \qlbar)_{\le \textbf{c}}$ generated by simple objects $\mathbb{L}_w$ with $w \in \bigcup_{\textbf{c}' \le \textbf{c}} \textbf{c}' \subset \wtilde$.  Letting $\mathfrak{o}_{\textbf{c}}$ be the corresponding nilpotent orbit as an orbit on the Lie algebra, let $D^b(\mathrm{Coh}^{\widehat{G}}(\mathrm{St}'))_{\le \textbf{c}}$ be the subcategory consisting of coherent sheaves supported over the closure of $\mathfrak{o}_{\textbf{c}}$.

\begin{proposition}[Bezrukavnikov]\label{prop:bezcoherent}
    $\Phi_{I^{\circ} I}$ restricts to an equivalence
    \begin{equation}
        \Phi_{I^{\circ} I,\le \textbf{c}} \colon D(I^{\circ} \backslash LG / I)_{\le \textbf{c}} \simeq D^b(\mathrm{Coh}^{\widehat{G}}(\mathrm{St}'))_{\le \textbf{c}}.
    \end{equation}
\end{proposition}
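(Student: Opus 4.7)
The plan is to verify that the equivalence $\Phi_{I^{\circ} I}$ is compatible with two natural filtrations that happen to be indexed by the same set, namely two-sided cells in $\wtilde$. First I would check that both filtrations are \emph{monoidal}: the subcategory $D(I^{\circ} \backslash LG / I)_{\le \textbf{c}}$ is closed under convolution with arbitrary objects on either side (this reflects the fact that two-sided cells give two-sided ideals of $\mathbf{H}$ in the Kazhdan--Lusztig basis, and the cell filtration lifts categorically), while $D^b(\mathrm{Coh}^{\widehat{G}}(\mathrm{St}'))_{\le \textbf{c}}$ is a monoidal ideal because the support condition over $\overline{\mathfrak{o}_{\textbf{c}}} \subset \mathcal{N}$ is preserved under the convolution product on $\mathrm{St}'$ (pullback-tensor-pushforward along the standard correspondence only shrinks the moment-map image). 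So $\Phi_{I^{\circ} I}$ carries one family of monoidal ideals to another, and it suffices to match their indexing.

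Next I would reduce the matching to a K-theoretic computation. Via the Kazhdan--Lusztig--Ginzburg isomorphism, $K^{\widehat{G}}(\mathrm{St}') \cong \mathbf{H}$, and under this isomorphism the support filtration by $\overline{\mathfrak{o}_{\textbf{c}}}$ is known (by Lusztig's work, see also \cite{bezrukavnikov2016two}) to correspond to the asymptotic filtration by two-sided cells; this fixes the bijection $\textbf{c} \leftrightarrow \mathfrak{o}_{\textbf{c}}$ in the statement. Since $\Phi_{I^{\circ} I}$ is known to induce the identity (up to the standard conventions) on K-theory, the image of the cell filtration on the Hecke side agrees in $K^{\widehat{G}}(\mathrm{St}')$ with the support filtration.

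To upgrade the K-theoretic statement to an equivalence of triangulated subcategories, I would argue by induction on the cell order, which is well-founded. The base case is the cell $\{e\}$, which on the coherent side corresponds to sheaves supported on $\{0\} \subset \mathcal{N}$, and $\Phi_{I^{\circ} I}$ is known to identify the anti-spherical block appropriately. For the inductive step, given $\textbf{c}$, one assumes the result for all $\textbf{c}' < \textbf{c}$ and notes that both sides of the claimed equivalence fit into recollement triangles with the $< \textbf{c}$ subcategories already identified; the quotients are Grothendieck groups that match by the previous paragraph, and one invokes that the simple objects $\mathbb{L}_w$ for $w \in \textbf{c}$ are sent by $\Phi_{I^{\circ} I}$ to objects whose set-theoretic support is precisely the closure of the generic $\widehat{G}$-orbit in the preimage of $\overline{\mathfrak{o}_{\textbf{c}}}$, as can be checked by restricting to a Slodowy slice transverse to $\mathfrak{o}_{\textbf{c}}$.

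The main obstacle I expect is the last point: precisely identifying the support of $\Phi_{I^{\circ} I}(\mathbb{L}_w)$ with $\overline{\mathfrak{o}_{\textbf{c}}}$ rather than merely a subvariety of it. K-theory alone does not rule out cancellations between cells of the same $a$-value, so one needs extra input, typically parity/purity properties of $\mathbb{L}_w$ combined with the t-exactness properties of $\Phi_{I^{\circ} I}$ with respect to the ``perverse coherent'' t-structure on the Steinberg side. Once this local-to-global support statement is established in one direction, the opposite inclusion follows formally from the monoidal ideal property and the matching of Grothendieck groups.
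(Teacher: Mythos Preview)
The paper does not give its own proof of this proposition. It is stated as a result of Bezrukavnikov, introduced with ``We also recall the following proposition from \cite[Section~11.3]{bezrukavnikov2016two},'' and no argument is supplied. So there is nothing in the paper to compare your proposal against; the authors are simply quoting the literature.

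As for the content of your sketch: the outline you give (monoidal ideals on both sides, K-theoretic matching via Kazhdan--Lusztig--Ginzburg, upgrading via induction on cells and support identification) is a reasonable narrative of how such a compatibility is established in Bezrukavnikov's work, and you correctly flag the genuine difficulty, namely pinning down the exact support of $\Phi_{I^{\circ} I}(\mathbb{L}_w)$ rather than just an upper bound. But none of this appears in the present paper, which treats the result as a black box. If you want to see the actual argument, you should consult \cite{bezrukavnikov2016two} directly.
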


\section{Application to the depth 0 Langlands correspondence}\label{sec:depth 0}

\subsection{The regular representation of $\widehat{G}$}

In this section, if $V$ denotes a representation of $\widehat{G}$ then $\underline{V}$ denotes the underlying vector space with trivial $\widehat{G}$-action.  A particularly important representation of $\widehat{G}$ is the left regular representation whereby $\widehat{G}$ acts on the ring of regular functions $\mathcal{O}_{\widehat{G}}$.  Here, we will take $\widehat{G}$ to live over the field $E$ which is a finite extension of $\mathbb{Q}_{\ell}$ or $\qlbar$.

Elements of the left regular representation $\Reg$ of $\widehat{G}$ in $E$-vector spaces can be viewed as functions $f \colon \widehat{G} \to E$, where the $\widehat{G}$-action is given by $h \cdot f(g) = f(h^{-1} g)$.  Given $x \in V$ and $\xi \in V^*$, we can construct such a function by $f(g) = \langle x, g\cdot \xi \rangle$.  The map $V \otimes \underline{V^*} \to \Reg$ is $\widehat{G}$-equivariant because $\langle hx, g\xi \rangle = \langle x, h^{-1}g\xi \rangle = f(h^{-1}g)$.

Given the representation $\Reg$, we can identify $\Reg \otimes V$ with functions $\widehat{G} \to V$.  The diagonal $\widehat{G}$ action is by $h \cdot (f \otimes v)(g) = f(h^{-1} g) h v$.  We define a map
\[ \theta \colon \Reg \otimes \underline{V} \to \Reg \otimes V \]
by $\theta(f \otimes v)(g) = f(g) gv$.  The equivariance of $\theta$ is checked as
\begin{align*}
    \theta(h \cdot (f \otimes v))(g) &= \theta(hf \otimes v)(g) = f(h^{-1}g) gv, \\
    (h \cdot \theta(f \otimes v))(g) &= h \theta(f \otimes v)(h^{-1} g) = f(h^{-1} g) hh^{-1} gv.
\end{align*}

Given an element $\gamma \in \widehat{G}$, we will consider another map $F_{\gamma} \colon \Reg \otimes \underline{V} \to \Reg \otimes \underline{V}$ of vector spaces (not of $\widehat{G}$-representations) defined by
\[ F_\gamma (f \otimes v)(g) = f(g) g^{-1} \gamma g v. \]
It is easy to check that if $(1 \otimes \gamma)$ acts on $\Reg \otimes V$ via the action $\widehat{G} \times \widehat{G}$, then $F_{\gamma}= \theta^{-1} \circ (1 \otimes \gamma) \circ \theta$.

The map $F_\gamma$ is not a map in the category $\Rep(\widehat{G})$ but it is a map in the category $\Rep(Z(\gamma))$, the centralizer of $\gamma$ in $\widehat{G}$.  Therefore, if we are given a central functor from $\Rep(Z(\gamma)) \to \mathcal{C}$ for some suitable category $\mathcal{C}$ of sheaves, $F_\gamma$ will give well-defined maps between sheaves.

Let $f'$ be another regular function on $\widehat{G}$.  Let $\langle \cdot, \cdot \rangle$ be the trace pairing $V \otimes V^* \to 1$, which is $\widehat{G}$-invariant under the diagonal $\widehat{G}$-action.  We know that $\phi(x \otimes \xi)(g) = f'(g) = \langle x, g\xi \rangle$ for some finite-dimensional representation $V$ and elements $x \in V$ and $\xi \in V^*$, and the map $\phi \colon V \otimes \underline{V^*} \to \Reg$ is $\widehat{G}$-equivariant, as
\[ hf'(g) = \langle x, h^{-1} g\xi \rangle = \langle hx, g\xi \rangle = \phi(hx \otimes \xi). \]

Let $\mathfrak{o}_{\gamma}$ be the conjugacy class of $\gamma$.  The closure of the conjugacy class $\overline{\mathfrak{o}_{\gamma}}$ then defines an ideal in the ring of regular functions, which we denote $\mathscr{J}_{\mathfrak{o}_{\gamma}}$.  Let $x \otimes \xi \in V \otimes V^*$ be chosen so that $f' = \phi(x \otimes \xi)$ is nonvanishing everywhere on $\mathfrak{o}_{\gamma}$.  Then the composition
\begin{equation}\label{eqn:pre-excursion}
    \begin{tikzcd}
    \Reg \arrow{r}{x} & \Reg \otimes \underline{V} \arrow{r}{F_{\gamma}} & \Reg \otimes \underline{V} \arrow{r}{\xi} & \Reg,
    \end{tikzcd}
\end{equation}
sends $f(g) \in \Reg$ to $f(g) f'(g^{-1} \gamma^{-1} g)$ and therefore must be an injection.  In light of this, we will use the following lemma to show that a certain excursion operator is injective.

For any conjugacy class $\mathfrak{o}_{\gamma'} \subset \overline{\mathfrak{o}_{\gamma}}$ strictly contained in the boundary so that $\mathfrak{o}_{\gamma'} \ne \mathfrak{o}_{\gamma}$, we can find a function $f'$ in the ideal $\mathscr{J}_{\mathfrak{o}_{\gamma'}} \setminus \mathscr{J}_{\mathfrak{o}_{\gamma}}$.  By exponentiating, we can even assume that $f' \in \mathscr{J}_{\mathfrak{o}_{\gamma'}}^n$.

\subsection{Truncated convolution category and excursion operator}\label{sec:truncatedexcursion}

The composition in Equation~\eqref{eqn:pre-excursion} defines a map in $\Rep(\widehat{G}_{\gamma})$.  On the other hand, if $d$ is a Duflo involution and $\gamma = s^{-1} N_d$, then we have a functor $\Rep(\widehat{G}_{\gamma}) \to \mathcal{A}^{\lambda}_d$ into a truncated convolution category by Proposition~\ref{prop:conjugacy}.

This gives a map
\begin{equation}\label{eqn:hecke pre-excursion}
    \begin{tikzcd}
    Z^{\lambda}_{\Reg} * \mathbb{L}^{\dot{d}}_{\lambda} \arrow{r}{x} & Z^{\lambda}_{\Reg \otimes \underline{V}} * \mathbb{L}^{\dot{d}}_{\lambda} \arrow{r}{F_{\gamma}} & Z^{\lambda}_{\Reg \otimes \underline{V}} * \mathbb{L}^{\dot{d}}_{\lambda} \arrow{r}{\xi} & Z^{\lambda}_{\Reg} * \mathbb{L}^{\dot{d}}_{\lambda},
    \end{tikzcd}
\end{equation}
in the truncated convolution category.  Consider two cases
\begin{enumerate}
    \item Suppose $\phi(x,\xi) \in \mathscr{J}_{\mathfrak{o}_{\gamma}}$.  Then the composition in $\Rep(\widehat{G}_{\gamma})$ is $0$ by definition.
    \item Suppose $\phi(x,\xi) \not \in \mathscr{J}_{\mathfrak{o}_{\gamma}}$.  Then the composition in $\Rep(\widehat{G}_{\gamma})$ is an injection.
\end{enumerate}

Moreover, if $d \le w_0$ the long Weyl group element in the finite Hecke algebra, we can identify $Z(V) * \mathbb{L}^{\dot{d}}_{\lambda}$ as a nearby cycles sheaf for $\mathcal{S}_V \boxtimes \mathbb{L}^{\dot{d}}_{\lambda}$.  Under this identification, the map $F_{\gamma}$ becomes the action of the monodromy of nearby cycles in $\mathcal{A}^{\lambda}_d$ under the identification of $\mathfrak{M}_V$ with the action of $\lambda^{-1} N_d$.
\begin{equation}
    \begin{tikzcd}
    Z^{\lambda}_{\Reg \otimes \underline{V}} * \mathbb{L}^{\dot{d}}_{\lambda} \arrow{r}{\theta} & Z^{\lambda}_{\Reg} * (Z^{\lambda}_{V} * \mathbb{L}^{\dot{d}}_{\lambda}) \arrow{r}{1 * \mathfrak{M}_V} & Z^{\lambda}_{\Reg} * (Z^{\lambda}_{V} * \mathbb{L}^{\dot{d}}_{\lambda}) \arrow{r}{\theta^{-1}} & Z^{\lambda}_{\Reg \otimes \underline{V}} * \mathbb{L}^{\dot{d}}_{\lambda},
    \end{tikzcd}
\end{equation}
which makes sense in both the affine Hecke algebra and the truncated convolution category.

Now consider truncated convolution
\[Z^{\lambda}_{\Reg \otimes \underline{V}} * \mathbb{L}^{\dot{d}}_{\lambda} \bullet \mathbb{L}^w_{\lambda}\]
for fixed $w \cdot \lambda \in \textbf{c}$ we may write $w = w' m$ with $m = w(w(\lambda),\lambda)$ and $w' \in W_{w(\lambda)}$.  Let $d$ be the Duflo involution in the left cell containing $w'$, and consider the map
\[1 * \mathfrak{M}_{V,d} \bullet 1 * 1 \colon Z^{w(\lambda)}_{\Reg} * (Z^{w(\lambda)}_{V} * \mathbb{L}^{\dot{d}}_{w(\lambda)}) \bullet \mathbb{L}^{\dot{w}}_{w(\lambda)} * \mathbb{L}^{\dot{m}}_{\lambda} \to Z^{w(\lambda)}_{\Reg} * (Z^{w(\lambda)}_{V} * \mathbb{L}^{\dot{d}}_{w(\lambda)}) \bullet \mathbb{L}^{\dot{w}}_{w(\lambda)} * \mathbb{L}^{\dot{m}}_{\lambda},\]
or the arrow in $\mathcal{H}^{\mathfrak{o}}_{\le \textbf{c}}$,
\[ 1 * \mathfrak{M}_{V,w} \colon Z^{w(\lambda)}_{\Reg} * (Z^{w(\lambda)}_{V} * \mathbb{L}^{\dot{w}}_{\lambda}) \to Z^{w(\lambda)}_{\Reg} * (Z^{w(\lambda)}_{V} * \mathbb{L}^{\dot{w}}_{\lambda}). \]
We observe that in the composition
\begin{equation}
    \begin{tikzcd}
    Z^{w(\lambda)}_{\Reg \otimes \underline{V}} * \mathbb{L}^{\dot{w}}_{\lambda} \arrow{r}{\theta} & Z^{w(\lambda)}_{\Reg} * (Z^{w(\lambda)}_{V} * \mathbb{L}^{\dot{w}}_{\lambda}) \arrow{d}{1 * \mathfrak{M}_{V,w}} \\
    Z^{w(\lambda)}_{\Reg \otimes \underline{V}} * \mathbb{L}^{\dot{w}}_{\lambda} & Z^{w(\lambda)}_{\Reg} * (Z^{w(\lambda)}_{V} * \mathbb{L}^{\dot{w}}_{\lambda}) \arrow{l}{\theta^{-1}},
    \end{tikzcd}
\end{equation}
in $\mathcal{H}_{\le \textbf{c}}^{\mathfrak{o}}$, resp. in $\mathcal{A}^{\mathfrak{o}}_{\textbf{c}}$, the arrow $1 * \mathfrak{M}_{V,w}$ is the map for sheaves on a two-dimensional nearby cycle with the first leg containing the $\Reg$ representation and the second containing the $V$ representation.  Namely, this map is monodromy of nearby cycles for the second leg and identity on the first leg starting with the sheaf $\mathcal{S}_{\Reg} \boxtimes \mathcal{S}_V \boxtimes \mathbb{L}^{\dot{w}}_{\lambda}$ on $\Gr\btilde_{G,N,I,y}$ with $I = \{ 0,1 \}$.  To make this identification, we are implicitly using \ref{prop:Grbtilde psi factorizable}.

Now we apply induction, resp. truncated induction, using the diagram in Equation~\eqref{eqn: truncated induction functors} and Theorem~\ref{thm: monodromy chtr}.  Under truncated induction, our composition becomes
\begin{equation}
    \begin{tikzcd}
    \Psi(\mathcal{S}_{\Reg} \boxtimes \underline{\chi}(\mathbb{L}^{\dot{w}}_{\lambda})) \arrow{r}{x} & \Psi(\mathcal{S}_{\Reg \otimes \underline{V}} \boxtimes \underline{\chi}(\mathbb{L}^{\dot{w}}_{\lambda})) \arrow{d}{F_{\gamma}} \\
    \Psi(\mathcal{S}_{\Reg} \boxtimes \underline{\chi}(\mathbb{L}^{\dot{w}}_{\lambda})) & \Psi(\mathcal{S}_{\Reg \otimes \underline{V}} \boxtimes \underline{\chi}(\mathbb{L}^{\dot{w}}_{\lambda})) \arrow{l}{\xi}.
    \end{tikzcd}
\end{equation}
So over $\Cht\mathcal{R}^{ad}$ and over $\Cht_{G,I}$, we get a composition
\begin{equation}\label{eqn:F-excursion}
    \begin{tikzcd}
    \Psi \mathscr{F}_{\Reg,\underline{\chi}(\mathbb{L}^{\dot{w}}_{\lambda})} \arrow{r}{x} & \Psi \mathscr{F}_{\Reg \otimes \underline{V},\underline{\chi}(\mathbb{L}^{\dot{w}}_{\lambda})} \arrow{r}{F_{\gamma}} & \Psi \mathscr{F}_{\Reg \otimes \underline{V},\underline{\chi}(\mathbb{L}^{\dot{w}}_{\lambda})} \arrow{r}{\xi} & \Psi \mathscr{F}_{\Reg,\underline{\chi}(\mathbb{L}^{\dot{w}}_{\lambda})}.
    \end{tikzcd}
\end{equation}

Finally pushing forward to the curve, under the functor $\mathfrak{p}_!$, the composition becomes
\begin{equation}\label{eqn:H-excursion}
    \begin{tikzcd}
    \mathfrak{p}_!\Psi \mathscr{F}_{\Reg,\underline{\chi}(\mathbb{L}^{\dot{w}}_{\lambda})} \arrow{r}{x} & \mathfrak{p}_!\Psi \mathscr{F}_{\Reg \otimes \underline{V},\underline{\chi}(\mathbb{L}^{\dot{w}}_{\lambda})} \arrow{d}{F_{\gamma}} \\
    \mathfrak{p}_!\Psi \mathscr{F}_{\Reg,\underline{\chi}(\mathbb{L}^{\dot{w}}_{\lambda})} & \mathfrak{p}_!\Psi \mathscr{F}_{\Reg \otimes \underline{V},\underline{\chi}(\mathbb{L}^{\dot{w}}_{\lambda})} \arrow{l}{\xi}.
    \end{tikzcd}
\end{equation}
We call this operator $F_{\phi(x,\xi),\mathfrak{M}}$ for reasons that will be apparent after we identify this with an element of the framed excursion algebra.  Letting $1 \subset \Reg$ be the trivial representation inside the regular representation, we can consider the restriction of $F_{\phi(x,\xi),\mathfrak{M}}$ along $\mathfrak{p}_! \mathscr{F}_{\{0\},1,\underline{\chi}(\mathbb{L}^{\dot{w}}_{\lambda})} \to \mathfrak{p}_! \mathscr{F}_{\{0\},\Reg,\underline{\chi}(\mathbb{L}^{\dot{w}}_{\lambda})}$, which we call $F_{1,\phi(x,\xi),\mathfrak{M}}$ to distinguish it.

\begin{theorem}\label{thm:upper bound}
Suppose $\phi(x,\xi)$ is in $\mathscr{J}_{\mathfrak{o}_{\textbf{c}}}$.  Then if $w \cdot \lambda \in \textbf{c} \cap W \times \mathfrak{s}_{\infty}$ the composition $F_{1,\phi(x,\xi),\mathfrak{M}}$ is $0$.

If in addition $\lambda = 1$ (and conjecturally for $\lambda$ general), $F_{\phi(x,\xi),\mathfrak{M}} = 0$ in Equation~\eqref{eqn:H-excursion}.
\end{theorem}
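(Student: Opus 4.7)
The plan is to categorify the elementary scalar identity on $\Reg$: in $\Rep(\widehat{G}_\gamma)$ the composition in \eqref{eqn:pre-excursion} acts as multiplication by the regular function $g \mapsto \phi(x,\xi)(g^{-1}\gamma^{-1}g)$, and then transport this vanishing through a chain of monoidal/central functors landing in sheaves on $\Cht\mathcal{R}_0$, and finally push forward to the curve via $\mathfrak{p}_!$. Proposition~\ref{prop:conjugacy} lets us choose $\gamma = \lambda^{-1} N_d \in \mathfrak{o}_{\textbf{c}}$, so the hypothesis $\phi(x,\xi) \in \mathscr{J}_{\mathfrak{o}_{\textbf{c}}}$ makes this scalar function identically zero on the orbit closure and hence the composition is zero in $\Rep(\widehat{G}_\gamma)$.

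The first main step is to package the Tannakian vanishing. For a Duflo involution $d$ in the left cell of $w \in \textbf{c} \cap W_\lambda$, Proposition~\ref{prop:conjugacy} supplies a fiber functor $\Rep(\widehat{H}_d) \to \mathcal{A}^{\lambda}_d$ with $\widehat{H}_d \subset \widehat{G}_\gamma$, under which the monodromy automorphism $\mathfrak{M}$ realizes the action of $\gamma$. The composition from Step~1 restricts to a morphism in $\Rep(\widehat{H}_d)$, so applying this fiber functor yields vanishing of \eqref{eqn:hecke pre-excursion} in $\mathcal{A}^{\lambda}_d$. Writing a general $w \in W \times \mathfrak{o}$ as $w = w' \cdot w(w(\lambda),\lambda)$ and using that truncated convolution with the minimal IC sheaf is an equivalence (Proposition~\ref{prop: minimal equivalence}), this extends to vanishing in $\mathcal{A}^{\mathfrak{o}}_{\textbf{c}}$ of the same composition truncated-convolved with $\mathbb{L}^{\dot{w}}_{\lambda}$, parallel to Corollary~\ref{corr:conjugacy Ao}.

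Next I would apply truncated induction $\underline{\chi}\colon \mathcal{A}^{\mathfrak{o}}_{\textbf{c}} \to \mathcal{P}(\Cht\mathcal{R}_0)_{\textbf{c}}$ through the commutative diagram \eqref{eqn: truncated induction functors}; the compatibility with monodromy and nearby cycles follows because the horocycle maps $\pi,f$ are proper and smooth (Proposition~\ref{prop: horocycle proper smooth}), so $\Psi$ commutes with $\pi_! f^*$. This yields vanishing of \eqref{eqn:F-excursion} in the Serre quotient $\mathcal{P}(\Cht\mathcal{R}_0)_{\textbf{c}}$. Pushing forward with $\mathfrak{p}_!$, which commutes with $\Psi_J$ by Theorem~\ref{thm:xuezorro} together with Theorem~\ref{thm: psi-factorizable GFq}, promotes this to a statement about \eqref{eqn:H-excursion} modulo endomorphisms factoring through $\mathcal{P}(\Cht\mathcal{R}_0)_{<\textbf{c}}$. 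For the restricted operator $F_{1,\phi(x,\xi),\mathfrak{M}}$, the source and target $\mathfrak{p}_!\mathscr{F}_{\{0\},1,\underline{\chi}(\mathbb{L}^{\dot{w}}_\lambda)}$ have nearby-cycle restriction built from $\underline{\chi}(\mathbb{L}^{\dot{w}}_\lambda) \in \mathcal{P}(G /_{\sigma} G)_{\textbf{c}}$; since $\mathcal{P}(G /_{\sigma} G)$ is semisimple and its simple constituents land in the $\textbf{c}$-stratum, any endomorphism that factors through $\mathcal{P}(\Cht\mathcal{R}_0)_{<\textbf{c}}$ vanishes, giving the first claim.

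The hard part is promoting Serre-quotient vanishing to genuine vanishing for the unrestricted $F_{\phi(x,\xi),\mathfrak{M}}$: here the source/target are not concentrated in the $\textbf{c}$-stratum and a factorization through $<\textbf{c}$ need not be zero. For $\lambda = 1$, Bezrukavnikov's equivalence (Proposition~\ref{prop:bezcoherent}) identifies the cell filtration on $D(I^{\circ} \backslash LG / I^{\circ})$ with the filtration by orbit-closure support on coherent sheaves on the one-sided Steinberg variety; under this translation $\mathscr{J}_{\mathfrak{o}_{\textbf{c}}}$ becomes the honest ideal of regular functions vanishing on the support, so the composition is literally multiplication by zero rather than a morphism in a Serre quotient, and pushing this identity forward gives strict vanishing. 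Extending to general $\lambda$ would require a monodromic analogue of Bezrukavnikov's equivalence which is not currently available, and this is the reason the general case is stated only conjecturally.
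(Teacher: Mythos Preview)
Your overall architecture matches the paper's: establish vanishing in the Serre quotient $\mathcal{P}(\Cht\mathcal{R}_0)_{\textbf{c}}$ via the Tannakian identification of monodromy with the action of $\gamma=\lambda^{-1}N_d$, then promote to genuine vanishing; and for $\lambda=1$ invoke Bezrukavnikov's equivalence so that the cell filtration becomes honest support and multiplication by $\phi(x,\xi)\in\mathscr{J}_{\mathfrak{o}_{\textbf c}}$ is literally zero. That part, including the reason the general-$\lambda$ strengthening is only conjectural, is right and aligns with the paper.

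The gap is in your lifting step for $F_{1,\phi(x,\xi),\mathfrak{M}}$. First, a slip: $F_1$ is not an endomorphism, and the target is not $\mathfrak{p}_!\mathscr{F}_{\{0\},1,\underline{\chi}(\mathbb{L}^{\dot w}_\lambda)}$ but $\mathfrak{p}_!\Psi\mathscr{F}_{\{0\},\Reg,\underline{\chi}(\mathbb{L}^{\dot w}_\lambda)}$, which is \emph{not} supported on $G/_\sigma G$; so you cannot argue by semisimplicity of both ends. More substantively, the assertion that ``the simple constituents land in the $\textbf{c}$-stratum'' is about the cell filtration on $\mathcal{P}(\Cht\mathcal{R}_0)$, not on $\mathcal{P}(G/_\sigma G)$, and this requires the nontrivial lemma (proved in the paper just before the theorem, via proper base change along $i_G$) that $(i_G)_*\zeta\notin\mathcal{P}(\Cht\mathcal{R}_0)_{<\textbf{c}}$ for $\zeta$ attached to $\textbf c$. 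The paper's argument avoids this by a support/recollement trick: since the source of $F_1$ is $(i_G)_*\underline{\chi}(\mathbb{L}^{\dot w}_\lambda)$, the composite $F_1$ followed by $\Psi\to j_*j^*\Psi$ vanishes, so $F_1$ factors through $i_!i^!\Psi$; by adjunction this is a morphism in $D(G/_\sigma G)$, where the Serre quotient functor is faithful (again by that lemma), hence $F_1=0$ on the nose before applying $\mathfrak{p}_!\epsilon^*$. Your semisimplicity argument can be repaired to work, but only once you supply that lemma and correctly note that only the \emph{source} is supported on $G/_\sigma G$; the paper's adjunction argument is what makes the one-sided support sufficient.
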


\begin{proof}
    We prove the case $\lambda = 1$ first.  Under Bezrukavnikov's equivalence Proposition~\ref{prop:bezcoherent}, for any sheaf $M$ in the affine Hecke category, we may consider objects $Z_V * M$ as living in $D_{I^{\circ} I}$ under averaging, which is compatible with the action of nearby cycles.  The functor $\Phi_{I^{\circ} I}$ sends objects of $Z_V * M$ to $V \otimes \mathscr{O}_{\tilde{\widehat{\mathfrak{g}}}} \otimes \Phi_{I^{\circ} I}(M)$, where $V \otimes \mathscr{O}_{\tilde{\widehat{\mathfrak{g}}}}$ is considered as a coherent sheaf with a tautological $\widehat{G}$ action on $V$ as a representation.  Monodromy acts fiberwise by a tautological endomorphism so that the fiber over $g$ acts by $g$.  We note that in this case, everything is actually supported over the closure of the relevant two-sided cell in the nilpotent cone, so we observe that the composition $Z_{\Reg} * M \to Z_{\Reg} * Z_V * M \to Z_{\Reg} * Z_V * M \to Z_{\Reg} * M$ as in \eqref{eqn:hecke pre-excursion} is $0$.  This proves the second statement.

    Now we will prove the first statement, that $F_{1,\phi(x,\xi),\mathfrak{M}} = 0$.  We note that $F_{1,\phi(x,\xi),\mathfrak{M}}$ is the functor $\mathfrak{p}_! \epsilon^*$ applied to a map
    \begin{equation}\label{eqn:F1 map}
    \begin{tikzcd}
    F_1 \colon \Psi(\mathcal{S}_{1} \boxtimes \underline{\chi}(\mathbb{L}^{\dot{w}}_{\lambda})) \arrow{r}{\subset}& \Psi(\mathcal{S}_{\Reg} \boxtimes \underline{\chi}(\mathbb{L}^{\dot{w}}_{\lambda})) \arrow{r}{F}& \Psi(\mathcal{S}_{\Reg} \boxtimes \underline{\chi}(\mathbb{L}^{\dot{w}}_{\lambda})).
    \end{tikzcd}
    \end{equation}
    We will adopt the abbreviation $\Psi = \Psi(\mathcal{S}_{\Reg} \boxtimes \underline{\chi}(\mathbb{L}^{\dot{w}}_{\lambda}))$.  If we consider this map under the Serre quotient functor $Q$ to $D(\Cht\mathcal{R}_0)_{\textbf{c}}$, we see that the map $F$, and therefore $F_1$, becomes $0$ in the quotient category.  We argue that this implies that the composition above is $0$ \emph{before} passing to the quotient category.  Let $i_G$ be the inclusion $G /_{\sigma} G \to \Cht\mathcal{R}_0$, and let $j$ be the inclusion of the complement.  Composing $F_1$ with $\Psi \to j_* j^* \Psi$ gives by adjunction $j^* \underline{\chi}(\mathbb{L}^{\dot{w}}_{\lambda}) \to j^* \Psi$, which is $0$ by noting that $\underline{\chi}(\mathbb{L}^{\dot{w}}_{\lambda})$ is supported on $G /_{\sigma} G$.  Therefore, $F_1$ factors through $i_! i^! \Psi \to \Psi$.  By adjunction, this gives a map $i^* \underline{\chi}(\mathbb{L}^{\dot{w}}_{\lambda}) \to i^! \Psi$, which becomes $0$ upon applying the quotient functor $Q$.  But $Q$ is fully faithful when restricted to $G /_{\sigma} G$, so $F_1$ must be $0$.
\end{proof}

\begin{theorem}\label{thm:injectivity}
Suppose $\phi(x,\xi)$ is not in $\mathscr{J}_{\mathfrak{o}_{\textbf{c}}}$.  If $w \in \textbf{c} \cap W \times \mathfrak{s}_{\infty}$ and $\pi \in H_{1,\underline{\chi}(\mathbb{L}^{\dot{w}})} \subset H_{\Reg,\underline{\chi}(\mathbb{L}^{\dot{w}}_{\lambda})}$ is nonzero, then $F_{1,\phi(x,\xi),\mathfrak{M}} \pi \ne 0$.
\end{theorem}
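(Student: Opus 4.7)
The plan is to interpret $F_{1,\phi(x,\xi),\mathfrak{M}}$ as a framed excursion operator in V. Lafforgue's sense, compute its action on Hecke eigencomponents of $\pi$, and exploit the non-vanishing of the regular function $g \mapsto \phi(x,\xi)(g^{-1}\gamma^{-1}g)$ on the orbit closure $\overline{\mathfrak{o}_{\textbf{c}}}$ to conclude.

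First, I will realize $F_{1,\phi(x,\xi),\mathfrak{M}}$ as a framed excursion operator on $H_{1,\zeta}$. Combining Theorem~\ref{thm:xuezorro} with the $\Psi$-factorizability of Theorem~\ref{thm: psi-factorizable GFq} lets $\mathfrak{p}_!$ commute with $\Psi$, so the construction of $F_{\phi(x,\xi),\mathfrak{M}}$ through Equations~\eqref{eqn:F-excursion} and \eqref{eqn:H-excursion} descends to an operator on cohomology built from insertions $x$ and $\xi$ together with the monodromy $\mathfrak{M}$. By Theorem~\ref{thm: monodromy chtr} (and Corollary~\ref{corr:conjugacy Ao}), $\mathfrak{M}$ is represented by an element in $\mathfrak{o}_{\textbf{c}}$, which supplies the framing data.

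Next, on a Hecke eigencomponent of $\pi$ with Langlands parameter $\rho$, the framed excursion operator acts by a scalar obtained from the pre-excursion composition of Equation~\eqref{eqn:pre-excursion}. That composition, as noted in the excerpt, is the multiplication operator $f(g) \mapsto f(g)\cdot\phi(x,\xi)(g^{-1}\gamma^{-1}g)$ on $\Reg$, and under Lafforgue's reciprocity this translates to the scalar $\phi(x,\xi)(\rho(\gamma)^{-1})$ on the eigenspace. Theorem~\ref{thm:upper bound} forces $\rho(\gamma) \in \overline{\mathfrak{o}_{\textbf{c}}}$ for every parameter $\rho$ appearing in the decomposition of $\pi$, so the vanishing/non-vanishing analysis takes place entirely on $\overline{\mathfrak{o}_{\textbf{c}}}$. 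The assumption $\phi(x,\xi) \notin \mathscr{J}_{\mathfrak{o}_{\textbf{c}}}$ says $\phi(x,\xi)$ is not identically zero on $\overline{\mathfrak{o}_{\textbf{c}}}$, and this feeds into a Tannakian-style argument through the equivalence $\Phi_d$ of Proposition~\ref{prop:conjugacy}: at the level of the truncated convolution category $\mathcal{A}^{\lambda}_{\textbf{c}}$, the corresponding composition in \eqref{eqn:hecke pre-excursion} is injective in $\Rep(\widehat{G}_{\gamma})$, hence nonzero in $\mathcal{A}^{\mathfrak{o}}_{\textbf{c}}$ by Tannakian duality, and this nonvanishing is preserved by the faithful truncated induction functor $\underline{\chi}$ landing in $\mathcal{P}(\Cht\mathcal{R}_0)_{\textbf{c}}$.

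The main obstacle will be the second step: rigorously matching the scalar by which the framed excursion operator acts on a single Hecke eigencomponent with the value of $\phi(x,\xi)$ at $\rho(\gamma)^{-1}$, and showing that this matching yields a \emph{nonzero} scalar even when $\rho(\gamma)$ lies in the boundary of $\mathfrak{o}_{\textbf{c}}$ rather than in the open orbit. This will require using the Tannakian structure on $\mathcal{A}^{\lambda}_d$ provided by Proposition~\ref{prop:conjugacy} in conjunction with Bezrukavnikov's equivalence (Proposition~\ref{prop:bezcoherent}), which realizes the relevant morphisms as morphisms of $\widehat{G}$-equivariant coherent sheaves on the Steinberg variety, so that the non-vanishing reduces to a concrete calculation of a coherent-sheaf-theoretic multiplication operator that is faithful on cohomology via $\mathfrak{p}_!$ by the preceding commutation results.
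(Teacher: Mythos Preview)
Your approach has a genuine gap. The framed operator $F_{1,\phi(x,\xi),\mathfrak{M}}$ is a map $H_{\{0\},1,\underline{\chi}(\mathbb{L}^{\dot{w}}_{\lambda})} \to H_{\{0\},\Reg,\underline{\chi}(\mathbb{L}^{\dot{w}}_{\lambda})}$, not an endomorphism, and it does not act by the scalar $\phi(x,\xi)(\rho(\gamma)^{-1})$ on a $\rho$-eigencomponent; only the \emph{unframed} operators in $\mathscr{B}^{1}$ act by scalars. The obstacle you flag at the end is in fact fatal to the scalar strategy: $\phi(x,\xi)\notin\mathscr{J}_{\mathfrak{o}_{\textbf{c}}}$ only says $\phi(x,\xi)$ is not identically zero on $\overline{\mathfrak{o}_{\textbf{c}}}$, so it may well vanish on a boundary orbit containing $\rho(\gamma)$, and your argument would then give zero. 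The Tannakian injectivity you cite from \eqref{eqn:hecke pre-excursion} and $\underline{\chi}$ only yields injectivity of the sheaf-level map $F_{1}$ on $\Cht\mathcal{R}_{0}$ (or its Serre quotient); it says nothing about injectivity after applying $\mathfrak{p}_{!}$, which is not exact.

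The paper's proof handles exactly this last point, and it is the step missing from your plan. Starting from the sheaf-level injection $F_{1}^{\le 0}\colon \Psi(\mathcal{S}_{1}\boxtimes\underline{\chi}(\mathbb{L}^{\dot{w}}_{\lambda}))\hookrightarrow \Psi(\mathcal{S}_{\Reg}\boxtimes\underline{\chi}(\mathbb{L}^{\dot{w}}_{\lambda}))^{\le 0}$ (obtained by a weight-filtration argument: the source is pure of weight $0$, so the composite into the weight-$\ge 1$ quotient vanishes), one writes the five-term exact sequence for $\mathfrak{p}_{!}$ applied to the resulting short exact sequence with cokernel $C^{\le 0}$. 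The obstruction to injectivity of $\mathfrak{p}_{!}F_{1}$ on $H^{0}$ is the boundary map $\delta\colon R^{-1}\mathfrak{p}_{!}\epsilon^{*}C^{\le 0}\to H^{0}_{\{0\},1,\underline{\chi}(\mathbb{L}^{\dot{w}}_{\lambda})}$. Deligne's theorem on weights forces $R^{-1}\mathfrak{p}_{!}\epsilon^{*}C^{\le 0}$ to be mixed of weight $\le -1$, while the target is pure of weight $0$ (it is the cohomology of a weight-$0$ local system on the smooth proper $0$-dimensional stack $\Bun_{G}(\mathbb{F}_{q})$), so $\delta=0$. This weight argument is what replaces any appeal to the global Langlands decomposition or to Bezrukavnikov's equivalence in controlling the loss of injectivity under $\mathfrak{p}_{!}$.
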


\begin{proof}
    Recall from the previous theorem that $F_{1,\phi(x,\xi),\mathfrak{M}}$ is the functor $\mathfrak{p}_! \epsilon^*$ applied to the map $F_1$ from Equation~\eqref{eqn:F1 map}.  After applying $Q$, we now see that $F_1$ is nonzero, and in fact injective as it is nonzero map on each simple subobject.  The object $\Psi(\mathcal{S}_{\Reg} \boxtimes \underline{\chi}(\mathbb{L}^{\dot{w}}_{\lambda}))$ also has an ascending weight filtration, which means we have a subobject $\Psi(\mathcal{S}_{\Reg} \boxtimes \underline{\chi}(\mathbb{L}^{\dot{w}}_{\lambda}))^{\le 0}$ which is mixed of weight $\le 0$ and a quotient $\Psi(\mathcal{S}_{\Reg} \boxtimes \underline{\chi}(\mathbb{L}^{\dot{w}}_{\lambda}))^{\ge 1}$ which is mixed of weight $\ge 1$.  To summarize, we may consider the composition
    \[
    \begin{tikzcd}
    \Psi(\mathcal{S}_{1} \boxtimes \underline{\chi}(\mathbb{L}^{\dot{w}}_{\lambda})) \arrow{r}{F_{1,\phi(x,\xi),\mathfrak{M}}}& \Psi(\mathcal{S}_{\Reg} \boxtimes \underline{\chi}(\mathbb{L}^{\dot{w}}_{\lambda})) \arrow{r}& \Psi(\mathcal{S}_{\Reg} \boxtimes \underline{\chi}(\mathbb{L}^{\dot{w}}_{\lambda}))^{\ge 1},
    \end{tikzcd}
    \]
    which is a map from a sheaf pure of weight $0$ to a sheaf mixed of weight $\ge 1$, and therefore must be $0$.  This means that the map $F_{1,\phi(x,\xi),\mathfrak{M}}$ lifts to a map
    \[ F_{1,\phi(x,\xi),\mathfrak{M}}^{\le 0} \colon \Psi(\mathcal{S}_{1} \boxtimes \underline{\chi}(\mathbb{L}^{\dot{w}}_{\lambda})) \to \Psi(\mathcal{S}_{\Reg} \boxtimes \underline{\chi}(\mathbb{L}^{\dot{w}}_{\lambda}))^{\le 0}, \]
    which is injective.  Let $C^{\le 0}$ be the cokernel of this map, which is mixed of weight $\le 0$.
    
    To show that $F_{1,\phi(x,\xi),\mathfrak{M}}$ is injective after pulling back by $\epsilon$ and pushing forward by $\mathfrak{p}_!$, consider the 5-term exact sequence
    \begin{equation}
    \begin{tikzcd}
    0 \arrow{r} & R^{-1} \mathfrak{p}_! \mathscr{F}_{\Reg,\underline{\chi}(\mathbb{L}^{\dot{w}}_{\lambda})}^{\le 0} \arrow{r}& R^{-1} \mathfrak{p}_! \epsilon^* C^{\le 0} \arrow{dl}{\delta}& \\
    & H^0_{\{0\},1,\underline{\chi}(\mathbb{L}^{\dot{w}}_{\lambda})} \arrow{r}{\mathfrak{p}_! F_1}& R^{0} \mathfrak{p}_! \mathscr{F}_{\Reg,\underline{\chi}(\mathbb{L}^{\dot{w}}_{\lambda})}^{\le 0} \arrow{r}& R^0 \mathfrak{p}_! \epsilon^* C^{\le 0} \arrow{r} & 0.
    \end{tikzcd}
    \end{equation}
    This sequence shows that $F_{1,\phi(x,\xi),\mathfrak{M}} \pi = 0$ if and only if $\pi$ lives in the image of the boundary map $\delta$.  On the other hand, by applying Deligne's theorem on weights \cite{deligne1980weilii} \cite{bbd}, $R^{-1} \mathfrak{p}_! C^{\le 0}$ is (ind-)mixed of weight $\le -1$.  But $H^0_{\{0\},\Reg,\underline{\chi}(\mathbb{L}^{\dot{w}}_{\lambda})}$ is (ind-)pure of weight $0$ because it is the direct image of a perverse sheaf (in fact a local system), pure of weight $0$, along a smooth, proper $0$-dimensional stack $\Bun_G(\mathbb{F}_q)$.  Therefore, the boundary map is $0$.
\end{proof}

\subsection{Action of the framed excursion algebra}

For the remainder of this section we fix $G$ a split reductive group over $\mathbb{F}_q$, $K$ a function field of a curve $C$ over $\mathbb{F}_q$, and $Q = \prod_{x \in |C|} Q_x$ a compact open subgroup of $G(\mathbb{A}_K)$.  We furthermore fix a closed $\mathbb{F}_q$-point $y \in C(\mathbb{F}_q)$ and $Q_y = L^{\circ} G$, the unipotent radical of $G(\mathcal{O}_y)$.  Let $G(\mathbb{F}_{q})$ be the Levi quotient $L^+_y G / L^{\circ}_y G$.

For $\zeta \in \Rep(G(\mathbb{F}_q))$, we define $\mathcal{H}_{I,V,\zeta} = \mathfrak{p}_! \mathscr{F}_{I,V,\zeta}$ over $C^I$.  We also define $H_{I,V,\zeta}$ as the stalk at the geometric generic point $\mathcal{H}_{I,V,\zeta} |_{\overline{\eta_I}}$.  There exist constant term maps along each proper parabolic subgroup of $G$.

Following \cite{xue2020cuspidal}, we can define $H^{cusp}_{I,V,\zeta}$ as the intersection of the kernels of all the constant term maps, which Xue proves is the same as the Hecke-finite part of cohomology.  Under the decomposition
\[ H^{cusp}_{\{0\},\Reg,\zeta} = \bigoplus_V H^{cusp}_{\{0\},V,\zeta} \otimes \underline{V^*}, \]
the vector space $H^{cusp}_{\{0\},\Reg,\zeta}$ admits a right $\widehat{G}$-module structure.  It also admits an additional action by excursion operators.  For $f$ a regular function on $\widehat{G}^{I}$ defined by $f(g_1, \dots, g_{|I|}) = \langle \xi, g x \rangle$ and $(\gamma_1, \dots, \gamma_{|I|})$ an $I$-tuple of elements of $\Gal(\overline{K} / K)$, we define an excursion operator $F_{f,\gamma}$ by the composition
\begin{equation}
\begin{tikzcd}
    H^{cusp}_{\{0\},\Reg,\zeta} \arrow{r}{x} & H^{cusp}_{\{0\} \cup I, \Reg \boxtimes V,\zeta} \arrow{r}{1,\gamma} & H^{cusp}_{\{0\} \cup I, \Reg \boxtimes V,\zeta} \arrow{r}{\xi} & H^{cusp}_{\{0\},\Reg,\zeta},
\end{tikzcd}
\end{equation}
following the format of the preceding sections.

We are particularly interested in the space of cuspidal automorphic forms $H^{cusp}_{0,1} = H^{cusp}_{0,1,\Reg(G(\mathbb{F}_q))}$, where the representation in this case is the regular representation of $G(\mathbb{F}_q)$.  This space decomposes under the action of $G(\mathbb{F}_q)$ into $\zeta$-isotypic components.
\[ H^{cusp}_{0,1,\Reg(G(\mathbb{F}_q))} = \bigoplus_{\zeta} H^{cusp}_{0,1,\zeta} \otimes \underline{\zeta^*}. \]
Our strategy will be to characterize the Langlands parameters attached to a nonzero vector in a $\zeta$-isotypic component by computing the action of framed excursion operators on $H_{0,1,\zeta}$.

We define the framed excursion algebra $\mathscr{B}_{\zeta}$ as the algebra of endomorphisms of $H^{cusp}_{\{0\},\Reg,\zeta}$ generated by operators $F_{f,\gamma}$ for $f$ a regular function on $\widehat{G}^I$ and $\gamma$ an $I$-tuple of elements of the Weil group $\Weil(\overline{K} / K)$.  It can be viewed as living over space of functions on the affine space of representations of the Weil group \cite{lafforgue2018d}.  Moreover, in the case of finite central character, the Frobenius acts continuously, and the Weil action extends to an action of the absolute Galois group.

We note that there are some immediate enhancements of this setup.  First, we can consider a quotient of $\Weil(\overline{K} / K)$ that is unramified outside of $N \cup R$, which we denote $\Weil((C \setminus (N \cup R))_{\mathbb{F}_q}, \overline{\eta})$.  Furthermore, at the special place $y \in N$, we can consider the contribution of wild inertia, which is a normal subgroup $P_y \subset \Weil(C \setminus (N \cup R), \overline{\eta})$.  This gives us a quotient, which we will denote $\Weil^y(C \setminus (N \cup R), \overline{\eta})$.  Excursion operators defined for the group $\Weil(\overline{K} / K)$ will thus factor over the quotient $\Weil^y(C \setminus (N \cup R), \overline{\eta})$.

We will now consider a much smaller algebra.  Consider the local Galois group at the special place $y$, which canonically embeds in the global Galois group.  The local Galois group has an inertia subgroup $I_y$, which in turn has a Sylow pro-p subgroup $P_y$ (wild inertia).  The quotient $I_y / P_y$ has a topological generator, which we shall denote $\gamma$.  We call $\gamma$ the tame generator, and given this choice, the monodromy of nearby cycles $\mathfrak{M}$ in Section~\ref{sec:truncatedexcursion} is an automorphism, not just the action of the local Galois group, and this automorphism represents the action of the element $\gamma$ of the tame Galois group.

We also have a conjugacy class distinguished by $\zeta$.  Recall that there is a unique $W$-orbit of $\lambda$ and two sided cell $\textbf{c}$ attached to $\zeta$ for which there is a $w \in \textbf{c} \cap W_{\lambda}$ such that $\zeta \subset \underline{\chi}(\mathbb{L}^{\dot{w}}_{\lambda})$ is a summand of $G$-equivariant sheaves on $G$, i.e., a summand as $G(\mathbb{F}_q)$-representations.  The pair $(\lambda, \textbf{c})$ determines a semisimple element of $\widehat{T}$ and unipotent conjugacy class in the endoscopic group $\widehat{H}$, and the product of these gives a conjugacy class in $\widehat{G}$ which is independent of the representative $\lambda$ of the $W$-orbit.  We denote this conjugacy class by $\mathfrak{o}_{\zeta}$ and the ideal sheaf in $\mathscr{O}(\widehat{G})$ defined by its Zariski closure in $\widehat{G}$ as $\mathscr{J}_{\mathfrak{o}_{\zeta}}$.

\begin{theorem}\label{thm: annihilator is Jozeta}
    Let $\pi \ne 0$ be an automorphic form in $H^{cusp}_{\{0\},1,\zeta} \subset H^{cusp}_{\{0\},\Reg,\zeta}$.  Then $F_{\cdot,\gamma}$ gives a map $\mathscr{O}_{\widehat{G}} \to \mathscr{B}_{\zeta}$.  The annihilator of $\pi$ in $\mathscr{O}_{\widehat{G}}$ is $\mathscr{J}_{\mathfrak{o}_{\zeta}}$.
\end{theorem}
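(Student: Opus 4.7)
The plan is to identify the global framed excursion operator $F_{f,\gamma}$ acting on $H^{cusp}_{\{0\},\Reg,\zeta}$ with the sheaf-theoretic operator $F_{1,\phi(x,\xi),\mathfrak{M}}$ from Equation~\eqref{eqn:H-excursion}, after which the statement about the annihilator reduces immediately to Theorems~\ref{thm:upper bound} and~\ref{thm:injectivity}.

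First I would check well-definedness of $F_{\cdot,\gamma}$. Any $f \in \mathscr{O}_{\widehat{G}}$ can be written as a matrix coefficient $f(g) = \langle \xi, gx \rangle$ for some $V \in \Rep(\widehat{G})$ and $x \in V$, $\xi \in V^*$, and independence of the particular representative $(V,x,\xi)$ is a standard consequence of the compatibility of excursion operators with the creation and annihilation morphisms in $\Rep(\widehat{G})$, following \cite{lafforgue2018chtoucas,lafforgue2018d}. By construction $F_{f,\gamma}$ lies in the framed excursion algebra $\mathscr{B}_{\zeta}$, so we obtain the desired map.

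The core of the proof is the geometric identification. The space $H^{cusp}_{\{0\}\cup\{1\},\Reg \boxtimes V,\zeta}$ is the geometric generic stalk of the cohomology sheaf $\mathcal{H}_{\{0\}\cup\{1\},\Reg \boxtimes V,\zeta} = \mathfrak{p}_! \mathscr{F}_{\{0\}\cup\{1\},\Reg \boxtimes V,\zeta}$, and the action of $\gamma \in I_y/P_y$ on the second leg is realized as the monodromy of the nearby cycles $\Psi_{\{1\}} \mathfrak{p}_! \mathscr{F}_{\{0\}\cup\{1\},\Reg \boxtimes V,\zeta}$ after specialization to $y$. By Theorem~\ref{thm: psi-factorizable GFq} the representation $\zeta$ is $\Psi$-factorizable, so Theorem~\ref{thm:xuezorro} supplies a canonical isomorphism
\begin{equation}
\mathfrak{p}_! \Psi_{\{1\}} \mathscr{F}_{\{0\}\cup\{1\},\Reg \boxtimes V,\zeta} \xrightarrow{\sim} \Psi_{\{1\}} \mathfrak{p}_! \mathscr{F}_{\{0\}\cup\{1\},\Reg \boxtimes V,\zeta}
\end{equation}
under which the action of $\gamma$ is matched with the geometric monodromy $\mathfrak{M}$ studied in Section~\ref{sec:horocycle}, with our fixed choice of tame generator matching the convention used to define $\mathfrak{o}_{\zeta}$. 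Since monodromy on restricted shtukas factors through the tame quotient (Corollary~\ref{corr:conjugacy Ao}), so does the local $I_y$-action. This transports $F_{f,\gamma}$ to the operator $F_{1,\phi(x,\xi),\mathfrak{M}}$.

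Finally, for $f \in \mathscr{J}_{\mathfrak{o}_{\zeta}}$ Theorem~\ref{thm:upper bound} gives $F_{1,\phi(x,\xi),\mathfrak{M}}=0$, while for $f \notin \mathscr{J}_{\mathfrak{o}_{\zeta}}$ Theorem~\ref{thm:injectivity} gives $F_{1,\phi(x,\xi),\mathfrak{M}}\pi \ne 0$, using that $\pi \ne 0$ and that $\zeta$ appears as a summand of $\underline{\chi}(\mathbb{L}^{\dot{w}}_{\lambda})$ for some $w \cdot \lambda \in \textbf{c}$. Together these identify the annihilator of $\pi$ with $\mathscr{J}_{\mathfrak{o}_{\zeta}}$. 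The main obstacle is the geometric identification: one must track carefully how the Galois action on generic global cohomology is matched with the monodromy of nearby cycles on the restricted shtuka model, including verifying that the sign conventions relating $L_{\lambda}$ to a point in $\widehat{T}$, the choice of tame generator, and the identification of $\mathfrak{o}_{\zeta}$ all line up to produce the correct conjugacy class rather than, for example, its inverse or a Frobenius-twisted version.
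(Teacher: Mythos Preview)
Your proposal is correct and follows essentially the same route as the paper: identify the framed excursion operator $F_{f,\gamma}$ with the sheaf-theoretic operator $F_{1,\phi(x,\xi),\mathfrak{M}}$ via the Galois-equivariant isomorphism $\mathfrak{p}_!\Psi \simeq \Psi\mathfrak{p}_!$ of Theorem~\ref{thm:xuezorro}, then invoke Theorems~\ref{thm:upper bound} and~\ref{thm:injectivity} for the two inclusions. The paper's proof is terser and does not spell out the well-definedness step or the sign-convention caveats you raise, but the logical skeleton is the same.
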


\begin{proof}
    Let us set up notation by writing $\zeta$ as a summand of $\underline{\chi}(\mathbb{L}^{\dot{w}}_\lambda)$.  In this case, we may identify excursion operators on $H^{cusp}_{\{0\},1,\zeta}$ with the summand of excursion operators on $H^{cusp}_{\{0\},1,\underline{\chi}(\mathbb{L}^{\dot{w}}_{\lambda})}$.  We work with the latter excursion operators for the remainder of the proof.

    We first prove that every element $f \in \mathscr{J}_{\mathfrak{o}_{\gamma}}$ satisfies $F_{f,\gamma} \pi = 0$.  Writing $f(g) = \phi(x,\xi)$, the operator $F_{f,\gamma}$ is the same as $F_{\phi(x,\xi),\mathfrak{M}}$ in Equation~\eqref{eqn:H-excursion} since the isomorphism $\mathfrak{p}_! \Psi \simeq \Psi \mathfrak{p}_!$ in Theorem~\ref{thm:xuezorro} is Galois equivariant.  But we know that $F_{\phi(x,\xi),\mathfrak{M}} \pi = 0$, so the annihilator of $\pi$ contains $\mathscr{J}_{\mathfrak{o}_{\gamma}}$.

    To prove the reverse inclusion, consider a function $f'$ not in $\mathscr{J}_{\mathfrak{o}_{\gamma}}$.  By Theorem~\ref{thm:injectivity}, $F_{f,\gamma} \pi \ne 0$ and the annihilator cannot contain $f$.

\end{proof}

\subsection{Compatibility of the excursion algebra action on $H_{\{0\},1}$ and $H_{\{0\},1,\zeta}$}

If $\zeta \in \Rep(G(\mathbb{F}_q))$ is irreducible and $v \in \underline{\zeta^*}$ is a vector in the underlying vector space of the contragredient representation, we may produce a map $v \colon \zeta \to \Reg(G(\mathbb{F}_q))$ to the regular representation of $G(\mathbb{F}_q)$.  On the level of cohomology sheaves of shtukas this produces a map
\[v \colon \mathcal{H}_{I,V,\zeta} \to \mathcal{H}_{I,V,\Reg(G(\mathbb{F}_q))}.\]
This map is compatible with passing to cuspidal subspaces, which can be seen as the action of $G(\mathbb{F}_q)$ is an action by elements of the Hecke algebra, and there is an alternate characterization of the cuspidal subspace as the space of Hecke-finite vectors.  The goal of this section is to show that this map $v$ does not affect Langlands parameters, which is to say that the map $v$ commutes with the operations of creation, annihilation, and coalescence and is Galois equivariant.

\begin{proposition}
    The functor $(I,V,\zeta) \mapsto \mathcal{S}_{I,V} \boxtimes \zeta$ defines a functor
    \begin{equation}
    \begin{tikzcd}
        \Rep(\widehat{G}^{I}) \otimes \Rep(H(\mathbb{F}_q)) \arrow{r}& D(\Cht\mathcal{R}^{ad}_{G,I,N}|_{(C \setminus (N \cup R))^{I}}, \qlbar).
    \end{tikzcd}
    \end{equation}
\end{proposition}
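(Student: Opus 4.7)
The plan is to realize the asserted functor as a composition of three independently well-defined constructions, and then appeal to the universal property of the (Deligne) tensor product of $\qlbar$-linear categories. First, I would use the moduli description of Proposition~\ref{prop:restricted shtuka quotient definition} together with Theorem~\ref{thm:smooth model}: over the unramified locus $(C \setminus (N \cup R))^I$, the stack $\Cht\mathcal{R}^{ad}_{G,I,N}$ admits a natural map to the product $[G^{ad}_{I,\infty} \backslash \Gr_{G,I}] \times BH(\mathbb{F}_q)$, up to a neutral gerbe for a unipotent group $U_H$, which is invisible to the derived category of $\qlbar$-sheaves. This is precisely the geometric fact that lets the Satake factor and the character-sheaf factor be separated.

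Second, I would invoke geometric Satake on the first tensor factor to obtain the symmetric monoidal functor
\[
V \;\mapsto\; \mathcal{S}_{I,V} \;\in\; D([G^{ad}_{I,\infty} \backslash \Gr_{G,I}], \qlbar),
\]
and, on the second tensor factor, the standard equivalence $\Rep(H(\mathbb{F}_q)) \simeq \mathrm{Shv}(BH(\mathbb{F}_q), \qlbar)$ (valid because $H(\mathbb{F}_q)$ is a finite group) sending $\zeta$ to the corresponding local system, still denoted $\zeta$.

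Third, I would combine the two by forming the external product and pulling back along the map of the first step. External product is a bifunctor that is exact (hence sends direct sums to direct sums) in each variable, so the assignment
\[
(V, \zeta) \;\longmapsto\; \mathcal{S}_{I,V} \boxtimes \zeta
\]
is bilinear as a functor $\Rep(\widehat{G}^I) \times \Rep(H(\mathbb{F}_q)) \to D(\Cht\mathcal{R}^{ad}_{G,I,N}|_{(C \setminus (N \cup R))^I}, \qlbar)$. By the universal property of the Deligne tensor product it therefore descends uniquely to a functor out of $\Rep(\widehat{G}^I) \otimes \Rep(H(\mathbb{F}_q))$, as claimed.

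I do not expect a substantive obstacle here: this proposition is essentially a bookkeeping statement packaging the moduli description of the restricted shtuka together with geometric Satake and the elementary representation theory of a finite group. The only check worth spelling out is the bilinearity and the fact that the unipotent gerbe factor does not obstruct pullback of $\qlbar$-sheaves; once that is dispatched, the functoriality needed for the forthcoming compatibility of excursion operators on $H_{\{0\},1}$ and on $H_{\{0\},1,\zeta}$ follows without further work.
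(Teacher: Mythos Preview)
Your proposal is correct and follows the same approach as the paper: both rest on the product decomposition of the restricted shtuka stack over the unramified locus (coming from the trivialization $\Gr_{G,I}^{\triv(N)} \cong \Gr_{G,I} \times H$ over $(C \setminus (N\cup R))^I$) together with geometric Satake on the first factor and the identification of $\Rep(H(\mathbb{F}_q))$ with sheaves on $BH(\mathbb{F}_q)$ on the second. The paper dispatches this in one sentence, whereas you spell out the bilinearity and the Deligne tensor product explicitly; this is fine and arguably more careful, since the paper does not specify what $\otimes$ on the source means. One small misdirection: Theorem~\ref{thm:smooth model} concerns the smooth map from \emph{global} shtukas to restricted shtukas and is not what gives the product structure of $\Cht\mathcal{R}^{ad}_{G,I,N}$ itself; the relevant input is Proposition~\ref{prop:restricted shtuka quotient definition} (and its adjoint analogue), which already identifies the restricted shtuka over $(C\setminus N)^I$ with $[G_{I,\infty}\backslash \Gr_{G,I}] \times [U_H \backslash BH(\mathbb{F}_q)]$.
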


\begin{proof}
    This is immediate from the product structure of $\Gr_{G,I}^{\triv(N)}$ over $(C \setminus (N \cup R))^I$ and geometric Satake.
\end{proof}

\begin{proposition}
    Let $v \colon \zeta \to \pi$ be a map in $\Rep(H(\mathbb{F}_q))$.  Let $\overline{x}$ and $\overline{y}$ be two geometric points with $\mathfrak{sp} \colon \overline{y} \to \overline{x}$ a specialization in the \'etale topos of $(C \setminus (N \cup R))^I$.  Then the following diagram commutes:
    \begin{equation}
        \begin{tikzcd}
            \mathcal{H}_{I,V,\zeta}|_{\overline{x}} \arrow{r}{\mathfrak{sp}^*} \arrow{d}{v} & \mathcal{H}_{I,V,\zeta}|_{\overline{y}} \arrow{d}{v} \\
            \mathcal{H}_{I,V,\pi}|_{\overline{x}} \arrow{r}{\mathfrak{sp}^*} & \mathcal{H}_{I,V,\pi}|_{\overline{y}}
        \end{tikzcd}
    \end{equation}
\end{proposition}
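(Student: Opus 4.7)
The plan is to show that the vertical map $v$ in the diagram is induced by an honest morphism of sheaves on $(C \setminus (N \cup R))^I$, after which compatibility with the specialization maps $\mathfrak{sp}^*$ is automatic from naturality of the specialization functor in the étale topos.

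First I would invoke the preceding proposition, which asserts that $(I, V, \zeta) \mapsto \mathcal{S}_{I,V} \boxtimes \zeta$ is a functor out of $\Rep(\widehat{G}^I) \otimes \Rep(H(\mathbb{F}_q))$, to turn the morphism $v \colon \zeta \to \pi$ into a morphism of sheaves
\[ \mathcal{S}_{I,V} \boxtimes v \colon \mathcal{S}_{I,V} \boxtimes \zeta \to \mathcal{S}_{I,V} \boxtimes \pi \]
on $\Cht\mathcal{R}^{ad}_{G,I,N}|_{(C \setminus (N \cup R))^{I}}$. Pulling back along $\epsilon$ gives a morphism $\mathscr{F}_{I,V,\zeta} \to \mathscr{F}_{I,V,\pi}$ of sheaves on $\Cht_{G,\Xi,N,I}$, and then applying $\mathfrak{p}_!$ produces the desired morphism
\[ v_* \colon \mathcal{H}_{I,V,\zeta} \to \mathcal{H}_{I,V,\pi} \]
of constructible sheaves on $(C \setminus (N \cup R))^I$. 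By construction, the action of $v_*$ on stalks at any geometric point recovers the map labelled $v$ in the statement.

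Next, I would use the general fact that for any morphism $f \colon \mathcal{F} \to \mathcal{G}$ of étale sheaves on a scheme $X$ and any specialization $\mathfrak{sp} \colon \overline{y} \to \overline{x}$, the specialization maps $\mathfrak{sp}^* \colon \mathcal{F}|_{\overline{x}} \to \mathcal{F}|_{\overline{y}}$ and $\mathfrak{sp}^* \colon \mathcal{G}|_{\overline{x}} \to \mathcal{G}|_{\overline{y}}$ satisfy $\mathfrak{sp}^* \circ f_{\overline{x}} = f_{\overline{y}} \circ \mathfrak{sp}^*$. This is just the statement that a morphism of sheaves restricts compatibly to any étale neighborhood, and then to any geometric fiber inside that neighborhood. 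Applying this to $f = v_*$ with $\mathcal{F} = \mathcal{H}_{I,V,\zeta}$ and $\mathcal{G} = \mathcal{H}_{I,V,\pi}$ gives the commutativity of the required square.

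There is no serious obstacle here: the only thing to check is that the map $v$ appearing in the vertical arrows of the diagram agrees with the one induced by functoriality on cohomology sheaves, which is essentially its definition. The content of the statement is really that the construction $\zeta \mapsto \mathcal{H}_{I,V,\zeta}$ is a functor to the category of constructible sheaves (not merely to individual stalks), which is what the preceding proposition and the functoriality of $\epsilon^*$ and $\mathfrak{p}_!$ guarantee.
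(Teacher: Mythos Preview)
Your proposal is correct and follows essentially the same approach as the paper: the paper's proof simply notes that $v$ arises from a morphism of sheaves $\mathfrak{p}_! (\epsilon^{ad})^*(\mathcal{S}_{I,V} \boxtimes \zeta) \to \mathfrak{p}_! (\epsilon^{ad})^*(\mathcal{S}_{I,V} \boxtimes \pi)$, so the diagram commutes by functoriality of taking stalks. Your version spells out the construction of this sheaf morphism and the naturality of specialization in more detail, but the underlying argument is identical.
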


\begin{proof}
    The maps $v$ come from a morphism of sheaves
    \[\mathfrak{p}_! (\epsilon^{ad})^*(\mathcal{S}_{I,V} \boxtimes \zeta) \to \mathfrak{p}_! (\epsilon^{ad})^*(\mathcal{S}_{I,V} \boxtimes \pi),\]
    so this commutative diagram is just saying that taking stalks is functorial.
\end{proof}

\begin{remark}
    The above proposition when combined with the previous one shows compatibility with coalescence, since coalescence for a map $I \to J$ of finite sets involves specializations from the geometric generic point $\overline{\eta_I}$ to the geometric generic point of the generalized diagonal $\Delta_{I \to J}(\overline{\eta_J})$.

    Moreover, the above proposition also shows Galois equivariance, since Galois actions on stalks arises from automorphisms of the corresponding geometric points.
\end{remark}

\begin{proposition}
Excursion operators commute with morphisms $v \colon \zeta \to \pi$.
\end{proposition}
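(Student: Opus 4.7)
The plan is to observe that every morphism in the construction of $F_{f,\gamma}$ is induced by a morphism of sheaves of the form $\mathcal{S}_{I,V} \boxtimes \mathrm{id}_{\zeta}$ (for creation/annihilation), a specialization map (for coalescence), or a Galois automorphism of a geometric point (for the action of $\gamma$). Since $v$ is of the form $\mathrm{id}_{\mathcal{S}_{I,V}} \boxtimes v$, functoriality of each of these operations in the second tensor factor immediately gives commutativity.

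More concretely, first I would recall that the excursion operator $F_{f,\gamma}$ with $f(g_1,\dots,g_{|I|}) = \langle \xi, g \cdot x\rangle$ factors through a composition
\[
\begin{tikzcd}
H^{cusp}_{\{0\},\mathrm{Reg},\zeta} \arrow{r}{x}& H^{cusp}_{\{0\}\cup I, \mathrm{Reg}\boxtimes V\boxtimes V^{*}, \zeta} \arrow{r}{\mathfrak{sp}^{*}}& H^{cusp}_{\{0\}\cup I, \mathrm{Reg}\boxtimes V\boxtimes V^{*}, \zeta} \arrow{d}{\gamma} \\
H^{cusp}_{\{0\},\mathrm{Reg},\zeta} & H^{cusp}_{\{0\}\cup I, \mathrm{Reg}\boxtimes V\boxtimes V^{*}, \zeta} \arrow{l}{\xi} & H^{cusp}_{\{0\}\cup I, \mathrm{Reg}\boxtimes V\boxtimes V^{*}, \zeta} \arrow{l}{\mathfrak{sp}^{*}}
\end{tikzcd}
\]
where the specialization maps $\mathfrak{sp}^*$ are the coalescence/decoalescence isomorphisms between a geometric generic point of $C^{\{0\}\cup I}$ and a geometric generic point of the diagonal $\Delta(C^{\{0\}})$.

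Next I would check each of the four pieces separately. The creation morphism $x\colon 1\to V\otimes V^{*}$ and the annihilation morphism $\xi\colon V\otimes V^{*}\to 1$ lie in $\Rep(\widehat{G}^{I})$, and by the functor in the first proposition before the statement, they induce morphisms of the sheaves $\mathcal{S}_{I,V}\boxtimes\zeta$ that are the identity on the $\zeta$-factor. Hence the induced maps on $H^{cusp}_{\bullet,\bullet,-}$ are natural in the second argument. The specializations $\mathfrak{sp}^{*}$ and the Galois automorphism $\gamma$ are handled by the second proposition before the statement: both arise from morphisms in the \'etale topos of $(C\setminus(N\cup R))^{I}$ acting on stalks of a sheaf of the form $\mathcal{H}_{I,V,-}$, and by that naturality square they commute with $v$. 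Since the Hecke-finiteness condition defining the cuspidal subspace is preserved by $G(\F_{q})$-maps, $v$ also restricts from $H_{\bullet,\bullet,\zeta}$ to $H^{cusp}_{\bullet,\bullet,\zeta}$, giving the required commuting square.

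Finally, pasting the four commuting squares together yields $F_{f,\gamma}\circ v = v\circ F_{f,\gamma}$ as maps $H^{cusp}_{\{0\},\mathrm{Reg},\zeta}\to H^{cusp}_{\{0\},\mathrm{Reg},\pi}$, proving the proposition. There is no genuine obstacle here; the only mild subtlety is verifying that the fusion/coalescence isomorphism used to go between $I$-legged cohomology at $\Delta(\overline{\eta})$ and $1$-legged cohomology is itself natural in the second tensor factor, which follows from the cocartesian property of the functor $(I,V,\zeta)\mapsto \mathcal{S}_{I,V}\boxtimes \zeta$ in Theorem~\ref{thm:nearbyfusion}, applied with the morphism $\mathrm{id}\otimes v$.
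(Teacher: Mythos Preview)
Your proposal is correct and follows essentially the same route as the paper: decompose the excursion operator into creation, specialization to the diagonal, Galois action, inverse specialization, and annihilation, then use the two preceding propositions to see that each step is natural in the $\zeta$-slot, and paste the resulting squares. The paper displays exactly this ladder diagram (with legs $I\cup J_1\cup J_2$ and the inverse specialization written as $(\mathfrak{sp}^*)^{-1}$); your leg-indexing is slightly compressed and the final appeal to Theorem~\ref{thm:nearbyfusion} is unnecessary here, since the coalescence step is already covered by the stalk-naturality proposition, but these are cosmetic points.
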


\begin{proof}
    We will prove this for unframed excursion operators.  The verification for framed excursion operators is similar.  Let $I$ and $J$ be finite sets and $V \in \Rep(\widehat{G}^I)$ and $W \in \Rep(\widehat{G}^J)$.  Let $J_1$ and $J_2$ be two identical copies of $J$.  For any finite set $K$, let $\overline{\eta_K}$ be the geometric generic point of $C^K$ and fix a specialization $\mathfrak{sp} \colon \overline{\eta_{I \cup J_1 \cup J_2}} \to \Delta(\overline{\eta_{I \cup J}})$ under the map $I \cup J_1 \cup J_2 \to I \cup J$.  The morphism $v$ produces a commutative diagram from one excursion operator to the other, depicted in Figure~\ref{fig: excursion compatibility}.
    \begin{figure}
    \begin{center}
        \begin{tikzcd}
            \mathcal{H}_{V \boxtimes 1,\zeta}|_{\overline{\eta_{I\cup J}}} \arrow{d}{\mathscr{C}^{\sharp}} \arrow{r}{v} & \mathcal{H}_{V \boxtimes 1,\pi}|_{\overline{\eta_{I\cup J}}} \arrow{d}{\mathscr{C}^{\sharp}} \\
            \mathcal{H}_{V \boxtimes W \boxtimes W^*,\zeta}|_{\Delta(\overline{\eta_{I\cup J}})} \arrow{d}{\mathfrak{sp}^*} \arrow{r}{v} & \mathcal{H}_{V \boxtimes W \boxtimes W^*,\pi}|_{\Delta(\overline{\eta_{I\cup J}})} \arrow{d}{\mathfrak{sp}^*} \\
            \mathcal{H}_{V \boxtimes W \boxtimes W^*,\zeta}|_{\overline{\eta_{I\cup J_1 \cup J_2}}} \arrow{d}{(\gamma_j)_{j \in J_1}} \arrow{r}{v} & \mathcal{H}_{V \boxtimes W \boxtimes W^*,\pi}|_{\overline{\eta_{I\cup J_1 \cup J_2}}} \arrow{d}{(\gamma_j)_{j \in J_1}} \\
            \mathcal{H}_{V \boxtimes W \boxtimes W^*,\zeta}|_{\overline{\eta_{I\cup J_1 \cup J_2}}} \arrow{d}{(\mathfrak{sp}^*)^{-1}} \arrow{r}{v} & \mathcal{H}_{V \boxtimes W \boxtimes W^*,\pi}|_{\overline{\eta_{I\cup J_1 \cup J_2}}} \arrow{d}{(\mathfrak{sp}^*)^{-1}} \\
            \mathcal{H}_{V \boxtimes W \boxtimes W^*,\zeta}|_{\Delta(\overline{\eta_{I\cup J}})} \arrow{d}{\mathscr{C}^{\flat}} \arrow{r}{v} & \mathcal{H}_{V \boxtimes W \boxtimes W^*,\pi}|_{\Delta(\overline{\eta_{I\cup J}})} \arrow{d}{\mathscr{C}^{\flat}} \\
            \mathcal{H}_{V \boxtimes 1,\zeta}|_{\overline{\eta_{I\cup J}}} \arrow{r}{v} & \mathcal{H}_{V \boxtimes 1,\pi}|_{\overline{\eta_{I\cup J}}}
        \end{tikzcd}
    \caption{Naturality of excursion operators with respect to maps in $\Rep(H(\mathbb{F}_q))$.}\label{fig: excursion compatibility}
    \end{center}
    \end{figure}
\end{proof}

\subsection{General annihilators in the framed excursion algebra}

We now want to explain the significance of annihilator ideals in the framed or unframed excursion algebra for Vincent Lafforgue's automorphic-to-Galois direction of the global Langlands correspondence.  Recall that for any fixed level, the space of cuspidal automorphic forms is finite-dimensional and decomposes as a direct sum
\begin{equation}\label{eqn: isotypic decomposition}
H^{cusp}_{\{0\},1} \cong \bigoplus_{\rho} \mathfrak{H}_{\rho},
\end{equation}
where $\rho$ runs over semisimple Langlands parameters.  If $\pi \in \mathfrak{H}_{\rho}$ is a nonzero cuspidal automorphic form, we say that it is attached to the semisimple parameter $\rho$.  Such semisimple parameters arise from a smaller algebra of excursion operators than we have considered in previous sections, namely, the operators $\mathscr{B}^1 \subset \mathscr{B}$ such that $f \in \mathscr{O}(\widehat{G} \backslash \widehat{G}^I / \widehat{G})$.  It is a fact that such operators $\mathscr{B}^1$ preserve $H_{\{0\},1} \subset H_{\{0\},\Reg}$ and their characters can be put in correspondence with semisimple Langlands parameters.  A priori, these give semisimple representations of $\Weil(\overline{K} / K)$, but it can be shown that the Frobenius acts continuously and thus we get a representation of $\Gal(\overline{K} / K)$.  To aid our development, we introduce some notation for ``framed'' and ``unframed'' annihilators in excursion algebras.

Let us recast V. Lafforgue's correspondence in terms of annihilators in the excursion algebra.  Consider the action of $\widehat{G} \times \widehat{G}$ on $\widehat{G}^I$ where the first copy acts by multiplication on the left and the second by multiplication on the right.  The framed excursion operator $f \in \mathscr{O}(\widehat{G}^{n})^{\widehat{G} \times \widehat{G}}$ preserves each $H_{I,V}$ and in particular preserves sends $H^{cusp}_{\{0\},1}$ to itself.  This produces excursion operators $F_{f, (\gamma_i)_{i \in I}}$ in the sense of \cite{lafforgue2018chtoucas}.  Running over all possible choices of $f \in \mathscr{O}(\widehat{G}^{n})^{\widehat{G} \times \widehat{G}}$, varying $n$ and $(\gamma_1, \dots, \gamma_n)$, produces the commutative algebra $\mathscr{B}^1$, which we call the excursion algebra, and the characters of the excursion algebra are in 1-1 correspondence with semisimple Langlands parameters.

\begin{definition}
    For a tuple $(\gamma_1, \dots, \gamma_n) \in \Gal(\overline{K} / K)^n$, let $\mathscr{B}_{(\gamma_1, \dots, \gamma_n)}\pi$ denote the module generated by $F_{f, (\gamma_1, \dots, \gamma_n)} \pi$ for all $f \in \mathscr{O}(\widehat{G}^n)$.  Let $\Ann^{\Box}_{(\gamma_1, \dots, \gamma_n)}(\pi)$ denote the annihilator of the module $\mathscr{B}_{(\gamma_1, \dots, \gamma_n)}\pi$ under the action of $\mathscr{O}(\widehat{G}^n)$.  Let $\xi^{\Box}_{\pi}(\gamma_1, \dots, \gamma_n)$ be the support of $\Ann^{\Box}_{(\gamma_1, \dots, \gamma_n)}(\pi)$ as a subset of $\widehat{G}^n$.

    Let $\mathscr{B}^1_{(\gamma_1, \dots, \gamma_n)} \pi$ denote the module generated by $F_{f, (1, \gamma_1, \dots, \gamma_n)} \pi$ for all $f \in \mathscr{O}(\widehat{G}^{n+1})^{\widehat{G} \times \widehat{G}}$.  Let $\Ann_{(\gamma_1, \dots, \gamma_n)}(\pi)$ denote the annihilator of the module $\mathscr{B}^1_{(\gamma_1, \dots, \gamma_n)} \pi$ under the action of $\mathscr{O}(\widehat{G}^{n+1})^{\widehat{G} \times \widehat{G}}$.  Let $\xi_{\pi}(\gamma_1, \dots, \gamma_n)$ be the support of $\Ann_{(\gamma_1, \dots, \gamma_n)}(\pi)$ as a subset of $\widehat{G}^{n+1} // (\widehat{G} \times \widehat{G})$.
\end{definition}

Suppose $\pi \in \mathfrak{H}_{\rho}$ corresponding to the character $\nu$ of the excursion algebra and let $\rho$ be a semisimple $\widehat{G}(\qlbar)$-valued Galois representation corresponding to $\nu$.  We briefly recall how $\rho$ can be constructed from $\nu$.  Following Lafforgue \cite[Proof of Proposition~5.7]{lafforgue2014introduction} \cite[Proof of Proposition~11.7]{lafforgue2018chtoucas}, recall the isomorphism $\widehat{G}^{n+1} // (\widehat{G} \times \widehat{G}) \cong \widehat{G}^n // \widehat{G}$, where the latter quotient is the action by conjugation.  Let $\xi_{n}(\gamma_1, \dots, \gamma_n) = \xi_{\pi}(\gamma_1, \dots, \gamma_n)$ denote the $\qlbar$-point in $\widehat{G}^n // \widehat{G}$ corresponding to the character $f \mapsto \nu(F_{f, (1, \gamma_1, \dots, \gamma_n)}) \colon \mathscr{O}(\widehat{G}^{n+1})^{\widehat{G} \times \widehat{G}} \to \qlbar$.  By Richardson's theorem, points in $\widehat{G}^n // \widehat{G}$ correspond to semisimple conjugacy classes, so and let $\overline{\xi} \subset \widehat{G}^n$ be the semisimple conjugacy class attached to a point $\xi \in \widehat{G}^n // \widehat{G}$.

Now assume that $(\gamma_1, \dots, \gamma_n)$ are chosen so that $\rho(\gamma_1), \dots \rho(\gamma_n) \in \widehat{G}(\qlbar)$ generate a Zariski dense subset of the Zariski closure of the image $\rho(\Gal(\overline{K} / K))$.  Then for any $\gamma \in \Gal(\overline{K} / K)$, there is a unique $g$ such that $g$ lives in the Zariski closure of the group generated by $\rho(\gamma_1), \dots \rho(\gamma_n)$ and $(\rho(\gamma_1), \dots, \rho(\gamma_n), g) \in \overline{\xi_{n+1}(\gamma_1, \dots, \gamma_n, \gamma)} \subset \widehat{G}^{n+1}$, and this unique $g$ is equal to $\rho(\gamma)$.

The character $\nu$ of the excursion algebra can be considered as giving maximal ideals $\nu_{(\gamma_1, \dots, \gamma_n)} \subset \mathscr{O}(\widehat{G}^{n+1})^{\widehat{G} \times \widehat{G}}$ for any $n$-tuple $(\gamma_1, \dots, \gamma_n)$.  This maximal ideal $\nu_{(\gamma_1, \dots, \gamma_n)}$ is characterized by the property that $\nu_{(\gamma_1, \dots, \gamma_n)}^n \subset \Ann_{(1, \gamma_1, \dots, \gamma_n)}(\pi) \subset \nu_{(\gamma_1, \dots, \gamma_n)}$.

\begin{proposition}
\begin{enumerate}
    \item The framed and unframed annihilators are related by
    \[ \Ann_{(\gamma_1, \dots, \gamma_n)}(\pi) = \Ann^{\Box}_{(\gamma_1, \dots, \gamma_n)}(\pi) \cap \mathscr{O}(\widehat{G}^{n+1})^{\widehat{G} \times \widehat{G}}. \]
    \item Let $I \to J$ be a map of finite sets, and consider the corresponding map $\widehat{G}^J \to \widehat{G}^{I}$ and ring map $\mathscr{O}(\widehat{G}^I) \to \mathscr{O}(\widehat{G}^J)$.  If $I \to J$ is an inclusion the ring map is also an injection and
    \[ \Ann^{\Box}_{(\gamma_i)_{i \in I}}(\pi) = \Ann^{\Box}_{(\gamma_j)_{j \in J}}(\pi) \cap \mathscr{O}(\widehat{G}^I) \]
    \item The ideal $\Ann^{\Box}_{(\gamma_1, \dots, \gamma_n)}(\pi)$ is invariant under $\widehat{G}$-conjugation.  Therefore, the variety $\xi^{\Box}_{(\gamma_1, \dots, \gamma_n)}$ is invariant under $\widehat{G}$-conjugation.
\end{enumerate}
\end{proposition}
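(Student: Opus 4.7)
The plan is to derive each of the three parts from the functorial properties of V.~Lafforgue's framed excursion operators \cite{lafforgue2018d}, combined with the compatibility results already established in the preceding section. All three reduce to identities among creation/annihilation, coalescence, and Galois-action morphisms, and the module-theoretic annihilators can be computed pointwise since $f \mapsto F_{f,(\gamma_i)}$ is linear and multiplicative for fixed $(\gamma_i)$.

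First I would handle part (2), which is the simplest. For an inclusion $I \hookrightarrow J$, a function $f \in \mathscr{O}(\widehat{G}^I)$ lifts to $\tilde f \in \mathscr{O}(\widehat{G}^J)$ by pullback along the projection $\widehat{G}^J \to \widehat{G}^I$. If $f(g) = \langle \xi, g x \rangle$ for some $V \in \Rep(\widehat{G}^I)$, then $F_{\tilde f, (\gamma_j)_{j\in J}}$ is built from $V \boxtimes 1^{\boxtimes(J\setminus I)}$. The Galois action on the trivial summands is trivial and the coalescence maps $\mathscr{C}^{\sharp}, \mathscr{C}^{\flat}$ on the extra coordinates reduce to the identity, so $F_{\tilde f, (\gamma_j)} \pi = F_{f, (\gamma_i)} \pi$. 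Thus the two framed annihilator ideals match under the ring inclusion $\mathscr{O}(\widehat{G}^I) \hookrightarrow \mathscr{O}(\widehat{G}^J)$.

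Part (1) exploits the isomorphism $\widehat{G}^{n+1} // (\widehat{G} \times \widehat{G}) \xrightarrow{\sim} \widehat{G}^n // \widehat{G}$ given by $(g_0,\dots,g_n) \mapsto (g_0^{-1} g_1, \dots, g_0^{-1} g_n)$, which identifies $\mathscr{O}(\widehat{G}^{n+1})^{\widehat{G} \times \widehat{G}}$ with $\mathscr{O}(\widehat{G}^n)^{\widehat{G}} \subset \mathscr{O}(\widehat{G}^n)$. Explicitly, setting $g_0 = 1$ sends $f$ to $f'(g_1, \dots, g_n) = f(1, g_1, \dots, g_n)$. The identity $F_{f, (1, \gamma_1, \dots, \gamma_n)} \pi = F_{f', (\gamma_1, \dots, \gamma_n)} \pi$ holds because the Galois element $1$ acts trivially on the zeroth leg, which then collapses via coalescence; this is the ``unit axiom'' for Lafforgue's framed excursion construction. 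Consequently $\Ann_{(\gamma_i)}(\pi) = \Ann^{\Box}_{(\gamma_i)}(\pi) \cap \mathscr{O}(\widehat{G}^n)^{\widehat{G}}$ under this identification.

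Finally, part (3) uses the right regular action $R$ of $\widehat{G}$ on $\Reg$, which extends to an automorphism of $H^{cusp}_{\{0\}, \Reg, \zeta}$ commuting with coalescence, Galois action, and $\mathfrak{p}_!\Psi$, but conjugating creation/annihilation by $(x, \xi) \mapsto (hx, h\xi)$. Writing $f(g) = \langle \xi, g x \rangle$ and tracing the effect on the framed excursion operator yields $F_{h \cdot f, (\gamma_i)} = R_h \circ F_{f, (\gamma_i)} \circ R_h^{-1}$. Since $1 \subset \Reg$ is precisely the $R$-fixed subrepresentation, $R_h \pi = \pi$, and invertibility of $R_h$ gives $F_{h \cdot f, (\gamma_i)} \pi = 0$ if and only if $F_{f, (\gamma_i)} \pi = 0$, proving that $\Ann^{\Box}_{(\gamma_i)}(\pi)$ is $\widehat{G}$-conjugation invariant. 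The main technical obstacle will be verifying cleanly the conjugation identity in part~(3): this requires distinguishing the two $\widehat{G}$-actions on $H^{cusp}_{\{0\}, \Reg, \zeta}$ (one coming from the Satake structure on the leg $\{0\}$ with $V = \Reg$, and one coming from the right regular action on $\Reg$) and showing that $R$ commutes with every structure morphism appearing in the excursion construction except the creation and annihilation maps, which it twists by the prescribed rule.
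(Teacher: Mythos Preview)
Your proposal is correct and follows essentially the same approach as the paper. The paper's proof is extremely terse: for (1) it simply says ``this is a tautology'' (the unframed algebra is a subring acting via the same operators), for (2) it says the same but with the ring map $f \mapsto f \otimes 1$, and for (3) it invokes exactly the conjugation identity $F_{f^h,\gamma} = h\,F_{f,\gamma}\,h^{-1}$ (citing \cite[Section~6]{lafforgue2018d}) together with $h^{-1}\pi = \pi$ for $\pi \in H^{cusp}_{\{0\},1}$. Your write-up unpacks the same ideas with more care, in particular making explicit the identification $\mathscr{O}(\widehat{G}^{n+1})^{\widehat{G}\times\widehat{G}} \cong \mathscr{O}(\widehat{G}^n)^{\widehat{G}}$ that the paper leaves implicit; the ``technical obstacle'' you flag in (3) is precisely the input the paper imports from Lafforgue--Zhu rather than reproving.
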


\begin{proof}
\begin{enumerate}
    \item This is a tautology because the framed excursion algebra acts in the same way as the excursion algebra, and is simply a larger ring.  So elements of the smaller ring that annihilate $\pi$ are precisely those that annihilate $\pi$ in the larger ring that live in the smaller ring.
    \item This is identical to the above but relating the two rings by $f \mapsto f \otimes 1$ where $1 \in \mathscr{O}(\widehat{G}^{J \setminus I})$.
    \item The second statement follows from the first.  So it suffices to check that the annihilator of $\pi$ must be conjugation invariant in the sense that if $F_{f, (\gamma_1, \dots, \gamma_n)} \pi = 0$, then $f^h(g) = f(h^{-1}gh)$ also satisfies $F_{f^h, (\gamma_1, \dots, \gamma_n)} \pi = 0$.  This results from the general fact that $F_{f^h,(\gamma_1, \dots, \gamma_n)} = hF_{f,(\gamma_1, \dots, \gamma_n)} h^{-1}$ where the action of $F_{f,\gamma}$ is by excursion operators and the action of $h,h^{-1} \in \widehat{G}$ is by the right action on $H^{cusp}_{\Reg}$ \cite[Section~6]{lafforgue2018d}.  If $F_{f,\gamma} \pi = 0$ and $\pi \in H^{cusp}_{1} \subset H^{cusp}_{\Reg}$, then
    \[F_{f^h,(\gamma_1, \dots, \gamma_n)} \pi = h F_{f,(\gamma_1, \dots, \gamma_n)} h^{-1} \pi = h F_{f,(\gamma_1, \dots, \gamma_n)} \pi = 0.\]
\end{enumerate}
\end{proof}

On supports, the above proposition produces the commutative diagram
\begin{equation}\label{eqn: excursion annihilator diagram}
    \begin{tikzcd}
        \xi_{\pi}((\gamma_i)_{i \in I}) \arrow{d} & \xi_{\pi}^{\Box}((\gamma_i)_{i \in I}) \arrow{r} \arrow{l} \arrow{d} & \xi_{\pi}^{\Box}((\gamma_j)_{j \in J}) \arrow{d} \\
        \widehat{G}^I // \widehat{G} & \widehat{G}^I \arrow{r} \arrow{l} & \widehat{G}^J
    \end{tikzcd}
\end{equation}
for finite sets $J \subset I$, where the framed supports $\xi^{\Box}$ are a union of orbits under $\widehat{G}$-conjugation.

\subsection{Proof of the first part of Theorem~\ref{thm:depth0}}

\begin{proposition}
    For any $\gamma \in \Gal(\overline{K} / K)$ and $\pi \in \mathfrak{H}_{\rho}$ nonzero corresponding to some semisimple Galois representation $\rho$, and if $\xi^{\Box}_\pi(\gamma)$ is the closure of a conjugacy class $\mathfrak{o}_{\pi}$, then $\rho(\gamma)$ lives in the closure of $\mathfrak{o}_{\pi}$.
\end{proposition}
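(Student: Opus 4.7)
The plan is to reduce to a containment in a high-dimensional framed support via Lafforgue's construction of $\rho$ from the character $\nu$, and then to project back down to $\widehat{G}$ using the compatibility in diagram~\eqref{eqn: excursion annihilator diagram}. First I would choose auxiliary Galois elements $\gamma_1, \ldots, \gamma_n \in \Gal(\overline{K}/K)$ such that $\rho(\gamma_1), \ldots, \rho(\gamma_n)$ generate a Zariski dense subset of the Zariski closure of $\rho(\Gal(\overline{K}/K))$ in $\widehat{G}(\qlbar)$; such a choice exists because $\widehat{G}$ is a linear algebraic group of finite dimension. The construction of $\rho$ from $\nu$ recalled in the paper then places the tuple $(\rho(\gamma_1), \ldots, \rho(\gamma_n), \rho(\gamma))$ in the preimage of the single point $\xi_\pi(\gamma_1, \ldots, \gamma_n, \gamma)$ under the GIT quotient $\widehat{G}^{n+1} \to \widehat{G}^{n+1}//\widehat{G}$.

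The second step is to upgrade this to a containment in the framed support itself, namely $(\rho(\gamma_1), \ldots, \rho(\gamma_n), \rho(\gamma)) \in \xi^{\Box}_{\pi}(\gamma_1, \ldots, \gamma_n, \gamma) \subset \widehat{G}^{n+1}$. I would argue this by a direct computation of framed excursion operators. For $f = \phi(x,\xi)$ with $x \in V$ and $\xi \in V^*$, the framed operator $F_{f,(\gamma_1,\ldots,\gamma_n, \gamma)}$ factors through creation of a $V$-leg, the Galois action on the new legs, and annihilation by $\xi$; on the $\pi$-isotypic component of the cohomology of shtukas, the Galois action on the $V$-factor is by $\rho(\gamma_1) \otimes \cdots \otimes \rho(\gamma_n) \otimes \rho(\gamma)$, so the composition evaluates $f$ at $(\rho(\gamma_1), \ldots, \rho(\gamma_n), \rho(\gamma))$ times $\pi$ (suitably interpreted as a vector in $H^{cusp}_{\{0\},\Reg}$). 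Hence $F_{f,\cdot}\pi = 0$ forces $f$ to vanish at this point, and $\widehat{G}$-conjugation invariance of $\Ann^{\Box}$ then extends this vanishing to the entire conjugation orbit, which is exactly the containment we want.

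Finally I would apply the diagram~\eqref{eqn: excursion annihilator diagram} with $I = \{\gamma_1, \ldots, \gamma_n, \gamma\}$ and $J = \{\gamma\}$. The map $\widehat{G}^I \to \widehat{G}^J$ is projection to the $\gamma$-coordinate, and the commutativity of the diagram forces the image of $\xi^{\Box}_{\pi}(I)$ to land in $\xi^{\Box}_{\pi}(\gamma) = \overline{\mathfrak{o}_{\pi}}$. The image of $(\rho(\gamma_1), \ldots, \rho(\gamma_n), \rho(\gamma))$ under this projection is $\rho(\gamma)$, so $\rho(\gamma) \in \overline{\mathfrak{o}_{\pi}}$ as required.

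The main obstacle is the second step. The framed ideal $\Ann^{\Box}$ is only $\widehat{G}$-conjugation-invariant (not $\widehat{G} \times \widehat{G}$-invariant), so the framed vanishing locus $\xi^{\Box}_{\pi}$ need not coincide with the full preimage under GIT of the unframed vanishing locus $\xi_{\pi}$; Lafforgue's construction only outputs information about the latter. Running the direct calculation above requires care in interpreting the isotypic decomposition of $H^{cusp}_{\Reg \boxtimes V}$ and in how the Satake identifications and Lafforgue's characterization of $\rho$ as the character governing the Galois action on the $\pi$-component interact with creation and annihilation in the framed (non-invariant) setting.
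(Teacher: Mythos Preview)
Your first and third steps match the paper exactly: choose $\gamma_1,\dots,\gamma_n$ so that their $\rho$-images generate the Zariski closure of the image of Galois, and then project down via diagram~\eqref{eqn: excursion annihilator diagram} with $J=\{0\}$, $\gamma_0=\gamma$.

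The gap is precisely your Step~2, and you have correctly diagnosed why it is problematic. The framed operators $F_{f,(\gamma_i)}$ do not preserve $H^{cusp}_{\{0\},1}$ inside $H^{cusp}_{\{0\},\Reg}$, and the clean formula $F_{f,(\gamma_i)}\pi = f(\rho(\gamma_1),\dots,\rho(\gamma_n),\rho(\gamma))\cdot\pi$ that you want is only available for the $\widehat{G}\times\widehat{G}$-invariant subalgebra $\mathscr{B}^1$. For general $f$ the output lands in some other $H^{cusp}_{\{0\},V}$, and there is no evident reason the annihilator of the module $\mathscr{B}_{(\gamma_i)}\pi$ should be the full ideal of the point $(\rho(\gamma_i))$; conjugation-invariance alone does not bridge this.

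The paper bypasses this entirely, and the replacement for your Step~2 is short. One does \emph{not} need $(\rho(\gamma_1),\dots,\rho(\gamma_n),\rho(\gamma))\in\xi^{\Box}_{\pi}(\gamma_1,\dots,\gamma_n,\gamma)$; it suffices to land in its Zariski closure. The framed support $\xi^{\Box}_{\pi}(\gamma_1,\dots,\gamma_n,\gamma)$ is a nonempty union of $\widehat{G}$-conjugation orbits whose image in the GIT quotient is the single point $\xi_{\pi}(\gamma_1,\dots,\gamma_n,\gamma)$ (by part~(1) of the preceding proposition). By Richardson, the closure of any orbit in a GIT fibre contains the unique closed orbit in that fibre, which is exactly the semisimple conjugacy class $\overline{\xi_{\pi}(\gamma_1,\dots,\gamma_n,\gamma)}$; and Lafforgue's construction places $(\rho(\gamma_1),\dots,\rho(\gamma_n),\rho(\gamma))$ in that closed orbit. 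Since $\xi^{\Box}_{\pi}(\gamma_1,\dots,\gamma_n,\gamma)\subset\widehat{G}^n\times\overline{\mathfrak{o}_{\pi}}$ by your Step~3 argument and the right-hand side is closed, the closed orbit is also contained there, and projecting gives $\rho(\gamma)\in\overline{\mathfrak{o}_{\pi}}$.
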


\begin{proof}
    Consider the diagram~\eqref{eqn: excursion annihilator diagram} with $J = \{ 0 \}$ and $\gamma_0 = \gamma$.  By the commutativity of the diagram, $\xi^{\Box}(\gamma_1, \dots, \gamma_n, \gamma)$ and its Zariski closure are contained in the preimage of the closure of this conjugacy class.  Choosing $(\gamma_1, \dots, \gamma_n)$ as in the construction of the Langlands parameter, $\overline{\xi_{\pi}(\gamma_1, \dots, \gamma_n, \gamma)}$ is the unique closed $\widehat{G}$ orbit in the preimage of $\xi_{\pi}(\gamma_1, \dots, \gamma_n, \gamma)$ along the GIT quotient map and so must be contained in the Zariski closure of $\xi^{\Box}(\gamma_1, \dots, \gamma_n, \gamma)$.  Since $(\rho(\gamma_1), \dots, \rho(\gamma_n),\rho(\gamma))$ lives in this preimage, which in turn lives in the preimage of the closure, $\widehat{G}^n \times \overline{\mathfrak{o}_{\pi}}$, we conclude that $\rho(\gamma)$ lives in the closure of $\mathfrak{o}_{\pi}$.
\end{proof}

By combining this proposition with Theorem~\ref{thm: annihilator is Jozeta}, we arrive at the corollary, which is the first part of Theorem~\ref{thm:depth0}.

\begin{corollary}\label{cor: isotypic component closure zeta}
    Let $\rho$ be a semisimple Langlands parameter $\rho$ attached to a nonzero cuspidal automorphic form $\pi \in H^{cusp}_{\{0\},1}$.  If $\zeta$ is an irreducible representation of $G(\mathbb{F}_q)$ and $\pi$ lives in the $\zeta$-isotypic component of cuspidal automorphic forms under the $G(\mathbb{F}_q)$-action, then $\rho$ is trivial on wild inertia at $y$.  Furthermore, if $\gamma$ is the tame generator at the point $y$, $\rho(\gamma) \in \overline{\mathfrak{o}_{\zeta}}$.
\end{corollary}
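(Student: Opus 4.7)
\medskip

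The plan is to combine Theorem~\ref{thm: annihilator is Jozeta} with the preceding proposition via the diagram~\eqref{eqn: excursion annihilator diagram}, handling the wild and tame parts separately. Both parts boil down to computing the closed set $\xi^{\Box}_{\pi}(\gamma')$ for a single element $\gamma' \in \Gal(\overline{K}/K)$ and then invoking the preceding proposition to transfer this to the constraint $\rho(\gamma') \in \xi^{\Box}_\pi(\gamma')$.

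First I would handle the wild inertia. The starting point is the observation already made in the text that because $\zeta$ is a depth~$0$ representation (factoring through $G_y/G_y^+ = G(\F_q)$), the sheaves $\mathscr{F}_{I,V,\zeta}$ live over $\Cht_{G,\Xi,N,I}$ with $N = 0 \cdot y$, and the pushforward cohomology $\mathcal{H}_{I,V,\zeta}$ is a constructible sheaf on $(C \setminus (N \cup R))^I$ whose monodromy at the geometric generic point factors through $\Weil^{y}(C \setminus (N \cup R), \overline{\eta})^I$, i.e. kills wild inertia at $y$. Consequently, for any $\gamma_w \in P_y$ and any $f \in \mathscr{O}(\widehat{G})$, the framed excursion operator $F_{f,\gamma_w}$ equals $F_{f,1}$; but $F_{f,1}$ is just scalar multiplication by $f(1)$ (the creation-then-annihilation composition collapses when the middle Galois action is trivial). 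Therefore $\Ann^{\Box}_{\gamma_w}(\pi)$ is the maximal ideal at $1 \in \widehat{G}$, so $\xi^{\Box}_\pi(\gamma_w) = \{1\}$. Applying the preceding proposition yields $\rho(\gamma_w) = 1$ for every $\gamma_w \in P_y$, i.e. $\rho$ is trivial on wild inertia.

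Since $\rho$ kills $P_y$, the element $\rho(\gamma)$ is well-defined for the tame generator $\gamma \in I/P$. Now I would invoke Theorem~\ref{thm: annihilator is Jozeta}: for $\pi$ in the $\zeta$-isotypic component, the annihilator of $\pi$ in $\mathscr{O}(\widehat{G})$ under the map $f \mapsto F_{f,\gamma}$ is exactly $\mathscr{J}_{\mathfrak{o}_{\zeta}}$, the defining ideal of $\overline{\mathfrak{o}_\zeta}$. In other words $\xi^{\Box}_\pi(\gamma) = \overline{\mathfrak{o}_\zeta}$. The preceding proposition then immediately gives $\rho(\gamma) \in \overline{\mathfrak{o}_\zeta}$, which is the stated conclusion.

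There is no serious obstacle: the argument is essentially bookkeeping once the hard inputs (Theorem~\ref{thm: annihilator is Jozeta}, itself built on Theorems~\ref{thm:upper bound} and~\ref{thm:injectivity}, together with the preceding Zariski-density proposition) are in place. The only point that deserves care is justifying that excursion operators evaluated at $\gamma_w \in P_y$ genuinely reduce to the trivial excursion operator; this rests on the fact that the nearby cycles sheaves $\Psi_J \mathscr{F}_{I \cup J, V\boxtimes W, \zeta}$ and their pushforwards carry Galois actions factoring through $\Weil^y$, which in turn follows from the smooth local model of Theorem~\ref{thm:smooth model} combined with the observation that the special fiber of restricted shtukas is a constant object along $C$ at $y$ so no wild ramification is introduced.
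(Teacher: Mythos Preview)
Your argument is correct and matches the paper's approach: the corollary is obtained by feeding Theorem~\ref{thm: annihilator is Jozeta} into the preceding proposition, exactly as you do. The paper's own proof is a single sentence pointing to this combination; you have simply unpacked it, and in particular made explicit the wild inertia step (which the paper absorbs into the earlier assertion that excursion operators factor through $\Weil^{y}(C \setminus (N \cup R), \overline{\eta})$, together with the tameness of monodromy established in Corollary~\ref{corr:conjugacy Alambda} and Theorem~\ref{thm: monodromy chtr}).

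One small remark on your bookkeeping: the preceding proposition is phrased in terms of $\xi^{\Box}_\pi(\gamma)$, which is the support of $\Ann^{\Box}_{\gamma}(\pi)$, the annihilator of the \emph{module} $\mathscr{B}_\gamma \pi$, whereas Theorem~\ref{thm: annihilator is Jozeta} computes the annihilator of the \emph{element} $\pi$. These agree because $\mathscr{J}_{\mathfrak{o}_\zeta}$ is prime (the closure of a conjugacy class in a connected reductive group is irreducible), so $f \cdot g \cdot \pi = 0$ for all $g$ iff $f \in \mathscr{J}_{\mathfrak{o}_\zeta}$. This is implicit in the paper but worth a sentence if you want the argument to be self-contained.
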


\subsection{The elliptic case}

A special case is when $\rho$ is elliptic, which has been studied by Lafforgue and Zhu \cite{lafforgue2018d}.  We can consider the image $\rho(\Gal(\overline{K} / K))$ in $\widehat{G}(\qlbar)$ and denote the centralizer of the image by $\mathfrak{S}_{\rho}$.  For a split group, if $\mathfrak{S}_{\rho} / Z_{\widehat{G}}$ is finite, then we say that $\rho$ is elliptic.

We recall some general facts about the map $q \colon \widehat{G}^n \to \widehat{G}^n // \widehat{G}$ by conjugation.  For any $\qlbar$-point $x \in \widehat{G}^n // \widehat{G}$, we consider the preimage $\overline{x} \subset \widehat{G}^n$, which contains a unique closed orbit given by semisimple $n$-tuples, and moreover, any $\widehat{G}$-orbit in the preimage contains this closed orbit in its closure \cite[Section~1.3.2, 3.6]{richardson1988conjugacy}.  Specializing to the case $\xi_{\pi}(\gamma_1, \dots, \gamma_n)$, from the diagram \eqref{eqn: excursion annihilator diagram} and using the fact that $\xi^{\Box}_{\pi}(\gamma_1, \dots, \gamma_n)$ must contain an orbit $\widehat{G} \cdot y$ for some $y \in \widehat{G}^I$, we find that $\xi^{\Box}_{\pi}(\gamma_1, \dots, \gamma_n)$ must contain the semisimple conjugacy class attached to $\xi_{\pi}(\gamma_1, \dots, \gamma_n)$ in its closure.

\begin{lemma}
Suppose $\rho$ is an elliptic Langlands parameter attached to $\pi$ and let the Zariski closure of the group generated by $\gamma_1, \dots, \gamma_n$ contain the image of $\Gal(\overline{K} / K)$.  Then $\xi^{\Box}_{\pi}(\gamma_1, \dots, \gamma_n) = q^{-1}(\xi_{\pi}(\gamma_1, \dots, \gamma_n))$ where $q$ is the quotient map $\widehat{G}^n \to \widehat{G}^n // \widehat{G}$ by conjugation.
\end{lemma}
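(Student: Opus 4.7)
The plan is to prove the reverse inclusion $q^{-1}(\xi_{\pi}(\gamma_1, \dots, \gamma_n)) \subseteq \xi^{\Box}_{\pi}(\gamma_1, \dots, \gamma_n)$, the opposite inclusion being immediate from the preceding commutative diagram relating framed and unframed supports. From the discussion just before the lemma we already know that $\xi^{\Box}_{\pi}$ is Zariski closed, $\widehat{G}$-conjugation invariant, and contains the unique closed orbit in the fiber $q^{-1}(\xi_\pi)$, which in the elliptic case is precisely the orbit $\widehat{G} \cdot (\rho(\gamma_1), \dots, \rho(\gamma_n))$. It therefore suffices to show that in the elliptic case this single orbit already equals the whole fiber.

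The first step is to verify that $\widehat{G} \cdot (\rho(\gamma_1), \dots, \rho(\gamma_n))$ is Zariski closed. Its stabilizer under conjugation is the common centralizer of $\rho(\gamma_1), \dots, \rho(\gamma_n)$, which equals the centralizer of the Zariski closure of the subgroup of $\widehat{G}$ these elements generate; by hypothesis this closure contains $\rho(\Gal(\overline{K}/K))$, so the stabilizer is exactly $\mathfrak{S}_{\rho}$. Ellipticity says $\mathfrak{S}_{\rho}/Z_{\widehat{G}}$ is finite, so $\mathfrak{S}_{\rho}$ is an extension of a finite group by the central torus $Z_{\widehat{G}}$ and is in particular reductive. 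Matsushima's criterion then gives closedness of the orbit.

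The heart of the argument is a Hilbert-Mumford analysis. Let $(h_1, \dots, h_n) \in q^{-1}(\xi_\pi)$ be arbitrary. Since the fiber has $\widehat{G} \cdot (\rho(\gamma_i))$ as its unique closed orbit and every orbit in the fiber has this closed orbit in its closure, the Hilbert-Mumford criterion produces a cocharacter $\lambda \colon \mathbb{G}_m \to \widehat{G}$ such that $\lim_{t \to 0} \lambda(t) h_i \lambda(t)^{-1}$ lies on $\widehat{G} \cdot (\rho(\gamma_i))$; after $\widehat{G}$-conjugating the tuple we may assume this limit equals $(\rho(\gamma_1), \dots, \rho(\gamma_n))$ itself. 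Existence of the limit forces each $h_i$ into the parabolic $P^{\lambda} = \{g \in \widehat{G} : \lim_{t \to 0} \lambda(t) g \lambda(t)^{-1} \text{ exists}\}$, and the limit equals its retraction to the Levi $L = Z_{\widehat{G}}(\lambda(\mathbb{G}_m))$. Thus each $\rho(\gamma_i) \in L$, which forces $\lambda(\mathbb{G}_m) \subseteq Z_{\widehat{G}}(\rho(\gamma_1), \dots, \rho(\gamma_n)) = \mathfrak{S}_{\rho}$. But any cocharacter into a group that is finite modulo the central torus $Z_{\widehat{G}}$ must itself land in $Z_{\widehat{G}}$, so conjugation by $\lambda(t)$ is trivial and $h_i = \rho(\gamma_i)$ for all $i$.

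The main obstacle is conceptual rather than technical: the content of the lemma is that ellipticity is exactly what rules out the nontrivial degenerations which could a priori enlarge $q^{-1}(\xi_\pi)$ beyond the single closed orbit. Once this is recast as the statement that any cocharacter fixing the tuple $(\rho(\gamma_i))$ must be central, the proof is a standard combination of Matsushima's reductivity criterion, the Hilbert-Mumford criterion, and the parabolic retraction picture; the only bookkeeping is to identify the stabilizer of the tuple with $\mathfrak{S}_\rho$ and to use that the Zariski closure of $\langle \gamma_1, \dots, \gamma_n \rangle$ contains $\rho(\Gal(\overline{K}/K))$.
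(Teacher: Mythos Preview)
Your proof is correct. Both your argument and the paper's establish the same intermediate claim---that the fiber $q^{-1}(\xi_\pi(\gamma_1,\dots,\gamma_n))$ consists of a single $\widehat{G}$-orbit---but by different mechanisms.

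The paper uses a bare dimension count: any orbit $\widehat{G}\cdot y$ in the fiber other than the closed one would have the closed orbit in its boundary, hence strictly larger dimension, hence $\dim \widehat{G}_y < \dim \mathfrak{S}_\rho$; but every stabilizer contains $Z_{\widehat{G}}$ and ellipticity gives $\dim \mathfrak{S}_\rho = \dim Z_{\widehat{G}}$, a contradiction. This is two lines once the setup is in place. Your route via Hilbert--Mumford and parabolic retraction is more explicit---it actually exhibits the degeneration and then shows the degenerating cocharacter is forced to be central---but it is correspondingly longer and invokes more machinery (Matsushima, the Kempf--Ness parabolic $P^\lambda$). Your Matsushima step is also redundant here: the paper has already recorded, via Richardson's theorem and Lafforgue's construction, that $(\rho(\gamma_1),\dots,\rho(\gamma_n))$ lies on the unique closed orbit in the fiber, and this holds for any semisimple $\rho$, not only elliptic ones (so your phrase ``which in the elliptic case is precisely'' slightly understates what was already available). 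What your approach buys is transparency about \emph{why} no degeneration can occur: the one-parameter subgroup must centralize the limit tuple, and ellipticity leaves no room for nontrivial such subgroups. The paper's dimension argument is terser but hides this picture.
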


\begin{proof}
    The diagram simplifies since there can only be one such orbit in the fiber over $\xi_{\pi}(\gamma_1, \dots, \gamma_n)$.  To see this, note that any orbit $\widehat{G} \cdot y$ is of the form $\widehat{G} / \widehat{G}_y$ for the stabilizer $\widehat{G}_y$ of $y$.  This stabilizer must be contained in the centralizer $C(\gamma_1, \dots, \gamma_n)$ of a point in the semisimple conjugacy class and contains the center $Z_{\widehat{G}}$.  Moreover, if $y$ does not itself live in the semisimple conjugacy class then this stabilizer must have strictly smaller dimension than $C(\gamma_1, \dots, \gamma_n)$.  This is impossible if $C(\gamma_1, \dots, \gamma_n) / Z_{\widehat{G}}$ is finite.  By construction, if $(\gamma_1, \dots, \gamma_n)$ are such that the Zariski closure of the group generated by $\rho(\gamma_1), \dots, \rho(\gamma_n)$ contains the image $\rho(\Gal(\overline{K} / K))$, then we must have $C(\gamma_1, \dots, \gamma_n) = \mathfrak{S}_{\rho}$ taking the stabilizer at the point $(\rho(\gamma_1), \dots, \rho(\gamma_n)) \in \widehat{G}^n$.  For any such choice of $(\gamma_1, \dots, \gamma_n)$, we find that $\xi^{\Box}_{\pi}(\gamma_1, \dots, \gamma_n)$ coincides with the preimage of $\xi_{\pi}(\gamma_1, \dots, \gamma_n)$.
\end{proof}

With this preparation, we can prove the second part of Theorem~\ref{thm:depth0}.

\begin{corollary}\label{cor:Langlands param}
    Let $\rho$ be a semisimple global Langlands parameter attached to a cuspidal automorphic form $\pi \ne 0$ in the $\zeta$-isotypic component of the space of automorphic forms as in Corollary~\ref{cor: isotypic component closure zeta}.  Let $\gamma$ be a choice of tame generator at $y$ as above.  If $\rho$ is an elliptic Langlands parameter, then the image $\rho(\gamma)$ is contained in the conjugacy class $\mathfrak{o}_{\gamma}$ (not just in the closure).
\end{corollary}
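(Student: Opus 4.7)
The plan is to combine the lemma immediately preceding the corollary with Theorem~\ref{thm: annihilator is Jozeta} and the inclusion-functoriality of framed annihilators recorded in the last proposition of the preceding subsection.

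First I would choose $\gamma_1, \dots, \gamma_n \in \Gal(\overline{K}/K)$ so that the Zariski closure of the subgroup generated by $\rho(\gamma_1), \dots, \rho(\gamma_n)$ contains the image $\rho(\Gal(\overline{K}/K))$, following the construction of $\rho$ from its associated character $\nu$ of the excursion algebra. Adjoining the tame generator $\gamma$ as an extra entry does not enlarge this Zariski closure, since $\rho(\gamma)$ already lies in it, so the hypotheses of the preceding lemma apply to the $(n+1)$-tuple $(\gamma_1, \dots, \gamma_n, \gamma)$. That lemma therefore identifies $\xi^\Box_\pi(\gamma_1, \dots, \gamma_n, \gamma)$ with $q^{-1}(\xi_\pi(\gamma_1, \dots, \gamma_n, \gamma))$, which is a single closed $\widehat{G}$-orbit $\mathcal{O}^{n+1} = \widehat{G} \cdot (\rho(\gamma_1), \dots, \rho(\gamma_n), \rho(\gamma))$ inside $\widehat{G}^{n+1}$.

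Next I would project to the last coordinate. Applying the proposition on framed annihilators under inclusions of finite sets to $\{0\} \subset \{0, 1, \dots, n\}$, where the $0$-th leg corresponds to $\gamma$, the resulting projection $p \colon \widehat{G}^{n+1} \to \widehat{G}$ yields
\begin{equation}
\xi^\Box_\pi(\gamma) \;=\; \overline{p\bigl(\xi^\Box_\pi(\gamma_1,\dots,\gamma_n,\gamma)\bigr)} \;=\; \overline{p(\mathcal{O}^{n+1})} \;=\; \overline{\widehat{G}\cdot\rho(\gamma)}.
\end{equation}
On the other hand, Theorem~\ref{thm: annihilator is Jozeta} computes $\xi^\Box_\pi(\gamma) = V(\mathscr{J}_{\mathfrak{o}_\zeta}) = \overline{\mathfrak{o}_\zeta}$. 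Hence $\overline{\widehat{G}\cdot\rho(\gamma)} = \overline{\mathfrak{o}_\zeta}$.

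To finish, I would use that $\overline{\mathfrak{o}_\zeta}$ is irreducible and decomposes as a finite union of $\widehat{G}$-conjugacy classes, of which $\mathfrak{o}_\zeta$ is the unique one of maximal dimension and hence the unique dense orbit. The equality of closures then forces $\widehat{G}\cdot\rho(\gamma) = \mathfrak{o}_\zeta$, i.e.\ $\rho(\gamma) \in \mathfrak{o}_\zeta$. The main subtlety I anticipate is justifying the closure-of-image translation: the intersection of a scheme-theoretic annihilator with a subring corresponds to the Zariski closure of the image only after passing to reduced structures, so one has to use that $\mathscr{J}_{\mathfrak{o}_\zeta}$ is already the vanishing ideal of the closed conjugacy orbit closure (hence radical) together with the ellipticity input, which ensures that the preimage $q^{-1}(\xi_\pi(\gamma_1,\dots,\gamma_n,\gamma))$ really is the single orbit $\mathcal{O}^{n+1}$ and not a larger non-reduced scheme. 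Once these compatibilities are spelled out, the remaining step — identifying the unique dense orbit in $\overline{\mathfrak{o}_\zeta}$ with $\mathfrak{o}_\zeta$ — is essentially formal.
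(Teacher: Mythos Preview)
Your proposal is correct and follows essentially the same route as the paper. Both arguments invoke the preceding lemma to identify $\xi^\Box_\pi(\gamma_1,\dots,\gamma_n,\gamma)$ with the single closed orbit through $(\rho(\gamma_1),\dots,\rho(\gamma_n),\rho(\gamma))$, then use the intersection relation $\Ann^\Box_\gamma(\pi)=\Ann^\Box_{(\gamma_1,\dots,\gamma_n,\gamma)}(\pi)\cap\mathscr{O}(\widehat{G})$ to compare with $\overline{\mathfrak{o}_\zeta}$; the paper phrases the conclusion as ``the generic point of the orbit maps to the generic point of $\mathfrak{o}_\zeta$,'' which is exactly your statement that $\overline{p(\mathcal{O}^{n+1})}=\overline{\mathfrak{o}_\zeta}$, and your unique-dense-orbit observation is the contrapositive of the paper's boundary contradiction.
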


\begin{proof}
    By the previous lemma, the $(n+1)$-tuple $(\rho(\gamma_1), \dots, \rho(\gamma_n), \rho(\gamma))$ lives in $\xi^{\Box}_{\pi}(\gamma_1, \dots, \gamma_n, \gamma)$, which consists of a single closed orbit.  Note that
    \[\mathscr{O}(\widehat{G}) / \Ann^{\Box}_{\pi}(\gamma) \to \mathscr{O}(\widehat{G}^{n+1}) / \Ann^{\Box}_{\pi}(\gamma_1, \dots, \gamma_n, \gamma)\]
    is injective, so the generic point of the closed orbit $\xi^{\Box}_{\pi}(\gamma_1, \dots, \gamma_n, \gamma)$ maps to the generic point of $\mathfrak{o}_{\zeta}$ along the projection $\widehat{G}^{n+1} \to \widehat{G}$.  If $\rho(\gamma) \not \in \mathfrak{o}_{\zeta}$, then the $\widehat{G}$-orbit of $(\rho(\gamma_1), \dots, \rho(\gamma_n), \rho(\gamma))$ would be contained in a conjugacy class on the boundary $\mathfrak{o}_{\zeta}$, which contradicts the observation that the generic point must map to the generic point.
\end{proof}

For general depth, we expect a correspondence extending the relationship between depth in the Moy-Prasad filtration and depth in the upper numbering filtration.  For example, we expect the following consequence:

\begin{conjecture}
Let $N = \sum a_y y$ for nonnegative rational numbers $a_y$.  The local Galois action given by monodromy of nearby cycles of sheaves $S_V \boxtimes \zeta$ on restricted shtukas $\Cht\mathcal{R}_{G,N,I}$ along the fiber over $y$ is trivial on the $a_y$th ramification subgroup in the upper numbering filtration.
\end{conjecture}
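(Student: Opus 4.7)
The plan is to reduce the assertion to a purely local computation on the Beilinson-Drinfeld grassmannian with $\triv(N)$-trivialization and then match the depth parameter $a_y$ against the upper numbering filtration via a resolution of the diagonal. First, using $\Psi$-factorizability (Theorem~\ref{thm:nearbyfusion} together with Proposition~\ref{prop:psi-factorizable grassmannian}) and the smooth local model $\epsilon$ of Theorem~\ref{thm:smooth model}, I would reduce to analyzing the nearby cycles of $\mathcal{S}_V \boxtimes \zeta$ on $\Gr_{G,I}^{\triv(N)}$ over the single factor $C$ containing $y$. Since monodromy is local and since $\epsilon$ is smooth, the ramification of the local Galois action at $y$ is controlled by the ramification of the nearby cycles sheaf on $\Gr_{G,I}^{\triv(N)}|_{\mathcal{O}_{C,y}^{\mathrm{sh}}}$.

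Second, I would analyze how the $H$-torsor $\Gr_{G,I}^{\triv(N)} \to \Gr_{G,I}$ degenerates over $y$. Away from $y$, this torsor is trivial, and the sheaf $\mathcal{S}_V \boxtimes \zeta$ splits as an external product of a Satake sheaf and a local system coming from $\zeta$ on $H$. At $y$ the trivialization acquires a singularity: $\zeta$, viewed as a representation of $H(\mathbb{F}_q) = \prod_j (G_{y_j}/G_{y_j}^{a_j+})(\mathbb{F}_q)$, becomes a local system that is ramified along the diagonal locus in $C^I$ above $y$. The key computation is then to bound the Swan conductor of this local system along the special fiber. Writing $\zeta$ via the horocycle correspondence and Lusztig's parametrization (as in Theorem~\ref{thm: monodromy chtr}) as a summand of $\underline{\chi}(\mathbb{L}^{\dot w}_\lambda)$ for some $\lambda \in \mathfrak{s}_n$ with $n$ controlled by the depth, we can reduce further to bounding the Swan conductor of rank one Kummer-type local systems $L_\lambda$ pulled back along $j^{\dot w}$ on the deeper flag varieties $\widetilde{\Fl}_x$ associated to $G_y^{a_y+}$.

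The final step is to match these Swan conductors with the upper numbering filtration. The most natural route is Hasse-Arf-type reasoning: construct a tower of finite \'etale covers of the punctured disk at $y$ corresponding to passing through the quotients $G_y / G_y^{(k/d)+}$ for rational $k/d \le a_y$, and identify these as the covers appearing in the Moy-Prasad stratification of $H$. Then Yun's formulas (or Heinloth-Ng\^o-Yun style Swan conductor computations) should give that the slopes of the resulting Galois representation at $y$ are bounded by $a_y$, which by the definition of the upper numbering filtration means triviality on $\mathrm{Gal}(\overline{K_y}/K_y)^{a_y}$. The main obstacle will be precisely this last step: controlling the slopes of the monodromy of a Moy-Prasad Kummer cover and identifying the jump filtration on the Galois side with the Moy-Prasad index on the automorphic side. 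This requires either an explicit resolution of the diagonal in the Beilinson-Drinfeld family together with a Newton-polygon estimate on the resulting Artin-Schreier-like covers, or an abstract comparison via a categorical local Langlands statement at depth $a_y$ that is not established in this paper and would likely need an input from the theory of ramification filtrations on loop groups.
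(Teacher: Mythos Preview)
The statement you are attempting to prove is explicitly labeled a \emph{Conjecture} in the paper and is left open; the paper provides no proof of it. There is therefore nothing to compare your proposal against. The paper only establishes the depth-zero case (via Theorem~\ref{thm: monodromy chtr} and the horocycle correspondence of Section~\ref{sec:horocycle}), and states this conjecture as an expected extension to positive depth.

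Your proposal has a structural gap beyond the one you yourself identify. In your second step you invoke the horocycle correspondence and write $\zeta$ as a summand of $\underline{\chi}(\mathbb{L}^{\dot w}_\lambda)$ ``with $n$ controlled by the depth.'' But the horocycle correspondence constructed in Section~\ref{sec:horocycle} is specific to the case $N = 0 \cdot y$, where $H = G_y / G_y^{0+} \cong G(\mathbb{F}_q)$ is a \emph{reductive} group over a finite field and Lusztig's theory of non-unipotent representations applies. For $a_y > 0$, the quotient $H = G_y / G_y^{a_y+}$ is a genuinely non-reductive group (it has a nontrivial unipotent radical), and there is no analogue of Lusztig's decomposition into pieces indexed by $(w,\lambda)$ available. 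So even before you reach the Swan-conductor estimate you flag as the main obstacle, your reduction to rank-one Kummer sheaves $L_\lambda$ is not justified at positive depth. A genuine attack on the conjecture would need a replacement for the horocycle machinery adapted to the Moy-Prasad quotients $G_y^r / G_y^{r+}$, which for $r>0$ are vector groups rather than reductive groups; this is closer to the theory of minimal $K$-types and is not developed in the paper.
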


Such conjectures, together with corresponding statements about $\Psi$-factorizability, would imply additional local-global compatibilities in the global Langlands correspondence for function fields.

\printbibliography

\end{document}